\newcommand{\map}[1]{\xrightarrow{#1}}
\newcommand{\iso}{\cong}
\newcommand{\define}{\stackrel{\mathrm{def}}{=}}
\newcommand{\imes}{\ltimes}
\newcommand{\normal}{\lhd}
\newcommand{\Gal}{\mathrm{Gal}}
\newcommand{\Hom}{\mathrm{Hom}}
\newcommand{\Aut}{\mathrm{Aut}}
\newcommand{\End}{\mathrm{End}}
\newcommand{\Spec}{\mathrm{Spec}}
\newcommand{\Spf}{\mathrm{Spf}}
\newcommand{\Q}{\mathbb Q}
\newcommand{\Z}{\mathbb Z}
\newcommand{\R}{\mathbb R}
\newcommand{\C}{\mathbb C}
\newcommand{\F}{\mathbb F}
\newcommand{\A}{\mathbb A}
\newcommand{\co}{\mathcal O}
\newcommand{\Lie}{\mathrm{Lie}}
\newcommand{\Sg}{\mathrm{Sg}}
\newcommand{\GSpin}{\mathrm {GSpin}}
\newcommand{\SO}{\mathrm {SO}}
\newcommand{\GSp}{\mathrm {GSp}}
\newcommand{\GL}{\mathrm {GL}}
\newcommand{\SL}{\mathrm {SL}}
\newcommand{\Sh}{\mathrm {Sh}}
\newcommand{\rat}{\mathrm {rat}}
\newcommand{\unit}{\mathrm {unit}}
\newcommand{\spl}{ s }
\newcommand{\weyl}{\mathscr{W}}
\newcommand{\beef}{\diamond}
\newcommand{\DD}{\mathcal{D}}
\newcommand{\sS}{S}
\newcommand{\naive}{\mathrm{naive}}
\newcommand{\regtheta}{\Theta^\mathrm{reg}}
\theoremstyle{plain}
\newtheorem{theorem}{Theorem}[subsection]
\newtheorem{proposition}[theorem]{Proposition}
\newtheorem{lemma}[theorem]{Lemma}
\newtheorem{corollary}[theorem]{Corollary}
\newtheorem{bigtheorem}{Theorem}[section]
\theoremstyle{definition}
\newtheorem{definition}[theorem]{Definition}
\newtheorem{hypothesis}[theorem]{Hypothesis}
\theoremstyle{remark}
\newtheorem{remark}[theorem]{Remark}
\numberwithin{equation}{subsection}
\author{Benjamin Howard}
\address{Department of Mathematics, Boston College, 140 Commonwealth Ave, Chestnut Hill, MA 02467, USA}
\email{howardbe@bc.edu}
\author{Keerthi Madapusi Pera}
\address{Department of Mathematics, Boston College, 140 Commonwealth Ave, Chestnut Hill, MA 02467, USA}
\email{madapusi@bc.edu}
\title{Arithmetic  of Borcherds products}
\begin{document}
%\frontmatter

\begin{abstract}
We compute the divisors of Borcherds products on integral models of orthogonal Shimura varieties.
As an application, we obtain an integral version of a theorem of Borcherds on the modularity of a generating series of special divisors.
\end{abstract}

%\begin{altabstract}
%Nous calculons les diviseurs des produits de Borcherds sur des mod\`eles int\'egraux de vari\'et\'es de Shimura orthogonales.  Comme application, nous obtenons une version int\'egrale d'un th\'eor\`eme de Borcherds sur la modularit\'e d'une s\'erie g\'en\'eratrice de diviseurs sp\'eciaux.
%\end{altabstract}

\subjclass{14G35, 14G40,  11F55, 11F27, 11G18}
\keywords{Shimura varieties, Borcherds products}
%\altkeywords{Vari\'et\'es de Shimura, produit de Borcherds}

\thanks{B.H.~was supported in part by NSF grants DMS-1501583 and DMS-1801905.}
\thanks{K.M.P.~was supported in part by NSF grants DMS-1502142 and DMS-1802169. }

\maketitle

\setcounter{tocdepth}{1}
\tableofcontents

%\mainmatter

%%%%%%%%%%%%%%%%%%%%%%%%%%%%%%%%%%%%%

\section{Introduction}

%%%%%%%%%%%%%%%%%%%%%%%%%%%%%%%%%%%%%

In the series of papers \cite{Bor95, Bor98, Bor:GKZ}, Borcherds introduced a family of meromorphic modular forms on orthogonal Shimura varieties, whose zeroes and poles are prescribed linear combinations of special divisors arising from embeddings of smaller orthogonal Shimura varieties.  These meromorphic modular forms are the Borcherds products of the title.

After work of Kisin \cite{KisinJAMS} on integral models of general Hodge and abelian type Shimura varieties, the theory of integral models of orthogonal  Shimura varieties and their special divisors was developed further in \cite{hor:thesis, hor:book} and \cite{mp:spin,AGHMP-1,AGHMP-2}.

The goal of   this paper is to combine the above theories to compute the divisor of a Borcherds product on the integral model of an orthogonal Shimura variety.   We  show that such a divisor is given as a prescribed linear combination of special divisors, exactly as in the generic fiber.

The first such results were obtained by Bruinier, Burgos Gil, and K\"uhn \cite{BBK}, who worked on Hilbert modular surfaces (a special type of signature $(2,2)$ orthogonal Shimura variety).
Those results were later extended  to more general orthogonal Shimura varieties by H\"ormann \cite{hor:thesis, hor:book}, but with some  restrictions.

Our results extend   H\"ormann's,  but with substantially weaker hypotheses. 
 For example, our results include cases where the integral model is not smooth, cases where the divisors in question may have irreducible components supported in nonzero characteristics, and even cases where the  Shimura variety is compact (so that one has no theory of $q$-expansions with which to analyze the arithmetic properties of Borcherds products).

%%%%%%%%%%%%%%%%%%%%%%%%%%%%%%%%%%%%%

\subsection{Orthogonal Shimura varieties}
\label{ss:intro general}

%%%%%%%%%%%%%%%%%%%%%%%%%%%%%%%%%%%%%

Given an integer $n\ge 1$ and a     quadratic space $(V,Q)$   over $\Q$ of signature $(n,2)$,  one can construct  a Shimura datum  $(G,\DD)$ with reflex field $\Q$.

The group $G=\GSpin(V)$ is a subgroup of the group of units in the Clifford algebra $C(V)$, and sits in a short exact sequence
\[
1 \to \mathbb{G}_m \to G \to \SO(V) \to 1.
\]
The hermitian symmetric domain is
\[
\DD = \{ z \in V_\C : [z,z] =0 ,\, [z,\overline{z} ] < 0 \} / \C^\times   \subset \mathbb{P}(V_\C),
\]
where the bilinear form 
\begin{equation}\label{bilinear}
[x,y]=Q(x+y) - Q(x) -Q(y)
\end{equation}
  on $V$ has been extended $\C$-bilinearly to $V_\C$.

To define a Shimura variety, fix a   $\Z$-lattice $V_\Z \subset V$ on which the quadratic form is $\Z$-valued, and  a  compact open subgroup $K\subset G(\A_f)$ such that
\begin{equation}\label{K choice}
K \subset G(\A_f) \cap C(  V_{ \widehat{\Z}} )^\times.
\end{equation}
Here $C( V_{\widehat{\Z}} )$ is the Clifford algebra of the $\widehat{\Z}$-quadratic space 
$
V_{\widehat{\Z}}=V_\Z \otimes\widehat{\Z}.
$
The canonical model  of the  complex orbifold  
\[
\Sh_K(G,\DD)(\C)  = G(\Q) \backslash \big( \DD \times G(\A_f) / K \big)
\]
 is a  smooth  $n$-dimensional Deligne-Mumford stack 
 \[
 \Sh_K(G,\DD) \to \Spec(\Q) .
 \]

As in work of Kudla \cite{Ku97, Ku-MSRI}, our Shimura variety  carries a family of effective Cartier divisors
\[
Z(m,\mu) \to  \Sh_K(G,\DD)
\]
indexed by positive $m\in \Q$ and $\mu \in V^\vee_\Z / V_\Z$, and a  metrized line bundle 
\[
\widehat{\bm{\omega}} \in \widehat{\mathrm{Pic}}( \Sh_K(G,\DD) )
\]
of weight one modular forms.
Under the complex uniformization of the Shimura variety,  this line bundle pulls back to the tautological bundle on $\mathcal{D}$, with the metric defined by (\ref{better metric}).

We say that  $V_\Z$  is \emph{maximal} if there is no proper superlattice in $V$ on which $Q$ takes integer values, and is 
 \emph{maximal at  $p$} if the $\Z_p$-quadratic space  $V_{\Z_p}=V_\Z \otimes \Z_p$ has the analogous property.
It is clear that  $V_\Z$ is maximal at every prime not dividing the discriminant  $[V_\Z^\vee : V_\Z]$.

Let  $\Omega$ be a finite set of rational primes containing all primes at which $V_\Z$ is not maximal, and abbreviate
\[
\Z_\Omega = \Z [ 1/p : p\in \Omega ].
\]
Assume  that (\ref{K choice}) factors as $K = \prod_p K_p$,  in such a way that 
\[
K_p= G(\Q_p) \cap C( V_{\Z_p} )^\times
\]
for every prime $p\not\in \Omega$. 
For such  $K$ there is a  flat and normal integral model 
\[
\mathcal{S}_K(G,\DD) \to \Spec( \Z_\Omega )
\]
 of $\Sh_K(G,\DD)$.  It is a Deligne-Mumford stack of finite type over $\Z_\Omega$, and is a scheme if  $K$ is sufficiently small.
 At any prime $p\not\in \Omega$, it satisfies the following properties: 
 \begin{enumerate}
 \item
  If the lattice $V_\Z$ is self-dual at a prime $p$ (or even \emph{almost self-dual} in the sense of Definition \ref{defn:almost self-dual})   then the restriction of the integral model to  $\Spec(\Z_{(p)})$ is smooth.
  \item
If $p$ is odd and $p^2$ does not divide   the discriminant $[V_\Z^\vee : V_\Z]$, then
the restriction of the integral model to  $\Spec(\Z_{(p)})$ is regular.
\item
If  $n\ge 6$ then the reduction mod $p$ is  geometrically normal.
\end{enumerate}

The integral model carries over it a  metrized line bundle 
\[
\widehat{\bm{\omega}} \in \widehat{\mathrm{Pic}}( \mathcal{S}_K(G,\DD) )
\]
of weight one modular forms, extending the one already available in the generic fiber, and a family  of effective Cartier divisors
\[
\mathcal{Z}(m,\mu) \to  \mathcal{S}_K(G,\DD)
\]
indexed by positive $m\in \Q$ and $\mu \in V^\vee_\Z / V_\Z$.

\begin{remark}
If  $V_\Z$ is itself  maximal, one can   take $\Omega=\emptyset$, choose 
\[ 
K = G(\A_f) \cap C( V_{ \widehat{\Z}} )^\times
\] 
for the level subgroup,  and  obtain an integral model of $\Sh_K(G,\DD)$  over $\Z$.
\end{remark}

%%%%%%%%%%%%%%%%%%%%%%%%%%%%%%%%%%%%%

\subsection{Borcherds products}

%%%%%%%%%%%%%%%%%%%%%%%%%%%%%%%%%%%%%

In \S \ref{ss:weakly holomorphic forms}, we recall the  Weil representation
\[
\rho_{V_\Z }  : \widetilde{\SL}_2(\Z) \to \Aut_\C( \sS_{V_\Z} )
 \] 
of the metaplectic double cover of $\SL_2(\Z)$ on the  $\C$-vector space 
\[
\sS_{V_\Z  } = \C [ V^\vee_\Z / V_\Z ].
\]

Any weakly holomorphic form 
\[
f(\tau)  = \sum_{  \substack{ m\in \Q \\ m \gg -\infty} } c(m) \cdot q^m   \in M^!_{1- \frac{n}{2} }( \overline{\rho}_{V_\Z } )
\]
valued in the complex-conjugate representation has Fourier coefficients  
\[
c(m) \in \sS_{V_\Z},
\]
 and we denote by $c(m,\mu)$ the value of $c(m)$ at the coset $\mu \in V_\Z^\vee / V_\Z$.
Fix such an $f$,  assume that $f$ is \emph{integral} in the sense that  $c(m,\mu)\in \Z$ for all $m$ and $\mu$.

Using the theory of regularized theta lifts, 
Borcherds  \cite{Bor98}   constructs a Green function $\regtheta(f)$ for the analytic divisor
\begin{equation}\label{divisor combo}
 \sum_{  \substack{  m > 0 \\ \mu\in V_\Z^\vee / V_\Z }  }  c(-m,\mu) \cdot Z(m,\mu)(\C)
\end{equation}
on $\Sh_K(G,\DD)(\C)$,  and shows  (after possibly replacing $f$ by a suitable multiple) that some  power of  $\bm{\omega}^{an}$ admits a  meromorphic section $\psi(f)$ satisfying
\begin{equation}\label{intro norm}
  -2\log \| \psi(f) \|       =  \regtheta (f)  .
\end{equation}
This implies that the divisor of $\psi(f)$ is  (\ref{divisor combo}).
These meromorphic sections are the \emph{Borcherds products} of the title.

Our main result, stated in the text as Theorem \ref{thm:main borcherds}, asserts that the Borcherds product $\psi(f)$ is algebraic, defined over $\Q$, and has the expected divisor when viewed as a rational section over the integral model.

\begin{bigtheorem}\label{bigthm:integral borcherds}
After possibly replacing $f$ by a positive integer multiple, 
there is a rational section $\psi(f)$   of the line bundle $\bm{\omega}^{c(0,0)}$ on  $\mathcal{S}_K(G,\DD)$ 
whose norm under the  metric  (\ref{better metric}) satisfies (\ref{intro norm}),  and whose divisor is 
 \[
  \mathrm{div}( \psi (f) ) =  \sum_{  \substack{  m > 0 \\ \mu\in V_\Z^\vee / V_\Z }  }  c(-m,\mu) \cdot \mathcal{Z}(m,\mu).
  \]
\end{bigtheorem}

The unspecified positive integer by which one must multiply $f$ can be made at least somewhat more explicit.  
For example, it depends only on the lattice $V_\Z$, and not on the  form $f$.  See the discussion of \S \ref{ss:divisible remark}.

As  noted earlier, similar results can be found in the work of H\"ormann \cite{hor:thesis, hor:book}.   
H\"ormann only considers self-dual lattices, so that the corresponding integral model $\mathcal{S}_K(G,\DD)$ is smooth,
and always assumes that the quadratic space $V$ admits an isotropic line.  
This allows him to prove the flatness of $\mathrm{div}(\psi(f))$ by examining the $q$-expansion of $\psi(f)$ at a cusp.  
As H\"ormann's special divisors $\mathcal{Z}(m,\mu)$, unlike ours, are  defined as the Zariski closures of their generic fibers, the equality of divisors stated in Theorem \ref{bigthm:integral borcherds} is then a formal consequence of the same equality in the generic fiber.

In contrast, we can prove Theorem \ref{bigthm:integral borcherds} even in cases where the divisors in question may not be flat, and in cases where $V$ is anisotropic, so no theory of  $q$-expansions is available.

The reader may be surprised to learn that even the descent of $\psi(f)$  to $\Q$ was not previously known in full generality.
Indeed, there is a product formula for the Borcherds product giving  its  $q$-expansions at every cusp, and so 
one should be able to detect the field of definition of $\psi(f)$  from a suitable $q$-expansion principle.

If $V$ is anisotropic then $\Sh_K(G,\DD)$ is proper over $\Q$,  no theory of $q$-expansions exists, and the above strategy fails completely.   But even when $V$ is isotropic there is a serious technical obstruction to this argument. 
The product formula of Borcherds  is not completely precise, in that the $q$-expansion of $\psi(f)$ at a given cusp is only specified  up to multiplication by an unknown constant of absolute value $1$, and there is no a priori relation between the different constants at different cusps.  These constants are the $\kappa^{(a)}$ appearing in Proposition \ref{prop:product expansion}.

If $\Sh_K(G,\DD)$ admits (a toroidal compactification with) a cusp defined over $\Q$ there is no problem: simply rescale the Borcherds product by a constant of absolute value $1$ to remove the mysterious constant at that cusp, and now $\psi(f)$ is defined over $\Q$.  
But if $\Sh_K(G,\DD)$ has no rational cusps, then to prove that $\psi(f)$ descends to $\Q$ one must compare the $q$-expansions of $\psi(f)$ at all points in a  Galois orbit of cusps. 
 One can rescale the Borcherds product to trivialize the constant at one cusp, but then one has no control over the constants at other cusps in the Galois orbit.

Using the $q$-expansion principle alone, is seems that the best one can  prove is that $\psi(f)$ descends to the minimal field of definition of a cusp.  Our strategy to improve on this is sketched in \S \ref{ss:intro proof} below.

\begin{remark}
As in the statement and proof of \cite[Theorem 10.4.12]{hor:thesis},
there is an elementary argument using Hilbert's Theorem 90 that allows one to rescale the Borcherds product so that it descends to $\Q$, but in this argument one has no control over  the scaling factor, and it need not have absolute value $1$.  In particular this rescaling may  destroy the norm relation (\ref{intro norm}).
Even worse, rescaling by such factors may introduce unwanted and unknown vertical components into the divisor of the Borcherds product on the integral model of the Shimura variety, and understanding  what's happening on the  integral model is the central concern of this work.
\end{remark}

%%%%%%%%%%%%%%%%%%%%%%%%%%%%%%%%%%%%%

\subsection{Modularity of generating series}

%%%%%%%%%%%%%%%%%%%%%%%%%%%%%%%%%%%%%

The family of special divisors determines a family of line bundles
\[
\mathcal{Z}(m,\mu) \in \mathrm{Pic}(\mathcal{S}_K(G,\DD) )
\]
indexed by positive $m\in \Q$ and $\mu \in V_\Z^\vee / V_\Z$.
We extend the definition to $m=0$  by setting 
\[
\mathcal{Z}(0,\mu) = \begin{cases}
\bm{\omega}^{-1} & \mbox{if } \mu=0 \\
\co_{\mathcal{S}_K(G,\DD)} & \mbox{if }\mu \neq 0.
\end{cases}
\] 

Exactly as in the work of Borcherds \cite{Bor:GKZ}, Theorem \ref{bigthm:integral borcherds}  produces enough  relations in the Picard group   to prove the  modularity of the generating series of these line bundles.
Let 
\[
\phi_\mu \in \sS_{V_\Z}= \C [ V^\vee_\Z / V_\Z ]
\] 
denote the characteristic function of the coset $\mu\in V_\Z^\vee / V_\Z$.

\begin{bigtheorem}\label{bigthm:modularity}
The formal  $q$-expansion
\[
 \sum_{ \substack{ m \ge 0 \\   \mu \in V_\Z^\vee / V_\Z   }  } \mathcal{Z}(m ,\mu ) \otimes \phi_\mu   \cdot q^m
\]
 is a modular form valued in $\mathrm{Pic}( \mathcal{S}_K(G,\DD) ) \otimes \sS_{V_\Z}$.
More precisely, we have  
\[
 \sum_{ \substack{ m \ge 0 \\   \mu \in V_\Z^\vee / V_\Z   }  }  \alpha ( \mathcal{Z}(m ,\mu ) ) \cdot  \phi_\mu   \cdot q^m \in M_{ 1+ \frac{n}{2} } ( \rho_{V_\Z} )
\]
  for any $\Z$-linear map $\alpha: \mathrm{Pic}( \mathcal{S}_K(G,\DD) ) \to \C$.
\end{bigtheorem}

Theorem \ref{bigthm:modularity} is stated in the text as Theorem \ref{thm:naked modularity}. 
After endowing the special divisors with Green functions as in \cite{Bruinier}, we also prove a modularity result in the group of metrized line bundles.  See Theorem \ref{thm:metrized modularity}.

%%%%%%%%%%%%%%%%%%%%%%%%%%%%%%%%%%%%%

\subsection{Idea of the proof}
\label{ss:intro proof}

%%%%%%%%%%%%%%%%%%%%%%%%%%%%%%%%%%%%%

We first prove  Theorem  \ref{bigthm:integral borcherds} assuming that   $n\ge 6$, and that $V_\Z$ splits an integral hyperbolic plane.
This assumption has three crucial consequences. 
First, it guarantees the existence of cusps of $\Sh_K(G,\DD)$ defined over $\Q$.  
Second, it guarantees that our integral model has geometrically normal  fibers, so that we may use the results of \cite{mp:compactification} to fix a toroidal compactification  in such a way that every irreducible component of every mod $p$  fiber of $\mathcal{S}_K(G,\DD)$ meets a cusp.
Finally, it guarantees the flatness of all special divisors $\mathcal{Z}(m,\mu)$.  

As noted above, the existence of cusps over $\Q$ allows us to deduce the descent of $\psi(f)$ to $\Q$ using the $q$-expansion principle.
Moreover, by examining the $q$-expansions of $\psi(f)$ at the cusps, one can show that its divisor is flat over $\Z_\Omega$, and  the equality of divisors in Theorem \ref{bigthm:integral borcherds}  then follows from the known equality in the generic fiber.

\begin{remark}
In fact, we prove that  our divisors are  flat over $\Z$ as soon as $n\ge 4$. 
When $n\in \{1,2,3\}$  the orthogonal Shimura varieties and their special divisors can be interpreted as a moduli space of abelian varieties with additional structure, as in the work of Kudla-Rapoport \cite{KR-hilbert,KR-siegel,KR-drinfeld}.     
Already in the case of $n=1$, Kudla and Rapoport \cite{KR-drinfeld} provide examples in which the special divisors are not flat.
\end{remark}

To understand how to deduce the general case from the special case above, we  first recall how Borcherds constructs $\psi(f)$ in the complex fiber.  
 If $V$ contains an isotropic line, the construction boils down to explicitly writing down its $q$-expansion as an infinite product.  
 This gives the desired $\psi(f)$,  along with the norm relation (\ref{intro norm}),  on the region of convergence.
 The right hand side of (\ref{intro norm}) is a  pluriharmonic function defined on the complement of the support of (\ref{divisor combo}),  and the  meromorphic continuation of $\psi(f)$ follows more-or-less formally from this.

  Suppose now that $V$ is anisotropic.  The idea of Borcherds  is to fix isometric embeddings of $V$ into two 
 (very particular) quadratic spaces $V^{ [1]}$ and $V^{ [2] }$ of signature $(n+24,2)$.
  From this   one can construct  morphisms of orthogonal Shimura varieties
\[
\xymatrix{
& {\Sh_K(G,\DD)} \ar[dl]_{ j^{ [1]} }  \ar[dr]^{ j^{ [2]} } \\
{ \Sh_{K^{[1]} } (G^{[1]},\DD^{[1]}) }  & & { \Sh_{K^{[2]} } (G^{[2]},\DD^{[2]}) } .
}
\] 
As both $V^{[1]}$ and $V^{[2]}$ contain isotropic lines, one already has Borcherds products on their associated Shimura varieties.

The next step should be to define 
\begin{equation}\label{borcherds quotient}
\psi(f) =  \frac{  ( j^{ [2] }  )^* \psi(f ^{ [2]} ) }  {   ( j^{ [1] }  )^* \psi(f ^{ [1]} )  } 
\end{equation}
for (very particular) weakly holomorphic forms $f^{[1]}$ and $f^{[2]}$.
The problem is that the quotient on the right hand side  is nearly always either $0/0$ or $\infty/\infty$, and so doesn't really make sense.

Borcherds gets around this via an analytic construction on the level of hermitian domains.
On the hermitian domain 
\[
\DD^{[i]} = \{ z\in V_\C^{[i]} : [z,z]=0 ,\, [z,\overline{z}] <0 \} / \C^\times \subset \mathbb{P}(V^{[i]}_\C),
\]
every irreducible component of every special divisor has the form
\[
\DD^{[i]}(x) = \{ z\in \DD^{[i]} : z\perp x \}
\]
for some $x\in V^{[i]}$, and the dual of the tautological line bundle $\bm{\omega}_{ \DD^{[i]}} $ on $\DD^{[i]}$  admits a canonical section 
\[
\mathrm{obst}^{an}_x \in H^0 ( \DD^{[i]} , \bm{\omega}_{ \DD^{[i]}} ^{-1})
\]
with zero locus $\DD^{[i]}(x)$.  See the discussion at the beginning of \S \ref{ss:deformation}.

Whenever there is an $x\in V^{[i]}$ such that 
$
\DD \subset \DD^{[i]}(x),
$
Borcherds  multiplies $\psi(f ^{ [i]} )$ by a suitable power of $\mathrm{obst}^{an}_x$ in order to remove the component $\DD^{[i]}(x)$ from $\mathrm{div}(  \psi(f ^{ [i]} ) $.  
After modifying both $\psi(f ^{ [1]} )$ and $\psi(f ^{ [2]} )$ in this way, the quotient (\ref{borcherds quotient}) is defined.
This process is what Borcherds calls the \emph{embedding trick}  in \cite{Bor98}.
As understood by Borcherds, the embedding trick is a purely analytic construction.  The sections $\mathrm{obst}^{an}_x$ over $\DD^{[i]}$ do not descend to the Shimura varieties, and have no obvious algebraic properties.  In particular, even if one knows that the $\psi(f^{[i]})$ are defined over $\Q$, it is not obvious that the renormalized quotient  (\ref{borcherds quotient}) is defined over $\Q$.

One of the main  contributions of this paper is an algebraic analogue of the embedding trick, 
which  works  even on the level of integral models.
This is based on the methods used to compute improper intersections in \cite{BHY,AGHMP-1,Ho3}.  
The idea  is to use deformation theory to construct an analogue of the section  $\mathrm{obst}^{an}_x$, not over all of $\Sh_{K^{[i]}}( G^{[i]} ,\DD^{[i]}) $, but only over the first order infinitesimal neighborhood of 
$\Sh_K(G,\DD)$ in $\Sh_{K^{[i]}}( G^{[i]} ,\DD^{[i]})$.  
This section is the \emph{obstruction to deforming $x$} appearing in \S \ref{ss:deformation}.

With this algebraic analogue of the embedding trick in hand, we can make sense of the quotient (\ref{borcherds quotient}), and compute the divisor of the left hand side in terms of the divisors of the numerator and denominator on the right.
This allows us to deduce the general case of Theorem \ref{bigthm:integral borcherds} from the special case explained above.

%%%%%%%%%%%%%%%%%%%%%%%%%%%%%%%%%%%%%

\subsection{Organization of the paper}

%%%%%%%%%%%%%%%%%%%%%%%%%%%%%%%%%%%%%

Ultimately, all arithmetic information about Borcherds products comes from their $q$-expansions, and so  we must make heavy use of the arithmetic theory of  toroidal compactifications of Shimura varieties of \cite{pink, mp:compactification}.  This theory requires introducing a substantial amount of notation just to state the main results. 
Also,  because  Borcherds products are rational sections of powers of the line bundle $\bm{\omega}$, we need the theory of automorphic vector bundles on toroidal compactifications.  This theory is distributed across a series of papers of Harris \cite{harris:automorphic_0, harris:automorphic_1, harris:automorphic_2, harris:automorphic_3}   and Harris-Zucker \cite{HZ1,HZ2,HZ3}.

Accordingly,  before we even begin to talk about orthogonal Shimura varieties, we first  recall in \S \ref{s:pink review} the main results on toroidal compactification from Pink's thesis \cite{pink}, and in \S \ref{s:AVB}  the results of Harris and Harris-Zucker on automorphic vector bundles.  
All of this is in the generic fiber of fairly general Shimura varieties. 

 Beginning in \S \ref{s:GSpin} we specialize to  case of orthogonal Shimura varieties. 
 We  consider their toroidal compactifications, and give a purely algebraic definition of $q$-expansions of modular forms on them.
 In particular, we prove the $q$-expansion principle Proposition \ref{prop:q principle}, which can be used to  detect  their fields of definition.

In \S \ref{s:borcherds} we introduce  Borcherds products and, when $V$ admits an isotropic line, describe their $q$-expansions.

In  \S \ref{s:integrality I}  we  introduce   integral models of  orthogonal Shimura varieties over $\Z_{(p)}$, along with their line bundles of modular forms and special divisors.
This material is drawn from \cite{mp:spin, AGHMP-1, AGHMP-2}, although we work here in slightly more generality.
The main new result in  \S \ref{s:integrality I}   is the pullback formula of Proposition \ref{prop:pullback}, which explains how the special divisors behave under pullback via embeddings of orthogonal Shimura varieties.  This formula, whose proof is similar to calculations of improper intersections found in \cite{BHY,AGHMP-1,Ho3},  is essential to our  algebraic version of the embedding trick.

In  \S \ref{s:integrality II}  we prove some technical properties of the integral models over $\Z_{(p)}$.
We show that the special divisors are flat when $n\ge 4$, and the integral model has geometrically normal fibers when $n \ge 6$.
When $p\neq 2$ these results already appear in \cite{AGHMP-2}. The methods here are similar, except that we appeal to the work of Ogus~\cite{Ogus1979} instead of \cite{Howard-Pappas} (which excludes $p=2$) to control the dimension of the supersingular locus.

In \S \ref{s:integral q} we extend the theory of toroidal compactifications and $q$-expansions to our integral models,
making use of the general theory of  toroidal compactifications of Hodge type Shimura varieties found in \cite{mp:compactification}.   
The culmination of the discussion is Corollary \ref{cor:flatness by q}, which allows one to use $q$-expansions to detect the flatness of  divisors of  rational sections of $\bm{\omega}$ and its powers.

Finally, in \S \ref{s:integral borcherds} we put everything together to prove  Theorem \ref{bigthm:integral borcherds}.  
The modularity result of Theorem \ref{bigthm:modularity} (and its extension to the group of metrized line bundles) 
 follows  immediately from Theorem \ref{bigthm:integral borcherds} and the modularity criterion of Borcherds.

%%%%%%%%%%%%%%%%%%%%%%%%%%%%%%%%%%%%%

\subsection{Notation and conventions}

%%%%%%%%%%%%%%%%%%%%%%%%%%%%%%%%%%%%%

 For every $a\in \A_f^\times$ there is a unique factorization 
\[
a= \rat(a) \cdot \unit(a) 
\]
in which $\rat(a)$ is a positive rational number and $\unit(a) \in \widehat{\Z}^\times$.

Class field theory provides us with a reciprocity map
\[
\mathrm{rec} :   \Q^\times_{>0} \backslash  \A_f^\times \iso  \Gal(\Q^{ab} / \Q),
\]
which we normalize as follows.
  Let $\mu_\infty$ be the set of all roots of unity in $\C$, so that  $\Q^{ab}=\Q(\mu_\infty)$ is the maximal abelian extension of $\Q$. The group  $(\Z/M\Z)^\times$ acts on the set of  $M^\mathrm{th}$ roots of unity in the usual way, by letting  $u \in ( \Z/M\Z)^\times$ act by  $\zeta \mapsto \zeta^u$.  Passing to the limit yields an action of $\widehat{\Z}^\times$ on  $\mu_\infty$, and the reciprocity map is characterized by 
\[
 \zeta^{ \mathrm{rec}(a) } = \zeta^{\unit(a)}
\]
for all $a\in \A_f^\times$ and  $\zeta\in \mu_\infty$.

We follow the conventions of \cite{Deligne:Shimura2}  and \cite[Chapter 1]{pink} for Hodge structures and  mixed Hodge structures.  As usual, 
$
\mathbb{S} = \mathrm{Res}_{\C/\R} \mathbb{G}_{m\C}
$ 
is Deligne's torus, so that $\mathbb{S}(\C) =\C^\times\times\C^\times$, with complex conjugation acting by $(t_1,t_2) \mapsto ( \bar{t}_2,\bar{t}_1)$. In particular, $\mathbb{S}(\R)\iso \C^\times$ by   $(t,\bar{t}) \mapsto t$.   If $V$ is a rational vector space endowed with a  Hodge structure $\mathbb{S} \to \GL(V_\R)$, then  $V^{(p,q)} \subset V_\C$ is the subspace on which $(t_1,t_2) \in \C^\times \times \C^\times=\mathbb{S}(\C)$ acts via $t_1^{-p} t_2^{-q}$.   There is a distinguished cocharacter 
\[
\mathrm{wt} : \mathbb{G}_{m\R} \to \mathbb{S}
\]
 defined on complex points by $t\mapsto ( t^{-1},t^{-1})$.  The composition
\[
\mathbb{G}_{m\R} \map{\mathrm{wt}} \mathbb{S} \to \GL(V_\R)
\]
encodes the weight grading on $V_\R$, in the sense that 
\[
 \bigoplus_{p+q=k} V^{(p,q)} = \{ v\in V_\C : \mathrm{wt}(z) \cdot v = z^k \cdot v,\, \forall z\in \C^\times \}.
\]

Now suppose that $V$ is endowed with a mixed Hodge structure.  This consists of an increasing  weight filtration $\mathrm{wt}_\bullet V$ on $V$,  and a decreasing Hodge filtration $F^\bullet V_\C$  on $V_\C$, whose induced filtration on every graded piece 
\begin{equation}\label{wt grading}
\mathrm{gr}_k (V) = \mathrm{wt}_k V/ \mathrm{wt}_{k-1} V
\end{equation}
is a pure Hodge structure of weight $k$.  By \cite[Lemma-Definition 3.4]{PS} there is a canonical bigrading
$
V_\C =  \bigoplus V^{(p,q)}
$
with the property that 
\[
\mathrm{wt}_k V_\C = \bigoplus_{ p+q \le k } V^{(p,q)},\quad F^i V_\C = \bigoplus_{ p \ge i} V^{(p,q)}.
\]
This bigrading is  induced by a morphism $\mathbb{S}_\C  \to \GL(V_\C)$.

%%%%%%%%%%%%%%%%%%%%%%%%%%%%%%%%%%%%%%%%%%

\section{Toroidal compactification}
\label{s:pink review}

%%%%%%%%%%%%%%%%%%%%%%%%%%%%%%%%%%%%%%%%%%

This section is a (relatively) short summary of Pink's thesis \cite{pink} on  toroidal compactifications of canonical models of Shimura varieties.  See also \cite{hor:thesis} and \cite{HZ1,HZ3}. We limit ourselves to what is needed in the sequel, and simplify the discussion somewhat by only dealing with those mixed Shimura varieties that appear at the boundaries of pure Shimura varieties.

%%%%%%%%%%%%%%%%%%%%%%%%%%%%%%%%%%%%%

\subsection{Shimura varieties}

%%%%%%%%%%%%%%%%%%%%%%%%%%%%%%%%%%%%%

Throughout  \S \ref{s:pink review} and \S \ref{s:AVB} we let $(G,\DD)$ be a (pure) Shimura datum in the sense of \cite[\S 2.1]{pink}.  Thus $G$ is a reductive group over $\Q$, and $\DD$ is a $G(\R)$-homogeneous space equipped with a finite-to-one $G(\R)$-equivariant map
\[
 \mathtt{h} :  \DD \to \Hom(\mathbb{S} , G_\R )
\]
 such that the pair $(G , \mathtt{h} ( \DD ) )$ satisfies Deligne's axioms   \cite[(2.1.1.1)-(2.1.1.3)]{Deligne:Shimura2}.    We often abuse notation and confuse $z\in \DD$ with its image $\mathtt{h}(z)$.

 The \emph{weight cocharacter}  
\begin{equation}\label{pure weight}
w\define \mathtt{h}(z) \circ \mathrm{wt} : \mathbb{G}_{m\R} \to G_\R
\end{equation}
 of $(G,\DD)$ is independent of $z\in\DD$, and takes values in the center of $G_\R$.

\begin{hypothesis}\label{hyp:motivic}
Because it will simplify much of what follows, and because it is assumed throughout \cite{HZ3}, we always assume that our Shimura datum $(G,\DD)$ satisfies:
\begin{enumerate}
\item
The weight cocharacter (\ref{pure weight}) is defined over $\Q$.
\item
The connected center of $G$ is isogenous to the product of a $\Q$-split torus with  a torus whose group of real points is compact. 
\end{enumerate}
\end{hypothesis}

Suppose $K\subset G(\A_f)$ is any compact open subgroup.  The associated Shimura variety
\[
\Sh_K(G,\DD)(\C) = G(\Q) \backslash \big(  \DD \times G(\A_f) / K \big)
\]
is a complex orbifold.  Its canonical model $\Sh_K(G,\DD)$  is a Deligne-Mumford stack over the reflex field $E(G,\DD)\subset \C$.   If $K$ is neat in the sense of \cite[\S 0.6]{pink}, then $\Sh_K(G,\DD)$ is a quasi-projective scheme.  By slight abose of notation, the image of a point $(z, g) \in \DD \times G(\A_f)$ is again denoted 
\[
 (z, g)  \in \Sh_K(G,\DD)(\C).
 \]

\begin{remark}\label{rem:little shimura}
Let $\mathbb{G}_m(\R) = \R^\times$ act on the two point set
\[
\mathcal{H}_0 \define \{ 2\pi \epsilon  \in \C : \epsilon^2=-1 \}
\]  
via the unique continuous transitive action: positive real numbers act trivially, and negative real numbers swap the two points. If we define \[\mathcal{H}_0 \to \Hom(\mathbb{S} , \mathbb{G}_{m \R})\] by sending both points  to the norm map $\C^\times \to \R^\times$, then $(\mathbb{G}_m , \mathcal{H}_0)$ is a  Shimura datum in the sense of \cite{pink}.
\end{remark}

%%%%%%%%%%%%%%%%%%%%%%%%%%%%%%%%%%%%%

\subsection{Mixed Shimura varieties}
\label{ss:mixed}

%%%%%%%%%%%%%%%%%%%%%%%%%%%%%%%%%%%%%

Toroidal compactifications of Shimura varieties are obtained by gluing together certain mixed Shimura varieties, which we now define.

Recall from  \cite[Definition 4.5]{pink} the notion of an \emph{admissible} parabolic subgroup  $P\subset G$.
  If   $G^{ad}$ is simple, this just means that $P$ is either a maximal proper parabolic subgroup, or is all of $G$.  In general, it means that  if we write $G^{ad} = G_1 \times\cdots \times G_s$ as a product of simple groups, then $P$ is the preimage of a subgroup $P_1 \times \cdots \times P_s$, where each $P_i \subset G_i$ is an admissible parabolic.

\begin{definition}\label{def:clr}
A \emph{cusp label representative} 
$
\Phi=( P, \DD^\circ , h)
$
for $(G,\DD)$ is a triple consisting of an admissible parabolic subgroup $P$, a connected component $\DD^\circ \subset \DD$,   and an $h\in G(\A_f)$.  
\end{definition}

 As in  \cite[\S 4.11 and \S 4.12]{pink}, any cusp label representative $\Phi=(P,\DD^\circ, h)$ determines a mixed Shimura datum  $(Q_\Phi , \DD_\Phi)$, whose construction we now recall.

Let $W_\Phi\subset P$ be the unipotent radical, and let $U_\Phi$ be the center of $W_\Phi$.   According to \cite[\S 4.1]{pink} there is a distinguished central  cocharacter 
$
\lambda: \mathbb{G}_{m} \to P / W_\Phi .
$
 The weight cocharacter $w : \mathbb{G}_m \to G$ is central, so takes values in  $P$, and therefore determines a new central cocharacter
\begin{equation}\label{preweight}
w \cdot \lambda^{-1} : \mathbb{G}_m \to P/W_\Phi.
\end{equation}

Suppose $G \to \GL(N)$ is a faithful  representation on a finite dimensional $\Q$-vector space.
Each point $z\in \DD$  determines a Hodge filtration $F^\bullet N_\C$     on $N$.    
Any  lift  of (\ref{preweight}) to a cocharacter  $\mathbb{G}_{m} \to P$    determines a grading $N=\bigoplus N^k$, and the associated  \emph{weight filtration} 
\[ 
\mathrm{wt}_\ell N  = \bigoplus_{ k \le \ell} N ^k
\] 
is independent of the lift.  
The triple $(N, F^\bullet N_\C , \mathrm{wt}_\bullet N)$ is a mixed Hodge structure \cite[\S 4.12, Remark (i)]{pink}, and the associated bigrading of $N_\C$  determines a morphism $\mathtt{h}_\Phi(z) \in \Hom( \mathbb{S}_\C ,  P_\C  )$ independent of the choice of faithful representation $N$.

 As in \cite[\S 4.7]{pink}, define  $Q_\Phi \subset P$  to be the smallest closed normal subgroup through which every such $\mathtt{h}_\Phi(z)$ factors.  Thus we have normal subgroups
\[
U_\Phi \normal W_\Phi \normal Q_\Phi \normal P,
\]
and  a map
\[
\mathtt{h}_\Phi: \DD \to  \Hom( \mathbb{S}_\C ,  Q_{\Phi \C}  ).
\]
The cocharacter (\ref{preweight}) takes values in $Q_\Phi/W_\Phi$,  defining  the \emph{weight cocharacter} 
\begin{equation}\label{weight cocharacter}
w_\Phi  : \mathbb{G}_m \to Q_\Phi / W_\Phi.
\end{equation}

\begin{remark}
 Being an abelian unipotent group, $\Lie(U_\Phi) \iso U_\Phi$ has the structure of  a $\Q$-vector space.   By \cite[Proposition 2.14]{pink}, the conjugation action of $Q_\Phi$ on $U_\Phi$ is through a character
\begin{equation}\label{unipotent character}
\nu_\Phi : Q_\Phi \to \mathbb{G}_m.
\end{equation}
\end{remark}

By \cite[Proposition 4.15(a)]{pink}, the map $\mathtt{h}_\Phi$  restricts to an open immersion on every connected component of $\DD$, and so  the diagonal map 
\[
\DD \to \pi_0(\DD) \times \Hom( \mathbb{S}_\C ,  Q_{\Phi \C}  )
\]  
is a $P(\R)$-equivariant open immersion.  The action of the subgroup $U_\Phi(\R)$ on $\pi_0(\DD)$ is trivial, and we extend it to the trivial action of $U_\Phi(\C)$ on $\pi_0(\DD)$.  Now define 
\[
\DD_\Phi =  Q_\Phi(\R) U_\Phi(\C) \DD^\circ \subset \pi_0(\DD) \times \Hom( \mathbb{S}_\C ,  Q_{\Phi\C}  ).
\]
Projection to the second factor defines a finite-to-one map
  \[
\mathtt{h}_\Phi :   \DD_\Phi \to \Hom( \mathbb{S}_\C ,  Q_{\Phi\C}  ),
  \]
and we usually abuse notation and confuse $z\in \DD_\Phi$ with its image $\mathtt{h}_\Phi(z)$.

%\begin{remark}
%Having constructed a distinguished subgroup $Q\subset P$ of every admissible parabolic, we define a \emph{rational boundary component} of $(G,\DD)$ to be a pair $(P,\DD^+)$ in which $P\subset G$ is an admissible parabolic as above, and $\DD^+\subset \DD$ is a $Q(\R)$-orbit of connected components.     Any cusp label representative  $\Phi$ determines a rational boundary component $(P,\DD^+)$ by taking  $\DD^+=\DD_\Phi$.    In Pink's terminology, it is $(Q_\Phi , \DD_\Phi)$ that is the rational boundary component, not $(P,\DD^+)$, but each pair determines the other, so the conflicting terminology is only cosmetic.
%\end{remark}

Having now defined the mixed Shimura datum $(Q_\Phi , \DD_\Phi)$,  the compact open subgroup  
\[
K_\Phi \define  hKh^{-1} \cap Q_\Phi(\A_f) 
\]
determines a mixed Shimura variety
\begin{equation}\label{complex mixed}
\Sh_{K_\Phi}( Q_\Phi , \DD_\Phi )(\C) = Q_\Phi(\Q) \backslash  \big( \DD_\Phi \times Q_\Phi(\A_f) / K_\Phi  \big),
\end{equation}
which has a canonical model $\Sh_{K_\Phi}( Q_\Phi , \DD_\Phi )$  over its reflex field.  Note that the reflex field is again $E(G,\DD)$,  by   \cite[Proposition 12.1]{pink}.    The canonical model is a  quasi-projective scheme if $K$ (hence $K_\Phi$) is neat.

\begin{remark}
If we choose our cusp label representative to have the form $\Phi = (G, \DD^\circ  , h)$, then  $(Q_\Phi , \DD_\Phi) = (G,\DD)$  and 
\[
\Sh_{K_\Phi}(Q_\Phi ,\DD_\Phi) = \Sh_{ hKh^{-1} }(G,\DD) \iso \Sh_{ K }(G,\DD).
\]
As a consequence, all of our statements about the mixed Shimura varieties  $\Sh_{K_\Phi}(Q_\Phi ,\DD_\Phi)$ include the Shimura variety $\Sh_{ K }(G,\DD)$ as a special case.
\end{remark}

%%%%%%%%%%%%%%%%%%%%%%%%%%%%%%%%%%%%%%

\subsection{The torsor structure}
\label{ss:torsor}

%%%%%%%%%%%%%%%%%%%%%%%%%%%%%%%%%%%%

Define $\bar{Q}_\Phi =  Q_\Phi/U_\Phi$ and    $\bar{\DD}_\Phi = U_\Phi(\C) \backslash \DD_\Phi$.  The pair 
\[
(  \bar{Q}_\Phi  ,  \bar{\DD}_\Phi )   =   (Q_\Phi , \DD_\Phi) / U_\Phi 
\]
is the quotient mixed Shimura datum in the sense of \cite[\S2.9]{pink}.
 Let  $\bar{K}_\Phi$ be the image of $K_\Phi$ under  the quotient map $Q_\Phi(\A_f) \to \bar{Q}_\Phi(\A_f)$, so that we have a canonical morphism 
 \begin{equation}\label{torus torsor}
\Sh_{K_\Phi}( Q_\Phi , \DD_\Phi )  \to \Sh_{\bar{K}_\Phi}( \bar{Q}_\Phi , \bar{\DD}_\Phi ),
\end{equation}
where the target mixed Shimura variety is defined in the same way as (\ref{complex mixed}).

\begin{proposition}\label{prop:torsor def}
Define a $\Z$-lattice in $U_\Phi(\Q)$ by 
$
\Gamma_\Phi =   K_\Phi \cap    U_\Phi(\Q).
$
The morphism (\ref{torus torsor}) is canonically a torsor for the relative torus
\[
T_\Phi \define \Gamma_\Phi(-1) \otimes  \mathbb{G}_m
\] 
with cocharacter group $\Gamma_\Phi(-1)=(2\pi i )^{-1}\Gamma_\Phi$.
\end{proposition}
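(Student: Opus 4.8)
The plan is to make the torsor structure explicit on complex points and then invoke the canonical-model theory to descend it. First I would recall from \cite[\S 4.12]{pink} the description of $\DD_\Phi$ as an open subset of $\pi_0(\DD)\times\Hom(\mathbb{S}_\C,Q_{\Phi\C})$, and note that the unipotent group $U_\Phi(\C)$ acts on $\DD_\Phi$ by translations coming from the morphism $U_\Phi \hookrightarrow Q_\Phi$ and the $Q_\Phi$-conjugation of Hodge cocharacters; concretely, via the character $\nu_\Phi$ of (\ref{unipotent character}) the action is through $U_\Phi(\C)$ acting by a fixed $\Q$-vector-space translation on the ``imaginary part'' of $\mathtt{h}_\Phi(z)$. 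The quotient map $\DD_\Phi\to\bar{\DD}_\Phi=U_\Phi(\C)\backslash\DD_\Phi$ is then a torsor for the complex vector group $U_\Phi(\C)$ over $\bar{\DD}_\Phi$, trivializable after a choice of base point in each component. This is essentially \cite[Proposition 4.15]{pink}.

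Next I would push this down through the arithmetic quotient. Writing $\bar{K}_\Phi$ for the image of $K_\Phi$ in $\bar{Q}_\Phi(\A_f)$ and $\Gamma_\Phi = K_\Phi\cap U_\Phi(\Q)$, the kernel of $Q_\Phi(\Q)\to\bar{Q}_\Phi(\Q)$ acting on the fibres of $\DD_\Phi\to\bar{\DD}_\Phi$ is exactly $U_\Phi(\Q)$, and after passing to the double quotient (\ref{complex mixed}) the fibre of (\ref{torus torsor}) over a point of $\Sh_{\bar{K}_\Phi}(\bar{Q}_\Phi,\bar{\DD}_\Phi)(\C)$ becomes $\Gamma_\Phi\backslash U_\Phi(\C)$. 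Since $\Gamma_\Phi$ is a full $\Z$-lattice in the $\Q$-vector space $U_\Phi(\Q)\iso\Lie(U_\Phi)$ (this uses that $K_\Phi$ is an open compact subgroup and that $U_\Phi$ is a $\Q$-vector group), the exponential identifies $\Gamma_\Phi\backslash U_\Phi(\C)$ with $(\Gamma_\Phi\otimes\Z)\backslash(\Gamma_\Phi\otimes\C)\iso \Hom(\Gamma_\Phi^\vee,\C^\times)$, i.e. with the complex points of the torus $T_\Phi=\Gamma_\Phi(-1)\otimes\mathbb{G}_m$. The Tate twist by $(-1)$ — that is, the normalization $\Gamma_\Phi(-1)=(2\pi i)^{-1}\Gamma_\Phi$ — is forced by the fact that the weight cocharacter $w_\Phi$ of (\ref{weight cocharacter}) places $U_\Phi$ in weight $-2$, so that the natural polarization/Hodge-theoretic identification of the fibre is with $U_\Phi$ twisted rather than $U_\Phi$ itself; one checks this against the standard normalization of the exponential $q = e^{2\pi i z}$ in the one-dimensional case of Remark \ref{rem:little shimura}.

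Finally, to upgrade the complex-analytic torsor to an algebraic torsor over the canonical model over $E(G,\DD)$, I would cite the fact that both $\Sh_{K_\Phi}(Q_\Phi,\DD_\Phi)$ and $\Sh_{\bar K_\Phi}(\bar Q_\Phi,\bar\DD_\Phi)$ are the canonical models of mixed Shimura varieties over the common reflex field (using \cite[Proposition 12.1]{pink} for the equality of reflex fields), that the torus $T_\Phi$ has a canonical $\Q$-form as the split torus with cocharacter lattice $\Gamma_\Phi(-1)$, and that the morphism (\ref{torus torsor}) together with the $T_\Phi$-action is defined over the reflex field by functoriality of canonical models with respect to the morphism of mixed Shimura data $(Q_\Phi,\DD_\Phi)\to(\bar Q_\Phi,\bar\DD_\Phi)$. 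Descent of the torsor structure is then automatic from descent of the group action. The main obstacle I anticipate is bookkeeping the Tate twist correctly and checking that the lattice $\Gamma_\Phi$ — rather than some commensurable lattice — is exactly the cocharacter group of the torsor; this is where one must be careful about the interaction between the level structure $K_\Phi$ and the integral structure on $U_\Phi$, and it is the only genuinely non-formal point in the argument.
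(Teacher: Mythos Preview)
Your approach is broadly the same as the paper's---describe the torsor on complex points and cite Pink for the algebraic descent---but there is a genuine gap in your complex-analytic description.

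You claim that the fibre of (\ref{torus torsor}) over a point of $\Sh_{\bar{K}_\Phi}(\bar{Q}_\Phi,\bar{\DD}_\Phi)(\C)$ is $\Gamma_\Phi\backslash U_\Phi(\C)$. This is not correct: the point in question has the form $[(\bar z,\bar g)]$ for some $g\in Q_\Phi(\A_f)$, and the lattice one must divide by is $gK_\Phi g^{-1}\cap U_\Phi(\Q)$, not $K_\Phi\cap U_\Phi(\Q)=\Gamma_\Phi$. Since $Q_\Phi$ acts on $U_\Phi$ through the character $\nu_\Phi$ of (\ref{unipotent character}), this lattice equals $\mathrm{rat}(\bar\nu_\Phi(\bar g))\cdot\Gamma_\Phi$, which genuinely varies with $\bar g$. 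So what you have constructed is a priori only a family of tori over the base, not a torsor for a fixed torus.

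The paper resolves this by choosing a morphism of mixed Shimura data $(\bar Q_\Phi,\bar\DD_\Phi)\to(\mathbb{G}_m,\mathcal{H}_0)$ extending $\bar\nu_\Phi$, and then rescaling the fibre at $[(\bar z,\bar g)]$ by $2\pi\epsilon(\bar z)/\mathrm{rat}(\bar\nu_\Phi(\bar g))$; this simultaneously kills the $\bar g$-dependence of the lattice and introduces the Tate twist, identifying every fibre with $U_\Phi(\C)/\Gamma_\Phi(1)\iso\Gamma_\Phi(-1)\otimes\C^\times$. Your discussion of the Tate twist (``forced by the weight cocharacter placing $U_\Phi$ in weight $-2$'') is morally in the right direction but does not substitute for this explicit trivialization. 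The worry you flag at the end---that a commensurable lattice might appear instead of $\Gamma_\Phi$---is real, but the mechanism is the $g$-dependence above, not an ambiguity in the integral structure on $U_\Phi$ at $g=1$.
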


\begin{proof}
This is proved in  \cite[\S 6.6]{pink}.  In what follows we only want to make the torsor structure explicit on the level of complex points.

The character (\ref{unipotent character}) factors through a character $\bar{\nu}_\Phi : \bar{Q}_\Phi \to \mathbb{G}_m$.    
A pair  $(z,g) \in \DD_\Phi \times Q_\Phi(\A_f)$ determines  points  
\[
 ( z, g)  \in \Sh_{K_\Phi}(Q_\Phi , \DD_\Phi)(\C), \quad   ( \bar{z} , \bar{g} ) \in \Sh_{\bar{K}_\Phi}(\bar{Q}_\Phi , \bar{\DD}_\Phi ) (\C),
\]
and  we define  
$
\mathbf{T}_\Phi(\C) \to  \Sh_{\bar{K}_\Phi}( \bar{Q}_\Phi , \bar{\DD}_\Phi ) (\C)
$ 
as the relative torus with  fiber 
 \begin{equation}\label{torus fiber}
   U_\Phi(\C) /   (    g K_\Phi g^{-1} \cap  U_\Phi(\Q)   )   =U_\Phi(\C) /  \mathrm{rat}( \bar{\nu}_\Phi( \bar{g} ) )  \cdot \Gamma_\Phi  
\end{equation}
at   $( \bar{z} , \bar{g} )$.   
There is a natural action of $\mathbf{T}_\Phi(\C)$ on (\ref{complex mixed})  defined as follows: using the natural action of $U_\Phi(\C)$ on $\DD_\Phi$,  a  point   $u$ in the fiber  (\ref{torus fiber}) acts as $ ( z,g) \mapsto  ( uz, g) $.

It now suffices to construct an isomorphism  
\begin{equation}\label{torus trivialization}
\mathbf{T}_\Phi(\C) \iso T_\Phi(\C) \times     \Sh_{\bar{K}_\Phi}( \bar{Q}_\Phi , \bar{\DD}_\Phi ) (\C),
\end{equation}
and this is essentially \cite[\S 3.16]{pink}.  First choose a morphism 
\begin{equation}\label{base morphism}
\bar{\DD}_\Phi \map{\bar{z}\mapsto 2\pi \epsilon(\bar{z})} \mathcal{H}_0
\end{equation}
 in such a way that it, along with the character $\bar{\nu}_\Phi$, induces a morphism of mixed Shimura data
$
( \bar{Q}_\Phi , \bar{\DD}_\Phi) \to (\mathbb{G}_m , \mathcal{H}_0) .
$
Such a morphism always exists, by the Remark of \cite[\S 6.8]{pink}.  The fiber (\ref{torus fiber}) is 
\[
 U_\Phi(\C) /  \mathrm{rat}( \bar{\nu}_\Phi( \bar{g} ) )  \cdot \Gamma_\Phi 
   \map{  2\pi \epsilon( \bar{z} ) /   \mathrm{rat}(  \bar{\nu}_\Phi( \bar{g} ) )   }   U_\Phi(\C) /   \Gamma_\Phi (1)   ,
\]
and this identifies $\mathbf{T}_\Phi(\C)$  fiber-by-fiber with the constant torus 
\begin{equation}\label{minus!}
U_\Phi(\C) /   \Gamma_\Phi (1)    \iso \Gamma_\Phi  \otimes \C/\Z(1)  \iso \Gamma_\Phi  \otimes \C^\times \map{ (-2\pi \epsilon^\circ)^{-1}} \Gamma_\Phi (-1) \otimes \C^\times.
\end{equation}
Here $2\pi \epsilon^\circ$ is the image of   $\DD^\circ$ under  $\DD_\Phi  \to \bar{\DD}_\Phi \to \mathcal{H}_0$, and the minus sign is included  so that (\ref{closure orientation}) holds below; compare with the definition of the function ``$\mathrm{ord}$" in \cite[\S 5.8]{pink}.  

One can easily check that the  trivialization (\ref{torus trivialization}) does not depend on the choice of (\ref{base morphism}).
 \end{proof}

\begin{remark}
Our $\Z$-lattice $\Gamma_\Phi \subset U_\Phi(\Q)$ agrees with the seemingly more complicated lattice of  \cite[\S 3.13]{pink}, defined as the image of 
\[
\{ (c, \gamma ) \in Z_\Phi(\Q)_0 \times U_\Phi(\Q) : c\gamma  \in K_\Phi \} \map{ (c,\gamma ) \mapsto \gamma} U_\Phi(\Q).
\]
Here  $Z_\Phi$ is the center of $Q_\Phi$, and  $Z_\Phi(\Q)_0 \subset Z_\Phi(\Q)$ is the largest subgroup  acting trivially on $\DD_\Phi$ (equivalently, acting trivially on $\pi_0(\DD_\Phi)$). This follows from  the final comments of [\emph{loc.~cit.}] and the simplifying Hypothesis \ref{hyp:motivic}, which implies that the connected center of $Q_\Phi/U_\Phi$ is isogenous to the product of a $\Q$-split torus and a torus whose group of real points is compact (see the proof of \cite[Corollary 4.10]{pink}).
\end{remark}

Denoting by $\langle - ,- \rangle : \Gamma^\vee_\Phi(1) \times \Gamma_\Phi(-1) \to \Z$ the tautological pairing,  define an isomorphism
\[
\Gamma^\vee_\Phi(1) \map{\alpha \mapsto q_\alpha}  \Hom( \Gamma_\Phi(-1) \otimes \mathbb{G}_m , \mathbb{G}_m) =  \Hom(T_\Phi,   \mathbb{G}_m ) 
\]
by  $q_\alpha( \beta \otimes z ) = z^{  \langle \alpha , \beta \rangle }$.   This determines an isomorphism
\[
T_\Phi \iso \Spec\Big(  \Q[q_\alpha]_{ \alpha \in \Gamma^\vee_\Phi(1) } \Big),
\]
and hence,  for any rational  polyhedral cone\footnote{By which we mean a \emph{convex rational polyhedral cone} in the sense of \cite[\S 5.1]{pink}.  In particular, each $\sigma$ is a closed subset of the real vector space $U_\Phi(\R)(-1)$.}   $\sigma \subset  U_\Phi(\R)(-1)$, a partial compactification
\begin{equation}\label{q torus}
T_\Phi(\sigma) \define \Spec\Big(  \Q[q_\alpha]_{  \substack{  \alpha \in \Gamma^\vee_\Phi(1)  \\  \langle \alpha , \sigma \rangle \ge 0  } } \Big).
\end{equation}

More generally, the $T_\Phi$-torsor structure on (\ref{torus torsor}) determines,   by the general theory of torus embeddings \cite[\S 5]{pink},    a partial compactification 
\begin{equation}\label{mixed compactification}
\xymatrix{
{ \Sh_{K_\Phi} ( Q_\Phi ,\DD_\Phi) }  \ar@{->}[r] \ar[d] &   {   \Sh _{K_\Phi} (Q_\Phi ,\DD_\Phi ,\sigma  )  }  \ar[dl] \\
{  \Sh_{\bar{K}_\Phi} ( \bar{Q}_\Phi , \bar{\DD}_\Phi)   }
}
\end{equation}
 with a stratification by locally closed substacks
\begin{equation}\label{mixed stratification}
\Sh_{K_\Phi} (Q_\Phi ,\DD_\Phi ,\sigma  ) = \bigsqcup_\tau Z^\tau_{K_\Phi}(Q_\Phi ,\DD_\Phi ,\sigma )
\end{equation}
indexed by the faces $\tau \subset \sigma$.    The unique open stratum  
\[
Z^{  \{ 0\}  }_{K_\Phi}(Q_\Phi ,\DD_\Phi ,\sigma ) = \Sh_{K_\Phi} (Q_\Phi ,\DD_\Phi  )
\]
corresponds to $\tau = \{0\}$.  The unique closed stratum corresponds to $\tau=\sigma$.

%%%%%%%%%%%%%%%%%%%%%%%%%%%%%%%%%%%%%%%%

\subsection{Rational polyhedral cone decompositions}
\label{ss:cones}

%%%%%%%%%%%%%%%%%%%%%%%%%%%%%%%%%%%%%%%%%%

Let  $\Phi = (P,\DD^\circ, h)$ be a cusp label representative for $(G,\DD)$, with associated mixed Shimura datum $(Q_\Phi , \DD_\Phi)$. We  denote by $\DD_\Phi^\circ = U_\Phi(\C) \DD^\circ$  the connected component of $\DD_\Phi$ containing $\DD^\circ$.

Define the \emph{projection to the imaginary part}  $c_\Phi : \DD_\Phi \to U_\Phi(\R)(-1)$ by 
\[
c_\Phi(z)^{-1} \cdot z \in \pi_0(\DD) \times \Hom(\mathbb{S} , Q_{\Phi \R} )
\]
for every   $z\in \DD_\Phi $.   By  \cite[Proposition 4.15]{pink} there is an open convex cone   
\begin{equation}\label{convex cone}
C_\Phi \subset U_\Phi(\R)(-1)
\end{equation}
characterized by 
$
\DD^\circ = \{ z\in \DD_\Phi^\circ : c_\Phi(z)\in C_\Phi  \}.
$

%\begin{remark}\label{rem:funny sign}
%The function $c_\Phi$ is the negative of the function   \emph{projection to the imaginary part} defined in \cite[\S 4.14]{pink}, and so our convex cones $C_\Phi$ are the negatives of  Pink's.  The point is that there is a  commutative diagram
%\[
%\xymatrix{
%{   \Gamma_\Phi \backslash U_\Phi(\C)  } \ar@{=}[r]\ar[d] & { \Gamma_\Phi(-1) \otimes \C/\Z(1) } \ar[rr]^{\mathrm{id} \otimes \exp } & &  { \Gamma_\Phi(-1) \otimes \C^\times } \ar[d]^{\ord} \\
%{  U_\Phi(\R)(-1) } \ar@{=}[rrr] & & & {  \Gamma_\Phi(-1) \otimes \R }
%}
%\]
%in which $\ord( Z \otimes z) =  Z\otimes( -\log|z| )$ is the function of \cite[\S 5.8]{pink} (note the minus sign!) and the vertical arrow on the left is the \emph{negative} of the projection to the second factor in 
%\[
%\Gamma_\Phi \backslash U_\Phi(\C)  =  \Gamma_\Phi \backslash U_\Phi(\R)  \oplus U_\Phi(\R)(-1) .
%\]
%Thus the assertion ``...one easily sees that this is a map `$\ord$' as in 5.8" made in \cite[\S 6.13]{pink} appears to be off by a sign.  
%\end{remark}

\begin{definition}\label{def:cusp morphism}
Suppose $\Phi = (P,\DD^\circ,h)$ and $\Phi_1=(P_1,\DD_1^\circ,h_1)$ are cusp label representatives. A $K$-\emph{morphism} 
\begin{equation}\label{K-morphism}
\Phi  \map{ ( \gamma , q ) } \Phi_1
\end{equation}
is a pair $(\gamma,q) \in G(\Q) \times Q_{\Phi_1 }(\A_f)$, such that 
\[
\gamma Q_{\Phi} \gamma^{-1} \subset Q_{\Phi_1}, \quad   \gamma \DD^\circ = \DD^\circ_1 , \quad  \gamma h \in q h_1 K.
\]
A  $K$-morphism is a \emph{$K$-isomorphism} if $\gamma Q_{\Phi } \gamma^{-1}=  Q_{\Phi_1}$.
\end{definition}

\begin{remark}
The Baily-Borel compactification of $\Sh_K(G,\DD)$ admits a stratification by locally closed substacks, defined over the reflex field,  whose strata are indexed by the $K$-isomorphism classes of  cusp label representatives.  Whenever there is a $K$-morphism $\Phi \to \Phi_1$, the stratum indexed by $\Phi$ is ``deeper into the boundary" than the stratum indexed by $\Phi_1$, in the sense that the $\Phi$-stratum is contained in the closure of the $\Phi_1$-stratum.  The unique open stratum, which is just the Shimura variety $\Sh_K(G,\DD)$,  is indexed by the $K$-isomorphism class consisting of all cusp label representatives of the form $(G,\DD^\circ, h)$ as $\DD^\circ$ and $h$ vary.
\end{remark}

Suppose we have a $K$-morphism (\ref{K-morphism}) of cusp label representatives.  It follows from  \cite[Proposition 4.21]{pink} that  $ U_{\Phi_1}  \subset  \gamma U_\Phi \gamma^{-1}$,  and the image of the open convex cone $ C_{\Phi_1 }$ under
\begin{equation}\label{morphism on unipotent}
U_{\Phi_1}(\R)(-1)  \map{ u \mapsto \gamma^{-1} u \gamma } U_{\Phi} (\R)(-1) 
\end{equation}
 lies in the closure of the open convex cone $C_{\Phi }$.  Define, as in \cite[Definition-Proposition 4.22]{pink},
\[
C_\Phi^* = \bigcup_{ \Phi \to  \Phi_1}  \gamma^{-1} C_{\Phi_1} \gamma  \subset U_\Phi(\R)(-1),
\]
where the union is over all $K$-morphisms with source $\Phi$.  This is a convex cone lying between  $C_\Phi$ and its  closure, but in general $C_\Phi^*$ is neither open nor closed.    For every $K$-morphism $\Phi\to \Phi_1$ as above, the injection (\ref{morphism on unipotent}) identifies
$
C^*_{\Phi_1} \subset C^*_\Phi.
$

\begin{definition}
A  \emph{(rational polyhedral)   partial cone decomposition} of $C_\Phi^*$ is a collection $\Sigma_\Phi = \{ \sigma\}$ of rational polyhedral cones $\sigma \subset U_\Phi(\R)(-1)$  such that
\begin{itemize}
\item
each $\sigma \in \Sigma_\Phi$ satisfies $\sigma \subset C_\Phi^*$,
\item
every face of every  $\sigma \in \Sigma_\Phi$ is again an element of $\Sigma_\Phi$, 
 \item
  the intersection of any $\sigma,\tau \in \Sigma_\Phi$ is a face of both $\sigma$ and $\tau$,
   \item
   $\{ 0 \} \in \Sigma_\Phi$.
   \end{itemize}
We say that  $\Sigma_\Phi$ is \emph{smooth} if it is smooth, in the sense of \cite[\S 5.2]{pink}, with respect to the lattice $\Gamma_\Phi(-1) \subset U_\Phi(\R)(-1)$.   It is \emph{complete} if  \[ C_\Phi^* = \bigcup_{\sigma \in \Sigma_\Phi } \sigma.\]
   \end{definition}

\begin{definition}
A \emph{$K$-admissible (rational polyhedral) partial cone decomposition}  $\Sigma = \{ \Sigma_\Phi \}_\Phi$  for $(G,\DD)$ is a collection of  partial  cone decompositions $\Sigma_\Phi$ for $C_\Phi^*$, one for every cusp label representative $\Phi$,  such that for any $K$-morphism $\Phi \to \Phi_1$, the induced inclusion $C^*_{\Phi_1} \subset C^*_\Phi$ identifies  
\[
\Sigma_{\Phi_1} = \{ \sigma \in \Sigma_\Phi : \sigma \subset C_{\Phi_1}^* \}.
\]
We say that $\Sigma$ is \emph{smooth}  if every $\Sigma_\Phi$ is smooth, and \emph{complete} if every $\Sigma_\Phi$ is complete.
\end{definition}

Fix   a  $K$-admissible complete cone decomposition $\Sigma$ of $(G,\DD)$.

\begin{definition}
A \emph{toroidal stratum representative} for $(G,\DD,\Sigma)$ is a pair $(\Phi ,\sigma)$ in which $\Phi$ is a cusp label representative  and $\sigma \in \Sigma_\Phi$ is a rational polyhedral cone whose interior is contained in $C_\Phi$.  In other words, $\sigma$ is not contained in any proper subset $C^*_{\Phi_1}  \subsetneq C^*_\Phi$ determined by a $K$-morphism $\Phi \to \Phi_1$.
\end{definition}

We  now extend Definition \ref{def:cusp morphism} from  cusp label representatives to toroidal stratum representatives.

\begin{definition}\label{def:K-morphism}
A \emph{$K$-morphism} of toroidal stratum representatives
\[
(\Phi ,\sigma)  \map{ (\gamma,q) } (\Phi_1,\sigma_1)
\]
consists of a pair $( \gamma , q) \in G(\Q) \times Q_{\Phi_1}(\A_f)$  such that 
\[
\gamma Q_{\Phi} \gamma^{-1} \subset Q_{\Phi_1}, \quad   \gamma \DD^\circ = \DD^\circ_1 , \quad  \gamma h \in q h_1 K,
\]
and such that the injection (\ref{morphism on unipotent})  identifies $\sigma_1$ with a face of $\sigma$.  Such a  $K$-morphism is a \emph{$K$-isomorphism} if $\gamma Q_{\Phi} \gamma^{-1} = Q_{\Phi_1}$ and  $\gamma^{-1} \sigma_1 \gamma = \sigma$.
 \end{definition}

The set of $K$-isomorphism classes of toroidal stratum representatives will be denoted $\mathrm{Strat}_K (G,\DD,  \Sigma  )$.

\begin{definition}\label{def:finite decomposition}
We say that  $\Sigma$  is \emph{finite} if 
$
\#  \mathrm{Strat}_K (G,\DD,  \Sigma  )  < \infty.
$
\end{definition}

\begin{definition}
We say that  $\Sigma$  has the \emph{no self-intersection property} if the following  holds: whenever we are given toroidal stratum representatives $(\Phi,\sigma)$ and $(\Phi_1,\sigma_1)$, and two $K$-morphisms
\[
\xymatrix{
{ ( \Phi ,\sigma)  } \ar@/^/ [r] \ar@/_/[r]  & { (\Phi_1,\sigma_1) },
}
\]
the two  injections
\[
\xymatrix{
{ U_{\Phi_1}(\R)(-1)  } \ar@/^/ [r] \ar@/_/[r]  & { U_\Phi (\R)(-1)}
}
\]
of (\ref{morphism on unipotent}) send  $\sigma_1$ to the same face of $\sigma$.
 \end{definition}

The no self-intersection property is just a rewording of the condition of  \cite[\S 7.12]{pink}.
If $\Sigma$ has the no self-intersection property then so does any refinement (in the sense of \cite[\S 5.1]{pink}).

\begin{remark}\label{rem:K shrink}
Any   finite and $K$-admissible cone decomposition $\Sigma$ for $(G,\DD)$ acquires the no self-intersection property after possibly replacing $K$ by a smaller compact open subgroup \cite[\S 7.13]{pink}.  Moreover, by examining the proof one can see that if $K$ factors as $K=K_\ell K^{\ell}$ for some prime $\ell$ with $K_\ell\subset G(\Q_\ell)$ and $K^{\ell} \subset G(\A_f^{\ell})$, then it suffices to shrink $K_\ell$ while holding $K^{\ell}$ fixed.
%  Indeed, the main point is the following: If $G$ acts on a $\Q$-vector space $V$, and $x,y\in \mathbb{P}(V_{\Q_\ell})$ are two distinct points in the associated projective space, then any sufficiently small compact open subgroup $K'_{\ell}\subset K_{\ell}$ satisfies $x \notin K'_{\ell}y$; see the proof of \cite[Lemma 7.14]{pink}.
\end{remark}

%%%%%%%%%%%%%%%%%%%%%%%%%%%%%%%%%%%%%

\subsection{Functoriality of cone decompositions}
\label{ss:cone functoriality}

%%%%%%%%%%%%%%%%%%%%%%%%%%%%%%%%%%%%%

Suppose that we have an embedding $(G,\DD)\to (G^\prime,\DD^\prime)$  of Shimura data.

As explained in~\cite[(2.1.28)]{mp:compactification}, every cusp label representative 
\[
\Phi = ( P,\DD^\circ,g)
\] 
for $(G,\DD)$ determines a cusp label representative
\[
\Phi^\prime = (P^\prime,\DD^{\prime,\circ},g^\prime) 
\]
for $(G^\prime,\DD^\prime)$.  More precisely, we define  $g^\prime = g$, let $\DD^{\prime,\circ} \subset \DD^\prime$ be the connected component  containing $\DD^\circ$,  and let $P^\prime\subset G^\prime$ be the smallest admissible parabolic subgroup containing $P$.
In particular, 
\[
Q_{\Phi}\subset Q_{\Phi^\prime} ,\quad U_{\Phi}\subset U_{\Phi^\prime}, \quad C_{\Phi} \subset C_{\Phi^\prime}.
\] 

% We thus obtain a map of mixed Shimura data
% \begin{equation}\label{eqn:mixed shimura funct}
% (Q_{\Phi},\DD_\Phi) \to (Q_{\Phi^\prime},\DD_{\Phi^\prime}).
% \end{equation}

If $K\subset G(\A_f)$ is a compact open subgroup contained in a compact open subgroup $K^\prime\subset G^\prime(\A_f)$, then every $K$-morphism
\[
\Phi\xrightarrow{(\gamma,q)}\Phi_1
\]
determines a $K^\prime$-morphism
\[
\Phi^\prime\xrightarrow{(\gamma,q)}\Phi^\prime_1 .
\]

Any $K^\prime$-admissible rational cone decomposition $\Sigma^\prime$ for $(G^\prime,\DD^\prime)$ 
pulls back to a  $K$-admissible rational cone decomposition $\Sigma$  for $(G,\DD)$, defined by 
\[
\Sigma_ \Phi= \{\sigma^\prime\cap C_{\Phi}^*: \sigma^\prime\in \Sigma^\prime_{\Phi^\prime}  \}
\]
for every cusp label representative $\Phi$ of $(G,\DD)$. 
It is shown in \cite[\S 3.3]{harris:automorphic_3} that $\Sigma$ is finite whenever $\Sigma^\prime$ is so. 
It is also not hard to check that $\Sigma$ has the no self-intersection property whenever $\Sigma^\prime$ does, 
and that it is complete when $\Sigma^\prime$ is so.

Given a cusp label representative $\Phi$ for $(G,\DD)$ and a $\sigma\in \Sigma_\Phi$, there is a unique  rational polyhedral cone $\sigma^\prime\in \Sigma^\prime_{\Phi^\prime}$ such that $\sigma\subset \sigma'$, but $\sigma$ is not contained in any proper face of $\sigma'$.
The assignment $(\Phi,\sigma)\mapsto (\Phi^\prime,\sigma^\prime)$ induces a function 
 \[
 \mathrm{Strat}_K(G,\DD,\Sigma)\to \mathrm{Strat}_{K^\prime}(G^\prime,\DD^\prime,\Sigma^\prime)
 \]
on $K$-isomorphism classes of toroidal stratum representatives.

%%%%%%%%%%%%%%%%%%%%%%%%%%%%%%%%%%%%%%%%

\subsection{Compactification of canonical models}
\label{ss:compactification}

%%%%%%%%%%%%%%%%%%%%%%%%%%%%%%%%%%%%%%%%%%

In this subsection we assume that $K\subset G(\A_f)$ is neat.   Suppose $\Sigma$ is a  finite and $K$-admissible complete cone decomposition  for $(G,\DD)$.

\begin{remark}\label{rem:nice decompositions}
A $\Sigma$ with the above properties always exists, and  may be refined,  in the sense of \cite[\S 5.1]{pink}, to make it smooth.
This is the content of \cite[Theorem 9.21]{pink}.
\end{remark}

The main result of \cite[\S 12]{pink} is the existence  of a  proper toroidal  compactification 
\[
\Sh_K(G,\DD) \hookrightarrow \Sh_K(G,\DD , \Sigma),
\]
in the category of algebraic spaces over $E(G,\DD)$, along  with  a stratification
\begin{equation}\label{generic stratification}
\Sh_K(G,\DD , \Sigma) = \bigsqcup_{ (\Phi,\sigma) \in \mathrm{Strat}_K (G,\DD,  \Sigma  )  } 
Z_K^{(\Phi,\sigma)} (G,\DD ,\Sigma )
\end{equation}
by locally closed subspaces indexed by the finite set $ \mathrm{Strat}_K (G,\DD,  \Sigma  )$ appearing in Definition \ref{def:finite decomposition}.  
The stratum indexed by $( \Phi,\sigma)$ lies in the closure of the stratum indexed by $(\Phi_1,\sigma_1)$ if and only if there is a $K$-morphism of toroidal stratum representatives $(\Phi ,\sigma ) \to (\Phi_1,\sigma_1)$.

 If $\Sigma$ is smooth then so is the toroidal compacification.

After possibly shrinking $K$, we may assume that $\Sigma$ has the no self-intersection property (see Remark \ref{rem:K shrink}).
The no self-intersection property guarantees that the strata appearing in (\ref{generic stratification})  have an especially simple shape.   
Fix one $( \Phi , \sigma) \in \mathrm{Strat}_K(G,\DD,\Sigma)$ and write $\Phi = (P ,\DD^\circ ,h)$.  Pink shows that there is a canonical isomorphism
\begin{equation}\label{boundary divisor iso}
\xymatrix{
{  Z^\sigma_{K_\Phi} (Q_\Phi ,\DD_\Phi ,\sigma )}  \ar[rrr]^{\iso}  \ar[d]
& & &  {   Z_K^{ (\Phi,\sigma) } (G,\DD ,\Sigma )   }  \ar[d] \\
{       \Sh_{K_\Phi} (Q_\Phi ,\DD_\Phi  ,\sigma )   }   
& & &  {  \Sh_K(G,\DD , \Sigma)  } 
}
\end{equation}
such that  the two algebraic spaces in the bottom row become isomorphic  after formal completion along their common locally closed subspace in the top row.  See \cite[Corollary 7.17]{pink} and \cite[Theorem 12.4]{pink}.

In other words, if we  abbreviate 
\[
\widehat{\Sh}_{K_\Phi} (Q_\Phi ,\DD_\Phi  ,\sigma) =
\Sh_{K_\Phi} (Q_\Phi ,\DD_\Phi ,\sigma  )^\wedge_{ Z^\sigma_{K_\Phi} (Q_\Phi ,\DD_\Phi  ,\sigma ) }
\] 
for the formal completion of   $\Sh_{K_\Phi} (Q_\Phi ,\DD_\Phi  ,\sigma )$ along  its closed stratum, 
and abbreviate\footnote{In order to limit the already burdensome notation, we choose to suppress the dependence on $(\Phi,\sigma)$ of the left hand side.  The meaning will always be clear from context.}
\[
\widehat{\Sh}_K(G,\DD , \Sigma)   = \Sh_K(G,\DD , \Sigma)^\wedge_{  Z_K^{(\Phi,\sigma)} (G,\DD,\Sigma  )  } 
\] 
for the formal completion of $ \Sh_K(G,\DD , \Sigma)$ along  the locally closed stratum  $Z_K^{(\Phi,\sigma)} (G,\DD ,\Sigma )$,  there is   an isomorphism of formal algebraic spaces
\begin{equation}\label{boundary iso}
  \widehat{ \Sh }_{K_\Phi} (Q_\Phi ,\DD_\Phi  ,\sigma )  \iso \widehat{ \Sh }_K(G,\DD , \Sigma).
\end{equation}

\begin{remark}
In \cite{pink} the isomorphism (\ref{boundary iso}) is constructed after the left hand side is replace by its quotient by a finite group action.  
Thanks to Hypothesis \ref{hyp:motivic} and the assumption that $K$ is neat, the finite group in question is trivial.  See \cite[Lemma 1.7 and Remark 1.8]{wildeshaus}.
\end{remark}

We can make the above more explicit on the level of complex points.  Suppose $(\Phi , \sigma)$ is  a toroidal stratum representative with underlying cusp label representative $\Phi = (P,\DD^\circ, h)$, and denote by  $Q_\Phi(\R)^\circ \subset Q_\Phi(\R)$  the stabilizer of the connected component $\DD^\circ \subset \DD$.  The complex manifold
\[
\mathscr{U}_{K_\Phi} (Q_\Phi , \DD_\Phi) = Q_\Phi(\Q)^\circ \backslash  ( \DD^\circ \times Q_\Phi(\A_f)  / K_\Phi )
\]
sits in a diagram
\begin{equation}\label{analytic nbhd}
\xymatrix{
{  \mathscr{U}_{K_\Phi} (Q_\Phi , \DD_\Phi)   } \ar[r]\ar[d]_{    (z,g) \mapsto   (z,gh)     }   & {    \Sh_{K_\Phi} (Q_\Phi , \DD_\Phi) (\C)     }  \\
{    \Sh_K(G,\DD)(\C)    }
}
\end{equation}
in which the horizontal arrow is an open immersion, and the vertical arrow is a local isomorphism.  This allows us to define a partial compactification 
\[
 \mathscr{U}_{K_\Phi} (Q_\Phi , \DD_\Phi) \hookrightarrow  \mathscr{U}_{K_\Phi} (Q_\Phi , \DD_\Phi ,\sigma)
\]
as the interior of the closure of $ \mathscr{U}_{K_\Phi} (Q_\Phi , \DD_\Phi)$ in $ \Sh_{K_\Phi} (Q_\Phi , \DD_\Phi,\sigma)(\C)$.

Any $K$-morphism as in Definition \ref{def:K-morphism} induces a morphism of complex manifolds
\[
\mathscr{U}_{K_\Phi} (Q_\Phi , \DD_\Phi) \map{ (z,g)  \mapsto  (  \gamma z , \gamma g \gamma^{-1} q   )  } \mathscr{U}_{K_{\Phi_1}} (Q_{\Phi_1} , \DD_{\Phi_1}),
\]
which extends uniquely to 
\[
\mathscr{U}_{K_\Phi} (Q_\Phi , \DD_\Phi, \sigma)\to  \mathscr{U}_{K_{\Phi_1}} (Q_{\Phi_1} , \DD_{\Phi_1} ,\sigma_1).
\]
Complex analytically, the toroidal compactification is defined as the quotient
\[
\Sh_K(G,\DD , \Sigma) (\C) =  \Big(  \bigsqcup_{  (\Phi ,\sigma)  \in \mathrm{Strat}_K( G, \DD,\Sigma) }\mathscr{U}_{K_\Phi} (Q_\Phi , \DD_\Phi, \sigma)  \Big)  \Big/\sim,
\]
where $\sim$ is the equivalence relation generated by the graphs of all such morphisms.

By  \cite[\S 6.13]{pink} the closed stratum  appearing in (\ref{mixed stratification}) satisfies
\begin{equation}\label{closure orientation}
Z_{K_\Phi}^\sigma(Q_\Phi , \DD_\Phi,\sigma)(\C) \subset  \mathscr{U}_{K_\Phi} (Q_\Phi , \DD_\Phi ,\sigma).
\end{equation}
The  morphisms in (\ref{analytic nbhd})  extend continuously to morphisms
\begin{equation}\label{topological comparison}
\xymatrix{
{  \mathscr{U}_{K_\Phi} (Q_\Phi , \DD_\Phi,\sigma)   } \ar[r]\ar[d]      & {    \Sh_{K_\Phi} (Q_\Phi , \DD_\Phi , \sigma) (\C)   }  \\
{    \Sh_K(G,\DD,\Sigma)(\C)    }
}
\end{equation}
in such a way that the vertical map identifies 
\[
Z_{K_\Phi}^\sigma(Q_\Phi , \DD_\Phi,\sigma)(\C) \iso Z_K^{ (\Phi,\sigma) } (G,\DD,\Sigma  )(\C).
\]
This agrees with the analytification of the  isomorphism (\ref{boundary divisor iso}).

Now pick any point $z\in Z_\Phi^\sigma(Q_\Phi , \DD_\Phi,\sigma)(\C)$.  Let $R$ be the completed local ring of  $\Sh_K(G,\DD,\Sigma)_{/\C}$ at $z$, and let  $R_\Phi$ be the completed local ring of  $\Sh_{K_\Phi}(Q_\Phi,\DD_\Phi,\sigma)_{/\C}$ at $z$.  Each completed local ring can be computed with respect to the \'etale or  analytic topologies, and the results are canonically identified.   Working in the analytic topology, the  morphisms in (\ref{topological comparison}) induce an isomorphism $R \iso R_\Phi,$ as they identify both  rings  with the completed local ring of $ \mathscr{U}_{K_\Phi} (Q_\Phi , \DD_\Phi,\sigma)$ at $z$.  This analytic isomorphism agrees with the one induced by the algebraic isomorphism (\ref{boundary iso}).

%%%%%%%%%%%%%%%%%%%%%%%%%%%%%%%%%%%%%%%%%%

\section{Automorphic vector bundles}
\label{s:AVB}

%%%%%%%%%%%%%%%%%%%%%%%%%%%%%%%%%%%%%%%%%%

Throughout \S \ref{s:AVB} we fix  a Shimura datum  $(G,\DD)$  satisfying Hypothesis \ref{hyp:motivic}, and  a compact open subgroup $K\subset G(\A_f)$.

We  recall the theory of automorphic vector bundles on  the Shimura variety  $\Sh_K(G,\DD)$, on its toroidal compactification, and on the mixed Shimura varieties appearing along the boundary.  The main reference is  \cite{HZ3}.

%%%%%%%%%%%%%%%%%%%%%%%%%%%%%%%%%%%%%%%%%%

\subsection{Holomorphic  vector bundles}

%%%%%%%%%%%%%%%%%%%%%%%%%%%%%%%%%%%%%%%%%%

Let  $\Phi = (P,\DD^\circ , h)$ be a cusp label representative for $(G,\DD)$.  As in \S \ref{s:pink review}, this determines a mixed Shimura datum $(Q_\Phi , \DD_\Phi)$ and a compact open subgroup $K_\Phi \subset Q_\Phi(\A_f)$.

Suppose we have a representation  $Q_\Phi \to \GL(N)$ on a finite dimensional $\Q$-vector space.  
Given a point $z\in \DD_\Phi$, its image under 
\[
 \DD_\Phi \to \Hom(\mathbb{S}_\C , Q_{\Phi\C})
\]  
determines a  mixed Hodge structure  $(N,F^\bullet N_\C, \mathrm{wt}_\bullet N)$.    The weight filtration is  independent of $z$, and is split  by any lift $\mathbb{G}_m \to Q_\Phi$ of the weight cocharacter (\ref{weight cocharacter}).

Denote by  $(\bm{N}^{an}_{dR} , F^\bullet \bm{N}^{an}_{dR} , \mathrm{wt}_\bullet \bm{N}^{an}_{dR})$ the  doubly filtered holomorphic vector bundle on  $\DD_\Phi \times Q_\Phi(\A_f) / K_\Phi$   whose fiber at  $(z,g)$ is the vector space $N_\C$ endowed with the Hodge and weight filtrations determined by $z$.    There is a natural action of $Q_\Phi(\Q)$ on this doubly filtered vector bundle,  covering the action on  the base.  By  taking the quotient, we obtain   a functor 
\begin{equation}\label{mixed analytic bundles}
N \mapsto (\bm{N}^{an}_{dR} , F^\bullet \bm{N}^{an}_{dR} , \mathrm{wt}_\bullet \bm{N}^{an}_{dR})
\end{equation}
from finite dimensional representations of $Q_\Phi$ to doubly filtered holomorphic vector bundles on $\Sh_{K_\Phi}(Q_\Phi, \DD_\Phi)(\C)$.
Ignoring the double filtration, this functor is simply
\begin{equation}\label{mixed analytic bundles simple}
N\mapsto \bm{N}^{an}_{dR}  = Q_\Phi(\Q) \backslash  \big( \DD_\Phi \times N_\C \times Q_\Phi(\A_f) / K_\Phi \big).
\end{equation}

Given  a $K_\Phi$-stable $\widehat{\Z}$-lattice  $N_{\widehat{\Z}} \subset N\otimes \A_f$, we may define  a  $\Z$-lattice
\[
gN_\Z = g N_{\widehat{\Z}} \cap N
\]
for every $g\in Q_\Phi(\A_f)$, along with a weight filtration 
\[
\mathrm{wt}_\bullet (gN_\Z) = g N_{\widehat{\Z}} \cap \mathrm{wt}_\bullet  N.
\]
Denote by $(\bm{N}_{Be}, \mathrm{wt}_\bullet\bm{N}_{Be})$ the filtered   $\Z$-local system on $\DD_\Phi\times Q_\Phi(\A_f) / K_\Phi$ whose fiber at $(z,g)$ is $(gN_\Z, \mathrm{wt}_\bullet (g N_\Z))$.  This local system has an obvious action of $Q_\Phi(\Q)$, covering the action on the base.  Passing to the  quotient, we obtain a functor
\[
N_{\widehat{\Z}} \mapsto  ( \bm{N}_{Be}, \mathrm{wt}_\bullet\bm{N}_{Be})
\]
from $K_\Phi$-stable $\widehat{\Z}$-lattices in $N\otimes \A_f$ to  filtered $\Z$-local systems on  (\ref{complex mixed}).

By construction there is a canonical isomorphism
\begin{equation}\label{general mixed betti-derham}
(\bm{N}^{an}_{dR} ,  \mathrm{wt}_\bullet \bm{N}^{an}_{dR}) \iso (\bm{N}_{Be}  \otimes \co^{an} ,\mathrm{wt}_\bullet \bm{N}_{Be}  \otimes \co^{an}),
\end{equation}
where $\co^{an}$ denotes the structure sheaf on $\Sh_{K_\Phi}(Q_\Phi, \DD_\Phi)(\C)$.

%%%%%%%%%%%%%%%%%%%%%%%%%%%%%%%%%%%%%%%%%%

\subsection{The Borel morphism}

%%%%%%%%%%%%%%%%%%%%%%%%%%%%%%%%%%%%%%%%%%

Suppose $G\to \GL(N)$ is any faithful representation of $G$ on a finite dimensional $\Q$-vector space.   A point $z\in \DD$ determines a Hodge structure  $\mathbb{S} \to \GL(N_\R)$  on $N$,  and we denote by  $F^\bullet N_\C$ the induced Hodge filtration.   
As in  \cite[\S III.1]{milne:canonical} and  \cite[\S 1]{HZ3},  define the \emph{compact dual}
\begin{equation}\label{compact dual}
\check{M}(G,\DD)(\C) = \left\{ \begin{array}{cc} \mbox{descending filtrations on $N_\C$} \\ \mbox{that are $G(\C)$-conjugate to $F^\bullet N_\C$} \end{array} \right\}.
\end{equation}
 By construction, there  is a canonical $G(\R)$-equivariant  finite-to-one \emph{Borel morphism}
\[
\DD \to \check{M}(G,\DD)(\C)
\]
sending a point of $\DD$ to the induced Hodge filtration on $N_\C$.  The compact dual is the space of complex points of  a smooth projective  variety $\check{M}(G,\DD)$ defined  over the reflex field $E(G,\DD)$,   and admitting an action of $G_{E(G,\DD)}$ inducing the natural action of $G(\C)$ on complex points.   It is independent of the choice of $z$, and of the  choice of faithful representation $N$.

More generally,  there is an analogue of (\ref{compact dual}) for the mixed Shimura datum $(Q_\Phi,\DD_\Phi)$, as in  \cite[Main Theorem 3.4.1]{hor:thesis} and \cite[Main Theorem 2.5.12]{hor:book}.   Let  $Q_\Phi \to \GL(N)$ be a faithful representation on a finite dimensional $\Q$-vector space.  Any point $z\in \DD_\Phi$ then determines a mixed Hodge structure  $(N, F^\bullet N_\C, \mathrm{wt}_\bullet N)$, and we define the \emph{dual} of $(Q_\Phi , \DD_\Phi)$ by 
\[
\check{M}(Q_\Phi,\DD_\Phi )(\C) = \left\{ \begin{array}{cc} \mbox{descending filtrations on $N_\C$} \\ \mbox{that are $Q_\Phi(\C)$-conjugate to $F^\bullet N_\C$} \end{array} \right\}.
\]
 It is the space of complex points of  an open   $Q_{\Phi , E(G,\DD)}$-orbit
\[
 \check{M}(Q_\Phi,\DD_\Phi ) \subset  \check{M}(G,\DD),
\]
independent of the choice of $z\in \DD_\Phi$ and $N$.    By construction, there is a $Q_\Phi(\C)$-equivariant \emph{Borel morphism}
 \[
\DD_\Phi \to \check{M}(Q_\Phi,\DD_\Phi )(\C).
\]

%%%%%%%%%%%%%%%%%%%%%%%%%%%%%%%%%%%%%%%%%%

\subsection{The standard torsor}

%%%%%%%%%%%%%%%%%%%%%%%%%%%%%%%%%%%%%%%%%%

We want to give a more algebraic interpretation of the functor (\ref{mixed analytic bundles}).

Harris and Zucker \cite[\S 1]{HZ3} prove that  the mixed Shimura variety (\ref{complex mixed}) carries a \emph{standard torsor}\footnote{a.k.a.~\emph{standard principal bundle}}.   This consists of a diagram of $E(G,\DD )$-stacks
\begin{equation}\label{general  mixed local model}
\xymatrix{
 {  J_{K_\Phi}( Q_\Phi ,\DD_\Phi)    }   \ar[d]_a \ar[r]^{ b }  &    {  \check{M}(Q_\Phi ,\DD_\Phi) }  \\
{ \Sh_{K_\Phi}(Q_\Phi,\DD_\Phi)  ,}  
}
\end{equation}
in which  $a$ is a relative $Q_\Phi$-torsor, and $b$ is $Q_\Phi$-equivariant.   
See also the papers of Harris \cite{harris:automorphic_0,harris:automorphic_1,harris:automorphic_2}, Harris-Zucker \cite{HZ1,HZ2}, and Milne \cite{milne:automorphic,milne:canonical}.   Complex analytically, the standard torsor  is the complex orbifold
\[
J_{K_\Phi}( Q_\Phi ,\DD_\Phi) (\C) = Q_\Phi(\Q) \backslash  \big( \DD_\Phi \times Q_\Phi(\C) \times Q_\Phi(\A_f) /K_\Phi \big),
\]
with   $Q_\Phi(\C)$  acting  by  $s \cdot  ( z,t,g)  = ( z,ts^{-1}, g)$.    The morphisms $a$ and $b$ are, respectively, 
\[
( z,t,g) \mapsto ( z, g)  \quad\mbox{and}\quad  ( z,t,g) \mapsto t^{-1} z.
\]

Exactly as in \cite{HZ3},  we can use the standard torsor to define models of the vector bundles (\ref{mixed analytic bundles}) over the reflex field.   First, we require a lemma.

\begin{lemma}\label{lem:dual filtrations}
Suppose  $\check{N}\to \check{M}(Q_\Phi ,\DD_\Phi) $ is a  $Q_\Phi$-equivariant vector bundle; that is, a finite rank vector bundle   endowed with an action  of $Q_{\Phi,E(G,\DD)} $ covering the action on the base.   There are canonical $Q_\Phi$-equivariant filtrations
$\mathrm{wt}_\bullet \check{N}$ and $F^\bullet \check{N}$ on $\check{N}$, and the construction
\[
\check{N} \mapsto ( \check{N} , F^\bullet \check{N} , \mathrm{wt}_\bullet \check{N} )
\]
is functorial in $\check{N}$.
\end{lemma}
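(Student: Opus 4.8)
The plan is to reduce the statement to the fact that the weight cocharacter $w_\Phi : \mathbb{G}_m \to Q_\Phi/W_\Phi$ and the various Hodge cocharacters are defined naturally on the level of the compact dual $\check{M}(Q_\Phi,\DD_\Phi)$, so that any $Q_\Phi$-equivariant bundle inherits canonical filtrations by pullback. First I would recall that $\check{M}(Q_\Phi,\DD_\Phi)$ is a single open $Q_{\Phi,E(G,\DD)}$-orbit, so that fixing any base point $z_0 \in \DD_\Phi$ gives an identification $\check{M}(Q_\Phi,\DD_\Phi) \cong Q_{\Phi,E(G,\DD)}/\mathrm{Stab}(z_0)$, where the stabilizer is the parabolic subgroup of $Q_\Phi$ preserving the filtration $F^\bullet$ attached to $z_0$. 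A $Q_\Phi$-equivariant vector bundle $\check{N}$ on this orbit is therefore equivalent to a representation of $\mathrm{Stab}(z_0)$ on the fiber $\check{N}_{z_0}$; the two filtrations on $\check{N}$ will come from two canonical subgroups of (or cocharacters valued in) $\mathrm{Stab}(z_0)$.

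The key steps, in order: (1) observe that the weight cocharacter $w_\Phi$ of $(Q_\Phi,\DD_\Phi)$ lifts (noncanonically) to a cocharacter $\tilde{w} : \mathbb{G}_m \to Q_\Phi$, that the grading it induces on any representation $N$ is independent of the lift (this is exactly the statement recalled in \S\ref{ss:torsor}, following \cite[\S 4.12, Remark (i)]{pink}), and that the resulting weight filtration $\mathrm{wt}_\bullet N$ is $Q_\Phi$-stable since $w_\Phi$ is central modulo $W_\Phi$ and $W_\Phi$ is unipotent. Transporting this along $b$ gives the $Q_\Phi$-equivariant filtration $\mathrm{wt}_\bullet\check{N}$. (2) For the Hodge filtration: the base point $z_0$ furnishes a Hodge cocharacter $\mu_{z_0} : \mathbb{G}_{m\C} \to Q_{\Phi\C}$ whose nonnegative weight spaces define $F^\bullet$; while $\mu_{z_0}$ itself depends on $z_0$, its conjugacy class does not, and the associated parabolic $P(\mu_{z_0}) = \mathrm{Stab}(z_0)$ and its filtration of any representation are intrinsic to the point of $\check{M}(Q_\Phi,\DD_\Phi)$. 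Sheafifying over $\check{M}(Q_\Phi,\DD_\Phi)$ — i.e. letting the point vary — produces a canonical decreasing filtration $F^\bullet\check{N}$ by subbundles, which is $Q_\Phi$-equivariant because the whole construction is $Q_\Phi$-equivariant by transport of structure. (3) Functoriality in $\check{N}$ is then immediate: a $Q_\Phi$-equivariant morphism $\check{N}_1 \to \check{N}_2$ is fiberwise a $\mathrm{Stab}(z_0)$-equivariant map, hence strictly compatible with the subgroup/cocharacter-defined filtrations at $z_0$, and therefore (by equivariance) at every point. (4) Finally, I would check compatibility with \S\ref{s:AVB}'s earlier constructions: pulling the resulting doubly filtered bundle back along the Borel morphism $\DD_\Phi \to \check{M}(Q_\Phi,\DD_\Phi)$ and descending to $\Sh_{K_\Phi}(Q_\Phi,\DD_\Phi)$ recovers exactly the functor (\ref{mixed analytic bundles}), since by definition that functor attaches to $N$ the Hodge and weight filtrations determined by $z$ — which is the content making Lemma \ref{lem:dual filtrations} useful in the sequel.

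The main obstacle, I expect, is step (2): making precise that the Hodge filtration on a $Q_\Phi$-equivariant bundle over $\check{M}(Q_\Phi,\DD_\Phi)$ is a filtration \emph{by subbundles defined over $E(G,\DD)$}, rather than just a pointwise filtration of each complex fiber. One must use that $\check{M}(Q_\Phi,\DD_\Phi)$ is a $Q_{\Phi,E(G,\DD)}$-orbit of a point whose stabilizer is a parabolic $E(G,\DD)$-subgroup scheme — this is where one invokes that the compact dual and the action are defined over the reflex field, as recalled above from \cite[\S III.1]{milne:canonical} and \cite[\S 1]{HZ3}, together with the mixed analogue from \cite[Main Theorem 2.5.12]{hor:book} — so that "the subspace of $\check{N}$ cut out by $F^{\ge i}$ of the tautological flag" makes sense as a $Q_\Phi$-equivariant subbundle over the reflex field. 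Once that descent is in place, the rest is a formal transport-of-structure argument and the functoriality and compatibility claims follow without further difficulty.
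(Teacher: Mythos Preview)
Your proposal is correct and close in spirit to the paper's argument, but the packaging differs. You work globally via the homogeneous space description $\check{M}(Q_\Phi,\DD_\Phi)\cong Q_\Phi/\mathrm{Stab}(z_0)$, identify equivariant bundles with representations of the parabolic stabilizer, and then extract the Hodge filtration from the Hodge cocharacter $\mu_{z_0}$; your anticipated obstacle is making this descend to $E(G,\DD)$. The paper instead works \'etale-locally on $\check{M}(Q_\Phi,\DD_\Phi)$: it fixes a faithful representation $Q_\Phi\to\GL(H)$, observes that by definition each \'etale neighborhood $U$ carries the tautological filtration $F^\bullet H_U$, chooses (after shrinking $U$) a cocharacter $\mu_x:\mathbb{G}_m\to Q_{\Phi,U}$ splitting that filtration, and uses $\mu_x$ to filter $\check{N}_U$; the point is that this filtration is independent of the splitting, so the local pieces glue. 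The weight filtration is handled identically in both approaches, via any lift of $w_\Phi$. What the paper's route buys is that the descent question you flag simply does not arise---the splitting cocharacter lives over $U$ from the start, and independence-of-choice gives gluing for free---while your route makes the $Q_\Phi$-equivariance and functoriality more transparent, at the cost of needing the parabolic stabilizer and its canonical filtration to be defined over the reflex field.
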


\begin{proof}
Fix a faithful representation $Q_\Phi \to \GL(H)$.  Suppose we are given an \'etale neighborhood $U \to   \check{M}(Q_\Phi , \DD_\Phi)$ of some geometric point  $x$ of $\check{M}(Q_\Phi , \DD_\Phi)$.  By the very definition of $\check{M}(Q_\Phi , \DD_\Phi)$, $U$ determines a $Q_{ \Phi U}$-stable filtration $F^\bullet H_U $ on $H_U = H\otimes \co_U$.  After possibly shrinking $U$ we may choose a   cocharacter $\mu_x : \mathbb{G}_m \to Q_{\Phi  U}$ splitting this filtration.

As $Q_{\Phi U}$ acts on $\check{N}_U$, the cocharacter $\mu_x$ determines a filtration $F^\bullet \check{N}_U$, which does not depend on the choice of splitting.  Glueing over an \'etale cover of $\check{M}(Q_\Phi , \DD_\Phi)$ defines the desired filtration $F^\bullet \check{N}$. The definition of  $\mathrm{wt}_\bullet \check{N}$ is similar, but easier: it is the  filtration  split  by any lift $\mathbb{G}_m \to Q_\Phi$ of the weight cocharacter (\ref{weight cocharacter}).
\end{proof}

Now suppose we have a representation $Q_\Phi \to \GL(N)$ on a finite dimensional $\Q$-vector space.  Applying Lemma \ref{lem:dual filtrations} to the constant $Q_\Phi$-equivariant vector bundle
\[
\check{N} =  \check{M}(Q_\Phi ,\DD_\Phi)    \times_{\Spec(E(G,\DD ) )  } N_{E(G,\DD)}
\]
yields a $Q_\Phi$-equivariant doubly filtered vector bundle $( \check{N} , F^\bullet \check{N} , \mathrm{wt}_\bullet \check{N} )$ on $\check{M}(Q_\Phi ,\DD_\Phi)$.    The construction
 \begin{equation}\label{mixed bundles}
N\mapsto ( \bm{N}_{dR}   , F^\bullet\bm{N}_{dR}, \mathrm{wt}_\bullet\bm{N}_{dR} ) =  Q_\Phi\backslash  b^* (\check{N}, F^\bullet \check{N},  \mathrm{wt}_\bullet \check{N}) 
\end{equation}
defines a functor from  representations of $Q_\Phi$  to doubly filtered  vector bundles on  $\Sh_{K_\Phi}(Q_\Phi,\DD_\Phi)$.  Passing to the complex fiber  recovers the functor (\ref{mixed analytic bundles}).

 The following proposition extends the above functor to partial compactifications.

 \begin{proposition}\label{prop:canonical bundle machine}
For any rational polyhedral cone $\sigma \subset U_\Phi(\R)(-1)$  there is a functor 
 \[
N\mapsto ( \bm{N}_{dR}   , F^\bullet\bm{N}_{dR}, \mathrm{wt}_\bullet\bm{N}_{dR} ),
\] 
extending (\ref{mixed bundles}), from  representations of $Q_\Phi$ on finite dimensional $\Q$-vector spaces to doubly filtered  vector bundles on  $\Sh_{K_\Phi}(Q_\Phi,\DD_\Phi,\sigma)$. 
 \end{proposition}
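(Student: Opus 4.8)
The strategy is to extend the standard torsor (\ref{general mixed local model}) across the partial compactification and then run the construction (\ref{mixed bundles}) unchanged. Concretely, the plan is to produce a relative $Q_\Phi$-torsor
\[
a\colon J_{K_\Phi}(Q_\Phi,\DD_\Phi,\sigma)\longrightarrow \Sh_{K_\Phi}(Q_\Phi,\DD_\Phi,\sigma)
\]
restricting to $J_{K_\Phi}(Q_\Phi,\DD_\Phi)$ over the open stratum, together with an extension of the $Q_\Phi$-equivariant morphism $b$ to a morphism into a suitable $Q_\Phi$-stable projective compactification of $\check M(Q_\Phi,\DD_\Phi)$ inside $\check M(G,\DD)$. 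Granting this, one applies Lemma \ref{lem:dual filtrations} to the constant equivariant bundle on that compactification, pulls back along the extended $b$, and descends along $a$, exactly as in (\ref{mixed bundles}); the weight filtration causes no trouble since, as in \S\ref{s:AVB}, it is split by any lift $\mathbb{G}_m\to Q_\Phi$ of the weight cocharacter (\ref{weight cocharacter}) and hence extends over the whole of $\Sh_{K_\Phi}(Q_\Phi,\DD_\Phi,\sigma)$. Functoriality in $N$, and agreement with (\ref{mixed bundles}) over the open stratum, are then formal; so are compatibility with the stratification (\ref{mixed stratification}) and with the $K$-morphisms of \S\ref{ss:cone functoriality}.

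To build the extended torsor I would work with the torus-embedding presentation furnished by Proposition \ref{prop:torsor def}: the morphism $\Sh_{K_\Phi}(Q_\Phi,\DD_\Phi)\to \Sh_{\bar K_\Phi}(\bar Q_\Phi,\bar{\DD}_\Phi)$ is a $T_\Phi$-torsor, and $\Sh_{K_\Phi}(Q_\Phi,\DD_\Phi,\sigma)$ is obtained from it by the torus embedding $T_\Phi\hookrightarrow T_\Phi(\sigma)$ of (\ref{q torus}). It therefore suffices to extend $b$ and the torsor $J_{K_\Phi}(Q_\Phi,\DD_\Phi)$ across the toric boundary, working locally over $\Sh_{\bar K_\Phi}(\bar Q_\Phi,\bar{\DD}_\Phi)$. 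Using the explicit complex uniformization from the proof of Proposition \ref{prop:torsor def}, approaching the toric stratum attached to a face $\tau\subset\sigma$ amounts to letting the imaginary-part coordinate $c_\Phi$ of (\ref{convex cone}) tend to infinity along the interior of $\tau$; because $U_\Phi\subset Q_\Phi$ acts on $\check M(G,\DD)$ and the Borel morphism $\DD_\Phi\to\check M(Q_\Phi,\DD_\Phi)$ is $U_\Phi(\C)$-equivariant, the orbit maps of $U_\Phi(\C)$ extend across $T_\Phi(\sigma)$ by the valuative criterion of properness for the projective variety $\check M(G,\DD)$. This produces the extension of $b$, and transporting the torsor structure along it gives the sought-after extension of $J_{K_\Phi}(Q_\Phi,\DD_\Phi)$; equivalently, for each $N$ the associated holomorphic bundle $\bm N^{an}_{dR}\iso \bm N_{Be}\otimes\co^{an}$ acquires its Deligne canonical extension with logarithmic connection across the toric boundary divisors, and the Hodge filtration extends to a filtration by subbundles.

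Two points require care, and I expect the second to be the main obstacle. First, the whole construction is a priori analytic over $\C$, and one must check that the extended torsor — equivalently, the canonical extension of each $\bm N_{dR}$ together with its Hodge filtration — descends to the reflex field $E(G,\DD)$. This follows because the extension is pinned down by a Galois-stable property (a logarithmic pole with nilpotent residue along each boundary divisor, compatibly with the stratification), so one can invoke the descent formalism of \cite{HZ3} and GAGA exactly as in the pure case treated there. Second — and this is the crux — one must verify that the extension is genuinely a $Q_\Phi$-\emph{torsor} over all of $\Sh_{K_\Phi}(Q_\Phi,\DD_\Phi,\sigma)$, equivalently that for every $N$ the canonical extension of $\bm N_{dR}$ is a vector bundle and its limit Hodge filtration is by subbundles, with no collapse at the boundary. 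This is where the mixed structure is used: the monodromy of $\bm N_{Be}$ around each toric boundary divisor is the action on $N$ of an element of $\Gamma_\Phi=K_\Phi\cap U_\Phi(\Q)$, which is unipotent because $U_\Phi$ is unipotent, so Deligne's theory applies to give a well-behaved canonical extension for every $N$; taking $N$ faithful then shows $J_{K_\Phi}(Q_\Phi,\DD_\Phi,\sigma)$ is a $Q_\Phi$-torsor, and the general case of the Hodge filtration follows by functoriality through tensor, sub, and quotient operations.
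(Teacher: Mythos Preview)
Your approach via canonical extensions and limiting behaviour is a valid general strategy, but the paper takes a much more direct route that sidesteps all the difficulties you flag. The key observation you are missing is that the morphism $b$ in (\ref{general mixed local model}) is \emph{constant on $T_\Phi$-orbits}: in the complex-analytic description $b([(z,t,g)])=t^{-1}z$, and the $T_\Phi$-action arises from the left $U_\Phi(\C)$-action on both $z$ and $t$, so $(ut)^{-1}(uz)=t^{-1}z$. Since $a$ is $T_\Phi$-equivariant and $b$ is $T_\Phi$-invariant, the quotient $T_\Phi\backslash J_{K_\Phi}(Q_\Phi,\DD_\Phi)$ is already a $Q_\Phi$-torsor over the base $\Sh_{\bar K_\Phi}(\bar Q_\Phi,\bar\DD_\Phi)$, with $b$ descending to it. One then simply \emph{pulls this torsor back} along the diagonal arrow $\Sh_{K_\Phi}(Q_\Phi,\DD_\Phi,\sigma)\to\Sh_{\bar K_\Phi}(\bar Q_\Phi,\bar\DD_\Phi)$ of (\ref{mixed compactification}) to obtain the extended diagram. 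No limits, no valuative criterion, no compactification of $\check M(Q_\Phi,\DD_\Phi)$, and no Deligne canonical-extension machinery are needed; the extended $b$ still lands in $\check M(Q_\Phi,\DD_\Phi)$ itself. The only nontrivial point is checking that the $T_\Phi$-action on $J_{K_\Phi}(Q_\Phi,\DD_\Phi)$ is algebraic and defined over the reflex field, which the paper handles by reduction (as in \cite{HZ3}) to the pure and Siegel mixed cases.

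Your two flagged concerns---whether the extension is a genuine $Q_\Phi$-torsor, and whether the limit Hodge filtration is by subbundles---simply do not arise in this framework, because nothing degenerates: the extended torsor is the pullback of a torsor that already lives over the base. Your approach would presumably also succeed (it is close to how parts of the Harris--Zucker papers proceed), but the ``transporting the torsor structure along $b$'' step in your sketch is vague as written, and the descent-via-nilpotent-residue argument would need to be made precise. The paper's shortcut buys a one-line construction at the cost of first establishing the $T_\Phi$-invariance of $b$.
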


\begin{proof}
This is part of \cite[Definition-Proposition 1.3.5]{HZ3}.  Here we sketch a different argument.

Recall the $T_\Phi$-torsor structure on (\ref{torus torsor}).  On complex points, this action was deduced from the natural  left action of $U_\Phi(\C)$ on $\DD_\Phi$.    Of course the group  $U_\Phi(\C)$  also acts on both factors of $\DD_\Phi \times Q_\Phi(\C)$  on the left, and imitating the proof of Proposition \ref{prop:torsor def} yields action of the relative torus $T_\Phi(\C)$ on the standard torsor $J_{K_\Phi}( Q_\Phi ,\DD_\Phi)(\C)$, covering the action  on $\Sh_{K_\Phi}(Q_\Phi,\DD_\Phi)(\C)$.

  To see that the action is algebraic and defined over the reflex field, one can reduce, exactly as in the proof of  \cite[Proposition 1.2.4]{HZ3}, to the case in which $(Q_\Phi , \DD_\Phi)$ is either a pure Shimura datum, or is a mixed Shimura datum associated with a Siegel Shimura datum.  The pure case is vacuous  (the relative torus is trivial).  The   Siegel mixed Shimura varieties are moduli spaces of polarized $1$-motives, and it is not difficult to give a moduli-theoretic interpretation of the torus action, along the lines of \cite[\S 2.2.8]{mp:compactification}. 
  From this interpretation the descent to the reflex field is obvious.

In the diagram (\ref{general  mixed local model}),  the arrow $a$ is $T_\Phi$-equivariant, and the arrow $b$ is constant on $T_\Phi$-orbits.  This is clear from the complex analytic description.

Taking the quotient of the standard torsor by this action, we obtain a diagram
\[
\xymatrix{
 {  T_\Phi  \backslash  J_{K_\Phi}( Q_\Phi ,\DD_\Phi)    }   \ar[d]_a \ar[rr]^{ \qquad b }  & &   {  \check{M}(Q_\Phi ,\DD_\Phi) }  \\
{\Sh_{\bar{K}_\Phi}( \bar{Q}_\Phi , \bar{\DD}_\Phi )  ,}  
}
\]
in which  $a$ is a relative $Q_\Phi$-torsor  and $b$ is $Q_\Phi$-equivariant.    Pulling back the quotient $T_\Phi  \backslash  J_{K_\Phi}( Q_\Phi ,\DD_\Phi) $  along the diagonal arrow in (\ref{mixed compactification}) defines the upper left entry in the diagram 
\[
\xymatrix{
 {  J_{K_\Phi}( Q_\Phi ,\DD_\Phi,\sigma)   }   \ar[d]_a \ar[rr]^{ b }  &  &    {  \check{M}(Q_\Phi ,\DD_\Phi)  }  \\
{ \Sh_{K_\Phi}(Q_\Phi,\DD_\Phi,\sigma )  } 
}
\]
extending (\ref{general mixed local model}), in which $a$ is a $Q_\Phi$-torsor, and $b$ is $Q_\Phi$-equivariant.  Now simply repeat the construction (\ref{mixed bundles}) to obtain the desired functor.
 \end{proof}

\begin{remark}
The proof actually shows more: because the standard torsor admits a canonical descent to $\Sh_{\bar{K}_\Phi}( \bar{Q}_\Phi , \bar{\DD}_\Phi )$, the same is true of  all doubly filtered  vector bundles (\ref{mixed bundles}).  Compare with \cite[(1.2.11)]{HZ3}.
\end{remark}

%%%%%%%%%%%%%%%%%%%%%%%%%%%%%%%%%%%%%%

\subsection{Automorphic vector bundles on toroidal compactifications}

%%%%%%%%%%%%%%%%%%%%%%%%%%%%%%%%%%%%

Assume that $K$ is neat, and that $\Sigma$ is a   finite $K$-admissible complete cone decomposition for $(G,\DD)$ having the no self-intersection property.   

%Suppose $G \to \GL(N)$ is a representation of $G$ on a finite dimensional $\Q$-vector space.  
%For every cusp label representative $\Phi$ for $(G,\DD)$, and every rational polyhedral cone $\sigma \subset U_\Phi(\R)(-1)$, we may restrict our representation to $Q_\Phi \subset G$ and apply the functor of Proposition \ref{prop:canonical bundle machine}.  Ignoring the weight filtration, we obtain a functor 
%\begin{equation}\label{boundary functors}
%N\mapsto (\bm{N}_{dR} , F^\bullet \bm{N}_{dR} )
%\end{equation}
%from representations of $G$ to filtered vector bundles on $\Sh_{K_\Phi}(Q_\Phi,\DD_\Phi,\sigma)$

By results of Harris and Harris-Zucker, see especially \cite{HZ3},  one can glue together the diagrams in the proof of Proposition \ref{prop:canonical bundle machine} as $(\Phi , \sigma)$ varies in order to obtain a diagram
\begin{equation}\label{general compact local model}
\xymatrix{
 {  J_K( G,\DD ,\Sigma)    }   \ar[d]_a \ar[r]^{ b }  &    {  \check{M}(G,\DD) }  \\
{ \Sh_K(G,\DD,\Sigma)  }  
}
\end{equation}
in which $a$ is a $G$-torsor and $b$ is $G$-equivariant.  This implies the following:

\begin{theorem}\label{thm:total vector bundles}
There is a  functor  $N \mapsto (\bm{N}_{dR} , F^\bullet \bm{N}_{dR} )$ from representations of $G$ on finite dimensional $\Q$-vector spaces  to filtered vector bundles on  $\Sh_K(G,\DD,\Sigma)$,  compatible, in the obvious sense, with the isomorphism 
\[
 \widehat{ \Sh }_{K_\Phi} (Q_\Phi ,\DD_\Phi  ,\sigma )  \iso \widehat{ \Sh }_K(G,\DD , \Sigma)
 \]
 of (\ref{boundary iso}) and the functor of Proposition \ref{prop:canonical bundle machine},  for every   toroidal stratum representative
\[
(\Phi , \sigma) \in \mathrm{Strat}_K(G,\DD,\Sigma).
\]
\end{theorem}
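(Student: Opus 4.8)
The plan is to extract the functor directly from the standard torsor (\ref{general compact local model}), mimicking the construction (\ref{mixed bundles}), and then to verify the boundary compatibility one stratum at a time using the way (\ref{general compact local model}) is assembled.

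Granting the diagram (\ref{general compact local model}), first observe that the compact dual $\check{M}(G,\DD)$ carries a canonical $G$-equivariant descending filtration on every $G$-equivariant vector bundle, by exactly the argument of Lemma \ref{lem:dual filtrations} (a pure Shimura datum being a special case of a mixed one via Remark \ref{rem:improper cusp label}; since the weight cocharacter (\ref{pure weight}) is central and defined over $\Q$ by Hypothesis \ref{hyp:motivic}, the weight filtration is constant and there is nothing to record). Given a representation $G \to \GL(N)$ on a finite-dimensional $\Q$-vector space, apply this to the constant bundle $\check{N} = \check{M}(G,\DD) \times_{\Spec E(G,\DD)} N_{E(G,\DD)}$ to obtain a $G$-equivariant filtered bundle $(\check{N}, F^\bullet \check{N})$ on $\check{M}(G,\DD)$, functorially in $N$. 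Since $b$ is $G$-equivariant and $a$ is a $G$-torsor, the pullback $b^*(\check{N}, F^\bullet \check{N})$ is $G$-equivariant over $\Sh_K(G,\DD,\Sigma)$, hence descends along $a$ to a filtered vector bundle
\[
(\bm{N}_{dR}, F^\bullet \bm{N}_{dR}) := G\backslash b^*(\check{N}, F^\bullet \check{N})
\]
on $\Sh_K(G,\DD,\Sigma)$. A morphism $N \to N'$ of $G$-representations induces a morphism $\check{N} \to \check{N}'$ of $G$-equivariant filtered bundles, hence a morphism $(\bm{N}_{dR}, F^\bullet \bm{N}_{dR}) \to (\bm{N}'_{dR}, F^\bullet \bm{N}'_{dR})$; this gives the functor, and restricting along $\Sh_K(G,\DD) \hookrightarrow \Sh_K(G,\DD,\Sigma)$ and using that (\ref{general compact local model}) restricts over the open stratum to the pure standard torsor recovers the usual automorphic vector bundle functor (\ref{mixed bundles}) for $(G,\DD)$.

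Second, the compatibility with (\ref{boundary iso}) is built into the gluing construction of (\ref{general compact local model}). Fix $\Upsilon = [(\Phi,\sigma)]$ with $\Phi = (P,\DD^\circ,h)$. Pulling back (\ref{general compact local model}) along the formal completion $\widehat{\Sh}_K(G,\DD,\Sigma)$ yields, under (\ref{boundary iso}), precisely the pullback to $\widehat{\Sh}_{K_\Phi}(Q_\Phi,\DD_\Phi,\sigma)$ of the diagram $J_{K_\Phi}(Q_\Phi,\DD_\Phi,\sigma) \map{b} \check{M}(Q_\Phi,\DD_\Phi) \hookrightarrow \check{M}(G,\DD)$ with structure map $a$ to $\Sh_{K_\Phi}(Q_\Phi,\DD_\Phi,\sigma)$ appearing in the proof of Proposition \ref{prop:canonical bundle machine} — this is the meaning of ``gluing the diagrams.'' Granting this, one only has to observe that for a $G$-representation $N$, viewed as a $Q_\Phi$-representation via $Q_\Phi \subset G$, the canonical filtration $F^\bullet \check{N}$ on $\check{M}(G,\DD)$ restricts along the open immersion $\check{M}(Q_\Phi,\DD_\Phi) \hookrightarrow \check{M}(G,\DD)$ to the canonical filtration for $(Q_\Phi,\DD_\Phi)$: the filtration is defined \'etale-locally by a cocharacter splitting the tautological filtration on a faithful bundle, and a cocharacter of $Q_\Phi$ doing so is \emph{a fortiori} one of $G$ doing so, while the weight filtration is split by a common lift of (\ref{weight cocharacter}). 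Hence the restriction of $(\bm{N}_{dR}, F^\bullet \bm{N}_{dR})$ to $\widehat{\Sh}_K(G,\DD,\Sigma)$ coincides, under (\ref{boundary iso}), with the bundle produced by Proposition \ref{prop:canonical bundle machine} applied to $N|_{Q_\Phi}$ (after forgetting $\mathrm{wt}_\bullet$, which is what ``the obvious sense'' refers to).

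The one genuine input, which we take as given, is the existence of the glued diagram (\ref{general compact local model}) with these restriction properties — the work of Harris and Harris--Zucker, where the no self-intersection hypothesis on $\Sigma$ and the neatness of $K$ enter (to guarantee that the local pieces over the $\widehat{\Sh}_{K_\Phi}(Q_\Phi,\DD_\Phi,\sigma)$ agree on overlaps and glue into an algebraic $G$-torsor over all of $\Sh_K(G,\DD,\Sigma)$). That gluing is the only substantial obstacle; in the present logical setup the theorem is then formal, and the sole point requiring care is the compatibility of the various canonical filtrations under the open immersions $\check{M}(Q_\Phi,\DD_\Phi) \hookrightarrow \check{M}(G,\DD)$ isolated above.
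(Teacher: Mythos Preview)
Your proposal is correct and follows exactly the approach indicated in the paper: the paper simply asserts that the existence of the glued diagram (\ref{general compact local model}) from Harris and Harris--Zucker ``implies'' the theorem, and what you have written is precisely the unwinding of that implication via $(\bm{N}_{dR}, F^\bullet \bm{N}_{dR}) = G\backslash b^*(\check{N}, F^\bullet \check{N})$ together with the boundary compatibility coming from the gluing. Your explicit verification that the canonical filtrations on $\check{M}(Q_\Phi,\DD_\Phi)$ and $\check{M}(G,\DD)$ agree under the open immersion is a welcome clarification of a point the paper leaves implicit.
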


In other words, there is an arithmetic theory of  automorphic vector bundles on toroidal compactifications.   

\begin{remark}\label{rem:no weight}
Over the open Shimura variety $\Sh_K(G,\DD)$ there is also a weight filtration $\mathrm{wt}_\bullet \bm{N}_{dR}$ on $\bm{N}_{dR}$, but it is not  compatible with the weight filtrations along the boundary.  It is also not very interesting.  
On an irreducible representation $N$ the (central) weight cocharacter $w:\mathbb{G}_m \to G$ acts  through  $z\mapsto z^k$ for some $k$,  and  the weight filtration has a unique nonzero graded piece $\mathrm{gr}_k \bm{N}_{dR}$.
\end{remark}

%%%%%%%%%%%%%%%%%%%%%%%%%%%%%%%%%%%%%

\subsection{A simple Shimura variety}
\label{ss:simple}

%%%%%%%%%%%%%%%%%%%%%%%%%%%%%%%%%%%%%

Let $(\mathbb{G}_m,\mathcal{H}_0)$ be the Shimura datum of Remark \ref{rem:little shimura}.
For any compact open subgroup $K \subset \A_f^\times$, we obtain  a $0$-dimensional Shimura variety 
\begin{equation}\label{simple shimura}
\Sh_K(\mathbb{G}_m ,\mathcal{H}_0 ) (\C)= \Q^\times \backslash  (  \mathcal{H}_0 \times \A_f^\times / K  ),
\end{equation}
with a canonical model $\Sh_K(\mathbb{G}_m , \mathcal{H}_0)$ over $\Q$.

The action of $\Aut(\C)$  on its complex points satisfies 
\begin{equation}\label{simple reciprocity}
\tau\cdot ( 2\pi \epsilon , a) = ( 2\pi \epsilon , a a_\tau  )
\end{equation}
whenever $\tau\in \Aut(\C)$ and  $a_\tau \in \A_f^\times$ are related by   $\tau|_{\Q^{ab}} = \mathrm{rec}(a_\tau)$.  
This implies that
\[
\Sh_K(\mathbb{G}_m ,\mathcal{H}_0 ) \iso \Spec(F),
\]
where $F/\Q$ is the abelian extension characterized by
\[
\mathrm{rec} :    \Q^\times_{>0} \backslash  \A_f^\times / K  \iso \Gal( F / \Q).
\]

The following proposition  shows that all  automorphic   vector bundles on (\ref{simple shimura}) are canonically trivial.
The particular trivializations will be essential in our later discussion of $q$-expansions.  See especially Proposition \ref{prop:canonical sections}.

  \begin{proposition}
For any representation  $\mathbb{G}_m \to \GL(N)$ there is a canonical isomorphism 
\[
  N \otimes  \co_{  \Sh_K(\mathbb{G}_m , \mathcal{H}_0) }\map{n \otimes 1\mapsto \bm{n}  }   \bm{N}_{dR} 
\]
of vector bundles.  If $\mathbb{G}_m$ acts on $N$ through the character $z\mapsto z^k$, the global section $\bm{n}= n \otimes 1$  is given, in terms of the complex parametrization 
\[
\bm{N}^{an}_{dR} = \Q^\times \backslash ( \mathcal{H}_0 \times N_\C \times  \A_f^\times/K ) 
\]
of  (\ref{mixed analytic bundles simple}), by
\[
( 2\pi \epsilon, a)  \mapsto \left(  2\pi \epsilon,   \frac{\mathrm{rat}(a)^k }{(2\pi \epsilon)^k}   \cdot n , a    \right).
\]
\end{proposition}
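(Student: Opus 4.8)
The plan is to unwind the definition of the functor $N \mapsto \bm{N}_{dR}$ in the case $(G,\DD) = (\mathbb{G}_m, \mathcal{H}_0)$, where everything is explicit. First I would observe that for $(\mathbb{G}_m, \mathcal{H}_0)$ the compact dual $\check{M}(\mathbb{G}_m, \mathcal{H}_0)$ is a single point (the Hodge filtration on any representation $N$ of $\mathbb{G}_m$ determined by a point of $\mathcal{H}_0$ is the trivial filtration, since $\mathbb{G}_m$ acting through $z \mapsto z^k$ places all of $N_\C$ in a single graded piece; all such filtrations are $\mathbb{G}_m(\C)$-conjugate, indeed equal). Hence the standard torsor $J_K(\mathbb{G}_m, \mathcal{H}_0)$ is just a $\mathbb{G}_m$-torsor over $\Sh_K(\mathbb{G}_m, \mathcal{H}_0) = \Spec(F)$ with $b$ constant, and $\bm{N}_{dR} = \mathbb{G}_m \backslash b^*(\check N)$ is an honest vector bundle on $\Spec(F)$, which is automatically free; the content of the proposition is to pin down a canonical trivialization and compute it analytically.

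Next I would construct the map. Over $\C$, the bundle $\bm{N}^{an}_{dR}$ is $\Q^\times \backslash (\mathcal{H}_0 \times N_\C \times \A_f^\times / K)$, and the claimed section assigns to $[(2\pi\epsilon, a)]$ the vector $\tfrac{\rat(a)^k}{(2\pi\epsilon)^k} \cdot n \in N_\C$. I would check this is well-defined: an element $\gamma \in \Q^\times$ acts on the base by $a \mapsto \gamma a$ (so $\rat(a) \mapsto |\gamma|\,\rat(a)$ up to sign bookkeeping, more precisely $\rat(\gamma a) = \rat(\gamma)\rat(a)$ with $\rat(\gamma) = |\gamma|$) and simultaneously by $2\pi\epsilon \mapsto 2\pi\epsilon$ if $\gamma > 0$ and $2\pi\epsilon \mapsto -2\pi\epsilon = 2\pi\bar\epsilon$ if $\gamma < 0$, while on $N_\C$ it acts through $z \mapsto z^k$, i.e. by $n \mapsto \gamma^k n$. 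A direct check shows $\tfrac{\rat(\gamma a)^k}{(2\pi\epsilon_\gamma)^k}\cdot n = \gamma^k \cdot \tfrac{\rat(a)^k}{(2\pi\epsilon)^k}\cdot n$ in both cases (using $\rat(\gamma)^k/\mathrm{sgn}(\gamma)^k = \gamma^k$), so the formula descends, and it visibly factors through the $K$-action since $\rat$ is trivial on $\widehat{\Z}^\times$. This produces a nowhere-vanishing global analytic section, hence a trivialization $N \otimes \co \to \bm{N}_{dR}$ analytically.

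Then I would argue that this analytic trivialization is algebraic and defined over $\Q$ — equivalently, $\Aut(\C)$-invariant. Here I would invoke the reciprocity law (\ref{simple reciprocity}): $\tau \cdot [(2\pi\epsilon, a)] = [(2\pi\epsilon, a a_\tau)]$ with $a_\tau \in \widehat{\Z}^\times$ (since $\tau|_{\Q^{ab}} = \mathrm{rec}(a_\tau)$ only constrains the unit part, and we may take $a_\tau \in \widehat\Z^\times$). Because $\rat(a a_\tau) = \rat(a)$, the value $\tfrac{\rat(a)^k}{(2\pi\epsilon)^k}\cdot n$ is unchanged, so the section is Galois-invariant and therefore descends to the section $\bm n = n \otimes 1$ of $\bm{N}_{dR}$ over $\Spec(F)$, compatibly with the action of $\mathbb{G}_m$ on $N$ (functoriality in $N$ and compatibility with tensor operations is then immediate from the explicit formula, or from the fact that the construction is the one attached to the standard torsor). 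The main obstacle I anticipate is purely bookkeeping: getting the action of $\Q^\times$ on $\mathcal{H}_0 \times N_\C \times \A_f^\times$ exactly right (in particular the interaction of the sign of $\gamma$ with the two-point set $\mathcal{H}_0$ and with the factor $(2\pi\epsilon)^{-k}$), and confirming that the normalization of $\mathrm{rec}$ fixed in the conventions section makes $a_\tau$ land in $\widehat\Z^\times$ so that $\rat(a_\tau) = 1$; once those conventions are nailed down, every verification is a one-line computation.
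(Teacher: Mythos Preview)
Your well-definedness check (step 2) is fine, but the descent argument (step 3) has a genuine gap. You invoke the reciprocity law (\ref{simple reciprocity}) to compute the action of $\tau\in\Aut(\C)$ on the base $\Sh_K(\mathbb{G}_m,\mathcal{H}_0)(\C)$, observe that $\rat(aa_\tau)=\rat(a)$, and conclude the section is $\Aut(\C)$-invariant. But this only checks that the \emph{formula} for the section is unchanged when you move to the Galois-conjugate point of the base; it does not check that the section itself is fixed, because you have not said how $\tau$ acts on the fiber coordinate $N_\C$ in the analytic presentation $\bm{N}^{an}_{dR}=\Q^\times\backslash(\mathcal{H}_0\times N_\C\times\A_f^\times/K)$. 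That action is determined by the $\Q$-structure on $\bm{N}_{dR}$, which in turn comes from the canonical model of the standard torsor $J_K(\mathbb{G}_m,\mathcal{H}_0)$---and you have not identified this canonical model. Without it, the invariance check is circular: you are trying to verify that your section agrees with the $\Q$-structure while tacitly assuming you already know the $\Q$-structure on the fibers.

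The paper's proof runs in the opposite direction and avoids this problem. It first identifies the canonical model of the standard torsor concretely as $J_K(\mathbb{G}_m,\mathcal{H}_0)=\underline{\mathrm{Iso}}(\Lie(\mathbb{G}_m),\co_{\Sh_K(\mathbb{G}_m,\mathcal{H}_0)})$, by interpreting it as the torsor of trivializations of the Lie algebra of the quotient torus $\bm{N}_{Be}\backslash\bm{N}^{an}_{dR}$ (for $N=\Q$ with the identity character) and recognizing that torus as the constant torus $\mathbb{G}_m$. This torsor has an obvious section defined over $\Q$, namely the one given by the invariant derivation $q\,d/dq$ on $\mathbb{G}_m$. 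The paper then computes the analytic expression of that section and recovers the formula in the statement. So the paper goes algebraic $\to$ analytic, whereas you attempt analytic $\to$ algebraic; the missing ingredient in your direction is exactly the concrete description of the canonical model that the paper supplies.
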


\begin{proof}
 First set  $N=\Q$ with $\mathbb{G}_m$ acting via the identity character $z\mapsto z$, and set $N_{\widehat{\Z}}=\widehat{\Z}$.  Recalling (\ref{general mixed betti-derham}),  the quotient $\bm{N}_{Be}\backslash \bm{N}^{an}_{dR}$  defines an analytic family of rank one tori  over $\Sh_K(\mathbb{G}_m , \mathcal{H}_0)(\C)$, whose relative Lie algebra is the line bundle
\[
\Lie(\bm{N}_{Be}\backslash \bm{N}^{an}_{dR} )  = \bm{N}^{an}_{dR} = \Q^\times \backslash ( \mathcal{H}_0 \times \C \times  \A_f^\times/K).
\]
Using this identification,  we may  identify  the standard $\C^\times$-torsor
\begin{equation}\label{simple analytic torsor}
J_K(\mathbb{G}_m ,\mathcal{H}_0 ) (\C) = \Q^\times \backslash \big(  \mathcal{H}_0 \times \C^\times  \times \A_f^\times / K\big)
\end{equation}
 with the $\C^\times$-torsor  of trivializations of $\Lie(\bm{N}_{Be}\backslash \bm{N}^{an}_{dR} )$.

On the other hand, the isomorphisms
 \[
( N \cap  aN_{\widehat{\Z}} )  \backslash N_\C =  ( \Q \cap a\widehat{\Z} ) \backslash \C \map{  2\pi \epsilon /\mathrm{rat}(a)  } \Z(1) \backslash \C \map{\exp}\C^\times
 \]
 identify $\bm{N}_{Be}\backslash \bm{N}_{dR}^{an}$, fiber-by-fiber,  with the constant torus $\C^\times$, and so identify (\ref{simple analytic torsor}) with the $\C^\times$-torsor  of trivializations of $\Lie(\C^\times )$.   The canonical model of (\ref{simple analytic torsor})   is now concretely  realized as the $\mathbb{G}_m$-torsor
\[
J_K(\mathbb{G}_m ,\mathcal{H}_0 )  = \underline{\mathrm{Iso}} \big(   \Lie (\mathbb{G}_m)     , \co_{ \Sh_K(\mathbb{G}_m , \mathcal{H}_0) }   \big).
\]

For any ring $R$, the Lie algebra of $\mathbb{G}_m= \Spec( R[q,q^{-1}] )$ is canonically trivialized by the invariant derivation $q \cdot d/dq $.  Thus the standard torsor   admits a canonical section which, in terms of the  uniformization (\ref{simple analytic torsor}), is 
\[
( 2\pi \epsilon, a)  \mapsto   \left(  2\pi \epsilon,   \frac{\mathrm{rat}(a)}{2\pi \epsilon}   , a    \right) .
\]
This section trivializes  the  standard torsor, and induces the desired trivialization of any automorphic vector bundle.   
\end{proof}

\begin{remark}\label{rem:tate bundles}
Let $\mathbb{G}_m$ act on  $N$   via $z\mapsto z^k$.  What the above proof actually shows is that there are canonical isomorphisms
\[
 N \otimes  \co_{  \Sh_K(\mathbb{G}_m , \mathcal{H}_0) } \iso  N \otimes \Lie(\mathbb{G}_m)^{\otimes k} \iso \bm{N}_{dR}.
\]
\end{remark}

%%%%%%%%%%%%%%%%%%%%%%%%%%%%%%%%%%%%%%%%%%

\section{Orthogonal Shimura varieties}
\label{s:GSpin}

%%%%%%%%%%%%%%%%%%%%%%%%%%%%%%%%%%%%%%%%%%

In \S \ref{s:GSpin} we specialize the preceding theory to the case of Shimura varieties associated to the group  of spinor similitudes of  a quadratic space $(V,Q)$ over $\Q$ of signature $(n,2)$ with $n\ge 1$.
This will allow us to   define $q$-expansions of modular forms on such Shimura varieties,  and prove the $q$-expansion principle of Proposition \ref{prop:q principle}.

%%%%%%%%%%%%%%%%%%%%%%%%%%%%%%%%%%%%%%

\subsection{The GSpin Shimura variety}
\label{ss:gspin data}

%%%%%%%%%%%%%%%%%%%%%%%%%%%%%%%%%%%%

 Let $G=\GSpin(V)$ as in \cite{mp:spin}.  This is  a  reductive group over $\Q$ sitting in an exact sequence
\[
1 \to \mathbb{G}_m \to G \to \SO(V)\to 1.
\]
 There is a distinguished character   $\nu : G \to \mathbb{G}_m$, called the \emph{spinor similitude}.  
 Its  kernel is the  usual spin double cover of $\SO(V)$, and its restriction  to    $\mathbb{G}_m$ is $z\mapsto z^2$.

The group $G(\R)$ acts on the hermitian domain
\begin{equation}
\label{orthogonal domain}
 \DD = \big\{ z\in  V_\C : [z,z]=0 \mbox{  and  } [z,\overline{z}]<0 \big\} / \C^\times  \subset \mathbb{P}(V_\C)
\end{equation}
in the obvious way.  This hermitian  domain   has two connected components,  interchanged by the action of any $\gamma\in G(\R)$ with $\nu(\gamma)<0$.      The pair $(G,\DD)$ is the \emph{GSpin Shimura datum}.  Its reflex field is $\Q$.

By construction, $G$ is a subgroup of the multiplicative group of the Clifford algebra $C(V)$.  As such, $G$ has two distinguished representations.  One is the standard representation $G\to \SO(V)$, and the other is the faithful action  on $H=C(V)$ defined by left multiplication in the Clifford algebra.   These two representations are related  by a $G$-equivariant injection 
\begin{equation}\label{special injection}
V \to \End_\Q(H)
\end{equation}
defined by the left multiplication action of $V\subset C(V)$ on $H$. 
A point $z\in \DD$ determines a Hodge structure on any representation of $G$.  
For the representations $V$ and $H$, the induced  Hodge filtrations are
\begin{equation}\label{V hodge}
F ^2 V_\C =0, \quad F ^1 V_\C = \C z,\quad F ^0 V_\C =(\C z)^\perp ,\quad F ^{-1} V_\C =V_\C,
\end{equation}
and
\begin{equation}\label{H hodge}
F^{1} H_\C =0,\quad F^0 H_\C = z H_\C , \quad F ^{-1} H_\C = H_\C.
\end{equation}
Here we are using  (\ref{special injection})  to view $\C z\subset \End_\C(H_\C)$.

In order to obtain a Shimura variety $\Sh_K(G,\DD)$ as in \ref{ss:intro general},  
 we fix  a $\Z$-lattice $V_\Z \subset V$ on which $Q$ is  $\Z$-valued and assume that  the compact open subgroup $K \subset G(\A_f)$ is chosen as in (\ref{K choice}).
According to  \cite[Lemma 2.6]{mp:spin},
any such  $K$ stabilizes both $V_{\widehat{\Z}}$ and its dual, and acts trivially on the discriminant group 
\begin{equation}\label{disc quotient}
V_\Z^\vee / V_\Z \iso V_{\widehat{\Z}}^\vee / V_{\widehat{\Z}}.
\end{equation}

%%%%%%%%%%%%%%%%%%%%%%%%%%%%%%%%%%%%%%%%

\subsection{The line bundle of modular forms}
\label{ss:modular forms}

%%%%%%%%%%%%%%%%%%%%%%%%%%%%%%%%%%%%%%%%%%

Applying the functor of Proposition  \ref{prop:canonical bundle machine} to the standard representation $G \to \SO( V )$ yields  a filtered vector bundle $(\bm{V}_{dR}, F^\bullet \bm{V}_{dR})$ on    $\Sh_K(G , \DD )$.   
The filtration has the form 
\[
0= F^2 \bm{V}_{dR}  \subset F^1 \bm{V}_{dR}  \subset F^0 \bm{V}_{dR}\subset  F^{-1} \bm{V}_{dR} = \bm{V}_{dR},
\]
in which $F^1\bm{V}_{dR}$ is a line,   isotropic with respect to the bilinear form
\begin{equation}\label{de Rham bilinear}
[-,-]: \bm{V}_{dR} \otimes \bm{V}_{dR} \to \co_{ \Sh_K( G , \DD  ) }
\end{equation}
induced by (\ref{bilinear}), and $F^0\bm{V}_{dR} = (F^1\bm{V}_{dR})^\perp$.
These properties are clear from the complex analytic definition (\ref{mixed analytic bundles}) of $ \bm{V}_{dR}^{an}$, and the explicit  description of the Hodge filtration (\ref{V hodge}). 
In particular, the filtration on $\bm{V}_{dR}$ is completely determined by the isotropic line $F^1\bm{V}_{dR}$.

\begin{definition}
The \emph{line bundle of weight one modular forms} on  $\Sh_K(G, \DD )$ is defined by
\[
 \bm{\omega} = F^1 \bm{V}_{dR}.
 \]
\end{definition}

For any $g\in G(\A_f)$, the pullback of $\bm{\omega}$ via the complex uniformization
\[
\DD \map{  z\mapsto (z , g)  } \Sh_K(G,\DD)(\C)
\]
is just the tautological  bundle on the hermitian domain (\ref{orthogonal domain}).   In particular, the line bundle $\bm{\omega}$ carries a metric, inherited from the metric 
\begin{equation}\label{naive metric}
\| z \|^2_\naive = - [ z, \overline{z}]
\end{equation}
on the tautological bundle. 
We will more often use the rescaled metric
\begin{equation}\label{better metric}
\| z \|^2 = - \frac{  [ z, \overline{z}]}{ 4\pi e^\gamma}
\end{equation}
where $\gamma = -\Gamma'(1)$ is the Euler-Mascheroni constant.

%%%%%%%%%%%%%%%%%%%%%%%%%%%%%%%%%%%%%%%%

\subsection{The Hodge embedding}
\label{ss:hodge embedding}

%%%%%%%%%%%%%%%%%%%%%%%%%%%%%%%%%%%%%%%%%%

As above, let $H=C(V)$ viewed as a faithful $2^{n+2}$-dimensional representation of $G \subset C(V)^\times$ via left multiplication.  If we define a $\Z$-lattice in $H$ by
\[
H_{\Z} = C(V_\Z) ,
\]
the inclusion (\ref{K choice}) implies that $H_{\widehat{\Z}} = H_\Z \otimes_\Z \widehat{\Z}$  is $K$-stable.

The discussion of \S \ref{s:AVB} provides  a filtered vector bundle $(\bm{H}_{dR} , F^\bullet \bm{H}_{dR})$ on $\Sh_K(G,\DD)$, and a $\Z$-local system $\bm{H}_{Be}$ over the complex fiber  endowed with an isomorphism
\[
\bm{H}_{Be} \otimes \co_{\Sh_K(G,\DD)(\C)} \iso \bm{H}_{dR}^{an}.
\]
The double quotient 
\begin{equation}\label{analytic KS}
A(\C) = \bm{H}_{Be} \backslash \bm{H}_{dR}^{an} / F^0 \bm{H}_{dR}^{an}
\end{equation}
defines an analytic family of complex tori over $\Sh_K(G,\DD)(\C)$. 
 In fact,  this arises from an abelian scheme over $\Sh_K(G,\DD)$, as we now explain.

As in \cite[\S 2.2]{AGHMP-1},  one may choose a symplectic form $\psi$ on $H$ such that  the representation of $G$ factors through 
$
G^\Sg=\GSp(H),
$ 
and induces a  Hodge embedding
\[
(G,\DD) \to (G^\Sg , \DD^\Sg)
\]
into the Siegel Shimura datum determined by  $(H,\psi)$.   
Explicitly, choose any vectors  $v,w\in V$ of negative length with $[v , w ] =0$ 
and set
\[
\delta = vw\in C(V).
\] 
If we denote by  $c\mapsto c^*$  the $\Q$-algebra involution on $C(V)$ fixing pointwise the subset $V\subset C(V)$, then $\delta^* = -\delta$.  Denoting by $\mathrm{Trd}:C(V)\to \Q$ the reduced trace, the symplectic form
\[
\psi(x,y) = \mathrm{Trd}(x\delta y^*)
\]
has the desired properties.

As in (\ref{compact dual}), we may describe the compact dual $\check{M}(G,\DD)$ as a $G$-orbit of descending filtrations on the faithful representation $H$.  It is more convenient to characterize the compact dual as  the $\Q$-scheme with functor of points
\[
\check{M}(G,\DD) (S) = \{ \mbox{isotropic lines } z \subset V \otimes  \co_S \},
\]
where \emph{line} means a locally free $\co_S$-module direct summand of rank one.  
In order to realize $\check{M}(G,\DD)$ as a space of filtrations on $H$, first 
define 
\[
\check{M}(G^\Sg,\DD^\Sg) (S) = \{ \mbox{Lagrangian subsheaves } F^0   \subset H \otimes  \co_S \}
\]
and then use (\ref{special injection}) to define a closed immersion 
\[
\check{M}(G,\DD) \to \check{M}(G^\Sg,\DD^\Sg)
\]
 sending  the isotropic line $z \subset V$ to the Lagrangian  $z H\subset H$.  
%This morphism of compact duals is a special case of the (much) more general construction of \cite[Main Theorem 3.4.1]{hor:thesis} and \cite[Main Theorem 2.5.12]{hor:book}.

By rescaling, we may assume that $\psi$ is $\Z$-valued on $H_\Z$, and so 
the Hodge embedding  defines a morphism from $\Sh_K(G,\DD)$ to a moduli stack of polarized abelian varieties of dimension $ 2^{n+1}$. 
Pulling back the universal object defines the \emph{Kuga-Satake abelian scheme}
\[
\pi: A \to \Sh_K(G,\DD).
\]
The Kuga-Satake abelian scheme does not depend on the choice of $\psi$, but the polarization on it does.
Passing to the complex analytic fiber recovers the family of complex tori defined by (\ref{analytic KS}).

The first relative de Rham homology  of $A$, with its Hodge filtration,  is related to the vector bundle $\bm{H}_{dR}$ by a canonical isomorphism of filtered vector bundles 
\[
\bm{H}_{dR} \iso \underline{\Hom}\big( R^1 \pi_* \Omega^\bullet_{A/ \Sh_K(G,\DD)} , \co_{\Sh_K(G,\DD)} \big).
\]

%%%%%%%%%%%%%%%%%%%%%%%%%%%%%%%%%%%%%%%%

\subsection{Cusp label representatives: isotropic lines}
\label{ss:orthogonal clr lines}

%%%%%%%%%%%%%%%%%%%%%%%%%%%%%%%%%%%%%%%%%%

We wish to make more explicit the structure of the mixed Shimura datum $(Q_\Phi , \DD_\Phi)$ 
associated to a cusp label representative 
\[
\Phi=( P, \DD^\circ, h)
\]  
for $(G,\DD)$.  See  \S \ref{ss:mixed} for the definitions.

The admissible parabolic $P \subset G$  is the stabilizer of a totally isotropic subspace  $I\subset V$ with $\mathrm{dim}(I)\in \{0,1,2\}$. 
In this subsection we  assume that $P \subset G$ is  the stabilizer of an isotropic line $I\subset V$.   
The case of isotropic planes will be considered in \S \ref{ss:orthogonal clr planes}.
%In the case  $I=0$ we have $(Q_\Phi,\DD_\Phi=(G,\DD)$, and we have nothing new to say.

The  $P$-stable weight filtration on $V$ defined by
\[
\mathrm{wt}_{-3}V=0,\quad \mathrm{wt}_{-2}V = \mathrm{wt}_{-1}V = I , \quad \mathrm{wt}_0V=\mathrm{wt}_1V=I^\perp, \quad \mathrm{wt}_2 V= V,
\]
and the Hodge filtration (\ref{V hodge}) determined by a point $z\in \mathcal{D}$,  together determine a mixed Hodge structure 
\[
 \mathbb{S}_\C \map{\mathtt{h}_\Phi(z)}  P_\C \to \SO(V_\C)
\]
on $V$ of type $\{ (-1,-1) , (0,0),(1,1) \}$.

Similarly,the $P$-stable weight filtration on $H$ defined by
\[
\mathrm{wt}_{-3}H=0,\quad \mathrm{wt}_{-2} H = \mathrm{wt}_{-1} H = I H ,\quad \mathrm{wt}_0 H = H,
\]
and the Hodge filtration (\ref{H hodge}) determined by a point $z\in \mathcal{D}$,   together determine a mixed Hodge structure 
\[
 \mathbb{S}_\C \map{\mathtt{h}_\Phi(z)}  P_\C \to \GSp(H_\C)
\]
on $H$ of type $\{ (-1,-1) , (0,0) \}$.
In the definition of the weight filtration we are using the inclusion  $I \subset \End_\Q(H)$ determined by (\ref{special injection}), and setting
\[
I H = \mathrm{Span}_\Q \{ \ell x : \ell \in I ,\, x\in H\} .
\]

The proof of the following lemma is left as an exercise to the reader.

\begin{lemma}
Recalling the notation (\ref{wt grading}), 
the largest closed normal subgroup $Q_\Phi \subset P$ through which every such $\mathtt{h}_\Phi(z)$ factors is
\[
Q_\Phi  = \mathrm{ker} \big(   P \to \GL (   \mathrm{gr}_0(H) )    \big). 
\]
\end{lemma}

%\begin{proof}
%Let us  denote by  $\mathcal{Q}$ the kernel of $ P \to \GL (   \mathrm{gr}_0(H) )$.
%Both $Q_\Phi$ and $\mathcal{Q}$  contain the unipotent radical  $W_\Phi \subset P$, which can be characterized as the kernel of the natural map
%\begin{equation}\label{unipotent kernel}
%P \to \GL (   \mathrm{gr}_{-2}(H) )   \times  \GL (   \mathrm{gr}_0(H) ) .
%\end{equation}
%
%
%The symplectic form $\psi$ on $H$ of \S \ref{ss:hodge embedding}  determines a perfect pairing between $ \mathrm{gr}_{-2}(H)$ and $\mathrm{gr}_0(H)$, and the action of $P$ respects this pairing up to scaling.  From this it follows that (\ref{unipotent kernel}) 
%induces an isomorphism
%\[
%\mathcal{Q}/W_\Phi  \iso \mathbb{G}_m \times \{1\}.
%\]
%
%On the other hand, the pure Hodge structures on $\mathrm{gr}_{-2}(H)$ and $\mathrm{gr}_0(H)$ determined by  any $\mathtt{h}_\Phi(z)$ have types $(-1,-1)$ and $(0,0)$ respectively, from which it follows that 
% (\ref{unipotent kernel}) induces an isomorphism
%\[
%Q_\Phi /W_\Phi \iso \mathbb{G}_m \times \{1\}.
%\]
%Thus $Q_\Phi = \mathcal{Q}$.
%\end{proof}

The action  $Q_\Phi \to \SO(V)$ is faithful, and  is given on the graded pieces of $\mathrm{wt}_\bullet V$  by the commutative diagram
\begin{equation}\label{line diagram}
\xymatrix{
{ Q_\Phi } \ar[r]^{ \nu_\Phi } \ar[d]  &   { \mathbb{G}_m }  \ar[d]^{  t \mapsto ( t,1,t^{-1} ) }  \\
{ P }  \ar[r] &  { \GL(I) \times \SO(I^\perp / I) \times \GL( V/I^\perp ) , }
}
\end{equation}
in which $\nu_\Phi$ is the restriction to $Q_\Phi$ of the spinor similitude on $G$.  This agrees with the character (\ref{unipotent character}).
The groups $U_\Phi$ and $W_\Phi$ are 
\[
U_\Phi = W_\Phi = \mathrm{ker} ( \nu_\Phi: Q_\Phi \to \mathbb{G}_m  ),
\]
and there is an isomorphism of $\Q$-vector spaces
\begin{equation}\label{cusp unipotent}
(I^\perp / I) \otimes I \iso  U_\Phi(\Q)
\end{equation}
sending $v \otimes \ell \in (I^\perp / I)  \otimes I$ to the unipotent transformation of $V$ defined by 
\[
x\mapsto  x + [x,\ell] v  - [x,  v  ] \ell  -  Q(v)  [x,  \ell   ] \ell .
\]

The dual  of $(Q_\Phi , \DD_\Phi)$ is the $\Q$-scheme with functor of points
\begin{equation}\label{dual domain at line}
\check{M} (Q_\Phi,\DD_\Phi)(S) = 
\left\{ \begin{array}{c} \mbox{isotropic lines }  z \subset V \otimes  \co_S \mbox{ such that}  \\
V \to V/I^\perp \\ \mbox{ identifies }  z \iso (V/I^\perp) \otimes \co_S \end{array}  \right\}.
\end{equation}
Every point 
\[
z\in \DD_\Phi \subset \pi_0(\mathcal{D}) \times \Hom(  \mathbb{S}_\C ,  Q_{\Phi\C}  )
\]
 determines a mixed Hodge structure on $V$ of type $\{ (-1,-1), (0,0) , (1,1)\}$, and the Borel morphism
\[
\DD_\Phi \to \check{M} (Q_\Phi,\DD_\Phi)(\C)
\]
sends  $z$ to the isotropic line $ F^1 V_\C \subset V_\C$.  This induces  an isomorphism
\begin{equation}\label{line realization}
\DD_\Phi \iso \pi_0(\mathcal{D}) \times  \check{M} (Q_\Phi,\DD_\Phi)(\C) .
\end{equation}

%%%%%%%%%%%%%%%%%%%%%%%%%%%%%%%%%%%%%%%%

\subsection{Cusp label representatives: isotropic planes}
\label{ss:orthogonal clr planes}

%%%%%%%%%%%%%%%%%%%%%%%%%%%%%%%%%%%%%%%%%%

In this subsection we  fix a cusp label representative $\Phi=( P, \DD^\circ, h)$ with $P \subset G$ the stabilizer of an isotropic plane $I\subset V$.

The $P$-stable  weight filtrations on $V$ defined by  
\[
\mathrm{wt}_{-2}V=0,\quad \mathrm{wt}_{-1}V = I , \quad \mathrm{wt}_0V = I ^\perp, \quad \mathrm{wt}_1 V= V,
\]
and the Hodge filtration (\ref{V hodge}) determined by a point $z\in \mathcal{D}$,  together determine a mixed Hodge structure 
\[
 \mathbb{S}_\C \map{\mathtt{h}_\Phi(z)}  P_\C \to \SO(V_\C)
\]
on $V$ of type $\{ (-1,0) , (0,-1)  , (0,0),(1,0), (0,1) \}$.

Similarly, the $P$-stable weight filtration on $H$ defined by
\[
\mathrm{wt}_{-3}H=0,\quad \mathrm{wt}_{-2} H = I^2 H ,\quad  \mathrm{wt}_{-1} H = I H ,\quad \mathrm{wt}_0 H = H,
\]
and the Hodge filtration (\ref{H hodge}) determined by a point $z\in \mathcal{D}$,  together determine a mixed Hodge structure 
\[
 \mathbb{S}_\C \map{\mathtt{h}_\Phi(z)}  P_\C \to \GSp(H_\C)
\]
on $H$ of type $\{ (-1,-1) , (-1,0) , (0,-1), (0,0) \}$.
In the definition of the weight filtration we are using the inclusion  $I \subset \End_\Q(H)$ determined by (\ref{special injection}), and setting
\begin{align*}
IH &=  \mathrm{Span}_\Q   \{ \ell  x  : \ell   \in I ,\, x  \in H\}   \\
I^2 H &= \mathrm{Span}_\Q  \{ \ell \ell'  x : \ell , \ell' \in I ,\, x\in H\}  .
\end{align*}

The proof of the following lemma is left as an exercise to the reader.

\begin{lemma}
Recalling the notation (\ref{wt grading}), 
the largest closed normal subgroup $Q_\Phi \subset P$ through which every such $\mathtt{h}_\Phi(z)$ factors is
\[
Q_\Phi  = \ker \big(   P \to \GL(   \mathrm{gr}_0(H) )    \big). 
\]
\end{lemma}

The natural action $Q_\Phi \to \SO(V)$ is faithful, and is trivial on the quotient $I^\perp / I$.  The groups  $U_\Phi \normal W_\Phi \normal Q_\Phi$  are  
\[ 
W_\Phi = \ker (   Q_\Phi \to \GL(  I )   ),
\] 
and 
\[
U_\Phi \iso \bigwedge\nolimits^2 I,
\]
where we identify $a\wedge b \in \bigwedge\nolimits^2 I$ with the unipotent transformation of $V$ defined by
\[
x \mapsto x+ [ x,a] b - [ x,b]a .
\]

The dual of $(Q_\Phi , \DD_\Phi)$ is the $\Q$-scheme with functor of points
\[
\check{M} (Q_\Phi,\DD_\Phi)(S) = 
\left\{ \begin{array}{c} \mbox{isotropic lines }  z \subset V \otimes  \co_S \mbox{ such that}  \\
V \to V/I^\perp \mbox{ identifies }  z \mbox{ with a rank one } \\ \mbox{local direct summand of } (V/I^\perp) \otimes \co_S \end{array}  \right\}.
\]
Every point  $z\in \DD_\Phi$ determines a mixed Hodge structure on $V$ of type $\{ (-1,0) , (0,-1) , (0,0) , (0,1) , (1,0)\}$, and  again  the Borel morphism
\[
\DD_\Phi \to \check{M} (Q_\Phi,\DD_\Phi)(\C)
\]
sends $z\mapsto F^1 V_\C$.  It identifies $\DD_\Phi$ with the open subset
\[
\DD_\Phi = U_\Phi(\C) \DD \subset \pi_0(\DD) \times  \check{M} (Q_\Phi,\DD_\Phi)(\C).
\]

%%%%%%%%%%%%%%%%%%%%%%%%%%%%%%%%%%%%%%

\subsection{The $q$-expansion principle}
\label{ss:qs}

%%%%%%%%%%%%%%%%%%%%%%%%%%%%%%%%%%%%

Now suppose the compact open subgroup $K$ of (\ref{K choice})  is neat, and small enough that there exists a finite  $K$-admissible complete cone decomposition $\Sigma$  for $(G,\DD)$ having the no self-intersection property. 
See \S \ref{ss:cones} for the definitions.

The results of Pink recalled in \S \ref{ss:compactification} provide us with a toroidal compactification
\begin{equation}\label{ortho compact}
\Sh_K(G,\DD , \Sigma) = \bigsqcup_{  (\Phi,\sigma)  \in \mathrm{Strat}_K (G,\DD,  \Sigma  )  } 
Z_K^{(\Phi,.\sigma)} (G,\DD ,\Sigma ),
\end{equation}
and the result of Harris-Zucker recalled as Theorem \ref{thm:total vector bundles} gives a filtered vector bundle
\[
0= F^2 \bm{V}_{dR}  \subset F^1 \bm{V}_{dR}  \subset F^0 \bm{V}_{dR}\subset  F^{-1} \bm{V}_{dR} = \bm{V}_{dR}
\]
on the compactification, endowed with a quadratic form 
\[
[ - , - ] : \bm{V}_{dR}  \to \co_{ \Sh_K( G , \DD ,\Sigma  ) }
\]
induced by the bilinear form on $V$.  
Exactly as in  \ref{ss:modular forms}, the \emph{line bundle of weight one modular forms}
\[
\bm{\omega} = F^1\bm{V}_{dR}
\]
is isotropic with respect to this bilinear form, and $F^0\bm{V}_{dR} = ( F^1\bm{V}_{dR} )^\perp$.
These constructions extend those of \S \ref{ss:modular forms} from the open Shimura variety to its compactification.

 In order to define $q$-expansions of sections of  $\bm{\omega}^{\otimes k}$ on (\ref{ortho compact}), we need to make some additional choices.  
The first choice is a boundary  stratum 
\begin{equation}\label{cuspidal stratum}
 Z_K^{(\Phi,\sigma)} (G,\DD, \Sigma)_{/\C} \subset  \Sh_K(G,\DD,\Sigma)_{/\C}
\end{equation}
 indexed by a toroidal stratum representative  
$ (\Phi ,\sigma)$   in which the parabolic subgroup  appearing in  the underlying cusp label representative 
\[
\Phi = (P,\DD^\circ , h)
\]
 is the stabilizer of an isotropic line $I \subset V$.
The second choice is a nonzero vector $\ell \in I$, which will determine a trivialization of $\bm{\omega}$ in a formal neighborhood of the stratum (\ref{cuspidal stratum}).

As $\DD$ has two connected components, there are exactly two continuous surjections $\nu: \DD \to \mathcal{H}_0$.  Fix one of them.  It, along with the spinor similitude  $\nu:G\to\mathbb{G}_m$, induces a morphism of Shimura data
\[
( G, \DD) \map{\nu} (\mathbb{G}_m , \mathcal{H}_0).
\]
Denote by $2\pi \epsilon^\circ \in \mathcal{H}_0$ the image of the component $\DD^\circ$.  
There is a unique continuous extension of $\nu: \DD\to \mathcal{H}_0$ to  $\nu_\Phi:\DD_\Phi \to \mathcal{H}_0$, and this determines a morphism of mixed Shimura data
\begin{equation}\label{component morphism}
(Q_\Phi , \DD_\Phi) \map{\nu_\Phi}  (\mathbb{G}_m , \mathcal{H}_0),
\end{equation}
where $\nu_\Phi:Q_\Phi \to \mathbb{G}_m$ is the character of (\ref{line diagram}).

Applying the functor of Proposition \ref{prop:canonical bundle machine} to the $Q_\Phi$-representations $I\subset V$ determines vector bundles $\bm{I}_{dR} \subset \bm{V}_{dR}$ on   $\Sh_{K_\Phi}(Q_\Phi , \DD_\Phi , \sigma )$.  
The vector bundle $\bm{V}_{dR}$ is endowed with a filtration and a symmetric bilinear pairing, exactly as in the discussion following (\ref{ortho compact}), and restricting  the bilinear pairing yields a homomorphism
 \begin{equation}\label{omega triv}
 [\cdot,\cdot]  : \bm{I}_{dR} \otimes \bm{\omega}   \to \co_{ \Sh_{K_\Phi}( Q_\Phi , \DD_\Phi , \sigma)  } .
\end{equation}

The choice of  nonzero vector $\ell \in I$  defines a section 
\[
\bm{\ell}^{an}  \in H^0\big(  \Sh_{K_\Phi}(Q_\Phi , \DD_\Phi)(\C) , \bm{I}_{dR}^{an}  \big)
\]
of the line bundle 
\[
\bm{I}_{dR}^{an}  = Q_\Phi(\Q) \backslash \big( \DD_\Phi \times I_\C \times Q_\Phi(\A_f) / K_\Phi \big)
\]
by sending
\[
(z,g) \mapsto   \left(
z , \frac{\rat( \nu_\Phi (g))  }{ \nu_\Phi(z)  }   \cdot \ell   ,  g
\right)  .
\]

\begin{proposition}\label{prop:canonical sections}
The holomorphic section $\bm{\ell}^{an}$ extends uniquely to  the partial compactification  $  \Sh_{K_\Phi}(Q_\Phi , \DD_\Phi , \sigma )(\C)$.   This extension is algebraic and defined over  $\Q$, and so arises  from a unique global section 
\begin{equation}\label{magic section}
\bm{\ell } \in H^0 \big(  \Sh_{K_\Phi}(Q_\Phi , \DD_\Phi ,\sigma ) , \bm{I}_{dR}  \big) .
\end{equation}
Moreover, (\ref{omega triv}) is an isomorphism, and induces  an isomorphism
\[
 \bm{\omega} \map{   \psi \mapsto [ \bm{\ell} , \psi ]   }  \co_{  \Sh_{K_\Phi}(Q_\Phi , \DD_\Phi ,\sigma ) }.
\]
\end{proposition}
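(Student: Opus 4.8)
The plan is to recognise all of the data in the statement as pulled back from the $0$-dimensional Shimura variety $\Sh_{\nu_\Phi(K_\Phi)}(\mathbb{G}_m,\mathcal{H}_0)$ of \S\ref{ss:simple}, and then to invoke the Proposition of that subsection.

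First, I would note that the $Q_\Phi$-representation $I$ factors through the spinor similitude: by diagram (\ref{line diagram}) the group $Q_\Phi$ acts on the line $I$ through the character $\nu_\Phi$, so $U_\Phi=\ker(\nu_\Phi)$ acts trivially, and $I$ is the pullback along $\nu_\Phi$ of the weight-one representation $z\mapsto z$ of $\mathbb{G}_m$. Since $U_\Phi$ acts trivially on $I$ (and $T_\Phi$ acts trivially on the corresponding data), the remark following Proposition \ref{prop:canonical bundle machine} shows that $\bm{I}_{dR}$ on $\Sh_{K_\Phi}(Q_\Phi,\DD_\Phi,\sigma)$ is the pullback, along the diagonal arrow of (\ref{mixed compactification}), of the automorphic line bundle attached to the $\bar{Q}_\Phi$-representation $I$ on $\Sh_{\bar{K}_\Phi}(\bar{Q}_\Phi,\bar{\DD}_\Phi)$; as $I$ is $\bar{\nu}_\Phi^\ast$ of the identity representation of $\mathbb{G}_m$, functoriality of automorphic bundles for the morphism $\bar{\nu}_\Phi\colon(\bar{Q}_\Phi,\bar{\DD}_\Phi)\to(\mathbb{G}_m,\mathcal{H}_0)$ (see \cite{HZ3}) identifies it further with the pullback of $\bm{I}_{dR}$ on $\Sh_{\nu_\Phi(K_\Phi)}(\mathbb{G}_m,\mathcal{H}_0)$. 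Hence $\bm{I}_{dR}$ on the partial compactification is canonically the pullback of $\bm{I}_{dR}$ on $\Sh_{\nu_\Phi(K_\Phi)}(\mathbb{G}_m,\mathcal{H}_0)$ along the $\Q$-rational morphism
\[
\Sh_{K_\Phi}(Q_\Phi,\DD_\Phi,\sigma)\longrightarrow \Sh_{\bar{K}_\Phi}(\bar{Q}_\Phi,\bar{\DD}_\Phi)\map{\bar{\nu}_\Phi}\Sh_{\nu_\Phi(K_\Phi)}(\mathbb{G}_m,\mathcal{H}_0)
\]
extending $\nu_\Phi$. I would then define $\bm{\ell}$ to be the pullback of the canonical section of \S\ref{ss:simple} (in the case $k=1$, $n=\ell$) along this morphism; as the target and the morphism are defined over $\Q$, this is an algebraic section of $\bm{I}_{dR}$ defined over $\Q$. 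Comparing with the explicit complex formula for the canonical section shows that its restriction to the dense open stratum $\Sh_{K_\Phi}(Q_\Phi,\DD_\Phi)$ equals $\bm{\ell}^{an}$, and density forces uniqueness of the holomorphic extension. This proves the first two assertions.

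For the remaining statement, recall that the Proposition of \S\ref{ss:simple} exhibits the canonical section as a generator of $\bm{I}_{dR}$; thus $\bm{\ell}$ generates $\bm{I}_{dR}$ at every point of $\Sh_{K_\Phi}(Q_\Phi,\DD_\Phi,\sigma)$, and (\ref{omega triv}) is an isomorphism of line bundles if and only if $[\bm{\ell},\cdot]\colon\bm{\omega}\to\co_{\Sh_{K_\Phi}(Q_\Phi,\DD_\Phi,\sigma)}$ is. Now $\bm{\omega}=F^1\bm{V}_{dR}$, the bilinear form (\ref{bundle bilinear}), and $\bm{\ell}$ are all pulled back along the diagonal arrow $\pi\colon\Sh_{K_\Phi}(Q_\Phi,\DD_\Phi,\sigma)\to\Sh_{\bar{K}_\Phi}(\bar{Q}_\Phi,\bar{\DD}_\Phi)$, so $[\bm{\ell},\cdot]$ is $\pi^\ast$ of a morphism of line bundles on the base. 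Since $\pi$ is surjective — over the open stratum it is the $T_\Phi$-torsor (\ref{torus torsor}) — it suffices to check that $\Psi\mapsto[\bm{\ell}^{an},\Psi]$ is nowhere vanishing on $\Sh_{K_\Phi}(Q_\Phi,\DD_\Phi)(\C)$. There, by the complex description (\ref{mixed analytic bundles}) of $\bm{V}_{dR}^{an}$ and the realization (\ref{line realization}), the fiber of $\bm{\omega}$ at a point is an isotropic line $z\subset V_\C$ mapping isomorphically onto $(V/I^\perp)_\C$, hence not contained in $I^\perp_\C$, while the fiber of $\bm{I}_{dR}$ is $I_\C$; since $I$ is isotropic of dimension one, the pairing $I\times V/I^\perp\to\Q$ induced by (\ref{bilinear}) is perfect, so $[\ell,z]\neq 0$. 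This gives the desired fiberwise nonvanishing, and hence all the asserted isomorphisms.

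The step I expect to be the main obstacle is the identification, over the entire partial compactification, of $\bm{I}_{dR}$ (and, for the last claim, of $\bm{\omega}$ together with the bilinear form) with objects pulled back from $\Sh_{\bar{K}_\Phi}(\bar{Q}_\Phi,\bar{\DD}_\Phi)$ and ultimately from $\Sh_{\nu_\Phi(K_\Phi)}(\mathbb{G}_m,\mathcal{H}_0)$: this requires unwinding the construction of the functor of Proposition \ref{prop:canonical bundle machine}, using both that $U_\Phi$ acts trivially on $I$ and that the standard torsor descends along the diagonal arrow, together with functoriality of automorphic vector bundles in the Harris--Zucker theory. Once that is in hand, everything reduces to the explicit computation on $(\mathbb{G}_m,\mathcal{H}_0)$ already carried out in \S\ref{ss:simple}.
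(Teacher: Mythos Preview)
Your proposal is correct and follows essentially the same strategy as the paper: identify $\bm{I}_{dR}$ as the pullback along $\Sh_{K_\Phi}(Q_\Phi,\DD_\Phi,\sigma)\to\Sh_{\nu_\Phi(K_\Phi)}(\mathbb{G}_m,\mathcal{H}_0)$ of the automorphic line bundle attached to the identity character (using that $Q_\Phi$ acts on $I$ through $\nu_\Phi$), and define $\bm{\ell}$ as the pullback of the canonical trivializing section from \S\ref{ss:simple}. For the isomorphism claim, the paper takes a slightly different route: rather than descending to the base and checking nonvanishing on complex points, it verifies directly on the dual $\check{M}(Q_\Phi,\DD_\Phi)$ (via its functor of points) that the $Q_\Phi$-equivariant pairing $\check{I}\otimes\check{\omega}\to\co_{\check{M}(Q_\Phi,\DD_\Phi)}$ is an isomorphism, and then transports this through the construction of Proposition~\ref{prop:canonical bundle machine}. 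Both arguments reduce to the same linear-algebra fact (the isotropic line $z$ pairs nondegenerately with $I$ because it surjects onto $V/I^\perp$); the paper's version is marginally cleaner in that it handles the partial compactification in one stroke without invoking descent or complex points, while yours makes the dependence on the Remark following Proposition~\ref{prop:canonical bundle machine} explicit.
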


\begin{proof}
As the action of $Q_\Phi$ on $I$ is via $\nu_\Phi :Q_\Phi/U_\Phi \to \mathbb{G}_m$, the discussion of \S \ref{ss:simple} (see especially Remark \ref{rem:tate bundles}) identifies $\bm{I}_{dR}$ with the pullback of the line bundle 
$
I\otimes \Lie(\mathbb{G}_m) \iso  I \otimes \co_{\Sh_{\nu_\Phi (K_\Phi)}(\mathbb{G}_m ,\mathcal{H}_0 )} 
$ 
via 
\[
 \Sh_{K_\Phi}(Q_\Phi , \DD_\Phi,\sigma) \map{(\ref{mixed compactification})}  \Sh_{\bar{K}_\Phi}( \bar{Q}_\Phi , \bar{\DD}_\Phi) = \Sh_{\nu_\Phi (K_\Phi)}(\mathbb{G}_m ,\mathcal{H}_0 ) .
\]
The section (\ref{magic section}) is simply the pullback of the trivializing section 
\[
\ell \otimes 1  \in 
H^0\big(  \Sh_{\nu_\Phi (K_\Phi)}(\mathbb{G}_m ,\mathcal{H}_0 ) ,  I \otimes \co_{\Sh_{\nu_\Phi (K_\Phi)}(\mathbb{G}_m ,\mathcal{H}_0 )}   \big)  .
\]

It now suffices to  prove that (\ref{omega triv}) is an isomorphism. 
Recall from \S \ref{ss:orthogonal clr lines}  that  $\check{ M  }( Q_\Phi , \DD_\Phi )$ has functor of points
\[
\check{M} (Q_\Phi,\DD_\Phi)(S) = 
\left\{ \begin{array}{c} \mbox{isotropic lines }  z \subset V \otimes \co_S \mbox{ such that}  \\
V \to V/I^\perp \mbox{ identifies }  z \iso (V/I^\perp) \otimes \co_S \end{array}  \right\}.
\]
Let $\check{I}$ and $\check{V}$ be the (constant)  $Q_\Phi$-equivariant vector bundles on $\check{ M  }( Q_\Phi , \DD_\Phi )$  determined by the representations  $I$ and $V$.   In the notation of Lemma \ref{lem:dual filtrations}, the line bundle $\check{\omega}=F^1\check{V}$ is  the tautological bundle, and the bilinear form on $V$ determines a $Q_\Phi$-equivariant isomorphism
\[
\check{I} \otimes  \check{\omega} \to \check{V} \otimes  \check{V} \map{[ - , - ] }  \co_{\check{M}( Q_\Phi , \DD_\Phi)}.
\]
By examining the construction of the functor in Proposition \ref{prop:canonical bundle machine},  the induced morphism (\ref{omega triv}) is also an isomorphism.
\end{proof}

It follows from the analysis of \S \ref{ss:orthogonal clr lines} that the diagram (\ref{mixed compactification}) has the form 
\[
\xymatrix{
{ \Sh_{K_\Phi} ( Q_\Phi ,\DD_\Phi) }  \ar@{->}[r] \ar[d]_{\nu_\Phi} &   {   \Sh _{K_\Phi} (Q_\Phi ,\DD_\Phi ,\sigma  )  }  \ar[dl] \\
{  \Sh_{\nu_\Phi(K_\Phi)} ( \mathbb{G}_m , \mathcal{H}_0)  , }
}
\]
in which the arrow labeled $\nu_\Phi$ is a torsor for the $n$-dimensional torus
\[
T_\Phi =  \Spec \Big( \Q [ q_\alpha]_{  \alpha \in \Gamma_\Phi^\vee(1)      } \Big) 
\]
over the $0$-dimensional base $ \Sh_{\nu_\Phi(K_\Phi)} ( \mathbb{G}_m , \mathcal{H}_0)$.
To define $q$-expansions    we will trivialize this torsor over an \'etale extension of the base,
 effectively putting coordinates on  the mixed Shimura variety $\Sh_{K_\Phi} ( Q_\Phi ,\DD_\Phi)$.

Choose an auxiliary isotropic line $I_* \subset V$ with $[I,I_*]\neq 0$.  
This choice fixes a section 
 \[
\xymatrix{
(Q_\Phi , \DD_\Phi)    \ar[r]_{ \nu_\Phi } & ( \mathbb{G}_m , \mathcal{H}_0)  \ar@/_1pc/[l]_{\spl}.
}
\] 
 The underlying morphism of groups $\spl :  \mathbb{G}_m \to Q_\Phi$ sends, for any $\Q$-algebra $R$,  $ a \in R^\times$ to the  orthogonal transformation 
\begin{equation}\label{component section}
\spl(a) \cdot x = \begin{cases}
ax  & \mbox{if } x\in I_R \\
a^{-1} x &  \mbox{if }x\in I_{*,R} \\
x  &  \mbox{if } x\in (I\oplus I_*)_R^\perp.
\end{cases}
\end{equation}
To characterize $s:\mathcal{H}_0 \to \DD_\Phi$, we first use  (\ref{dual domain at line})   to view
\[
I_{*\C} \in  \check{M}(Q_\Phi , \DD_\Phi) (\C)  .
\]
Recalling the isomorphism (\ref{line realization}), the preimage of $I_{*\C}$ under the projection 
\[
\DD_\Phi \iso \pi_0(\mathcal{D}) \times \check{M}(Q_\Phi ,\mathcal{D}_\Phi)(\C) \to \check{M}(Q_\Phi ,\mathcal{D}_\Phi)(\C)
\]
consists of two points,  indexed by the two connected components of $\mathcal{D}$.
The function $s:\mathcal{H}_0 \to \DD_\Phi$ is  defined by  sending $2\pi \epsilon^\circ \in \mathcal{H}_0$ to the point indexed by $\mathcal{D}^\circ$, and the other element of $\mathcal{H}_0$  to the point indexed by the other connected component of $\mathcal{D}$.

 The section $ \spl$ determines a Levi decomposition $Q_\Phi = \mathbb{G}_m \imes U_\Phi$.   
 Choose a compact open subgroup $K_0 \subset \mathbb{G}_m( \A_f)$ small enough that its image under (\ref{component section}) is contained in $K_\Phi$, and set
\[
K_{\Phi 0} = K_0  \imes ( U_\Phi(\A_f) \cap K_\Phi)  \subset K_\Phi.
\]
Our hypothesis that $K$ is neat implies that $K_0 \subset K_{\Phi 0} \subset K_\Phi$ are also neat.

\begin{proposition}\label{prop:torsor splitting}
Assume that the rational polyhedral cone $\sigma  \subset U_\Phi(\R)(-1)$ has (top) dimension $n$.
 The above choices determine a commutative diagram
\[
\xymatrix{
{   \bigsqcup_{ a\in  \Q^\times_{>0} \backslash \A_f^\times / K_0  }   \widehat{T}_\Phi(\sigma)_{/\C}   } \ar[rr]^{\iso} \ar[d]  & &
 {   \widehat{\Sh}_{K_{\Phi 0}}   ( Q_\Phi ,  \DD_\Phi ,\sigma )_{/\C}  } \ar[d] \\
{   \widehat{ \Sh }_K(G,\DD , \Sigma)_{/\C} }   \ar[rr]^{\iso} & &     {  \widehat{ \Sh}_{K_{\Phi }}   ( Q_\Phi ,  \DD_\Phi ,\sigma )_{/\C}  }
}
\]
of formal algebraic spaces,   in which  the vertical arrows are formally \'etale surjections.
Here 
\begin{equation}\label{formal torus}
\widehat{T}_ { \Phi} (\sigma) \define \Spf \Big( \Q [[ q_\alpha]]_{  \substack{ \alpha \in \Gamma_\Phi^\vee(1)  \\    \langle \alpha, \sigma \rangle \ge 0  }  } \Big) 
\end{equation}
is the formal completion of (\ref{q torus}) along its closed stratum,
 the lower left corner is the formal completion of $\Sh_K(G,\DD,\Sigma)_{/\C}$ along the $0$-dimensional stratum (\ref{cuspidal stratum}), and the bottom isomorphism is (\ref{boundary iso}).
\end{proposition}

 \begin{proof}
Consider  the  diagram
\[
\xymatrix{
{  \Sh_{K_0} ( \mathbb{G}_m , \mathcal{H}_0 )   \times_{\Spec(\Q)  } T_\Phi  }   \ar@{=}[r]  \ar[dr] &   { \Sh_{K_{\Phi 0}}   ( Q_\Phi ,  \DD_\Phi ) }  \ar[r] \ar[d]_{ \nu_\Phi }  & { \Sh_{K_\Phi}   ( Q_\Phi ,  \DD_\Phi )   }   \ar[d]_{\nu_\Phi}  \\
  {   }   &   {  \Sh_{K_0} ( \mathbb{G}_m , \mathcal{H}_0 )   }  \ar[r]  \ar@/_1pc/[u]_{\spl}  &  {   \Sh_{\nu_\Phi(K_\Phi)}   ( \mathbb{G}_m , \mathcal{H}_0 )   } 
}
\]
in which   the  arrows labeled $\nu_\Phi$   are the  $T_\Phi$-torsors of (\ref{torus torsor}), and the isomorphism $``="$  is the trivialization induced by the section $\spl$.

There is  a canonical bijection
\begin{equation}\label{eqn:gm shimura points}
 \Q^\times_{>0} \backslash \A_f^\times / K_0 \xrightarrow{\simeq} \Sh_{K_0} ( \mathbb{G}_m , \mathcal{H}_0 ) (\C) 
\end{equation}
defined by  $a\mapsto [ ( 2\pi \epsilon^\circ , a) ]$.   We remind the reader that $2\pi \epsilon^\circ \in \mathcal{H}_0$
was fixed in the discussion preceding (\ref{component morphism}).

Using this, the top row of the  above diagram exhibits $\Sh_{K_\Phi}   ( Q_\Phi ,  \DD_\Phi )_{/\C}$ as an \'etale quotient 
\begin{equation}\label{torus cover}
 \bigsqcup_{ a\in  \Q^\times_{>0} \backslash \A_f^\times / K_0  }   T_{\Phi/\C} \iso\Sh_{K_{\Phi 0}}   ( Q_\Phi ,  \DD_\Phi )_{/\C}   \to \Sh_{K_\Phi}   ( Q_\Phi ,  \DD_\Phi )_{/\C}.
\end{equation}
This morphism extends to partial compactifications, and formally completing along the closed stratum yields a formally \'etale morphism
\[
 \bigsqcup_{ a\in  \Q^\times_{>0} \backslash \A_f^\times / K_0  }   \widehat{T}_\Phi(\sigma)_{/\C}  \iso \widehat{\Sh}_{K_{\Phi 0}}   ( Q_\Phi ,  \DD_\Phi ,\sigma )_{/\C}   \to  \widehat{\Sh}_{K_\Phi}   ( Q_\Phi ,  \DD_\Phi ,\sigma )_{/\C}.
\]

This defines the top horizontal arrow and the right vertical arrow in the diagram.  The  vertical arrow on the left is defined by the commutativity of the diagram. 
\end{proof}

%
%
%
%
%
%Let   $\Phi = (P,\DD^\circ , h)$ be a cusp label representative for $(G,\DD)$.  As always, the associated mixed Shimura datum is denoted $(Q_\Phi , \DD_\Phi)$.  
%
%
%
%For any rational polyhedral cone $\sigma \subset U_\Phi(\R)(-1)$,  applying the functor of Proposition  \ref{prop:canonical bundle machine} to the standard representation $G \to \SO( V )$ yields  a filtered vector bundle $(\bm{V}_{dR}, F^\bullet \bm{V}_{dR})$ on  the partially compactified mixed Shimura variety   $\Sh_{K_\Phi}(Q_\Phi , \DD_\Phi , \sigma )$.   The filtration has the form 
%\[
%0= F^2 \bm{V}_{dR}  \subset F^1 \bm{V}_{dR}  \subset F^0 \bm{V}_{dR}\subset  F^{-1} \bm{V}_{dR} = \bm{V}_{dR},
%\]
%in which $F^1\bm{V}_{dR}$ is a line,   isotropic with respect to the bilinear form
%\begin{equation}\label{bundle bilinear}
%\bm{V}_{dR} \otimes \bm{V}_{dR} \to \co_{ \Sh_{K_\Phi}(Q_\Phi , \DD_\Phi , \sigma ) }
%\end{equation}
%induced by (\ref{bilinear}).  The filtration is completely determined by this isotropic line, as  $F^0\bm{V}_{dR} = (F^1\bm{V}_{dR})^\perp$.  
%
%
%\begin{definition}
%The \emph{line bundle of weight one modular forms} on the partial compactification $\Sh_{K_\Phi}(Q_\Phi , \DD_\Phi , \sigma )$ is
%$ \bm{\omega} = F^1 \bm{V}_{dR}$.
%\end{definition}

Propositions \ref{prop:torsor splitting} and \ref{prop:canonical sections} now give us a working theory of $q$-expansions along the $0$-dimensional boundary stratum (\ref{cuspidal stratum}) determined by a top dimensional cone $\sigma \subset U_\Phi(\R)(-1)$.
Taking tensor powers in Proposition \ref{prop:canonical sections} determines an isomorphism
\[
[\bm{\ell}^{\otimes k} ,\, \cdot\, ] : \bm{\omega}^{\otimes k} \iso   \co_{  \Sh_{K_\Phi}(Q_\Phi , \DD_\Phi ,\sigma ) } ,
\]
and hence any global section 
\[
\psi \in H^0\big( \Sh_K ( G , \DD  ,\Sigma )_{/\C}  ,  \bm{\omega} ^{\otimes k}   \big) 
\]
determines a formal function $[\bm{\ell}^{\otimes k} , \psi]$ on 
\[
\widehat{ \Sh }_K(G,\DD , \Sigma)_{/\C}   \iso  \widehat{ \Sh}_{K_{\Phi }}   ( Q_\Phi ,  \DD_\Phi ,\sigma )_{/\C}  .
\]
Now pull this formal function back via the formally \'etale surjection
\[
  \bigsqcup_{ a\in  \Q^\times_{>0} \backslash \A_f^\times / K_0  }   \widehat{T}_\Phi(\sigma)_{/\C}   \to   \widehat{ \Sh }_K(G,\DD , \Sigma)_{/\C}
\]
of Proposition \ref{prop:torsor splitting}.  By restricting the pullback to the copy of $ \widehat{T}_\Phi(\sigma)_{/\C}$ indexed by $a$, we obtain a formal $q$-expansion (\emph{a.k.a.}~\underline{F}ourier \underline{J}acobi expansion)
\begin{equation}\label{basic q}
\mathrm{FJ}^{(a)} ( \psi )  = \sum_{  \substack{ \alpha \in \Gamma_\Phi^\vee(1)  \\    \langle \alpha, \sigma \rangle \ge 0  }  } \mathrm{FJ}_\alpha^{(a)} (\psi) \cdot q_\alpha
 \in    \C [[q_\alpha]]_{  \substack{    \alpha \in \Gamma_\Phi^\vee(1)     \\      \langle \alpha, \sigma \rangle \ge 0     }  }.
\end{equation}
We emphasize that (\ref{basic q}) depends on the choice of toroidal stratum representative $(\Phi,\sigma)$, as well as on the choices of $\nu:\DD \to \mathcal{H}_0$, $I_*$, and $\ell \in I$.  These will always be clear from context.

 For each $\tau \in \Aut(\C)$,  denote by  $a_\tau \in  \Q^\times_{>0} \backslash  \A_f^\times$ the unique element with  \[
 \mathrm{rec}(a_\tau) = \tau|_{\Q^{ab}} .
  \]
  The following is our  $q$-expansion principle; see also \cite[Theorem 2.8.7]{hor:book}.

\begin{proposition}[Rational $q$-expansion principle]\label{prop:q principle}
For any $a\in \A_f^\times$ and  $\tau\in \Aut(\C)$, the $q$-expansion coefficients of $\psi$ and $\psi^\tau$  are related by 
\[
\mathrm{FJ}_\alpha^{(  a a_\tau   )} (  \psi^\tau    )  =   \tau \big(  \mathrm{FJ}_\alpha^{(a  )}  (\psi ) \big) .
\]
Moreover,   $\psi$ is defined over a subfield $L\subset \C$ if and only if 
\[
\mathrm{FJ}_\alpha^{( a a_\tau   )} ( \psi     )  =   \tau \big(  \mathrm{FJ}_\alpha^{(a  )}  (\psi ) \big)
\]
for all $a\in \A_f^\times$, all $\tau \in \Aut(\C/L)$, and all $\alpha\in \Gamma_\Phi^\vee(1)$.
\end{proposition}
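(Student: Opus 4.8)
The plan is to trace through the construction of the maps $\mathrm{FJ}^{(a)}$ and notice that, apart from one point, everything in sight is defined over the reflex field $E(G,\DD)=\Q$, so that the action of $\Aut(\C)$ is entirely concentrated in the identification of the connected components of $\Sh_{K_0}(\mathbb{G}_m,\mathcal{H}_0)_{/\C}$ with the index set $\Q^\times_{>0}\backslash\A_f^\times/K_0$, where it is governed by the reciprocity law (\ref{simple reciprocity}). In detail: by Theorem \ref{thm:total vector bundles} the line bundle $\bm{\omega}$ on $\Sh_K(G,\DD,\Sigma)$ is defined over $\Q$; the isomorphism (\ref{boundary iso}) is defined over $\Q$; the section $\bm{\ell}$ and the resulting trivialization $[\bm{\ell}^{\otimes k},\cdot]\colon\bm{\omega}^{\otimes k}\iso\co$ are defined over $\Q$ by Proposition \ref{prop:canonical sections} (here one uses that $\ell\in I$ and the auxiliary line $I_*\subset V$ are rational, so that the homomorphism $\spl$ of (\ref{component section}) is defined over $\Q$); and the diagram of Proposition \ref{prop:torsor splitting} is obtained from a diagram over $\Q$ by base change to $\C$ and decomposition of the factor $\Sh_{K_0}(\mathbb{G}_m,\mathcal{H}_0)$ into its $\C$-points, the top isomorphism in that $\Q$-diagram reading $\widehat{\Sh}_{K_{\Phi 0}}(Q_\Phi,\DD_\Phi,\sigma)\iso\Sh_{K_0}(\mathbb{G}_m,\mathcal{H}_0)\times_{\Spec\Q}\widehat{T}_\Phi(\sigma)$, again via the rational splitting $\spl$. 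Consequently the assignment sending $\Psi$ to the pullback of $[\bm{\ell}^{\otimes k},\Psi]$ to $\bigsqcup_a\widehat{T}_\Phi(\sigma)_{/\C}$, and thence to its component indexed by $a$, is $\Aut(\C)$-equivariant provided we let $\tau$ act on the set of components by its action on $\C$-points of $\Sh_{K_0}(\mathbb{G}_m,\mathcal{H}_0)$ and coefficientwise on each $\C[[q_\alpha]]$ (the $q_\alpha$ being rational coordinates on $\widehat{T}_\Phi(\sigma)$). Since $\tau\cdot[(2\pi\epsilon^\circ,a)]=[(2\pi\epsilon^\circ,aa_\tau)]$ by (\ref{simple reciprocity}), the value of $[\bm{\ell}^{\otimes k},\Psi^\tau]$ on the component of the class of $aa_\tau$ equals $\tau$ applied to the value of $[\bm{\ell}^{\otimes k},\Psi]$ on the component of the class of $a$; this is precisely the asserted identity $\mathrm{FJ}_\alpha^{(aa_\tau)}(\Psi^\tau)=\tau(\mathrm{FJ}_\alpha^{(a)}(\Psi))$.

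For the descent criterion, the direction ``$\Longrightarrow$'' is immediate: if $\Psi$ is defined over $L$ then $\Psi^\tau=\Psi$ for $\tau\in\Aut(\C/L)$, and the displayed identity is the case $\Psi^\tau=\Psi$ of the first part. For ``$\Longleftarrow$'', assume the identity holds for all $a$, all $\tau\in\Aut(\C/L)$, and all $\alpha$. Combined with the first part it gives $\mathrm{FJ}_\alpha^{(aa_\tau)}(\Psi^\tau)=\mathrm{FJ}_\alpha^{(aa_\tau)}(\Psi)$, and since $a\mapsto aa_\tau$ is a bijection of $\Q^\times_{>0}\backslash\A_f^\times/K_0$ we get $\mathrm{FJ}^{(b)}(\Psi^\tau)=\mathrm{FJ}^{(b)}(\Psi)$ for every $b$. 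Now I would appeal to injectivity of the total $q$-expansion map on $H^0(\Sh_K(G,\DD,\Sigma)_{/\C},\bm{\omega}^{\otimes k})$: the cover of Proposition \ref{prop:torsor splitting} being faithfully flat, a formal function on the completion of $\Sh_K(G,\DD,\Sigma)$ along $Z_K^\Upsilon$ is determined by the family $\{\mathrm{FJ}^{(a)}\}_a$; and restriction to that completion is injective on global sections of $\bm{\omega}^{\otimes k}$ because the dense open $\Sh_K(G,\DD)$ is smooth (so its connected components, hence those of the compactification, are irreducible) and every connected component of $\Sh_K(G,\DD,\Sigma)$ meets $Z_K^\Upsilon$. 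This last point follows from the standard description of $\pi_0(\Sh_K(G,\DD))$ via the spinor similitude (strong approximation for $\mathrm{Spin}(V)$): since $\nu_\Phi$ is surjective and $\nu_\Phi(K_\Phi)\subseteq\nu(K)$, the image of $Z_K^\Upsilon$ in $\pi_0$, translated by $\nu(h)$, exhausts $\pi_0$. Hence $\Psi^\tau=\Psi$ for all $\tau\in\Aut(\C/L)$; and since $\bm{\omega}^{\otimes k}$ is coherent on the $\Q$-scheme $\Sh_K(G,\DD,\Sigma)$, so that $H^0$ commutes with the extension $\C/\Q$, and $\C^{\Aut(\C/L)}=L$, ordinary Galois descent shows that $\Psi$ is defined over $L$.

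The step I expect to be the real obstacle is the injectivity just used, i.e.\ the claim that $Z_K^\Upsilon$ meets every connected component of $\Sh_K(G,\DD,\Sigma)$; this is the one place that genuinely needs input about the arithmetic of orthogonal groups (the shape of $\pi_0$, strong approximation, surjectivity of $\nu_\Phi$) rather than the formalism of automorphic vector bundles. The remaining care in the first paragraph is purely organizational: one must check that every non-rational choice ($\epsilon^\circ$, $I_*$, $\ell$) has been accounted for, and that each such choice is either rational or contributes only through (\ref{simple reciprocity}) — which is exactly what Theorem \ref{thm:total vector bundles} and Propositions \ref{prop:canonical bundle machine}, \ref{prop:torsor splitting} and \ref{prop:canonical sections} were set up to guarantee.
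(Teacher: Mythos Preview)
Your proof is correct and follows essentially the same approach as the paper: both arguments reduce the first claim to the rationality of the constructions in Propositions \ref{prop:torsor splitting} and \ref{prop:canonical sections} together with the reciprocity law (\ref{simple reciprocity}) governing the action of $\Aut(\C)$ on the index set $\Q^\times_{>0}\backslash\A_f^\times/K_0$, and both deduce the descent criterion from injectivity of the total $q$-expansion map, established via strong approximation for $\mathrm{Spin}(V)$ to show that the boundary stratum $Z_K^\Upsilon$ meets every connected component of $\Sh_K(G,\DD,\Sigma)_{/\C}$. The paper phrases the first step in terms of distinguished points $\mathrm{cusp}_{K_{\Phi 0}}^{(a)}$ and the $\Aut(\C)$-action on them, while you phrase it as equivariance of the component-decomposition map, but the content is the same.
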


\begin{proof}
The formal scheme $\widehat{T}_\Phi(\sigma)$ of (\ref{formal torus}) has a distinguished $\Q$-valued point defined by $q_\alpha=0$ (\emph{i.e.}~the unique point of the underlying reduced $\Q$-scheme), and so has a distinguished $\C$-valued point. 
 Hence, using the morphism
 \[
  \bigsqcup_{ a\in  \Q^\times_{>0} \backslash \A_f^\times / K_0  }   \widehat{T}_\Phi(\sigma)_{/\C}   \to 
    \widehat{\Sh}_{K_{\Phi } }   ( Q_\Phi ,  \DD_\Phi ,\sigma)(\C)
 \]
  of Proposition \ref{prop:torsor splitting},  each $a\in \Q^\times_{>0} \backslash \A_f^\times$ determines a distinguished point 
\[
\mathrm{cusp}^{(a)} \in    \widehat{\Sh}_{K_{\Phi} }   ( Q_\Phi ,  \DD_\Phi ,\sigma)(\C).
\]
By examining the proof of Proposition   \ref{prop:torsor splitting},  the reciprocity law (\ref{simple reciprocity}) implies that 
\[
 \mathrm{cusp}^{(aa_\tau)}  = \tau ( \mathrm{cusp}^{(a)} ) 
\]
for any $\tau \in \Aut(\C)$, and the $q$-expansion (\ref{basic q}) is, tautologically, the image of the formal function $[ \bm{\ell}^{\otimes k} ,\psi ]$ in the completed local ring at $\mathrm{cusp}^{(a)}$.   The first claim is now a consequence of the equality
\[
[ \bm{\ell}^{\otimes k} ,\psi ] ^\tau=[ \bm{\ell}^{\otimes k} ,\psi^\tau  ]
\]
of formal functions on  
\[
\widehat{\Sh}_{K_\Phi} (   Q_\Phi ,  \DD_\Phi   ,\sigma  ) \iso  \widehat{\Sh}_K(G,\DD , \Sigma).
\]

The second claim follows from the first, and the observation that two rational sections $\psi_1$ and $\psi_2$ are equal if and only if 
$
\mathrm{FJ}^{(a)}(\psi_1) = \mathrm{FJ}^{(a)}(\psi_2)
$
for all $a$.  Indeed, to check that $\psi_1=\psi_2$, it suffices to check this in a formal neighborhood of one point on each connected component of  $\Sh_K(G,\DD,\Sigma)_{/\C}$.    Using strong approximation for the simply connected group
\[
\mathrm{Spin}(V) = \mathrm{ker} \big( \nu : G\to \mathbb{G}_m ),
\]
one can show that  the fibers of 
\[
\Sh_K(G,\DD)(\C) \to \Sh_{\nu(K)}(\mathbb{G}_m , \mathcal{H}_0)(\C)
\]
are connected.  This implies  that the images of the points $\mathrm{cusp}^{(a)}$ under 
\[
\widehat{\Sh}_{K_{\Phi 0} }   ( Q_\Phi ,  \DD_\Phi ,\sigma)(\C) \to \widehat{\Sh}_{K_{\Phi } }   ( Q_\Phi ,  \DD_\Phi ,\sigma)(\C)
\iso    \widehat{\Sh}_K(G,\DD , \Sigma)(\C)
\]
hit every connected component of $\Sh_K(G,\DD,\Sigma)(\C)$.
\end{proof}

%%%%%%%%%%%%%%%%%%%%%%%%%%%%%%%%%%%%%%

\section{Borcherds products}
\label{s:borcherds}

%%%%%%%%%%%%%%%%%%%%%%%%%%%%%%%%%%%%

Once again, we work with a fixed   $\Q$-quadratic space $(V,Q)$ of signature $(n,2)$ with $n\ge 1$, and denote by $(G,\DD)$  the associated GSpin Shimura datum of \S \ref{ss:gspin data}.  Fix  a $\Z$-lattice   $V_\Z \subset V$ on which the quadratic form is $\Z$-valued, and let $K$ be   as in  (\ref{K choice}).  Recalling the notation of Remark \ref{rem:little shimura},   fix a choice of 
\[
2\pi i \in \mathcal{H}_0.
\]

 We recall the analytic theory of Borcherds products \cite{Bor98, Bruinier} on $\Sh_K(G,\DD)(\C)$ using the adelic formulation as  in  \cite{KuBorcherds}.
Assuming that $V$ contains an isotropic line, we express their product expansions in  the algebraic language of  \S \ref{ss:qs}.

%%%%%%%%%%%%%%%%%%%%%%%%%%%%%%%%%%%%%%

\subsection{Weakly holomorphic forms}
\label{ss:weakly holomorphic forms}

%%%%%%%%%%%%%%%%%%%%%%%%%%%%%%%%%%%%

Let $S(V_{\A_f })$ be the Schwartz space of locally constant $\C$-valued compactly supported functions on $V_{\A_f} = V\otimes \A_f$. For any $g\in G(\A_f)$ abbreviate 
\[
g V_\Z = g V_{\widehat{\Z}} \cap V.
\]

Denote by  $\sS_{V_\Z  } \subset S( V_{ \A_f }  )$ the finite dimensional subspace of functions invariant under $V_{\widehat{\Z}}$, and supported on its dual lattice; we often identify it with the space
\[
\sS_{V_\Z  } = \C [ V^\vee_\Z / V_\Z ]
\]   
 of functions on $V^\vee_\Z / V_\Z$.  The metaplectic double cover $\widetilde{\SL}_2(\Z)$ of $\SL_2(\Z)$ acts via the Weil representation 
\[
\rho_{V_\Z }  : \widetilde{\SL}_2(\Z) \to \Aut_\C( \sS_{V_\Z} )
 \] 
 as in \cite{Bor98, Bruinier, BruinierFunke}.  Define the complex conjugate representation by
 \[
\overline{ \rho }_{V_\Z}( \gamma ) \cdot \varphi = \overline{ ( \rho _{V_\Z}( \gamma ) \cdot  \overline{\varphi } )} ,
 \]
 for $\gamma \in \widetilde{\SL}_2(\Z) $ and $\varphi \in \sS_{V_\Z} $.

\begin{remark}\label{rem:weil dual}
There is a canonical basis 
\[
\{ \phi_\mu : \mu \in V_\Z^\vee / V_\Z\} \subset \sS_{V_\Z},
\]
in which $\phi_\mu$ is the characteristic function of $\mu + V_\Z$.
This allows us to identify $\sS_{V_\Z}$ with its own $\C$-linear dual.  Under this identification, the
 complex conjugate representation $\overline{\rho}_{V_\Z}$ agrees with with  contragredient representation $\rho^\vee_{V_\Z}$. 
  It also agrees with  the representation denoted $\omega_{V_\Z}$ in \cite{AGHMP-1,AGHMP-2}.
 \end{remark}

Denote by $M^!_{1- n/2}(\overline{\rho}_{V_\Z })$  the space of weakly holomorphic forms for $\widetilde{\SL}_2(\Z)$ of weight $1-n/2$ and representation $\overline{\rho}_{V_\Z }$, as in \cite{Bor98, Bruinier, BruinierFunke}.   In particular, any 
\begin{equation}\label{input form}
f(\tau)  = \sum_{  \substack{ m\in \Q \\ m \gg -\infty} } c(m) \cdot q^m   \in M^!_{1- \frac{n}{2} }( \overline{\rho}_{V_\Z } )
\end{equation}
is an $\sS_{V_\Z }$-valued holomorphic function on the complex upper half-plane $\mathcal{H}$.  Each Fourier coefficient $c(m) \in \sS_{V_\Z }$ is determined by its values  $c(m,\mu)$ at the various cosets $\mu \in V^\vee_\Z/ V_\Z$.  Moreover,  $c(m,\mu) \neq 0$ implies  $m \equiv Q(\mu)$ modulo $\Z$.

\begin{definition}\label{def:integral form}
The weakly holomorphic form (\ref{input form}) is \emph{integral} if \[c(m,\mu) \in \Z\] for all $m\in \Q$ and all $\mu \in V_\Z^\vee / V_\Z$.
\end{definition}

It is a theorem of McGraw \cite{mcgraw} that the space of all forms (\ref{input form})  has a $\C$-basis of integral forms.

%%%%%%%%%%%%%%%%%%%%%%%%%%%%%%%%%%%%%%

\subsection{Borcherds products and regularized theta lifts}
\label{ss:borcherds definition}

%%%%%%%%%%%%%%%%%%%%%%%%%%%%%%%%%%%%

We now recall the meromorphic Borcherds products of \cite{Bor98, Bruinier, KuBorcherds}.

Write $\tau =u+i v \in \mathcal{H}$ for the variable on the complex upper half-plane.  
For each $\varphi\in \sS(V_{\A_f})$ there is a Siegel theta function
\[
\vartheta( \tau , z, g ; \varphi) : \mathcal{H} \times \DD \times G(\A_f)  \to \C ,
\]
as in \cite[(1.37)]{KuBorcherds}, satisfying the transformation law
\[
\vartheta( \tau , \gamma z,\gamma g h  ;  \varphi ) = \vartheta(\tau , z,g ;  \varphi \circ h^{-1} )
\]
for any $\gamma \in G(\Q)$ and any $h \in G(\A_f)$.  
If we use the basis of Remark \ref{rem:weil dual} to  define
\[
\vartheta( \tau , z, g ) =  \sum_{ \mu \in V_\Z^\vee / V_\Z } \vartheta( \tau , z, g ,\phi_\mu) \cdot \phi_\mu,
\]
we obtain a function 
\[
\vartheta( \tau , z, g ) : \mathcal{H} \times \DD \times G(\A_f)  \to \sS_{V_{\Z}},
\]
which  transforms in the variable $\tau$ like a modular form of weight  $\frac{n}{2}-1$ and representation $\rho_{V_\Z}$.

Given a  weakly holomorphic form (\ref{input form}) one can regularize the divergent integral 
\begin{equation}\label{theta integral}
\regtheta (f) (z, g) = \int_{   \SL_2(\Z) \backslash \mathcal{H}   }      f(\tau)   \vartheta(\tau,z,g)        \  \frac{ du\, dv}{v^2}
\end{equation}
as in  \cite{Bor98, Bruinier,  KuBorcherds}.  
Here we are using  the  map 
$
\sS_{V_\Z}  \otimes  \sS_{V_\Z} \to \C
$ 
defined by 
\[
\phi_{\mu} \otimes \phi_\nu \mapsto 
\begin{cases}
1 &\mbox{if }\mu=\nu \\
0 &\mbox{otherwise}
\end{cases}
\]
to obtain an $\SL_2(\Z)$-invariant scalar-valued integrand  $ f(\tau)   \vartheta(\tau,z,g)$. 

As the subgroup $K$ acts trivially on the quotient (\ref{disc quotient}),    the subspace  $\sS_{V_\Z}\subset S(V_{\A_f} )$ is $K$-invariant.  It follows that   (\ref{theta integral})  satisfies
\[
\regtheta(f) ( \gamma z, \gamma g k ) = \regtheta(f)(z,g)
\]
for any $\gamma \in G(\Q)$ and any $k \in K$.  This  allows us to view $\regtheta(f)$ as a function on $\Sh_K(G,\DD)(\C)$, which we call the \emph{regularized theta lift}.  Our $\regtheta(f)$ is usually denoted $\Phi(f)$ in the literature.

\begin{remark}\label{rem:miracle values}
The following fundamental theorem of Borcherds implies that the regularized theta lift is real analytic away from a prescribed divisor, with logarithmic singularities along that divisor.  
Remarkably, the regularization process gives $\regtheta(f)$ a meaningful value at \emph{every} point of $\Sh_K(G,\DD)(\C)$, including along the  singular divisor.
In the context of unitary Shimura varieties, this is  \cite[Theorem 4.1]{BHY} and \cite[Corollary 4.2]{BHY}, and the proof for orthogonal Shimura varieties is identical.
In other words,  $\regtheta(f)$ is a well-defined (but discontinuous) function on all of $\Sh_K(G,\DD)(\C)$.
Its values along the singular divisor will be  made more explicit  in \S \ref{ss:rational} when we use the embedding trick to complete the proof of Theorem \ref{thm:main borcherds}.
\end{remark}

\begin{theorem}[Borcherds]\label{thm:old borcherds}
Assume that $f$  is integral.  After multiplying $f$ by any sufficiently divisible positive integer\footnote{In particular, we may assume   $c(0,0)\in 2\Z$.},  there is a meromorphic section $\Psi(f)$ of the analytic line bundle $(\bm{\omega}^{an})^{\otimes c (0,0)/2}$  on $\Sh_K(G,\DD)(\C)$ such that, away from the support of $\mathrm{div}(\Psi(f))$, we have
\begin{equation}\label{naive borcherds}
     - 4 \log \| \Psi(f) \|_\naive  =  \regtheta (f) +  c (0,0)  \log(\pi) +  c (0,0) \Gamma'(1).
\end{equation}
Here $\Gamma'(s)$ is the derivative of the usual  Gamma function,  and $\| - \|_\naive$ is the metric of (\ref{naive metric}).
\end{theorem}

\begin{proof}
Choose a connected component $\DD^\circ \subset \DD$,  let $G(\R)^\circ \subset G(\R)$ be its stabilzer (this is just the subgroup of elements on which the spinor similitude $\nu : G\to \mathbb{G}_m$ is positive) and define $G(\Q)^\circ$ similarly.  Denote by  $\bm{\omega}_{\DD^\circ}$ the restriction to $\DD^\circ$ of the tautological line bundle on (\ref{orthogonal domain}).  It carries an action of $G(\R)^\circ$ covering the action on the base, and a $G(\R)^\circ$ invariant metric (\ref{naive metric}).  

For any $g\in G(\A_f)$,  denote by 
\[
\regtheta_g(f)(z) \define \regtheta(f)( z, g)
\]
the restriction of the regularized theta lift to the connected component 
\begin{equation}\label{connected component}
(  G(\Q)^\circ \cap g K g^{-1}  ) \backslash \DD^\circ \map{  z\mapsto ( z,g)    } \Sh_K( G,\DD)(\C).
\end{equation}

 Borcherds \cite{Bor98} proves the existence of a meromorphic section $\Psi_g(f)$ of $\bm{\omega}_{\DD^\circ}^{\otimes c (0,0)/2}$ satisfying 
\begin{equation}\label{naive norm}
 - 4 \log \| \Psi_g(f) \|_\naive  = \regtheta_g (f) +  c (0,0)  \log(\pi) +  c (0,0) \Gamma'(1).
\end{equation}
Note that Borcherds does not work adelically.  Instead, for every input form (\ref{input form}) he constructs a single
meromorphic section  $\Psi_{\mathrm{classical}} (f)$ over $\DD^\circ$.  However,    $g\in G(\A_f)$ determines an isomorphism 
$V_\Z ^\vee / V_\Z   \to g V_\Z ^\vee/ g V_\Z$, which induces an isomorphism
\[
M^!_{1- \frac{n}{2} }( \overline{\rho}_{V_\Z } ) \map{ f\mapsto g \cdot f} M^!_{1- \frac{n}{2} }( \overline{\rho}_{gV_\Z }).
\]
Replacing the pair $(V_\Z , f)$ by $( g V_\Z , g f)$  yields another meromorphic section $\Psi_\mathrm{classical}(g f)$ over $\DD^\circ$, and  
\[
 \Psi_g(f) = \Psi_\mathrm{classical}(g f)  .
\]

The relation (\ref{naive norm}) determines $\Psi_g(f)$ up to scaling by a complex number of absolute value $1$, and 
the linearity of $f \mapsto \regtheta_g(f)$ implies the multiplicativity 
\[
\Psi_g(f_1+f_2) = \Psi_g(f_1) \otimes \Psi_g(f_2)
\]
relation,  up to the ambiguity just noted.
As  $\regtheta_g(f)$ is invariant under translation by every $\gamma \in G(\Q)^\circ  \cap g K g^{-1}$, we must have
\[
 \gamma \cdot    \Psi_g(f ) (z)   = \xi_g(\gamma) \cdot \Psi_g(f) (  \gamma z  ) 
\]
for some unitary character   
\begin{equation}\label{multiplier}
\xi_g : G(\Q)^\circ  \cap g K g^{-1} \to \C^\times.
\end{equation}
The main result of  \cite{Bor:GKZcorrection}  asserts   that the character $\xi_g$ is of finite order, and so we may replace $f$ by a positive integer multiple in order to make it trivial.   The section $\Psi_g(f)$ now descends to the quotient (\ref{connected component}).

Repeating this procedure on every connected component of $\Sh_K(G ,\DD)(\C)$ yields a section $\Psi(f)$ satisfying (\ref{naive borcherds}).  
\end{proof}

The meromorphic section $\Psi(f)$ of the theorem is what is usually called the \emph{Borcherds product} (or Borcherds lift) of $f$.    We will use the same terminology to refer to the meromorphic section 
\[
\psi(f) =  (2\pi i)^{ c(0,0)} \Psi(2 f)
\]
of $(\bm{\omega}^{an})^{ \otimes c(0,0) }$, which has better arithmetic properties.  
We will see in \S \ref{s:integral borcherds}   that, after rescaling by a constant of absolute value $1$ on every connected component of $\Sh_K(G,\DD)(\C)$,  the section $\psi(f)$ is algebraic and defined over the reflex field $\Q$.

\begin{proposition}\label{prop:borcherds algebraic}
Assume that either  $n\ge 3$, or that $n=2$ and $V$ has Witt index $1$.
The    Borcherds product  $\Psi(f)$ of Theorem \ref{thm:old borcherds},  a priori a meromorphic section on $\Sh_K(G,\DD  )(\C)$, is the analytification of a rational section  on  $\Sh_K(G,\DD  )_{/\C}$.     
\end{proposition}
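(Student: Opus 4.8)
The plan is to invoke GAGA in a form suitable for the (possibly non-compact) quasi-projective setting, by first extending everything to the Baily--Borel compactification where GAGA applies directly. First I would observe that the analytic divisor of $\Psi(f)$ is, by the defining relation $-4\log\|\Psi(f)\|_{\naive} = \Theta^{reg}(f) + c(0,0)\log(\pi) + c(0,0)\Gamma'(1)$ together with the singularity behavior of the regularized theta lift recalled in \S\ref{ss:borcherds definition}, equal to the analytic special divisor $\mathcal{Z}(f)(\C)$, which is the analytification of an algebraic divisor on $\Sh_K(G,\DD)_{/\C}$. Hence the associated line bundle $\mathcal{O}(\mathcal{Z}(f))$ is algebraic, and after twisting we may regard $\Psi(f)$ as a meromorphic (indeed, on the complement of $\mathcal{Z}(f)(\C)$, nowhere-vanishing holomorphic) section of an algebraic line bundle whose analytification it trivializes away from $\mathcal{Z}(f)(\C)$. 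It therefore suffices to show that a holomorphic section of $(\bm{\omega}^{an})^{\otimes c(0,0)/2}\otimes \mathcal{O}(-\mathcal{Z}(f))^{an}$ over all of $\Sh_K(G,\DD)(\C)$, nowhere vanishing, is algebraic.

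Next I would pass to the Baily--Borel compactification $\Sh_K(G,\DD)^{BB}$. Under Hypothesis~\ref{hyp:koecher}, the boundary has codimension $\ge 2$, so Koecher's principle applies: any section of a suitable automorphic line bundle over $\Sh_K(G,\DD)(\C)$ of moderate growth extends holomorphically over the boundary of the (normal, projective) analytic space $\Sh_K(G,\DD)^{BB}(\C)$. The growth of $\Psi(f)$ is controlled by that of $\Theta^{reg}(f)$, which grows at most logarithmically toward the boundary; equivalently, as indicated in the text, the condition $c(0,0)\in 2^{n+2}\Z$ from Hypothesis~\ref{hyp:good multiple} guarantees that $\bm{\omega}^{\otimes c(0,0)/2}$ extends to an (algebraic) line bundle $\bm{\omega}^{BB}$ on $\Sh_K(G,\DD)^{BB}$, and $\Psi(f)$ extends to a meromorphic section of $\bm{\omega}^{BB,\, \otimes c(0,0)/2}\otimes \mathcal{O}(-\mathcal{Z}(f)^{BB})$, where $\mathcal{Z}(f)^{BB}$ denotes the Zariski closure. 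Now $\Sh_K(G,\DD)^{BB}$ is a projective variety over $\C$, so Serre's GAGA theorem applies: the extended section, being a global meromorphic section of an algebraic coherent sheaf on a projective $\C$-scheme, is automatically algebraic. Restricting back to the open subscheme $\Sh_K(G,\DD)_{/\C}$ gives that $\Psi(f)$ is a rational section there.

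The main obstacle is the growth/extension step: one must verify carefully that the Borcherds product, and not merely a high power of it, has the moderate growth needed for $\bm{\omega}^{\otimes c(0,0)/2}$ (rather than some larger twist) to extend across the Baily--Borel boundary, and that the extended object is meromorphic rather than merely continuous or with essential singularities. This is precisely where the numerology $c(0,0)\in 2^{n+2}\Z$ enters, and where one needs the precise asymptotics of $\Theta^{reg}(f)$ near the boundary strata from \cite{Bor98, Bruinier, KuBorcherds}; the text signals that this is to be carried out "in the proof of Proposition \ref{prop:borcherds algebraic} below." The remaining ingredients---that $\mathcal{Z}(f)(\C)$ is the analytification of an algebraic divisor (which follows from the algebraicity of the special divisors $\mathcal{Z}(m,\mu)$ established earlier), that Koecher's principle holds under Hypothesis~\ref{hyp:koecher}, and GAGA on the projective variety $\Sh_K(G,\DD)^{BB}$---are standard. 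An alternative to passing through the Baily--Borel compactification would be to work on a smooth projective toroidal compactification $\Sh_K(G,\DD,\Sigma)$ as in Remark~\ref{rem:nice decompositions}, using the algebraic theory of automorphic vector bundles from Theorem~\ref{thm:total vector bundles} to extend $\bm{\omega}^{\otimes c(0,0)/2}$ there, and then applying GAGA; this has the advantage of keeping the ambient space smooth, at the cost of a more involved analysis of the order of $\Psi(f)$ along the toroidal boundary divisors via its $q$-expansions.
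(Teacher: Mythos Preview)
Your approach is essentially the paper's: pass to the Baily--Borel compactification, extend the line bundle and the Borcherds product, then apply GAGA on the resulting projective variety. Two differences in execution are worth flagging.

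First, the paper does not bother to identify the divisor of $\Psi(f)$ with $\mathcal{Z}(f)(\C)$ in advance. Instead it extends $\Psi(f)$ meromorphically to $\Sh_K(G,\DD)^{\mathrm{BB}}$ and then invokes Chow's theorem on that projective variety to conclude that $D=\mathrm{div}(\Psi(f))$ is algebraic; only then does it twist by $\co(-D)$ and apply GAGA. This avoids having to say anything about the Zariski closure of $\mathcal{Z}(f)$ in the Baily--Borel compactification.

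Second, and more to the point, the ``main obstacle'' you identify---controlling the growth of $\Psi(f)$ via the asymptotics of $\Theta^{reg}(f)$ near the boundary---is not actually present. The paper cites Levi's generalization of Hartogs' theorem (\cite[\S 9.5]{GR}): a meromorphic function on a normal complex analytic space minus a closed analytic set of codimension $\ge 2$ extends meromorphically, with no growth hypothesis whatsoever. Under Hypothesis~\ref{hyp:koecher} the Baily--Borel boundary has codimension $\ge 2$, and the extension of $\bm{\omega}^{\otimes c(0,0)/2}$ to $\Sh_K(G,\DD)^{\mathrm{BB}}$ is handled concretely via the Hodge bundle of the Kuga--Satake abelian scheme together with the relation $\bm{\omega}_{\mathrm{Hodge}}^{\otimes 2}\cong \bm{\omega}^{\otimes 2^{n+1}}$ (this is where $c(0,0)\in 2^{n+2}\Z$ is used). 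Once the line bundle is extended, the meromorphic extension of $\Psi(f)$ is automatic.
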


\begin{proof}
It suffices to prove this after shrinking $K$, so we may assume that $K$ is neat and $\Sh_K(G,\DD  )$ is a quasi-projective variety.
The hypotheses on $n$  imply that the  boundary of  the (normal and projective) Baily-Borel compacification  
\[
\Sh_K(G,\DD  ) \hookrightarrow \Sh_K(G,\DD  )^{\mathrm{BB}}
\]
lies in codimension $\ge 2$.

Let $D$ be the polar part of the divisor of  $\Psi(f)$, so that $D$ is an effective analytic divisor  on $\Sh_K(G,\DD)(\C)$ with $\mathrm{div}(\Psi(f)) + D$ effective.
The proof of Levi's generalization of  Hartogs'  theorem \cite[\S 9.5]{GR} 
shows that the topological closure of $D$ in $\Sh_K(G,\DD  )^{\mathrm{BB}}(\C)$ is again an analytic divisor.
By Chow's theorem on the algebraicity of analytic divisors on projective varieties, this closure is algebraic, and so $D$ itself was algebraic.

 Now view $\Psi(f)$ as a holomorphic section of the analytification of the line bundle
$
\bm{\omega}^{\otimes c(0,0)/2} \otimes \co(D)
$
on  $  \Sh_K(G,\DD  )_{/\C}$.   By Hartshorne's extension of GAGA \cite[Theorem VI.2.1]{hart} this section  is  algebraic, as desired.
\end{proof}

%%%%%%%%%%%%%%%%%%%%%%%%%%%%%%%%%%%%%%%%

\subsection{The product expansion I}
\label{ss:first product}

%%%%%%%%%%%%%%%%%%%%%%%%%%%%%%%%%%%%%%%%

As in the proof of Theorem \ref{thm:old borcherds}, fix a connected component $\DD^\circ\subset\DD$ and  an $h\in G(\A_f)$, and denote by $\Psi_h(f)$ the restriction of the Borcherds product to the connected component
 \[
(  G(\Q)^\circ \cap h K h^{-1}  ) \backslash \DD^\circ \map{  z\mapsto ( z,h)    } \Sh_K( G,\DD)(\C).
\]
In this subsection we recall the product expansion for $\Psi_h(f)$ due to Borcherds.
Let $\bm{\omega}_{\DD^\circ}$ be the restriction to $\DD^\circ$ of the tautological bundle on (\ref{orthogonal domain}).

Assume throughout \S \ref{ss:first product} that there exists an isotropic line $I\subset V$.
 Choose a second isotropic line $I_*\subset V$ with $[I,I_*]\neq 0$, but do this in a  particular way: first choose a $\Z$-module generator 
 \[
 \ell \in I\cap hV_{\Z},
 \]
  and then choose a  $k\in hV_{\Z}^\vee$ such that $[\ell,k]=1$.  Now  take  $I_*$ be the span of the isotropic vector
\begin{equation}\label{second isotropic}
\ell_* = k - Q(k) \ell.
\end{equation}
Obviously $[\ell,\ell_*]=1$, but we need not have $\ell_* \in h V_\Z^\vee$.

Abbreviate $V_0 = I^\perp / I$.  This is a $\Q$-vector space endowed with a quadratic form  of signature $(n-1,1)$, and a $\Z$-lattice 
\begin{equation}\label{middle lattice}
V_{0\Z}   = (I^\perp \cap hV_{\Z}) / (I \cap hV_{\Z})   \subset V_0.
\end{equation}
Denote by 
\[
\mathrm{LightCone}(V_{0\R}) =  \{ w \in V_{0\R} : Q(w) < 0 \}
\]
the light cone in $V_{0\R}$.  It is a disjoint union of two open convex cones.    Every $v\in  I^\perp_\C$ determines an isotropic vector 
\[
 \ell_* + v - [\ell_*,v] \ell - Q(v) \ell \in V_\C,
\]
depending only on  the image $v \in V_{0\C}$.  The resulting injection $V_{0\C} \to \mathbb{P}^1(V_\C)$ restricts to an isomorphism
\[
V_{0\R}  +  i \cdot  \mathrm{LightCone}(V_{0\R}) \iso \DD,
\]
and we let  
\begin{equation}\label{light component}
\mathrm{LightCone}^\circ (V_{0\R}) \subset  \mathrm{LightCone}(V_{0\R})
\end{equation}
 be the connected component with 
\[
V_{0\R}  + i \cdot   \mathrm{LightCone}^\circ (V_{0\R}) \iso \DD^\circ.
\]

There is an action $\rho_{V_{0\Z} }$ of $\widetilde{\SL}_2(\Z)$ on the finite dimensional $\C$-vector space $\sS_{V_{0\Z}}$, exactly  as in \S \ref{ss:weakly holomorphic forms}, and  a  weakly holomorphic modular form
\[
f_0(\tau) = \sum_{  \substack{ m\in \Q  \\  m\gg -\infty } } \sum_{\lambda\in V_{0\Z}^\vee / V_{0\Z} } c_0(m,\lambda) \cdot q^m \in 
M^!_{1-\frac{n}{2}}(\overline{\rho}_{V_{0\Z}})
\]
whose coefficients are defined by
\[
c_0(m,\lambda) = \sum_{ \substack{  \mu  \in  hV_{\Z}^\vee / hV_{\Z}   \\   \mu\sim  \lambda    }  } c(m, h^{-1}\mu).
\]
Here we understand $h^{-1} \mu $ to mean the image of $\mu$ under the isomorphism $hV_{\Z}^\vee / hV_{\Z} \to V_{\Z}^\vee /V_{\Z}$ defined by multiplication by $h^{-1}$. The notation  $\mu \sim   \lambda$ requires explanation:  denoting by 
\[
 p: ( I^\perp \cap  hV_{\Z}^\vee) /(I^\perp \cap  hV_{\Z})  \to V_{0\Z}^\vee  / V_{0\Z}
\]
the natural map,  $\mu \sim  \lambda$ means that there is a  
\begin{equation}\label{sim lift}
\tilde{\mu} \in  I^\perp \cap  (\mu+hV_{\Z})
\end{equation}
  such that  $p(\tilde{\mu}) = \lambda$.

Every vector $x\in V_0$ of positive length determines a hyperplane $x^\perp \subset V_{0\R}$. For each $m\in \Q_{>0}$ and $\lambda\in V_{0\Z}^\vee/V_{0\Z}$ define a formal  sum of hyperplanes
\[
H(m,\lambda) = \sum_{   \substack{  x\in \lambda+ V_{0\Z} \\  Q(x) =m }   }  x^\perp ,
\]
in $V_{0\R}$, and set 
\[
H(f_0) =  \sum_{  \substack{   m \in \Q_{>0} \\ \lambda \in V_{0\Z}^\vee / V_{0\Z}   }  }
 c_0(-m,\lambda) \cdot H(m,\lambda).
\]

\begin{definition}
A \emph{Weyl chamber} for $f_0$ is a connected component 
\begin{equation}\label{weyl chamber}
\weyl \subset   \mathrm{LightCone}^\circ(V_{0\R})  \smallsetminus \mathrm{Support}(H(f_0) ) .
\end{equation}
\end{definition}

Let $N$ be  the positive integer determined by $N\Z =[ hV_{\Z} , I \cap hV_{\Z} ]$, and note that $\ell / N \in h V_\Z^\vee$. Set 
  \begin{equation}\label{constant A}
A    = \prod_{ \substack{    x \in \Z/ N  \Z \\ x \neq 0       }  }
\left(
1-e^{ 2\pi  i x /N} \right)^{ c(0, x   h^{-1} \ell /N ) }.
\end{equation}

Tautologically, every fiber of $\bm{\omega}_{\DD^\circ}$ is a line in $V_\C$, and each such fiber pairs nontrivially with the isotropic line $I_\C$.  Using the nondegenerate pairing 
\[
[\, \cdot\, , \, \cdot\, ]  :  I_\C \otimes \bm{\omega}_{\DD^\circ} \to \co_{\DD^\circ},
\] 
the Borcherds product $\Psi_h(f)$ and the isotropic vector $\ell \in I$ determine a meromorphic function $[  \ell ^{\otimes c(0,0) /2}  , \Psi_h(f)    ]$ on $\DD^\circ$.    It is this function that Borcherds expresses as an infinite product.

\begin{theorem}[Borcherds \cite{Bor98,Bruinier}]\label{thm:first q-expansion}
For each Weyl chamber $\weyl$  there is a vector  $\varrho \in V_0$  with the following property:   For all 
\[
v  \in  V_{0\R} + i\cdot \mathscr{W} \subset V_{0\C}
\]
with  $| Q(  \mathrm{Im}(v)   ) | \gg 0$, the value of   $[  \ell ^{\otimes c(0,0) /2}  , \Psi_h(f)    ]$  at the isotropic line
\[
 \ell_* + v - [\ell_*,v] \ell - Q(v) \ell \in \DD^\circ
\]
is given by the (convergent)  infinite product 
\[
\kappa   A    \cdot e^{2\pi  i [\varrho ,v] }
\prod_{  \substack{   \lambda \in V_{0\Z}^\vee  \\  [\lambda , \weyl ] >0    }   }
\prod_{   \substack{  \mu \in  hV_{\Z}^\vee /  h V_{\Z} \\   \mu \sim   \lambda  }   }
\Big(
1- \zeta_\mu \cdot  e^{2\pi  i [\lambda, v ]}
\Big)^{  c( - Q(\lambda) , h^{-1} \mu)    }
\]
for some $\kappa\in \C$ of absolute value $1$.  Here,  recalling the vector $k \in h V_{\Z}^\vee$ appearing in  (\ref{second isotropic}),  we have set
 \[
 \zeta_\mu = e^{2\pi  i [ \mu, k  ]} .
  \] 
\end{theorem}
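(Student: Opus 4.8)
The plan is to deduce this from Borcherds' original product formula, in the form recalled by Bruinier, after reducing to the classical (non-adelic) normalization and carefully matching conventions. By the remark at the end of \S\ref{ss:borcherds definition}, $\Psi_h(f)$ coincides with the classical Borcherds lift attached to the pair $(hV_\Z, hf)$, so it suffices to work with the fixed even lattice $L = hV_\Z \subset V$, the primitive isotropic vector $\ell \in L \cap I$, and the vector $k \in L^\vee$ with $[\ell,k] = 1$. Put $\ell_* = k - Q(k)\ell$ as in (\ref{second isotropic}), so that $Z(v) = \ell_* + v - [\ell_*,v]\ell - Q(v)\ell$ is a nowhere-vanishing holomorphic section of $\bm{\omega}_{\DD^\circ}$ over the tube-domain chart $V_{0\R} + (2\pi i)^{-1}\mathrm{LightCone}^\circ(V_{0\R}) \iso \DD^\circ$, normalized by $[\ell, Z(v)] = 1$. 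Writing $\Psi_h(f) = \psi(v)\cdot Z(v)^{\otimes c(0,0)/2}$ for a meromorphic function $\psi$, the normalization gives $[\ell^{\otimes c(0,0)/2}, \Psi_h(f)] = \psi(v)$ and $\|\Psi_h(f)\|_\naive^2 = |\psi(v)|^2\,(-[Z(v),\overline{Z(v)}])^{c(0,0)/2}$, so that (\ref{naive borcherds}) reads
\[
-4\log|\psi(v)| = \Theta^{reg}_h(f)(v) + c(0,0)\bigl(\log\pi + \Gamma'(1) + \log(-[Z(v),\overline{Z(v)}])\bigr).
\]

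The substance is Borcherds' evaluation of the integral (\ref{theta integral}) in a neighborhood of the $0$-dimensional boundary stratum attached to $I$: one unfolds $\int_{\SL_2(\Z)\backslash\mathbb{H}} f(\tau)\,\vartheta(\tau,z,h)\,\frac{du\,dv}{v^2}$ against the Fourier expansion of $f$ along the $\ell$-direction, reducing the theta integral over $L$ to theta integrals for the signature $(n-1,1)$ lattice $V_{0\Z}$ of (\ref{middle lattice}). For $v \in V_{0\R} + (2\pi i)^{-1}\weyl$ with $|Q(\mathrm{Im}\,v)| \gg 0$ this splits $\Theta^{reg}_h(f)(v)$ into: (i) a contribution of the $0$-th coefficient $c(0,\cdot)$, producing a term proportional to $\log(-Q(\mathrm{Im}\,v))$ together with the constants $\log\pi$ and $\Gamma'(1)$, the linear exponential $e^{2\pi i[\varrho,v]}$ whose Weyl vector $\varrho = \varrho(\weyl)$ records the chamber through which one unfolds, and the finite product $A$ of (\ref{constant A}) coming from the nonzero multiples $x\ell/N$; and (ii) a sum over $\lambda \in V_{0\Z}^\vee$ with $[\lambda,\weyl] > 0$, and over cosets $\mu$ of $L^\vee/L$ with $\mu \sim \lambda$ in the sense of (\ref{sim lift}), of $-4\log\bigl|1 - \zeta_\mu e^{2\pi i[\lambda,v]}\bigr|^{c(-Q(\lambda),\mu)}$. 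The root of unity $\zeta_\mu = e^{2\pi i[\mu,k]}$ enters precisely because $\ell_*$ need not lie in $L^\vee$, so that translating a representative of $\mu$ into $I^\perp$ introduces a phase in the $\ell$-direction.

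Feeding (i)--(ii) into the displayed identity, the terms $c(0,0)\log\pi$, $c(0,0)\Gamma'(1)$, the $\log(-Q(\mathrm{Im}\,v))$-term, and $c(0,0)\log(-[Z(v),\overline{Z(v)}])$ cancel --- this is exactly what the normalizing constants in (\ref{naive borcherds}) are designed to achieve --- leaving $|\psi(v)|$ equal to the absolute value of the convergent product
\[
A\,e^{2\pi i[\varrho,v]}\prod_{\substack{\lambda\in V_{0\Z}^\vee\\ [\lambda,\weyl]>0}}\;\prod_{\substack{\mu\in L^\vee/L\\ \mu\sim\lambda}}\bigl(1 - \zeta_\mu e^{2\pi i[\lambda,v]}\bigr)^{c(-Q(\lambda),\mu)}
\]
on the region $|Q(\mathrm{Im}\,v)| \gg 0$ inside the chart. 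This region lies in $\weyl$, away from the hyperplanes supporting $H(f_0)$, so both $\psi$ and the product are holomorphic and nowhere zero there; equality of absolute values then forces equality up to a locally constant factor $u$ of absolute value $1$, which is the claimed formula.

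I expect the only genuine difficulty to be bookkeeping: matching the adelic lattice data used here --- the surjection $(I^\perp\cap L^\vee)/(I^\perp\cap L) \to V_{0\Z}^\vee/V_{0\Z}$, the definition of $c_0(m,\lambda)$, the relation $\mu \sim \lambda$, the integers $N$, and the constant $A$ --- against the lattice-theoretic formulas in \cite[\S 13]{Bor98} and \cite{Bruinier}, while tracking the phase factors $\zeta_\mu$. There is no conceptual obstacle: the theorem is Borcherds', and what remains is to check that the adelic reformulation above reproduces his formula verbatim.
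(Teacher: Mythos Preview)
The paper does not prove this theorem: it is stated as a result of Borcherds, with citations to \cite{Bor98} and \cite{Bruinier}, and no proof is given. Your proposal correctly identifies this --- the substance is entirely in those references, and what remains is only the translation between the adelic language used here and the classical lattice-theoretic formulation in Borcherds' paper. Your sketch of that translation (via the remark at the end of \S\ref{ss:borcherds definition} identifying $\Psi_h(f)$ with $\Psi_{\mathrm{classical}}(hf)$, the tube-domain chart $v \mapsto Z(v)$, and the unfolding of the theta integral) is the right outline, and your closing assessment that ``there is no conceptual obstacle'' matches the paper's own treatment exactly.

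One small notational slip: in your displayed product you write the exponent as $c(-Q(\lambda),\mu)$, but since you have passed to the lattice $L = hV_\Z$ and the form $hf$, the coefficient of $hf$ at $\mu \in L^\vee/L$ is $c(-Q(\lambda), h^{-1}\mu)$ in terms of the original $f$; this is the exponent appearing in the theorem statement.
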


\begin{remark}
The vector $\varrho\in V_0$ of the theorem  is the \emph{Weyl vector}.  It is completely determined by the weakly holomorphic form $f_0$ and the choice of Weyl chamber $\mathscr{W}$.
\end{remark}

%%%%%%%%%%%%%%%%%%%%%%%%%%%%%%%%%%%%%%%%

\subsection{The product expansion II}
\label{ss:second product}

%%%%%%%%%%%%%%%%%%%%%%%%%%%%%%%%%%%%%%%%

We now  connect the product expansion of Theorem \ref{thm:first q-expansion} with the algebraic theory of $q$-expansions from \S \ref{ss:qs}.    Throughout \S \ref{ss:second product} we assume that $K$ is neat.

 The theory of \S \ref{ss:qs} applies to sections of the algebraic line bundle $\bm{\omega}_{/\C}$ on $\Sh_K(G,\DD)_{/\C}$ and at the moment we only know the algebraicity of  Borcherds products in special cases (Proposition \ref{prop:borcherds algebraic}).  
Throughout \S \ref{ss:second product}, we simply assume that our given Borcherds product $\Psi(f)$ is algebraic.

Begin by choosing a cusp label representative 
\[
\Phi =(P,\DD^\circ , h)
\]
 for which $P$ is the stabilizer of an isotropic line $I$.  Let $\ell \in I \cap hV_\Z$ be a generator, let $\ell_*$ be as in (\ref{second isotropic}), and let $I_* = \Q\ell_*$.   
 
Recall from the discussion surrounding (\ref{component section}) that the choice of $I_*$ determines morphisms of mixed Shimura data
 \[
\xymatrix{
(Q_\Phi , \DD_\Phi)    \ar[r]_{ \nu_\Phi } & ( \mathbb{G}_m , \mathcal{H}_0)  \ar@/_1pc/[l]_{\spl},
}
\] 
where we specify that  $\nu_\Phi: \DD_\Phi \to \mathcal{H}_0$ sends the connected component $\DD_\Phi^\circ \subset \DD_\Phi$ containing $\DD^\circ$ to the 
$2\pi i \in \mathcal{H}_0$ fixed at the beginning of \S \ref{s:borcherds}.

 Set $V_0 = I^\perp / I$ as before.  The connected component  (\ref{light component}) was chosen in such a way that the isomorphism
 \[
 V_{0\C} \map{ \otimes \ell }V_{0\C} \otimes I \map{(\ref{cusp unipotent})} U_\Phi(\C)
 \]
 identifies 
 \[
 V_{0\R} + i \cdot   \mathrm{LightCone}^\circ (V_{0\R}) \iso U_\Phi(\R) + C_\Phi,
 \]
  where
 $C_\Phi \subset U_\Phi(\R)(-1)$ is the open convex cone  (\ref{convex cone}). 
Equivalently,  the isomorphism
 \begin{equation}\label{lightcone match}
V_{0\R} \map{\otimes  ( - 2\pi i )^{-1} \ell  } V_{0\R} \otimes I(-1) \iso U_\Phi(\R)(-1)
\end{equation}
(note the minus sign!)
 identifies $\mathrm{LightCone}^\circ(V_{0\R}) \iso C_\Phi$.

\begin{lemma}\label{lem:good decomposition}
Fix a Weyl chamber $\mathscr{W}$ as in (\ref{weyl chamber}).  After possibly shrinking $K$, there exists a $K$-admissible,  complete cone decomposition $\Sigma$ of $(G,\DD)$  having the no self-intersection property, and such that the following holds: there is some  top-dimensional rational polyhedral cone  $\sigma \in \Sigma_\Phi$ whose interior  is identified with an  open subset of  $\weyl$ under the above isomorphism 
\[
C_\Phi \iso  \mathrm{LightCone}^\circ(V_{0\R})  .
\]
\end{lemma}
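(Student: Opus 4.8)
The plan is to construct a toroidal compactification whose fan at $\Phi$ refines the Weyl-chamber structure on $C_\Phi$, by taking any $K$-admissible complete cone decomposition and refining it. First I would record the relevant geometry of the chamber: under the isomorphism $C_\Phi \iso \mathrm{LightCone}^\circ(V_{0\R})$ the set $\mathrm{Support}(H(f_0))$ is a locally finite union of rational hyperplanes through the origin (this is part of Borcherds' analysis of the divisor of the regularized theta lift, and is implicit in the statement of Theorem \ref{thm:first q-expansion}), so each connected component of its complement, and in particular $\weyl$, is a nonempty open \emph{convex} cone --- it is the intersection of the convex cone $\mathrm{LightCone}^\circ(V_{0\R})$ with one open half-space per hyperplane of the arrangement. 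Choosing a rational point $x_0\in\weyl$ and $n$ rational vectors $v_1,\dots,v_n$ that are linearly independent and close enough to $x_0$, the simplicial rational cone $\sigma=\mathrm{cone}(v_1,\dots,v_n)$ is top-dimensional, and convexity of $\weyl$ forces $\sigma\smallsetminus\{0\}\subset\weyl$; in particular $\mathrm{int}(\sigma)\subset\weyl\subset C_\Phi\subset C_\Phi^*$, so once $\sigma$ belongs to a $K$-admissible $\Sigma_\Phi$ the pair $(\Phi,\sigma)$ is automatically a toroidal stratum representative.

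Next I would produce the decomposition. By Pink's theory (Remark \ref{rem:nice decompositions} and the discussion around it) fix a finite, $K$-admissible, complete cone decomposition $\Sigma^\circ$ of $(G,\DD)$; I would then refine it to a finite, $K$-admissible, complete $\Sigma^1$ with $\sigma\in\Sigma^1_\Phi$. To arrange this, choose $x_0$ above to be generic for $\Sigma^\circ_\Phi$, so it lies in the interior of a unique top-dimensional cone $\tau_0\in\Sigma^\circ_\Phi$, and to have trivial stabilizer in the group $\Gamma$ of $K$-isomorphisms $\Phi\to\Phi$ acting on $U_\Phi(\R)(-1)$; since $\Gamma$ acts properly discontinuously on the open cone $C_\Phi$, for $v_1,\dots,v_n$ sufficiently close to $x_0$ the cone $\sigma$ lies inside $\mathrm{int}(\tau_0)$ and none of its translates $\gamma\sigma$, $\gamma\neq 1$, produces a wall meeting $\mathrm{int}(\sigma)$. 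Then the common refinement of $\Sigma^\circ_\Phi$ with the $\Gamma$-invariant, finite-mod-$\Gamma$ collection of faces of the $\gamma\sigma$ is a $\Gamma$-invariant fan refining $\Sigma^\circ_\Phi$ and containing $\sigma$; because $\Sigma^\circ$ is finite this refinement at $\Phi$ propagates, along the (finitely many) $K$-morphisms and the compatibility rule $\Sigma_{\Phi_1}=\{\sigma\in\Sigma_\Phi:\sigma\subset C_{\Phi_1}^*\}$, to a refinement $\Sigma^1$ of $\Sigma^\circ$ that is still finite, $K$-admissible and complete. Finally, as recalled in the excerpt (after \cite[\S 7.13]{pink}), after shrinking $K$ --- at a single auxiliary prime, so $K_p$ and neatness are unaffected --- the decomposition $\Sigma:=\Sigma^1$ acquires the no self-intersection property; this shrinking leaves $\Phi$, $U_\Phi$, $C_\Phi$, $\weyl$ and $\sigma$ untouched, and $\Sigma$ remains finite, $K$-admissible and complete with $\sigma\in\Sigma_\Phi$ top-dimensional and $\mathrm{int}(\sigma)\subset\weyl$, as required.

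The step I expect to be the real obstacle is the refinement in the second paragraph: producing a refinement of $\Sigma^\circ$ that contains the prescribed cone $\sigma$, simultaneously $\Gamma$-equivariantly at $\Phi$ and compatibly with the $K$-admissibility constraints at all cusp label representatives. This is a purely combinatorial matter, handled by the refinement calculus for rational polyhedral cone decompositions of Ash--Mumford--Rapoport--Tai and Pink, using finiteness of $\Sigma^\circ$, proper discontinuity of the arithmetic group on the open cone $C_\Phi$, and the fact (also recalled in the excerpt) that refinements preserve finiteness, admissibility, completeness and the no self-intersection property; beyond Borcherds' local-finiteness statement used in the first paragraph, no new geometric input is required.
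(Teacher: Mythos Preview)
Your approach is correct, but takes a more hands-on route than the paper. The paper dispatches this in two lines: first, using Remark~\ref{rem:nice decompositions}, shrink $K$ and choose $\Sigma$ already finite, smooth, complete, $K$-admissible, and with the no self-intersection property; then apply barycentric subdivision (in the sense of \cite[\S 5.24]{pink}) finitely many times. Barycentric subdivision is a canonical refinement that automatically preserves finiteness, smoothness, completeness, $K$-admissibility, and---since any refinement does---the no self-intersection property. Because iterated barycentric subdivision drives the projective diameter of the top cones to zero, after finitely many steps some top-dimensional cone has interior contained in the open set $\weyl$.

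The essential difference: you build a specific $\sigma\subset\weyl$ first and then refine the fan to contain it, which forces you to carry out the $\Gamma$-equivariant propagation of the refinement at $\Phi$ to all other cusp label representatives---exactly the step you correctly flag as the delicate one. The paper sidesteps this entirely, since barycentric subdivision is globally defined on the whole $K$-admissible decomposition and needs no propagation. Your order of operations (refine first, then shrink $K$ to acquire no self-intersection) is also reversed from the paper's (shrink $K$ first, then refine, using that refinements preserve the property); both are valid. What your approach buys is an explicit $\sigma$ from the outset; what the paper's buys is a proof with essentially no combinatorics to verify.
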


\begin{proof}
This is an elementary exercise.  Using Remark \ref{rem:nice decompositions}, we first  shrink $K$ in order to find some  $K$-admissible, complete cone decomposition $\Sigma$ of $(G,\DD)$  having the no self-intersection property.   We may furthermore choose $\Sigma$ to be smooth, and applying  barycentric subdivision \cite[\S 5.24]{pink} finitely many times yields a refinement of $\Sigma$ with the desired properties.
\end{proof}

For the remainder of  \S \ref{ss:second product} we assume that   $K$, $\Sigma$, $\weyl$, and $\sigma \subset U_\Phi(\R)(-1)$ are as in Lemma \ref{lem:good decomposition}.    As in \S \ref{ss:qs},  the line bundle $\bm{\omega}$ on $\Sh_K(G,\DD)$ has a canonical extension to $\Sh_K(G,\DD,\Sigma)$, and we view $\Psi(f)$ as a rational section over $\Sh_K(G,\DD,\Sigma)_{/\C}$.

The top-dimensional cone $\sigma$ singles out a $0$-dimensional stratum
\[
Z^{(\Phi,\sigma)}_K(G,\DD,\Sigma) \subset \Sh_K(G,\DD ,\Sigma )
\]
as in \S \ref{ss:compactification}.  Completing along this stratum,  Proposition \ref{prop:torsor splitting} provides us with a formally \'etale surjection
\[
\bigsqcup_{ a \in \Q^\times_{>0} \backslash \A_f^\times / K_0  }\Spf \Big(   \C[[ q_\alpha]]_{  \substack{  \alpha \in \Gamma_\Phi^\vee(1)  \\ \langle \alpha, \sigma \rangle \ge 0   }  }   \Big) \to  \widehat{\Sh}_K(G,\DD,\Sigma)_{/\C},
\]
where $K_0 \subset \A_f^\times$ is chosen small enough  that the section   (\ref{component section})  satisfies $\spl(K_0) \subset K_\Phi$.
As in  (\ref{basic q}), the Borcherds product $\Psi(f)$ and the isotropic vector $\ell$ determine a rational formal function $ [ \bm{\ell}^{  \otimes c(0,0) / 2} , \Psi  (f) ]$ on the target, which pulls back to a  rational formal function 
\begin{equation}\label{BFJ}
\mathrm{FJ}^{(a)} ( \Psi(f) ) \in   \mathrm{Frac}\Big(  \C[[ q_\alpha]]_{  \substack{  \alpha \in \Gamma_\Phi^\vee(1)  \\ \langle \alpha, \sigma \rangle \ge 0   }  } \Big)
\end{equation}
  for every index $a$.   The following proposition explains how this formal $q$-expansion  varies  with $a$.

\begin{proposition}\label{prop:product expansion}
Let $F \subset \C$ be the abelian extension of $\Q$ determined by 
\[
 \mathrm{rec} : \Q^\times_{>0} \backslash \A_f^\times/K_0  \iso \Gal(F  / \Q).
\]
The  rational formal function (\ref{BFJ}) has the form
\begin{equation}\label{trivialized B}
(2\pi  i )^{ c(0,0)/2 }  \cdot \mathrm{FJ}^{(a)}( \Psi(f) )  = 
  \kappa^{(a)} A^{\mathrm{rec}(a)}    q_{ \alpha( \varrho)   }  \cdot  
   \mathrm{BP}(f)^{ \mathrm{rec}(a) } .
\end{equation}
Here   $\kappa^{(a)}\in \C$ is some constant of absolute value $1$, and the power series 
\[
\mathrm{BP}(f)   \in \co_F [[ q_\alpha ]]_{  \substack{    \alpha \in \Gamma_\Phi^\vee(1) \\  \langle \alpha, \sigma\rangle \ge 0      }   }
\]
 (\underline{B}orcherds \underline{P}roduct) is  the infinite product
\[
\mathrm{BP} (f) = 
 \prod_{  \substack{   \lambda \in V_{0\Z}^\vee  \\  [\lambda ,  \weyl ] >0    }   }
\prod_{   \substack{  \mu \in  hV_{\Z}^\vee /  h V_{\Z} \\   \mu \sim   \lambda  }   }
\Big(
1-  \zeta_\mu    \cdot 
 q _{  \alpha(\lambda) } \Big)^{  c ( - Q(\lambda) ,   h^{-1}  \mu )    }    .
\]
The constant $A$ and the roots of unity $\zeta_\mu$ have the same meaning as in Theorem \ref{thm:first q-expansion}, and these constants lie in $\co_F$.
The meaning of $q_{\alpha(\lambda) }$ is as follows: dualizing the isomorphism (\ref{lightcone match})
 yields an isomorphism 
 \begin{equation}\label{q shift}
 V_{0\R}   \map{ \lambda\mapsto \alpha(\lambda) } U_\Phi(\R)^\vee(1),
 \end{equation}
 and the image of each $\lambda \in  V_{0\R}$ appearing in the product satisfies
 \[
 \alpha (\lambda) \in  \Gamma_\Phi^\vee(1).
 \]
 The condition $[\lambda,\weyl]>0$ implies $\langle \alpha(\lambda) ,\sigma  \rangle >0$.
Of course $q_{\alpha(\varrho)}$ has the same meaning, with $\varrho \in V_0$ the Weyl vector of Theorem \ref{thm:first q-expansion}.  Again $\alpha(\varrho)\in \Gamma_\Phi^\vee(1)$, 
but need not satisfy the positivity condition with respect to $\sigma$.
\end{proposition}

\begin{proof}
First we address the field of definition of the constants $A$ and $\zeta_\mu$.

\begin{lemma}
The constant $A$ of (\ref{constant A}) lies in $\co_F$, and $\zeta_\mu \in \co_F$  for every $\mu$ appearing in the above product.   
\end{lemma}

\begin{proof}
Suppose $a\in K_0$. It follows from the discussion preceeding (\ref{disc quotient}) that $\spl(a)\in h K h^{-1}$ stabilizes the lattice $h V_{\Z}$,  and acts trivially on the quotient $hV_{\Z}^\vee / hV_{\Z}$.   In particular, $\spl(a)$ acts trivially on the vector $\ell/N \in hV_{\Z}^\vee / hV_{\Z}$. On the other hand, by its very definition (\ref{component section}) we know that $\spl(a)$ acts by $a$ on this vector. It follows that  
$
(a-1) \ell / N \in h V_{\Z},
$
from which we deduce first $a-1 \in   N\widehat{\Z}$, and then $A^{\mathrm{rec}(a)} =A$.

Suppose  $\mu \in hV_\Z^\vee / h V_\Z$ satisfies $\mu\sim\lambda$  for some $\lambda \in V_{0\Z}^\vee$.  By (\ref{sim lift})  we may fix some $\tilde{\mu} \in I^\perp \cap (\mu + hV_\Z)$.  This allows us to compute, using  (\ref{second isotropic}),
\begin{align*}
\zeta_\mu^{\mathrm{rec}(a)} = e^{2\pi i [\tilde{\mu}, a k] } & =e^{2\pi i [ \tilde{\mu}, a\ell_* ] } e^{2\pi i  Q(k)  \cdot [\tilde{\mu}, a\ell  ] } \\
& = e^{2\pi i [ \tilde{\mu}, \spl(a)^{-1} \ell_* ] } e^{2\pi i Q(k) \cdot   [\tilde{\mu},  \spl(a) \ell  ] }.
\end{align*}
As $[\tilde{\mu} ,\ell]=0$, we have   $[\tilde{\mu},  \spl(a) \ell  ] = 0 = [\tilde{\mu},  \spl(a)^{-1} \ell  ]$.  Thus
\[
\zeta_\mu^{\mathrm{rec}(a)}    =  e^{2\pi i [ \tilde{\mu}, \spl(a)^{-1} \ell_* ] } e^{2\pi i Q(k) \cdot   [\tilde{\mu},  \spl(a)^{-1} \ell  ] }
= e^{2\pi i [  \tilde{\mu}, \spl(a)^{-1} k] } =  e^{2\pi i [ \spl(a)  \tilde{\mu}, k] }.
\]
As above, $\spl(a)$ acts trivially on $hV_{\Z}^\vee / hV_{\Z}$, and we conclude that
\[
\zeta_\mu^{\mathrm{rec}(a)}   =e^{2\pi i [  \tilde{\mu}, k] } = \zeta_\mu.
\]
\end{proof}

Suppose $a\in \A_f^\times$.   The image of the discrete group
\[
\Gamma_\Phi^{(a)} =   \spl(a)K_\Phi \spl(a)^{-1} \cap Q_\Phi(\Q)^\circ 
\]
under $\nu_\Phi : Q_\Phi \to \mathbb{G}_m$ is contained in $\widehat{\Z}^\times \cap \Q^\times_{>0}=\{1\}$, and hence  $\Gamma_\Phi^{(a)}$ is contained in $\ker(\nu_\Phi)=U_\Phi$.  Recalling that the conjugation action of $Q_\Phi$ on $U_\Phi$ is by $\nu_\Phi$, we find that
\[
\Gamma_\Phi^{(a)}  =  \mathrm{rat}( \nu_\Phi(s(a))) \cdot \big(   K_\Phi  \cap U_\Phi(\Q)  \big) =  \mathrm{rat}(a) \cdot \Gamma_\Phi
\]
as lattices in $U_\Phi(\Q)$.

Recalling (\ref{torus cover}) and (\ref{analytic nbhd}), consider the following commutative diagram of complex analytic spaces
\begin{equation}\label{nbhd diagram}
\xymatrix{
{  \bigsqcup_a  \Gamma_\Phi^{(a)} \backslash \DD^\circ  }  \ar@{-->}[r] \ar[d]_{\iso}^{    z \mapsto    ( z, s(a)  )   }  &    {  \bigsqcup_a T_\Phi(\C) }  \ar@{=}[r] \ar[d]^{\iso}   &   { \bigsqcup_a \Gamma_\Phi(-1) \otimes \C^\times }    \\
 { \mathscr{U}_{K_{ \Phi 0 } }  (Q_\Phi,\DD_\Phi) } \ar[r]    \ar[d] &  {\Sh_{K_{ \Phi 0 } } (Q_\Phi,\DD_\Phi)  (\C) }   \ar[d] \\
  { \mathscr{U}_{K_{ \Phi  } }  (Q_\Phi,\DD_\Phi) }  \ar[r]    \ar[d]^{    ( z,g  ) \mapsto    ( z,g h )   }  &  {\Sh_{K_{ \Phi  } } (Q_\Phi,\DD_\Phi)  (\C) } \\
  { \Sh_K(G,\DD)(\C) ,}
}
\end{equation}
in which all horizontal arrows are open immersions, all vertical arrows are  local isomorphisms on the source, and the disjoint unions are over  a set of coset representatives
$
a\in \Q^\times_{>0} \backslash \A_f^\times / K_0.
$  
The dotted arrow is, by definition, the unique open immersion making the upper left square commute.

\begin{lemma}
Fix a $\lambda \in V_{0\R}$ whose image under (\ref{q shift}) satisfies  
\[
\alpha(\lambda) \in  \Gamma_\Phi^\vee(1),
\]
and suppose \[ v\in V_{0\R} +  i\cdot  \mathrm{LightCone}^\circ(V_{0\R}) .\]   If we restrict the character
\[
q_{\alpha(\lambda)} : T_\Phi(\C)   \to \C^\times
\]
 to a function $\Gamma_\Phi^{(a)} \backslash \DD^\circ \to \C^\times$ via the open immersion in the top row of (\ref{nbhd diagram}), its value at the isotropic vector   
\[
 \ell_* + v - [\ell_*,v] \ell  - Q(v) \ell \in \DD^\circ
 \]
 is $e^{2\pi i [ \lambda, v]/\mathrm{rat}(a)}$.
\end{lemma}

\begin{proof}
The proof is a (rather tedious) exercise in tracing through the definitions.
   The dotted arrow in the diagram above is induced by the open immersion $\DD^\circ \subset U_\Phi(\C)\DD^\circ = \DD_\Phi^\circ$ and the isomorphisms
\begin{equation}\label{torus coord}
\bigsqcup_a  \Gamma_\Phi^{(a)} \backslash \DD_\Phi^\circ  \iso 
\Sh_{K_{ \Phi 0 } } (Q_\Phi,\DD_\Phi)  (\C)   \iso  \bigsqcup_a T_\Phi(\C) .
\end{equation}
The second isomorphism is the trivialization of the  $T_\Phi  (\C)$-torsor
\begin{equation}\label{tedious torsor}
\Sh_{K_{\Phi 0}}(Q_\Phi , \DD_\Phi)(\C) \to \Sh_{K_0}(\mathbb{G}_m , \mathcal{H}_0 )(\C)
\end{equation}
induced by the section $s:(\mathbb{G}_m , \mathcal{H}_0) \to (Q_\Phi , \DD_\Phi)$, as in the proof of Proposition \ref{prop:torsor splitting}.

Tracing through the proof of Proposition \ref{prop:torsor def}, this isomorphism is obtained by combining the  isomorphism
\begin{equation}\label{exponentiation}
 U_\Phi(\C) /  \Gamma_\Phi \iso \Gamma_\Phi (-1) \otimes \C/\Z(1)  \map{ \mathrm{id} \otimes \exp }
 \Gamma_\Phi(-1) \otimes \C^\times  = T_\Phi(\C)
 \end{equation}
with the isomorphism
\begin{equation}\label{exponentiation 2}
U_\Phi(\C) /  \Gamma_\Phi  \map{ -\mathrm{rat}(a) }U_\Phi(\C) /  \Gamma_\Phi^{(a)} \iso 
 \Gamma_\Phi^{(a)} \backslash \DD_\Phi^\circ
\end{equation}
obtained by  trivializing $ \Gamma_\Phi^{(a)} \backslash \DD_\Phi^\circ$ as  a $U_\Phi(\C)/  \Gamma_\Phi^{(a)} $-torsor using the point $\ell_* \in \DD_\Phi^\circ$.
Note the minus sign in (\ref{exponentiation 2}),  which arises from the minus sign in the isomorphism (\ref{minus!}) used to define the torsor structure on (\ref{tedious torsor}).

Denote by $\beta$ the composition
\[
V_{0\R} \map{ \otimes \ell} V_{0\R} \otimes I  \map{(\ref{cusp unipotent})}   U_\Phi(\R) .
\]
It is related to $\alpha(\lambda) \in U_\Phi(\R)^\vee(1)$ by
\[
\langle \alpha(\lambda) , \beta(v) \rangle = -2\pi i \cdot [\lambda, v],
\]
for all $v\in V_{0\R}$.
Extending $\beta$ complex linearly yields a commutative diagram
\[
\xymatrix{
& &  & &  {   T_\Phi(\C)   }  \ar@{=}[dd]^{ (\ref{torus coord}) }    \\
 {  V_{0\C}  }  \ar[rr]^{ \beta } &  & {     U_\Phi(\C)  } \ar[drr]_{ (\ref{exponentiation 2}) }   \ar[urr]^{(\ref{exponentiation})} \\
& & & &  {   \Gamma_\Phi^{(a)} \backslash \DD_\Phi^\circ ,}
  } 
\]
and going all the way back to the definitions preceding  (\ref{q torus}), we find that the 
pullback of  $q_{\alpha(\lambda)} : T_\Phi(\C) \to \C^\times$ to a function on $V_{0\C}$ is given by
\[
q_{\alpha(\lambda)} (v)  =e^{ -2\pi i  [ \lambda,v ] }.
\]
 On the other hand, the composition along the bottom row sends
 \[
\frac{ v } {-\rat(a) } \in  V_{0\C} 
\]
  to the point obtained by translating $\ell_* \in \DD_\Phi^\circ$ by the vector $v\otimes \ell \in V_{0\C}\otimes I$, viewed as an element of $U_\Phi(\C)$ using (\ref{cusp unipotent}).
  This translate is 
\[
 \ell_* + v - [\ell_*,v] \ell  - Q(v) \ell \in   \DD_\Phi^\circ,
 \]
and hence the value of $q_{\alpha(\lambda)}$ at this point is
\[
q_{\alpha(\lambda)} \left(  \frac{ v } {-\rat(a) }  \right) = e^{   2\pi i  [\lambda, v] /\rat(a)      }. \qedhere
\]
\end{proof}

\begin{lemma}
Suppose  $v\in V_{0\R} + i\cdot \mathscr{W} $ with $| Q(\mathrm{Im}(v)) | \gg 0$.
The value of the meromorphic function 
\[
\rat(a)^{c(0,0) /2 } \cdot  [ \ell^{ c(0,0) / 2 } , \Psi_{s(a)h}(f) ] 
\]
at the isotropic line $ \ell_* + v - [\ell_*,v] \ell  - Q(v) \ell \in \DD^\circ$ is 
 \begin{eqnarray*}\lefteqn{
A_\Phi^{ \mathrm{rec}(a) } \cdot e^{2\pi  i   [\varrho ,v]   / \rat(a)   } } \\
& &\times  \prod_{  \substack{   \lambda \in V_{0\Z}^\vee  \\  [\lambda , \weyl ] >0    }   }
\prod_{   \substack{  \mu \in  hV_{\Z}^\vee /  h V_{\Z} \\   \mu \sim    \lambda  }   }
\Big(
1-  \zeta_\mu^{\mathrm{rec}(a)}   \cdot   e^{2\pi  i     [\lambda, v ]    / \rat(a)    } \Big)^{  c( - Q(\lambda) ,   h^{-1} \mu)    },
\end{eqnarray*}
up to scaling by a complex number of absolute value $1$.
\end{lemma}

\begin{proof}
The proof amounts to carefully keeping track of how   Theorem \ref{thm:first q-expansion} changes when  $\Psi_h(f)$ is replaced by $\Psi_{\spl(a)h}(f)$. The main source of confusion is that the vectors $\ell$ and $\ell_*$ appearing in Theorem \ref{thm:first q-expansion} were  chosen to have nice properties with respect to the lattice $hV_{\Z}$, and so we must first pick new isotropic vectors $\ell^{(a)}$ and $\ell^{(a)}_*$ having  similarly nice properties with respect to  $\spl(a) hV_{\Z}$.

Set  $\ell^{(a)} = \rat(a) \ell$.  This is a generator of 
\[
I\cap \spl(a) hV_{\Z} = \rat(a)\cdot  ( I \cap hV_{\Z}).
\]
Now choose a $k^{(a)} \in \spl(a) hV_{\Z}^\vee$ such that $[ \ell^{(a)} , k^{(a)} ]=1$, and let $I_*^{(a)}\subset V$ be the  span of the isotropic vector
\[
\ell_*^{(a)} = k^{(a)} - Q(k^{(a)})  \ell^{(a)}.
\]

Using the fact that $Q_\Phi$ acts trivially on the quotient $I^\perp / I$, it is easy to see that the lattice
\[
V_{0\Z}^{(a)} = (I^\perp \cap \spl(a) hV_{\Z}) / (I\cap \spl(a) hV_{\Z}) \subset I^\perp / I
\]
is equal, as a subset of $I^\perp/I$, to the lattice $V_{0\Z}$ of (\ref{middle lattice}). Thus replacing $hV_{\Z}$ by $\spl(a) hV_{\Z}$ has no effect on the construction  of the modular form $f_0$, or on the formation of Weyl chambers or their corresponding Weyl vectors.

Similarly,  as $Q_\Phi$ stabilizes $I$, the ideal $N\Z = [ h V_\Z , I\cap h V_\Z]$ is unchanged if $h$ is replaced by $\spl(a) h$.  Replacing $h$ by $\spl(a) h$ in the definition of $A$ now determines a new constant
\begin{align*}
A^{(a)} & = 
 \prod_{ \substack{    x \in \Z/ N \Z \\ x \neq 0       }  }
\left(
1-e^{ 2\pi  i x /N}
\right)^{ c(0, x \cdot  h^{-1}  \spl(a)^{-1}  \ell^{(a)} /N) } \\
& = 
 \prod_{ \substack{    x \in \Z/ N \Z \\ x \neq 0       }  }
\left(
1-e^{ 2\pi  i x /N} \right)^{ c(0, x \cdot  \unit(a)^{-1}  h^{-1} \ell /N) } \\
& = 
 \prod_{ \substack{    x \in \Z/ N \Z \\ x \neq 0       }  }
\left(  1-e^{ 2\pi  i x \cdot  \unit(a)  /N} \right)^{ c(0, x   h^{-1} \ell /N) } \\
& =  A^{\mathrm{rec}(a) } .
\end{align*}

Citing Theorem \ref{thm:first q-expansion} with $h$ replaced by $\spl(a)h$ everywhere, and using the isomorphism 
\[
\spl(a) hV_{\Z}^\vee / \spl(a) hV_{\Z} \iso hV_{\Z}^\vee / hV_{\Z}
\] 
induced by the action of $\spl(a)^{-1}$, we find that   the value of 
\begin{equation}\label{b expansion function}
[ ( \ell^{(a)} )^{ c(0,0) / 2 } , \Psi_{s(a)h}(f) ] = \rat(a)^{ c(0,0) /2 }  [ \ell^{ c(0,0) / 2 } , \Psi_{s(a)h}(f) ] 
\end{equation}
 at the isotropic line 
\[
 \ell^{(a)}_* + v - [\ell^{(a)}_*,v] \ell ^{(a)} - Q(v) \ell^{(a)} \in \DD^\circ
\]
 is given by the infinite product 
\[
 A_\Phi^{(a)}  \cdot e^{2\pi  i [\varrho ,v] }
\prod_{  \substack{   \lambda \in V_{0\Z}^\vee  \\  [\lambda , \weyl ] >0    }   }
\prod_{   \substack{  \mu \in  hV_{\Z}^\vee /  h V_{\Z} \\   \mu \sim  \lambda  }   }
\big(
1-e^{2\pi  i [  \spl(a) \mu, k^{(a)} ]}  \cdot  e^{2\pi  i [\lambda, v]}
\big)^{  c( - Q(\lambda) , h^{-1} \mu)    }.
\]

Now make a change of variables.  If we set
$
v^{(a)} = \ell_* - \rat (a) \ell_*^{(a)}  \in V_0 ,
$
we find that the value of (\ref{b expansion function}) at the isotropic line 
\begin{eqnarray*}\lefteqn{
 \ell_* + v - [\ell_*,v] \ell  - Q(v) \ell   }    \\
& = &   \ell^{(a)}_* + \Big( \frac{  v+v^{(a)} } {  \rat(a)  }  \Big) - \Big[\ell^{(a)}_*, \Big( \frac{  v+v^{(a)} } {  \rat(a)  }  \Big)  \Big] \ell ^{(a)} - Q\Big( \frac{  v+v^{(a)} } {  \rat(a)  }  \Big) \ell^{(a)}
\end{eqnarray*}
is 
\begin{eqnarray*}\lefteqn{
A_\Phi^{(a)} \cdot e^{2\pi  i   [\varrho ,v+v^{(a)}]   / \rat(a)   } } \\
& &\times  \prod_{  \substack{   \lambda \in V_{0\Z}^\vee  \\  [\lambda , \weyl ] >0    }   }
\prod_{   \substack{  \mu \in  hV_{\Z}^\vee /  h V_{\Z} \\   \mu \sim    \lambda  }   }
\Big(
1-  e^{2\pi  i          [  \spl(a)   \mu ,      k^{(a)}  ]     }      
e^{2\pi  i   [\lambda, v+ v^{(a)} ]    / \rat(a)    }  \Big)^{  c( - Q(\lambda) ,   h^{-1} \mu)    }.
\end{eqnarray*}

Assuming that $\mu\sim \lambda$, we may  lift $\lambda\in I^\perp / I$ to $\tilde{\mu} \in I^\perp \cap( \mu+ hV_\Z)$.
As $s(a)\in Q_\Phi(\A_f)$ acts trivially on $(I^\perp / I) \otimes \A_f$, we have
\[
 [   \lambda,    v^{(a)}  ]  = [\lambda, \spl(a)^{-1} v^{(a)} ] =  [\tilde{\mu} , \spl(a)^{-1} v^{(a)} ] .
\]
Using (\ref{component section}) and the definition of $v^{(a)}$, we find
\[
\rat(a)^{-1}  \spl(a)^{-1} v^{(a)}  = \unit(a)  \ell_* -   \spl(a)^{-1} \ell_*^{(a)} .
\]
Combining these relations with  $[\tilde{\mu} , \ell_*] =[\tilde{\mu} ,k ]$ and $[\tilde{\mu} , \ell^{(a)}_*] =[\tilde{\mu} ,k^{(a)} ]$ shows that
\[
\frac{ [   \lambda,    v^{(a)}  ]  }{\rat(a) }
=  [\tilde{\mu} ,  \unit(a)  k -  \spl(a)^{-1} k^{(a)} ] .
\]
As  $\unit(a)   k - \spl(a^{-1}) k^{(a)}  \in     h V_{ \widehat{\Z}}^\vee$ and $\tilde{\mu}-\mu \in hV_\Z$, we deduce the equality
\[
\frac{ [   \lambda,    v^{(a)}  ]   }{ \rat(a) }   = [  \mu ,    \unit(a)   k   - \spl(a)^{-1}   k^{(a)}   ] 
\]
in $\Q/\Z \iso \widehat{\Q}/\widehat{\Z}$.
Thus 
\begin{align*}
e^{2\pi  i    [  \spl( a )   \mu ,      k^{(a)}  ]     }      e^{2\pi  i  [\lambda, v+ v^{(a)} ]    / \rat(a)    } 
&=
e^{2\pi  i   [   \mu ,    \spl(a)^{-1}  k^{(a)}  ]     }    e^{2\pi  i      [\lambda,  v  +  v^{(a)} ]    / \rat(a)    }   \\
&=
e^{2\pi  i   [   \mu ,    \unit(a)  k ]     }    e^{2\pi  i      [\lambda,  v  ]    / \rat(a)    }   \\
&=
\zeta_\mu^{\unit(a)}   \cdot   e^{2\pi  i     [\lambda, v ]    / \rat(a)    }.
\end{align*}

Finally, the equality 
\[
 e^{2\pi  i   [\varrho ,v+v^{(a)}]   / \rat(a)   }=  e^{2\pi  i   [\varrho ,v ]   / \rat(a)   } 
\]
holds up to a root of unity, simply because $ [\varrho , v^{(a)}] \in \Q$.
\end{proof}

Working on one connected component
\[
\Gamma_\Phi^{(a)} \backslash \DD^\circ \hookrightarrow \mathscr{U}_{K_{\Phi 0 } } ( Q_\Phi , \DD_\Phi),
\]
the pullback of $\Psi(f)$ is $\Psi_{ s(a) h }(f)$.  The pullback of the section $\bm{\ell}^{an}$ of the constant vector bundle $\bm{I}_{dR}^{an}$ determined by $I_\C$ is, by the definition preceding Proposition \ref{prop:canonical sections},  the constant section determined by 
\[
\frac{ \rat(a) }{  2\pi i   } \cdot  \ell \in I_\C .
\]
Thus on $\Gamma_\Phi^{(a)} \backslash \DD^\circ $ we have the equality of  meromorphic functions
\[
( 2\pi i  )^{c(0,0)/2}   \cdot  [ \bm{\ell}^{  \otimes c(0,0)/2 } , \Psi(f) ] =   \rat(a)^{c(0,0)/2}   \cdot   [  \ell^{ \otimes c(0,0)/2} , \Psi_{s(a)h} (f)] .
\]
Combining the two lemmas above, we see that the value of this meromorphic function at the isotropic line 
$ \ell_* + v - [\ell_*,v] \ell  - Q(v) \ell \in \DD^\circ$ is 
\[
A_\Phi^{ \mathrm{rec}(a) } \cdot q_{  \alpha( \varrho )}  \cdot
  \prod_{  \substack{   \lambda \in V_{0\Z}^\vee  \\  [\lambda , \weyl ] >0    }   }
\prod_{   \substack{  \mu \in  hV_{\Z}^\vee /  h V_{\Z} \\   \mu \sim    \lambda  }   }
\Big(
1-  \zeta_\mu^{\mathrm{rec}(a)}   \cdot   q_{ \alpha(\lambda)} \Big)^{  c( - Q(\lambda) ,   h^{-1} \mu)    },
\]
up to scaling by a complex number of absolute value $1$.
The stated $q$-expansion (\ref{trivialized B}) follows from this.

It remains to prove the integrality conditions $\alpha(\lambda) \in \Gamma_\Phi^\vee(1)$.
A priori, every $\alpha(\lambda)$ appearing in the product above (including $\lambda = \varrho$) lies in $U_\Phi(\Q)^\vee(1)$.
However, as the product itself is invariant under the action of 
\[
\Gamma^{(a)}_\Phi = \rat(a) \cdot  \Gamma_\Phi \subset U_\Phi(\Q)
\]
on $\mathcal{D}^\circ$,   the uniqueness of the $q$-expansion implies that only those terms  
\begin{equation}\label{wee q}
 q_{ \alpha(\lambda) }  = e^{2\pi i [\lambda,v] / \rat(a) }
\end{equation}
that are themselves invariant under $\Gamma^{(a)}_\Phi$ can appear.
Pullback by the action of  $u \in U_\Phi(\Q)$ sends
\[
q_{\alpha(\lambda)} \mapsto 
  q_{\alpha(\lambda)} 
  \cdot  e ^{ \langle \alpha( \lambda ) ,  u \rangle / \rat(a) },
\]
where  $\langle -,-\rangle : U_\Phi(\Q)^\vee (1)  \otimes U_\Phi(\Q) \to \Q(1)$ is the tautological pairing,
and it follows that the invariance of (\ref{wee q}) under $\Gamma^{(a)}_\Phi$
 is equivalent to the integrality condition $\alpha(\lambda ) \in \Gamma_\Phi^\vee(1)$.  

This completes the proof of  Proposition \ref{prop:product expansion}.
\end{proof}

%%%%%%%%%%%%%%%%%%%%%%%%%%%%%%%%%%%%%%%%

\section{Integral models}
\label{s:integrality I}

%%%%%%%%%%%%%%%%%%%%%%%%%%%%%%%%%%%%%%%%

As in \S \ref{s:borcherds}, we keep  $V_\Z \subset V$ of signature $(n,2)$ with $n\ge 1$.
Fix  a prime $p$  at which $V_\Z$ is  maximal in the sense of \S \ref{ss:intro general}, and 
 assume  that the compact open subgroup (\ref{K choice}) factors as $K = K_p K^p$ with $p$-component
\[
K_p = G(\Q_p) \cap C(V_{\Z_p})^\times.
\]   

Under this assumption,  we recall from \cite{AGHMP-1,AGHMP-2} the construction of an integral model 
\[
\mathcal{S}_K (G,\DD)\to \Spec(\Z_{(p)})
\]
 of $\Sh_K(G,\DD)$, and extensions to this model of the line bundle of weight one modular forms  and the special divisors.   In those references it is  assumed that $V_\Z$ is maximal at \emph{every} prime, but nearly everything extends verbatim to the  more general case considered here.  
 Indeed, one only has to be careful about the definitions of special divisors in \S \ref{ss:special divisors}. 
  Once the correct definitions are formulated  the proofs of [\emph{loc.~cit.}] go through without significant change, and we simply give the appropriate citations without further comment.

 The main new result  is  the pullback formula of Proposition \ref{prop:pullback},  which describes how special divisors restrict under embeddings between orthogonal Shimura varieties of different dimension.  This will be a crucial ingredient in our algebraic variant of the embedding trick of Borcherds.

%%%%%%%%%%%%%%%%%%%%%%%%%%%%%%%%%%%%%%%%

\subsection{Almost self-dual lattices}

%%%%%%%%%%%%%%%%%%%%%%%%%%%%%%%%%%%%%%%%

 The motivation for the following definition will become clear in \S \ref{ss:gspin integral}.
 
\begin{definition}
 \label{defn:almost self-dual}
 We  say that $V_{\Z_p}$ is \emph{almost self-dual} if it has one of the following (mutually exclusive) properties:
 \begin{itemize}
 	\item $V_{\Z_p}$ is self-dual;
 	\item $p=2$, $\dim_\Q(V)$ is odd,  and $[V^\vee_{\Z_2}:V_{\Z_2}]$ is not divisible by $4$.
 \end{itemize}
 \end{definition}

%\begin{remark}
%The almost self-duality of $V_{\Z_2}$ is equivalent to requiring that the radical of the  bilinear form on $V_{\F_2}$ has dimension at most $1$.
%\end{remark}

\begin{remark}
Almost self-duality is equivalent to the smoothness of the  quadric over $\Z_p$  parameterizing isotropic lines in $V_{\Z_p}$. 
Here, an \emph{isotropic line} in $V_R$ for an $\Z_p$-algebra $R$ is a local direct summand $I\subset V_R$ of rank $1$ that is locally generated by an element $v\in I$ satisfying $Q(v) = 0$.
\end{remark}

Recall from \S \ref{ss:hodge embedding} that $G$ acts on the $\Q$-vector space $H=C(V)$, and that one may choose  a $\Z$-valued symplectic form $\psi$ on 
\[
H_\Z = C(V_\Z)
\]   
in such a way that the action of $G$ induces a Hodge embedding into the Siegel Shimura datum determined by $(H,\psi)$. The following lemma  will be used in \S \ref{s:integral q}  to choose $\psi$ in a particularly nice way.

\begin{lemma}\label{lem:integral polarization}
Assume that $V$ has Witt index $2$ (this is automatic if $n\ge 5$).
If  $V_{\Z_p}$ is almost self-dual, then we may choose $\psi$ as above in such a way that $H_{\Z_p}$ is self-dual.\end{lemma}

\begin{proof}
Choose any isotropic line $I\subset V$, and let  $\ell \in I \cap V_\Z$ be a $\Z$-module generator.
Let $N$ be the positive integer defined by 
\[
N\Z = [ V_\Z , \ell].
\]
On the one hand,    $\ell / N \in V_{\Z}^\vee/V_{\Z}$ is isotropic under the $\Q/\Z$-valued quadratic form induced by $Q$.    On the other hand,   maximality of  $V_{\Z_p}$ implies that  $V_{\Z_p}^\vee /V_{\Z_p} $ has no nonzero isotropic vectors.  Thus $\ell/N \in V_{\Z_p}$, and  so $p\nmid N$.

It follows that there is some  $k\in V_\Z$ such that $p\nmid [k,\ell]$, and from  this it is easy to see that there exists a vector  $v\in \Z k + \Z \ell$    such that  $Q(v)$ is negative and prime to $p$.

The $\Q$-span of $k,\ell \in V$ is  a hyperbolic plane over $\Q$, and the $\Z_p$-span of $k,\ell\in V_{\Z_p}$ is an integral  hyperbolic plane over $\Z_p$.
It follows that the orthogonal complement
\[
W= (\Q k + \Q \ell)^\perp \subset V
\]  
has Witt index $1$, and that the $\Z$-lattice  $W_\Z = W\cap V_\Z$ satisfies
\[
V_{\Z_p} = \Z_p k  \oplus \Z_p \ell \oplus W_{\Z_p}.
\]
In particular  $W_{\Z_p}$ is again maximal.  Repeating the argument above with $V_{\Z}$ replaced by $W_{\Z}$,  we find another vector  $w\in V_\Z$ with $Q(w)$ negative and prime to $p$, and $[v,w]=0$.

We have now constructed an element $\delta = vw\in C(V_\Z)$ such that 
\[
\delta^2=-Q(v)Q(w) \in \Z_{(p)}^\times.
\]
 Set $\psi(x,y) = \mathrm{Trd}(x\delta y^*)$,  exactly as in \S \ref{ss:hodge embedding}.

It remains  to  prove  that  $H_{\Z_p}$ is self-dual.
We will use the decomposition 
\[
H_{\Z_p}=H_{\Z_p}^+ \oplus H_{\Z_p}^-
\]
 induced by the decomposition $C(V_{\Z_p})=C^+(V_{\Z_p}) \oplus C^-(V_{\Z_p})$ into even and odd parts.  It is not hard to see that these direct summands of $H_{\Z_p}$ are orthogonal to each other under $\psi$, and so it suffices to prove the self-duality of each summand individually.

According to \cite[\S C.2]{MR3362641}, the almost self-duality of $V_{\Z_p}$ implies that the even Clifford algebra $C^+(V_{\Z_p})$ is an Azumaya algebra  over  its center, and this center is itself a finite \'etale $\Z_p$-algebra.   Equivalently, $C^+(V_{\Z_p})$ is isomorphic \'etale locally on $\Spec(\Z_p )$ to a finite product of matrix algebras.
It follows from this that 
\[
C^+(V_{\Z_p}) \otimes C^+(V_{\Z_p}) \map{ x\otimes y \mapsto \mathrm{Trd}(xy)  } \Z_p
\]
is a perfect bilinear pairing.  The self-duality of $H^+_{\Z_p}$  under  $\psi$ follows easily from this.
The self-duality of $H^-_{\Z_p}$ then follows using the isomorphism \[H_{\Z_p}^- \iso H_{\Z_p}^+\] given by right multiplication by the $v\in C(V_\Z)$ chosen above, and the relation
\[
\psi(  x v,  y v ) =  -Q(v) \cdot  \psi ( x , y)
\]
for all $x,y\in H$.
\end{proof}

%%%%%%%%%%%%%%%%%%%%%%%%%%%%%%%%%%%%%%%%

\subsection{Isometric embeddings}
\label{ss:embiggen}

%%%%%%%%%%%%%%%%%%%%%%%%%%%%%%%%%%%%%%%%

We will repeatedly find ourselves in the following situation.
Suppose we have another  quadratic space $(V^\beef ,Q^\beef)$  of signature $(n^\beef , 2)$, and an isometric embedding $V \hookrightarrow V^\beef$.  
 This induces a morphism of Clifford algebras $C(V) \to C(V^\beef)$, which induces a morphism of GSpin Shimura data 
\[
(G,\DD) \to (G^\beef , \DD^\beef).
\]

Just as we assume for $(V,Q)$, suppose we are given a $\Z$-lattice $V^\beef_\Z \subset V^\beef$
on which $Q^\beef$ is integer valued, and which is maximal at $p$.
Let
\[
K^\beef   = K^\beef_p \cdot K^{\beef,p} \subset G^\beef(\A_f) \cap C( V_{\widehat{\Z}}^\beef) ^\times
\]
be a compact open subgroup with $p$-component
\[
K_p^\beef = G^\beef(\Q_p) \cap C(V^\beef_{\Z_p})^\times.
\]
Assume that $V_{ \Z} \subset  V^\beef_{\Z}$ and   $K\subset K^\beef$,  so that we have a  finite and unramified morphism 
\begin{equation}\label{embiggen morphism}
j : \Sh_K(G,\DD) \to \Sh_{K^\beef}(G^\beef , \DD^\beef)
\end{equation}
of canonical models.  
Our choices  imply (using the assumption that $V_{\Z_p}$ is maximal) that   
$V_{\Z_p} = V_{\Q_p} \cap V_{\Z_p}^\beef$ and 
$K_p = K_p^\beef \cap G(\Q_p)$.

\begin{lemma}\label{lem:good beef}
It is possible to choose $(V^\beef, Q^\beef)$ and $V^\beef_\Z$ as above in such a way that $V_\Z^\beef$ is self-dual. Moreover, we can ensure that $V\subset V^\beef$ has codimension at most $2$ if $n$ is even, and has codimension at most $3$ if $n$ is odd.
\end{lemma}

\begin{proof}
%Let $W$ be any positive definite quadratic space over $\Q$ such that $d(W) = d(V)$ in $\Q^\times$ modulo squares, $\dim(W) + \dim(V)  \equiv 0\pmod{4}$, and 
%\[
%\epsilon_v(W) = \begin{cases}
%-\epsilon_v(V) \cdot ( d(V) ,d(W) )_v & \mbox{ if }v \in \{ 2,\infty\} \\ 
%\epsilon_v(V) \cdot ( d(V) ,d(W) )_v & \mbox{ otherwise.} \\
%\end{cases}
%\]
%Here $(\cdot,\cdot)_v$ is the Hilbert symbol, and $d$ and $\epsilon_v$ are the invariants of \cite{serre}. 
%
%If we define $V^\beef$ as the orthogonal  sum of $V$ and $W$, then $\mathrm{dim}(V^\beef) \equiv 0 \pmod{4}$, $d(V^\beef)$ is a square, and 
%\[
%\epsilon_v(V^\beef) = \begin{cases}
%-1 & \mbox{if } v\in \{ 2,\infty\} \\
%1 & \mbox{otherwise.}
%\end{cases}
%\]
%These conditions imply that $V^\beef$ is a direct sum of hyperbolic planes everywhere locally, and hence admits a self-dual lattice everywhere locally.  
An exercise in the classification of quadratic spaces over $\Q$ shows that we may choose a positive definite quadratic space $W$ in such a way that the orthogonal direct sum $V^\beef=V\oplus W$  admits a self-dual lattice  locally at every finite prime (for example,  we may arrange for $V^\beef$ to be a sum of  hyperbolic planes locally at every finite prime).
From Eichler's theorem that any two maximal lattices in a $\Q_p$-quadratic space are isometric \cite[Theorem 8.8]{Ger}, it follows that any maximal lattice in $V^\beef$ is self-dual.   Enlarging $V_\Z$ to a maximal lattice  $V^\beef_\Z \subset V^\beef$ proves the first claim.

A more careful analysis, once again using the classification of quadratic spaces, also yields the second claim.
\end{proof}

%%%%%%%%%%%%%%%%%%%%%%%%%%%%%%%%%%%%%%
%
%\subsection{Normalization}
%
%%%%%%%%%%%%%%%%%%%%%%%%%%%%%%%%%%%%%%
%
%We will frequently use the following construction:
%Suppose $\mathcal{Z}$ is an algebraic stack of finite type  over $\Z_{(p)}$ with generic fiber $Z=\mathcal{Z}_\Q$, and $Y$ is a normal algebraic stack of finite type over $\Q$ admitting a  finite morphism $j: Y \to Z$.  
%Define a sheaf $\co_\mathcal{Y}$ of finite $\co_\mathcal{Z}$-algebras  as the integral closure of $\co_\mathcal{Z}$ in $j_* \co_Y$, and define the \emph{normalization of $\mathcal{Z}$ in $Y$} as the finite $\mathcal{Z}$-stack
%\[
%\mathcal{Y} = \underline{\Spec}_{\mathcal{Z}} (  \co_{\mathcal{Y}} ).
%\]
% By construction, the generic fiber of $\mathcal{Y}$ is $Y$, and there is a commutative diagram
%\[
%\xymatrix{
%{ \mathcal{Y}  } \ar[d]  &  {  Y  } \ar[d]^{j}  \ar[l]  \\
%{  \mathcal{ Z } }     &   {   Z. } \ar[l]
%}
%\]
%
%If $\mathcal{Z}$ is any normal and flat $\Z_{(p)}$-stack of finite type  with generic fiber $Z$,  and if $\mathcal{Z}\to \mathcal{Z}$ is any finite morphism, then any $Z$-morphism $Z \to Y$ extends uniquely to an $\mathcal{Z}$-morphism $\mathcal{Z} \to \mathcal{Y}$.
%

%%%%%%%%%%%%%%%%%%%%%%%%%%%%%%%%%%%%%%%%

\subsection{Definition of the integral model}
\label{ss:gspin integral}

%%%%%%%%%%%%%%%%%%%%%%%%%%%%%%%%%%%%%%%%

We now define our integral model of the Shimura variety $\Sh_K(G,\DD)$.

Assume  first that $V_{\Z_p}$ is almost self-dual.
This implies, by \cite[\S C.4]{MR3362641},  that 
\[
\mathcal{G} = \GSpin(V_{\Z_{(p)}})
\]
 is a reductive group scheme over $\Z_{(p)}$, and hence that $K_p=\mathcal{G}(\Z_p)$ is a hyperspecial compact open subgroup of $G(\Q_p)$. 
Thus $\Sh_K(G,\DD)$ admits a canonical smooth integral model $
\mathcal{S}_K(G,\DD)$ over $\Z_{(p)}$ by the results of Kisin \cite{KisinJAMS} (and \cite{KMP} if $p=2$).

\begin{remark}
The notion of almost self-duality does not appear anywhere in our main references \cite{mp:spin,AGHMP-1,AGHMP-2} on integral models of $\Sh_K(G,\DD)$.  This is due to an oversight on the authors' part: we did not realize that one could obtain smooth integral models even if $V_{\Z_p}$ fails to be self-dual.
\end{remark}

According to \cite[Proposition 3.7]{KMP}  there is a  functor
\begin{equation}\label{int bundle machine}
N  \mapsto ( \bm{N}_{dR} , F^\bullet  \bm{N}_{dR}  )
\end{equation}
 from representations  $\mathcal{G} \to \GL(N)$ on free $\Z_{(p)}$-modules of finite rank  to filtered vectors bundles on $\mathcal{S}_K(G,\DD)$, restricting to the functor (\ref{mixed bundles}) in the generic fiber.\footnote{There is also a weight filtration on $\bm{N}_{dR}$, but, as noted in Remark \ref{rem:no weight}, it is not very interesting over the pure Shimura variety.}

 Applying this functor to the representation $V_{\Z_{(p)}}$ yields a filtered vector bundle $(\bm{V}_{dR} ,F^\bullet \bm{V}_{dR})$.
 Applying the  functor to the representation $H_{\Z_{(p)} }=C(V_{\Z_{(p)}})$  yields a filtered vector bundle $(\bm{H}_{dR} ,F^\bullet \bm{H}_{dR})$.   
The inclusion (\ref{special injection}) restricts to $V_{\Z_{(p)}} \to \End( H_{\Z_{(p)}} )$, which  determines an injection 
\[
\bm{V}_{dR} \to \underline{\End}(\bm{H}_{dR})
\]
onto a local direct summand.

 For any local section $x$ of $\bm{V}_{dR}$, the composition $x\circ x$ is a local section of the subsheaf
$
 \co_{ \mathcal{S}_K(G,\DD) } \subset \underline{\End}(\bm{H}_{dR}).
$
This defines a quadratic form 
\[
 \bm{Q} : \bm{V}_{dR} \to \co_{ \mathcal{S}_K(G,\DD) },
 \]
  with an  associated bilinear form
\[
[-,-] : \bm{V}_{dR} \otimes \bm{V}_{dR} \to  \co_{ \mathcal{S}_K(G,\DD) } 
\]
related  as in (\ref{bilinear}).
The filtration on $\bm{V}_{dR}$ has the form 
\[
0= F^2 \bm{V}_{dR}  \subset F^1 \bm{V}_{dR}  \subset F^0 \bm{V}_{dR}\subset  F^{-1} \bm{V}_{dR} = \bm{V}_{dR},
\]
in which $F^1\bm{V}_{dR}$ is an isotropic  line,  and   $F^0\bm{V}_{dR} = (F^1\bm{V}_{dR})^\perp$.  
As in \S \ref{ss:modular forms},  the \emph{line bundle of weight one modular forms} on $\mathcal{S}_K(G,\DD)$ is 
\[
\bm{\omega} =F^1 \bm{V}_{dR}.
\]

If $V_{\Z_p}$ is not almost self-dual then  choose auxiliary data $(V^\beef, Q^\beef)$ as in \S \ref{ss:embiggen} in such a way that  $V^\beef_{\Z_p}$ is almost self-dual.  This determines a commutative diagram 
\begin{equation}\label{integral definition}
\xymatrix{
{  \mathcal{S}_K (G,\DD)} \ar[d]  & {  \Sh _K(G,\DD)  }   \ar[d]  \ar[l] \\
 { \mathcal{S}_{K^\beef}(G^\beef,\DD^\beef)  }  & {  \Sh_{K^\beef}(G^\beef , \DD^\beef)  }  \ar[l]  \\
}
\end{equation}
in which the lower left corner is the canonical integral model of $ \Sh_{K^\beef}(G^\beef , \DD^\beef)$, and the upper right corner  is defined as its normalization in $\Sh _K(G,\DD)$, in the sense of  \cite[Definition 4.2.1]{AGHMP-2}.  By construction, $\mathcal{S}_K(G,\DD)$ is a normal Deligne-Mumford stack,  flat and of finite type  over $\Z_{(p)}$.

Define the \emph{line bundle of weight one modular forms} on $\mathcal{S}_K(G,\DD)$ by
\begin{equation}\label{integral omega}
\bm{\omega}= \bm{\omega}^\beef|_{\mathcal{S}_K(G,\DD)},
\end{equation}
where $\bm{\omega}^\beef$ is the line bundle  on $\mathcal{S}_{K^\beef}(G^\beef,\DD^\beef)$ constructed in the almost self-dual case above.
The line bundle (\ref{integral omega}) extends the line bundle of the same name previously constructed on the generic fiber.

The following is \cite[Proposition 4.4.1]{AGHMP-2}.

\begin{proposition}\label{prop:integral model ind}
The $\Z_{(p)}$-stack $\mathcal{S}_K(G,\DD)$ and the line bundle $\bm{\omega}$ are independent of the auxiliary choices of $(V^\beef, Q^\beef)$, $V_{\widehat{\Z}}^\beef$,  and $K^\beef$ used in their construction, and the Kuga-Satake abelian scheme of \S \ref{ss:hodge embedding} extends uniquely to an abelian scheme
$\mathcal{A} \to \mathcal{S}_K(G,\DD)$. 
\end{proposition}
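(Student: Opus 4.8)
The plan is to deduce all three assertions from two facts: that the relative normalization construction of \cite[Definition 4.2.1]{AGHMP-2} recalled in \S\ref{ss:gspin integral} is transitive, and that any two admissible choices of auxiliary data $(V^\beef,Q^\beef)$, $V^\beef_{\widehat{\Z}}$, $K^\beef$ are dominated by a common third choice. So the first step is the common--refinement construction. Given two choices $(V^{\beef_1},\dots)$ and $(V^{\beef_2},\dots)$, write $V^{\beef_i}=V\perp U_i$ with $U_i$ positive definite, and set $V^{\beef_3}=V\perp U_1\perp U_2$; this has signature $(\ast,2)$ and receives isometric embeddings of $V^{\beef_1}$ and $V^{\beef_2}$ compatible with $V\hookrightarrow V^{\beef_i}$. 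Following the proof of Lemma \ref{lem:good beef}, after replacing $V^{\beef_3}$ by its orthogonal sum with a suitable positive definite space I may assume $V^{\beef_3}$ is split at all finite primes, so that every maximal lattice in it is self-dual. Since $V^{\beef_1}_\Z$ and $V^{\beef_2}_\Z$ sit mutually orthogonally in $V^{\beef_3}$ (overlapping in $V_\Z$), the quadratic form is $\Z$-valued on $V^{\beef_1}_\Z+V^{\beef_2}_\Z$, so I may pick a maximal lattice $V^{\beef_3}_\Z$ containing it; maximality then gives $V^{\beef_i}_\Z=V^{\beef_i}\cap V^{\beef_3}_\Z$, hence $C(V^{\beef_i}_{\Z_p})\subset C(V^{\beef_3}_{\Z_p})$ and $K^{\beef_i}_p\subset K^{\beef_3}_p$, and shrinking the tame level gives $K^{\beef_i}\subset K^{\beef_3}$. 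This reduces the independence statements to comparing a single tower $V\hookrightarrow V^\beef\hookrightarrow V^{\beef\beef}$ with $V^\beef_{\Z_p}$ and $V^{\beef\beef}_{\Z_p}$ almost self-dual (the case where $V_{\Z_p}$ is itself almost self-dual, compared with one auxiliary $\beef$, being handled the same way).

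For such a tower I would argue as follows. Since $V^\beef_{\Z_p}$ is almost self-dual, $\mathcal{S}_{K^\beef}(G^\beef,\DD^\beef)$ is smooth, hence normal; by the functoriality of Kisin's integral canonical models \cite{KisinJAMS,mp:spin,KMP} there is a finite unramified morphism $\mathcal{S}_{K^\beef}(G^\beef,\DD^\beef)\to\mathcal{S}_{K^{\beef\beef}}(G^{\beef\beef},\DD^{\beef\beef})$ extending the generic-fibre morphism, and it identifies the source with the relative normalization of the target in $\Sh_{K^\beef}(G^\beef,\DD^\beef)$. All schemes involved are reduced and flat over $\Z_{(p)}$ with smooth generic fibre, so generic points map to generic points at every stage, and transitivity of the relative normalization identifies the normalization of $\mathcal{S}_{K^{\beef\beef}}(G^{\beef\beef},\DD^{\beef\beef})$ in $\Sh_K(G,\DD)$ with that of $\mathcal{S}_{K^\beef}(G^\beef,\DD^\beef)$ in $\Sh_K(G,\DD)$. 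Chasing the common refinement then proves $\mathcal{S}_K(G,\DD)$ is well-defined. For $\bm{\omega}$: the functor (\ref{int bundle machine}) is functorial in the group and compatible with the isometric embedding $V_{\Z_{(p)}}\hookrightarrow V^\beef_{\Z_{(p)}}$, so $\bm{V}^\beef_{dR}|_{\mathcal{S}_K(G,\DD)}$ splits orthogonally as $\bm{V}_{dR}\oplus\bm{W}_{dR}$ with $\bm{W}_{dR}$ coming from the positive definite complement; the latter has $F^1\bm{W}_{dR}=0$, whence $\bm{\omega}=F^1\bm{V}_{dR}=F^1\bm{V}^\beef_{dR}|_{\mathcal{S}_K(G,\DD)}=\bm{\omega}^\beef|_{\mathcal{S}_K(G,\DD)}$, and independence follows by the same chase (the corresponding statement in the generic fibre being built into Proposition \ref{prop:canonical bundle machine}).

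For the Kuga-Satake abelian scheme I would proceed in two stages. When $V_{\Z_p}$ is almost self-dual, $\mathcal{S}_K(G,\DD)$ is Kisin's model, which is by construction the normalization of the Zariski closure of $\Sh_K(G,\DD)$ in a Siegel integral model, and hence carries by restriction the pullback $\mathcal{A}$ of the universal abelian scheme, extending $A$. In general I would pick, using Lemma \ref{lem:good beef}, an auxiliary $V^\beef$ with $V^\beef_{\Z_p}$ almost self-dual, so that $\mathcal{S}_{K^\beef}(G^\beef,\DD^\beef)$ carries its abelian scheme $\mathcal{A}^\beef$; then $\mathcal{B}:=\mathcal{A}^\beef\times_{\mathcal{S}_{K^\beef}(G^\beef,\DD^\beef)}\mathcal{S}_K(G,\DD)$ is an abelian scheme over $\mathcal{S}_K(G,\DD)$. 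The decomposition of $C(V^\beef)$ as a free left $C(V)$-module identifies $\mathcal{B}$, up to isogeny over the generic fibre, with a power $A^{\oplus m}$; the idempotent of $\End(A^{\oplus m})$ projecting onto the first factor extends over the normal base $\mathcal{S}_K(G,\DD)$ to an idempotent $e\in\End(\mathcal{B})$ --- homomorphisms between abelian schemes over a normal base are determined by, and extend from, any dense open, cf.~\cite[Chapter I]{FaltingsChai} --- and $\mathcal{A}:=e(\mathcal{B})$ is the desired extension of $A$. Uniqueness, and therefore independence of the auxiliary choices, is automatic since an abelian scheme over a normal base is determined by its generic fibre; the details are carried out in \cite{AGHMP-2}.

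The step I expect to be the main obstacle is the bookkeeping in the common--refinement construction --- arranging the auxiliary quadratic spaces, their maximal lattices, and the level subgroups to nest compatibly while preserving almost self-duality at $p$ --- together with the verification that the relative normalization construction of \cite[Definition 4.2.1]{AGHMP-2} is transitive under the relevant hypotheses. Everything else then reduces formally to results of Kisin and of \cite{mp:spin,AGHMP-2}.
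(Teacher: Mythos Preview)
The paper's own proof is a one-line citation of \cite[Proposition~4.4.1]{AGHMP-2}, so there is no in-paper argument to compare against. Your outline --- common refinement of the auxiliary data, transitivity of relative normalization, compatibility of the filtered-bundle functor for $\bm{\omega}$, and a Faltings--Chai/idempotent argument for extending $\mathcal{A}$ --- is the natural strategy and is in the spirit of what the cited reference does.

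You correctly flag the lattice bookkeeping in the common refinement as the delicate step, and indeed one claim there is false as written. The assertion that $Q$ is $\Z$-valued on $V^{\beef_1}_\Z+V^{\beef_2}_\Z$ need not hold: for $x\in V^{\beef_1}_\Z$ and $y\in V^{\beef_2}_\Z$ with $V$-components $v_x,v_y$ one has $[x,y]=[v_x,v_y]$, and the orthogonal projections $v_x,v_y$ lie only in $V_\Z^\vee$; whenever $V^{\beef_i}_\Z$ strictly contains $V_\Z\oplus(U_i\cap V^{\beef_i}_\Z)$ --- which it typically does, since $V^{\beef_i}_\Z$ is maximal --- the pairing $[v_x,v_y]$ can fail to be integral. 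So one cannot simply enlarge $V^{\beef_1}_\Z+V^{\beef_2}_\Z$ to a maximal lattice in $V^{\beef_3}$ and proceed. A cleaner route is to extract $\mathcal{A}$ first (via your idempotent argument, applied to each $\beef_i$ separately) and then use the resulting map from each candidate $\mathcal{S}_K(G,\DD)$ to a fixed Siegel integral model attached to $\mathcal{A}$ to identify both candidates with the normalization of the same Zariski closure; this avoids having to nest the $\beef_i$ altogether. The remaining parts of your sketch are correct.
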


The following is a restatement of the main result of \cite{mp:spin}. 

\begin{proposition}
 If $p>2$ and $p^2$ does not divide $[V_\Z^\vee : V_\Z]$,  then $\mathcal{S}_K(G,\DD)$ is regular.
\end{proposition}

\begin{remark}\label{rem:different models}
Our $\mathcal{S}_K(G,\DD)$  is not quite the same as the integral model of  \cite{AGHMP-1}.  That integral model is obtained from $\mathcal{S}_K(G,\DD)$ by deleting certain closed substacks supported in characteristics $p$ for which $p^2$ divides $[V^\vee_\Z : V_\Z]$.  
The point of deleting such substacks is that the vector bundle $\bm{V}_{dR}$ on  $\Sh_K(G,\DD)$ of  \S \ref{ss:modular forms} then extends canonically to the remaining open substack.  In the present work, as in  \cite{AGHMP-2}, the only automorphic vector bundle required on $\mathcal{S}_K(G,\DD)$ is the line bundle of modular forms $\bm{\omega}$ just constructed; we have no need of an extension of $\bm{V}_{dR}$ to $\mathcal{S}_K(G,\DD)$.
\end{remark}

\subsection{Special divisors}
\label{ss:special divisors}

%%%%%%%%%%%%%%%%%%%%%%%%%%%%%%%%%%%%%%%%

For $m\in \Q_{>0}$ and $\mu \in V_\Z^\vee / V_\Z$ there is a  Cartier divisor $\mathcal{Z}(m,\mu)$ on $\mathcal{S}_K(G,\DD)$, defined in \cite{AGHMP-1,AGHMP-2} in the case where $V_\Z$ is maximal.
As we now assume only the weaker hypothesis that $V_\Z$ is maximal at $p$, 
the definition requires minor adjustment.

We first define the divisors in the generic fiber, where they were originally constructed by Kudla \cite{Ku97}.
Our construction is different, and has a more moduli-theoretic flavor.

By the theory of automorphic vector bundles described in \S \ref{s:AVB}, the $G$-equivariant inclusion (\ref{special injection}) determines an inclusion 
\begin{equation}\label{de Rham special inclusion}
\bm{V}_{dR} \subset \underline{\End}(\bm{H}_{dR})
\end{equation}
of vector bundles on $\Sh_K(G,\DD)$,  respecting the Hodge filtrations.   
Recall from \S \ref{ss:hodge embedding} that the filtered vector bundle $\bm{H}_{dR}$ is canonically identified with the first relative de Rham homology of the Kuga-Satake abelian scheme \[ \pi: A \to \Sh_K(G,\DD).\]

The compact open subgroup $K\subset G(\A_f)$ appears as a quotient of the \'etale fundamental group of 
 $\Sh_K(G,\DD)$, and hence representations of $K$ give rise to \'etale local systems.
In particular, for any prime $\ell$  the $\Z_\ell$-lattice $H_{\Z_\ell}$  determines an \'etale sheaf of $\Z_\ell$-modules $\bm{H}_\ell$ on $\Sh_K(G,\DD)$.  This is just the relative $\ell$-adic Tate module 
\[
\bm{H}_\ell \iso \underline{\Hom}( R^1\pi_{et,*} \underline{\Z}_\ell , \underline{\Z}_\ell)
\]
of the Kuga-Satake abelian scheme.

 As in the discussion preceding (\ref{disc quotient}),  $K$ also acts on both
\[
V_{\Z_\ell} = V_\Z \otimes \Z_\ell \qquad\mbox{and}\qquad V^\vee_{\Z_\ell} = V^\vee_\Z \otimes \Z_\ell,
\]
and the induced action on the quotient $V^\vee_{\Z_\ell} / V_{\Z_\ell}$ is trivial.  
These representations of $K$ determine \'etale sheaves of $\Z_\ell$-modules
$
\bm{V}_{\ell} \subset \bm{V}_\ell^\vee,
$
along with an inclusion of \'etale $\Z_\ell$-sheaves
\begin{equation}\label{etale special inclusion}
\bm{V}_\ell  \subset \underline{\End}( \bm{H}_\ell)  .
\end{equation}
and a canonical trivialization
$
\bm{V}_\ell^\vee / \bm{V}_{\ell}  \iso V^\vee_{\Z_\ell} / V_{\Z_\ell} .
$
In particular, each $\mu_\ell \in V^\vee_{\Z_\ell} / V_{\Z_\ell}$ determines a subsheaf of sets 
\begin{equation}\label{mu sheaf}
 \mu_\ell + \bm{V}_{\ell} \subset \bm{V}_\ell \otimes \Q_\ell.
\end{equation}

Suppose we are given a $\Q$-scheme $S$ and a morphism $S \to \Sh_K(G,\DD)$.
Denote by  $A_S \to S$ the  pullback of the Kuga-Satake abelian scheme.
A  quasi-endomorphism\footnote{A quasi-endomorphism should really be defined as global section of the Zariski sheaf $\underline{\End}(A_S) \otimes \Q$ on $S$.  If $S$ is not of finite type over $\Q$, the space of such global sections can be strictly larger than $\End(A_S) \otimes \Q$.  For simplicity of notation, we ignore this minor technical point.}  $x\in \End(A_S)\otimes \Q$ is \emph{special} if
\begin{itemize}
\item
 its de Rham realization 
\[
x_{dR} \in H^0 (S , \underline{\End}(\bm{H}_{dR})|_S )
\]
lies in the subsheaf $\bm{V}_{dR} |_S$, and
\item
  its $\ell$-adic realization
\[
x_\ell \in H^0(S , \underline{\End}( \bm{H}_\ell )|_S \otimes \Q_\ell )
\]
lies in the subsheaf $\bm{V}_{\ell} |_S  \otimes \Q_\ell$ for every prime $\ell$.
\end{itemize}

The space of all special quasi-endomorphisms of $A_S$ is a $\Q$-subspace
\[
V(A_S) \subset \End(A_S) \otimes \Q.
\]
Under the inclusion $V \subset \End(H)$,  the quadratic form on $V$ becomes  $Q(x) = x\circ x$.
Similarly, the square of any  $x\in V(A_S)$  lies in $\Q \subset \End(A_S) \otimes \Q$, 
and  $V(A_S)$ is endowed with the positive definite quadratic form $Q(x) = x\circ x$. 
For each   $\mu \in V_\Z^\vee / V_\Z$,  we now define 
\begin{equation}\label{generic special mu}
V_\mu(A_S) \subset V(A_S)
\end{equation}
to be the set of all special quasi-endomorphisms  whose $\ell$-adic realization  lies in the subsheaf (\ref{mu sheaf}) for every prime $\ell$, and set
\[
Z( m , \mu)(S) \define \{ x \in V_\mu(A_S) : Q(x) = m\}.
\]

We now explain how to extend this definition to the integral model.  

First assume that $V_\Z$ is self-dual at $p$.
As in the discussion following (\ref{int bundle machine}), the inclusion of vector bundles (\ref{de Rham special inclusion}) has a canonical extension to the integral model $\mathcal{S}_K(G,\DD)$.
Directly from the definitions, so does the inclusion of \'etale $\Q_\ell$-sheaves (\ref{etale special inclusion}) for any $\ell\neq p$.  As a substitute for $p$-adic \'etale cohomology, we use the inclusion 
\begin{equation}\label{crystalline special inclusion}
\bm{V}_{crys} \subset \underline{\End}(\bm{H}_{crys})
\end{equation}
of locally free crystals  on $\mathcal{S}_K(G,\DD)_{/\F_p}$ as in \cite[Proposition 4.2.5]{AGHMP-2}.
There is a canonical isomorphism
\[
\bm{H}_{crys} \iso 
\underline{\Hom}(R^1 \pi_{crys,*} \co^{crys}_{ \mathcal{A}_{\F_p} / \Z_p } ,  \co^{crys}_{ \mathcal{M}_{\F_p} / \Z_p }  )
\]
between $\bm{H}_{crys}$ and the first relative crystalline homology of the reduction  of the 
Kuga-Satake abelian scheme  $\pi:\mathcal{A} \to \mathcal{S}_K(G,\DD)$ of Proposition \ref{prop:integral model ind}.

Still assuming that $V_\Z$ is self-dual at $p$, suppose we are given a $\Z_{(p)}$-scheme $S$ and a morphism $S\to \mathcal{S}_K(G,\DD)$, and let 
 $\mathcal{A}_S$ be the pullback of the Kuga-Satake abelian scheme.
We call $x\in \End(A_S)\otimes \Z_{(p)}$  \emph{special} if 
\begin{itemize}
\item
 its de Rham realization 
\[
x_{dR} \in H^0 (S , \underline{\End}(\bm{H}_{dR})|_S )
\]
lies in the subsheaf $\bm{V}_{dR} |_S$, 
\item
 its $\ell$-adic realization
\[
x_\ell \in H^0(S , \underline{\End}( \bm{H}_\ell )|_S \otimes \Q_\ell )
\]
lies in the subsheaf $\bm{V}_{\ell} |_S  \otimes \Q_\ell$ for every prime $\ell\neq p$,
\item 
its $p$-adic realization
\[
x_p\in H^0(S_{\Q} , \underline{\End}( \bm{H}_p)|_{S_\Q} )
\]
over the generic fiber $S_\Q$ lies in the subsheaf $\bm{V}_p |_{S_\Q}$, and 
\item
its crystalline realization 
\[
x_{crys} \in H^0( S_{\F_p} , \underline{\End}( \bm{H}_{crys})|_{S_{\F_p}}  )
\]
over the special fiber $S_{\F_p}$ lies in the subcrystal $\bm{V}_{crys}|_{S_{\F_p}}$.
\end{itemize}
The space of all such  $x\in \End(A_S)\otimes \Z_{(p)}$ is denoted
\[
V(\mathcal{A}_S)_{\Z_{(p)}}  \subset \End( \mathcal{A}_S) \otimes \Z_{(p)},
\]
and tensoring with $\Q$ defines the subspace of all special quasi-endomorphisms
\[
V(\mathcal{A}_S)   \subset \End( \mathcal{A}_S) \otimes \Q.
\]
It  endowed with a positive definite quadratic form $Q(x) = x\circ x$ exactly as above.
For any $\mu \in V_\Z^\vee/V_\Z$ we define
\begin{equation}\label{self-dual mu}
V_\mu(\mathcal{A}_S) \subset V(\mathcal{A}_S)_{\Z_{(p)}}
\end{equation}
as the subset of elements whose $\ell$-adic realization lies in (\ref{mu sheaf}) for every prime $\ell \neq p$.

Now consider the general case in which $V_\Z \subset V$ is only assumed to be maximal at $p$.  
In this generality we still have the \'etale $\Q_\ell$-sheaves (\ref{etale special inclusion}) for $\ell\neq p$.
However,  there is no adequate theory of automorphic vector bundles or crystals on $\mathcal{S}_K(G,\DD)$; 
compare with Remark \ref{rem:different models}.  In particular, we have 
 no adequate substitute for the sheaves in  (\ref{crystalline special inclusion}).

So that we may apply the results of \cite{AGHMP-1,AGHMP-2}, 
enlarge $V_\Z$ to a lattice $V_\Z' \subset V$ that is maximal at every prime.
This choice determines a second  $\Z$-lattice $H'_\Z  \subset H_\Q$, and hence a second Kuga-Satake abelian scheme 
\[
\mathcal{A}' \to \mathcal{S}_K(G,\DD)
\]
endowed with an isogeny 
$
\mathcal{A} \to \mathcal{A}'
$
 of degree prime to $p$.     Choose a larger quadratic space $V^\beef$ as in \S \ref{ss:embiggen} admitting a maximal lattice $V_\Z^\beef \subset V^\beef$ that is self-dual at $p$, and  an isometric embedding $V_\Z' \to V_\Z^\beef$.

By the very construction of the integral model, there is a finite morphism
\[
\mathcal{S}_K(G,\DD) \to \mathcal{S}_{K^\beef}(G^\beef,\DD^\beef).
\] 
According to \cite[Proposition 2.5.1]{AGHMP-1}, the abelian schemes $\mathcal{A}'$ and 
\[
\mathcal{A}^\beef\to \mathcal{S}_{K^\beef} (G^\beef,\DD^\beef)
\]
 carry right actions of the integral Clifford algebras
$C(V_\Z')$ and $C(V_\Z^\beef)$, respectively, and are related by a canonical isomorphism
\begin{equation}\label{clifford serre}
\mathcal{A}'\otimes_{C(V_\Z') } C(V_\Z^\beef) \iso \mathcal{A}^\beef |_{ \mathcal{S}_K(G,\DD) }.
\end{equation}
Note that the Serre tensor construction on the left is  defined because the maximality of $V_\Z'$ implies that $V_\Z' \subset V_\Z^\beef$ as a $\Z$-module direct summand, which implies that the natural map $C(V_\Z') \to C(V_\Z^\beef)$ makes $C(V_\Z^\beef)$ into a free $C(V_\Z')$-module.

\begin{definition}
Suppose we are given a morphism $S\to \mathcal{S}_K(G,\DD)$.  A quasi-endomorphism 
\[
x\in \End(\mathcal{A}_S) \otimes \Q
\]
 is \emph{special} if the induced quasi-endomorphism of $\mathcal{A}_S'$ commutes with the action of  $C(V_\Z')$, and its image under the map 
\[
\End_{ C(V_\Z') } (\mathcal{A}' _S) \otimes \Q \to \End_{ C(V_\Z^\beef) } (\mathcal{A}^\beef_S ) \otimes \Q
\]
induced by (\ref{clifford serre}) is a special quasi-endomorphism of $\mathcal{A}^\beef_S$
(in the sense already defined for the self-dual-at-$p$ lattice $V_\Z^\beef$).
\end{definition}

The following is \cite[Proposition 4.3.4]{AGHMP-2}.

\begin{proposition}\label{prop:special rigidity}
If $S$ is connected,  then $x\in \End(\mathcal{A}_S) \otimes \Q$ is special if and only if the restriction  $x_s \in \End(\mathcal{A}_s) \otimes \Q$  is special for some (equivalently, every) geometric point $s\to S$.
\end{proposition}

Once again, the space of all special quasi-endomorphisms 
\[
V(\mathcal{A}_S) \subset \End(\mathcal{A}_S) \otimes \Q
\]
carries a positive definite quadratic form  $Q(x)=x\circ x$.  By construction it comes with an isometric embedding
\begin{equation}\label{embiggen special isometry}
V(\mathcal{A}_S) \subset V(\mathcal{A}^\beef_S).
\end{equation}

It remains to define a subset 
\begin{equation}\label{special cosets}
V_\mu(\mathcal{A}_S) \subset V(\mathcal{A}_S)
\end{equation}
for each  coset $\mu \in V_\Z^\vee / V_\Z$.
Let $\mu_\ell \in V_{\Z_\ell}^\vee / V_{\Z_\ell}$ be the $\ell$-component.
If $\ell\neq p$ let
\[
V_{\mu_\ell}(\mathcal{A}_S) \subset V(\mathcal{A}_S)
\]
be  the subset of elements whose $\ell$-adic realization lies in the subsheaf (\ref{mu sheaf}). 
To treat the $p$-part of $\mu$, define
\[
\Lambda = \{ x\in V_\Z^\beef : x\perp V_\Z\}.
\]
The maximality of $V_\Z$ at $p$ implies that $V_{\Z_p} \subset V_{\Z_p}^\beef$ is a $\Z_p$-module direct summand.  From this and the self-duality of $V_\Z^\beef$ at $p$ it is easy to see that the projections to the two factors in 
\[
V^\beef =  V \oplus \Lambda_\Q 
\]
induce bijections
\begin{equation}\label{coset swap}
V^\vee_{\Z_p} / V_{\Z_p}  \iso (V_{\Z_p}^\beef)^\vee / V_{\Z_p}^\beef  \iso \Lambda^\vee_{\Z_p}/\Lambda_{\Z_p}.
\end{equation}
The image of $\mu_p$ under this bijection is denoted $\bar{\mu}_p \in  \Lambda^\vee_{\Z_p}/\Lambda_{\Z_p}$.
As in  \cite[Proposition 2.5.1]{AGHMP-1}, there is a canonical isometric embedding
\[
\Lambda \to V(\mathcal{A}_S^\beef)_{\Z_{(p)} } 
\]
whose image is orthogonal to that of (\ref{embiggen special isometry}).  In fact, we have an orthogonal decmposition 
\[
V(\mathcal{A}_S^\beef) = V(\mathcal{A}_S) \oplus \Lambda_\Q,
\]
which allows us to define
\begin{equation}\label{sneaky coset}
V_{\mu_p}(\mathcal{A}_S) = \{ x \in V(\mathcal{A}_S) : x + \bar{\mu}_p \in V(\mathcal{A}_S^\beef)_{\Z_{(p)} } \}.
\end{equation}
Finally, define (\ref{special cosets}) by
\[
V_\mu(\mathcal{A}_S) = \bigcap_\ell V_{\mu_\ell} (\mathcal{A}_S).
\]
This  set   is independent of the choice of auxiliary data  $V_\Z' \subset V_\Z^\beef \subset V^\beef$ used in its definition, and agrees with the definition (\ref{generic special mu})  if $S$ is a $\Q$-scheme.  
See \cite[Proposition 4.5.3]{AGHMP-2}.

The following is \cite[Proposition 2.7.2]{AGHMP-1}.

\begin{proposition}\label{prop:divisor represent}
Given a positive $m\in \Q$  and  a $\mu \in V_\Z^\vee / V_\Z$, the functor sending an $\mathcal{S}_K(G,\DD)$-scheme $S$ to 
\[
 \mathcal{Z}( m , \mu)(S)  = \{ x\in V_\mu(\mathcal{A}_S) : Q(x) =m \}
\]
is represented by a finite, unramified, and relatively representable morphism of Deligne-Mumford stacks
\begin{equation}\label{special divisors}
\mathcal{Z}( m , \mu) \to \mathcal{S}_K(G,\DD).
\end{equation}
\end{proposition}

 In the next subsection we will justify in what sense the morphisms (\ref{special divisors}), which are not even closed immersions, deserve the name \emph{special divisors}.  
 
 We end this section by describing what the morphism (\ref{special divisors}) looks like in the complex fiber.
 For each $g\in G(\A_f)$, the pullback of  (\ref{special divisors}) via the  complex uniformization
\[
\DD \map{z\mapsto (z,g)}  \Sh_K(G,\DD)(\C)
\]
can be made explicit.  
 Each $x\in V$ with $Q(x) >0$ determines an analytic subset 
\[
\DD(x) = \{ z\in \DD : [z,x] =0 \} 
\]
of the hermitian domain (\ref{orthogonal domain}).

From the discussion of \S \ref{ss:hodge embedding}, we see that  the fiber of the Kuga-Satake abelian scheme 
at a point $z\in \DD$ is the complex torus
\[
A_z(\C) = g H_\Z \backslash H_\C / z H_\C.
\]
The action of $x\in V \subset \End(H)$ by left multiplication in the Clifford algebra $C(V)$ defines a quasi-endomorphism of $A_z(\C)$ if and only if it 
preserves the subspace $z H_\C \subset H_\C$, and a linear algebra  exercise shows that this condition is equivalent to $z\in \DD(x)$.
Using this, one can check that the pullback of (\ref{special divisors}) via the above complex uniformization is \begin{equation}\label{complex divisor}
 \bigsqcup_{  \substack{ x\in g\mu + g V_\Z \\ Q(x) = m}  } \DD(x)  \to   \DD.
\end{equation}
Here, by mild abuse of notation, $g\mu$ is the image of $\mu$ under the action-by-$g$ isomorphism
$
V^\vee_\Z / V_\Z \to gV^\vee_\Z / gV_\Z
$

%%%%%%%%%%%%%%%%%%%%%%%%%%%%%%%%%%%%%%%%

\subsection{Deformation theory}
\label{ss:deformation}

%%%%%%%%%%%%%%%%%%%%%%%%%%%%%%%%%%%%%%%%%%

We need to explain the sense in which the morphism (\ref{special divisors}) merits the name \emph{special divisor}.
This is closely tied up with the deformation theory of special endomorphisms, which will also be needed in the proof of the pullback formula of Proposition \ref{prop:pullback} below.

It is enlightening to first consider the  complex analytic situation of (\ref{complex divisor}).
Each subset $\DD(x) \subset \DD$ is not only an analytic divisor, but arises as the $0$-locus of a canonical section 
\begin{equation}\label{analytic obstruction}
\mathrm{obst}_x^{an} \in H^0( \DD , \bm{\omega}_\DD^{-1} )
\end{equation}
of the inverse of the tautological bundle $\bm{\omega}_{\DD}$ on (\ref{orthogonal domain}).   
Indeed, recalling that the fiber of $\bm{\omega}_\DD$ at $z\in \DD$ is the isotropic line $\C z \subset V_\C$, 
we define (\ref{analytic obstruction})  as the linear functional
\[
\C z \map{ z \mapsto   [z,x] }  \C .
\]
This is the \emph{analytic obstruction to deforming $x$}.

Returning to the algebraic world, suppose 
\begin{equation}\label{deformation diagram}
\xymatrix{
{   S } \ar[r]  \ar[d]   &   { \mathcal{Z}(m,\mu) }  \ar[d] \\
  { \widetilde{S} }   \ar[r]  &  {   \mathcal{S}_K(G,\DD) }
}
\end{equation}
is  a commutative diagram of stacks in which  $S \to \widetilde{S}$ is a closed immersion of schemes defined by an ideal sheaf $J \subset \co_{\widetilde{S}}$ with $J^2=0$.   
After pullback  to $S$, the Kuga-Satake abelian scheme $\mathcal{A} \to  \mathcal{S}_K(G,\DD)$ acquires a tautological special quasi-endomorphism
\[
x \in V_\mu(\mathcal{A}_S),
\]
and we want to know when this lies in the image of the (injective) restriction map
\begin{equation}\label{special deformation}
V_\mu( \mathcal{A}_{\widetilde{S}}) \to V_\mu(\mathcal{A}_S).
\end{equation}
Equivalently, when there is a (necessarily unique) dotted arrow 
\[
\xymatrix{
{   S } \ar[r]  \ar[d]   &   { \mathcal{Z}(m,\mu) }  \ar[d] \\
  { \widetilde{S} }   \ar@{.>}[ur] \ar[r]  &  {   \mathcal{S}_K(G,\DD) }
}
\]
making the diagram commute.

\begin{proposition}\label{prop:obstruction}
In the situation above, there is a canonical section
\begin{equation}\label{obstruction}
\mathrm{obst}_x \in H^0 \big( \widetilde{S} , \bm{\omega}|^{-1}_{\widetilde{S} } \big),
\end{equation}
called the \emph{obstruction to deforming $x$}, such that 
$x$ lies in the image of (\ref{special deformation}) if and only if $\mathrm{obst}_x=0$.
\end{proposition}

\begin{proof}
Suppose first that $V_\Z$ is self-dual at $p$, so that we have an inclusion 
\[
\bm{V}_{dR} \to \underline{\End}(\bm{H}_{dR})
\]
as a local direct summand of vector bundles on $\mathcal{S}_K(G,\DD)$.
The vector  bundle  $\bm{H}_{dR}$  is identified with the first relative de Rham homology of the Kuga-Satake abelian scheme $\mathcal{S}$.  As such, it is is endowed with its Gauss-Manin connection,  which restricts to a flat connection 
\[
\nabla : \bm{V}_{dR} \to \bm{V}_{dR} \otimes \Omega^1_{  \mathcal{S}_K(G,\DD) / \Z_{(p)} }.
\]
Indeed, one can check this in the complex fiber, over which the connection becomes identified, using (\ref{general mixed betti-derham}), with 
\[
 \bm{V}_{Be} \otimes_\Z \co_{ \Sh_K(G,\DD)(\C)} \map{ 1 \otimes d} \bm{V}_{Be} \otimes_\Z \Omega^1_{ \Sh_K(G,\DD)(\C)}.
\]

 The  de Rham realization 
\begin{equation}\label{deform x}
x_{dR} \in H^0(S , \bm{V}_{dR}|_S )
\end{equation}
 is parallel, and therefore admits parallel transport (the algebraic theory of parallel transport can be extracted from \cite[\S 2]{BO}, for example) to $\widetilde{S}$: there is a unique parallel extension of $x_{dR}$ to 
 \begin{equation}\label{extend x}
\widetilde{x}_{dR} \in H^0( \widetilde{S} , \bm{V}_{dR}|_{\widetilde{S}} ).
\end{equation}
We now define $\mathrm{obst}_x$ be the image of $\widetilde{x}_{dR}$ under
$
\bm{V}_{dR} \to \bm{V}_{dR}/ F^0 \bm{V}_{dR},
$
and use the perfect bilinear pairing (\ref{de Rham bilinear}) to identify 
\[
\bm{V}_{dR}/ F^0 \bm{V}_{dR}\iso ( F^1 \bm{V}_{dR} )^{-1}=\bm{\omega}^{-1} .
\]

The local sections of $F^0 \bm{V}_{dR}$ are precisely those local sections of $\bm{V}_{dR} \subset \underline{\End}(\bm{H}_{dR})$ which preserve the Hodge filtration $F^0 \bm{H}_{dR} \subset \bm{H}_{dR}$.
The vanishing of $\mathrm{obst}_x$ is equivalent to 
\[
\widetilde{x}_{dR} \in H^0(\widetilde{S} ,  F^0 \bm{V}_{dR}|_{\widetilde{S}} ),
\]
which is therefore equivalent to the endomorphism
\[
\widetilde{x}_{dR} \in \End(  \bm{H}_{dR}|_{\widetilde{S}}  ) 
\]
respecting the Hodge filtration.  Using the deformation theory of abelian schemes described in  \cite[Chapter 2]{Lan}, this is equivalent to 
\[
x \in V_\mu(\mathcal{A}_S) \subset \End(\mathcal{A}_S)\otimes  \Z_{(p)}
\]
admitting an extension to  
\[
\widetilde{x} \in \End(\mathcal{A}_{\widetilde{S}} )\otimes  \Z_{(p)}.
\] 
Using Proposition \ref{prop:special rigidity} it is easy to see that when such an  extension exists it must lie in  $V_\mu(\mathcal{A}_S)$.  This proves the claim when $V_\Z$ is self-dual at $p$.

We now explain how to construct the section (\ref{obstruction}) in general.
 Fix an isometric embedding $V_\Z \subset V_\Z^{\beef}$ as in \S \ref{ss:embiggen}, and assume that $V_\Z^\beef$ is  self-dual at $p$, so that  we have morphisms of integral models
\[
\mathcal{S}_K(G,\DD) \to \mathcal{S}_{K^\beef}(G^\beef,\DD^\beef). 
\]
In the notation of (\ref{sneaky coset}), the special quasi-endomorphism $x\in V_\mu( \mathcal{A}_S)$ determines  another special quasi-endomorphism 
\[
x^\beef = x + \bar{\mu}_p \in V( \mathcal{A}_S)_{ \Z_{(p)} },
\]
and $x$ extends  to $V_\mu( \mathcal{A}_{\widetilde{S} } )$ and only if $x^\beef$ extends to $V( \mathcal{A}_{\widetilde{S} } )_{ \Z_{(p)} }$.   

The self-dual-at-$p$ case considered above determines an obstruction to deforming $x^\beef$, denoted
\[
\mathrm{obst}_{x^\beef} \in H^0 \big( \widetilde{S} , \bm{\omega}^\beef|^{-1}_{\widetilde{S} } \big).
\]
Recalling that $\bm{\omega}|_{\widetilde{S}} = \bm{\omega}^\beef|_{\widetilde{S}}$ by definition, we now define (\ref{obstruction}) by
\[
\mathrm{obst}_x=\mathrm{obst}_{x^\beef}.
\]
It is easy to check that this does not depend on the auxiliary choice of $V_\Z^\beef$ used in its construction, and has the desired properties.
\end{proof}

\begin{proposition}\label{prop:sufficiently small}
Every geometric point of $\mathcal{S}_K(G,\DD)$ admits an \'etale neighborhood $U \to \mathcal{S}_K(G,\DD)$ such that 
\[
\mathcal{Z}(m,\mu)_{/U} \to U
\] 
restricts to a closed immersion on every connected component of its domain.
Each such closed immersion is an effective Cartier divisor on $U$.
\end{proposition}

\begin{proof}
The first claim is a formal consequence of Proposition \ref{prop:divisor represent}, and holds for any finite, unramified, relatively representable morphism of Deligne-Mumford stacks.
Indeed, if $\co_s$ denotes the \'etale local ring at a geometric point $s\to \mathcal{S}_K(G,\DD)$, then finiteness and relative representability imply that 
\[
 \Spec(\co_s) \times_{ \mathcal{S}_K(G,\DD)  } \mathcal{Z}(m,\mu) \iso \bigsqcup_t \Spec(\co_t)
\]
where $t$ runs over  the geometric points $t \to \mathcal{Z}(m,\mu)$ above $s$, and unramifiedness implies that 
each morphism  $\co_s \to \co_t$  is surjective.

Fix one such $t$,  set $J = \mathrm{ker}( \co_s \to \co_t)$, and consider the nilpotent thickening
\[
\Spec(\co_t) = \Spec(\co_s /  J ) \hookrightarrow \Spec(\co_s / J^2).  
\] 
In particular, we have a diagram
\[
\xymatrix{
{   \Spec(\co_t)   } \ar[r]  \ar[d]   &   { \mathcal{Z}(m,\mu) }  \ar[d] \\
  { \Spec(\co_s / J^2) }   \ar[r]  &  {   \mathcal{S}_K(G,\DD) }
}
\]
exactly as in (\ref{deformation diagram}).
The pullback of the Kuga-Satake abelian scheme to $\Spec(\co_t)$ acquires a tautological special quasi-endomorphism $x$.  
The obstruction to deforming $x$ is, after choosing a trivialization of $\bm{\omega}|_{ \Spec(\co_t)}$, an element 
\[
\mathrm{obst}_x \in   \co_s / J^2
\]
that generates $J/J^2$ as an $\co_s$-module.  
Nakayama's lemma now implies that  $J \subset \co_s$ is a principal ideal, and so 
$\Spec(\co_t) \hookrightarrow \Spec(\co_s)$ is an effective Cartier divisor.

This proves the claim on the level of \'etale local rings, and the extension to \'etale neighborhoods is routine.
\end{proof}

Proposition \ref{prop:sufficiently small} is what justifies referring to the morphisms (\ref{special divisors}) as divisors, even though they are not closed immersions. 
 In the notation of that proposition, every connected component of the source of  
\[
\mathcal{Z}(m,\mu)_{/U} \to U
\] 
determines a  Cartier divisor on $U$.  Summing over all such components and then glueing as $U$ varies over an \'etale cover defines an effective Cartier divisor on $\mathcal{S}_K(G,\DD)$ in the usual sense.   When no confusion can arise (and perhaps even when it can), we denote this Cartier divisor again by $\mathcal{Z}(m, \mu)$.

We end this subsection by explaining the precise relation between the analytic obstruction (\ref{analytic obstruction}) and the algebraic obstruction  (\ref{obstruction}).

Fix a $g\in G(\A_f)$.  If we pull back the diagram (\ref{deformation diagram}) via the morphism
\begin{equation}\label{deformation uniform}
\DD \map{ z\mapsto (z,g) } \Sh_K(G,\DD) (\C) 
\end{equation}
 we obtain (at least if $\widetilde{S}$ is of finite type over $\Q$) a diagram 
\[
\xymatrix{
  {   \mathcal{S}  = S^{an}  \times_{ \Sh_K(G,\DD)^{an}  } \DD  } \ar[r]  \ar[d]   &   { \bigsqcup \DD(x)    }  \ar[d] \\
   { \widetilde{\mathcal{S}}  = \widetilde{S}^{an}  \times_{ \Sh_K(G,\DD)^{an}  } \DD}   \ar[r]  &  {   \DD, }
}
\]
of complex analytic spaces, in which  the disjoint union is as in (\ref{complex divisor}), and the vertical arrow on the left is defined by a coherent  sheaf of ideals whose square is $0$.  In particular $\mathcal{S} \to \widetilde{\mathcal{S}}$ induces an isomorphism of underlying topological spaces.

  For a fixed $x$, let $\mathcal{S}(x)\subset \mathcal{S}$ be the union of those connected components of  whose image under the top horizontal arrow lies in the factor $\DD(x)$. This determines a union of connected components
   $\widetilde{\mathcal{S}}(x) \subset\widetilde{\mathcal{S}}$, and gives us a  diagram of complex analytic spaces
\[
\xymatrix{
{  S^{an}   } \ar[d]   & {   \mathcal{S}(x) } \ar[r]  \ar[d]   \ar[l]&   {  \DD(x)    }  \ar[d] \\
{   \widetilde{S}^{an}  }  &  { \widetilde{\mathcal{S}}(x) }   \ar[r]  \ar[l] &  {   \DD. }
}
\]

\begin{proposition}\label{prop:two obstructions}
There is an equality of sections
\[
\mathrm{obst}_x^{an}|_{ \widetilde{\mathcal{S}}(x) }  =\mathrm{obst}_x|_{ \widetilde{\mathcal{S}}(x) },
\]
 where the left hand side is the pullback of (\ref{analytic obstruction})  via 
 $
\widetilde{ \mathcal{S} } (x) \to \DD
 $
  and the right hand side is the pullback of  (\ref{obstruction}) via 
 $\widetilde{\mathcal{S}}(x)\to  \widetilde{S}^{an}$.
 \end{proposition}

 \begin{proof}
The pullback of $\bm{V}_{dR}$ via (\ref{deformation uniform}) is canonically identified with  the constant vector bundle   
 \[
 \bm{V}_{dR}|_{\DD} = V\otimes \co_{\DD},
 \]
  and under this identification the pullback of the connection $\nabla$ is the  induced by the usual $d : \co_{\DD} \to \Omega^1_{ \DD/\C}$.

By the discussion leading to (\ref{complex divisor}), the pullback of (\ref{deform x}) via $\mathcal{S}(x) \to S^{an}$ is identified with the constant section 
 \[
 x\otimes 1 \in H^0 \big( \mathcal{S}(x) ,  \bm{V}_{dR}|_{ \mathcal{S}(x) } \big),
 \]
and  the pullback of (\ref{extend x}) via $\tilde{\mathcal{S}}(x) \to\widetilde{S}^{an}$ is its unique parallel extension 
 \[
 x\otimes 1 \in H^0 \big( \widetilde{\mathcal{S}}(x) ,  \bm{V}_{dR}|_{ \widetilde{\mathcal{S}}(x) } \big).
 \] 
 Thus $\mathrm{obst}_x|_{ \widetilde{\mathcal{S}}(x) }$ is the image of $x\otimes 1$ under
 \[
 V\otimes \co_{\widetilde{\mathcal{S}}(x) } \iso   \bm{V}_{dR}|_{\widetilde{\mathcal{S}}(x)}  \to
    \big( \bm{V}_{dR}  / F^0 \bm{V}_{dR} \big)|_{\widetilde{\mathcal{S}}(x)} 
   \iso \bm{\omega}_{\widetilde{\mathcal{S}}(x)}^{-1} .
 \]

On the other hand,  the analytically  defined obstruction (\ref{analytic obstruction}) is, essentially by construction,  the image of the constant section $x\otimes 1$ under 
   \[
V \otimes \co_\DD \iso   \bm{V}_{dR}|_{\DD}  \to  \big( \bm{V}_{dR}/ F^0 \bm{V}_{dR} \big) |_{\DD} 
\iso \bm{\omega}_{\DD}^{-1} .
 \]
The stated equality of sections over $\widetilde{\mathcal{S}}(x)$ follows immediately.
 \end{proof}

%%%%%%%%%%%%%%%%%%%%%%%%%%%%%%%%%%%

\subsection{The pullback formula for special divisors}
\label{ss:integral pullbacks}

%%%%%%%%%%%%%%%%%%%%%%%%%%%%%%%%%%%%%

Suppose we are in the general situation of \S \ref{ss:embiggen}  (in particular, we impose no assumption of self-duality on $V_\Z^\beef$),  
so that we have a morphism  (\ref{embiggen morphism}) of Shimura varieties  
\[
\Sh_K(G,\DD) \to\Sh_{K^\beef}(G^\beef,\DD^\beef).
\]

The larger Shimura variety $\Sh_{K^\beef}(G^\beef , \DD^\beef)$ has its own integral model 
\[
\mathcal{S}_{K^\beef}(G^\beef , \DD^\beef) \to \Spec( \Z_{(p)}),
\]
 obtained by repeating the construction of  \S \ref{ss:gspin integral} with $(G,\DD)$ replaced by $(G^\beef , \DD^\beef)$. 
  That is, choose an isometric embedding  $V^\beef \subset V^{\beef \beef}$ into a larger quadratic space that admits an almost self-dual lattice at $p$, and define $\mathcal{S}_{K^\beef}(G^\beef , \DD^\beef)$ as a normalization. 
       Of course  $\mathcal{S}_{K^\beef}(G^\beef , \DD^\beef)$  has its own line bundle $\bm{\omega}^\beef$, its own Kuga-Satake abelian scheme, and its own collection of special divisors $\mathcal{Z}^\beef (m,\mu)$.

 \begin{proposition}\label{prop:integral bundle pullback}
 The above morphism of canonical models extends uniquely to a finite morphism 
\begin{equation}\label{integral embiggen morphism}
  \mathcal{S}_K(G,\DD) \to \mathcal{S}_{K^\beef}(G^\beef , \DD^\beef)
\end{equation}
 of integral models.
The line bundles of weight one modular forms on the source and target of (\ref{integral embiggen morphism}) are related by a canonical isomorphism
\begin{equation}\label{integral line pullback}
 \bm{\omega}^\beef|_{ \mathcal{S}_K(G,\DD)} \iso  \bm{\omega}  .
\end{equation}
\end{proposition}

\begin{proof}
The existence and uniqueness of  (\ref{integral embiggen morphism}) is proved in \cite[Proposition 2.5.1]{AGHMP-1}.

If $V_\Z^\beef$ is almost self-dual at $p$ then (\ref{integral line pullback}) is just a restatement of the definition of $\bm{\omega}$.
For the general case, one embeds $V^\beef$ into a quadratic space $V^{\beef \beef}$ admitting a lattice that is almost self-dual  at $p$.  This allows one to identify both sides of (\ref{integral line pullback}) with the pullback of  $\bm{\omega}^{\beef\beef}$ for some morphisms
\[
 \mathcal{S}_K(G,\DD) \to \mathcal{S}_{K^\beef}(G^\beef,\DD^\beef)  \to \mathcal{S}_{K^{\beef \beef}}(G^{\beef \beef},\DD^{\beef \beef}) 
 \]
 into the larger Shimura variety determined by $V^{\beef\beef}$.
\end{proof}

Define a quadratic space
\[
\Lambda = \{ x \in L_\Z^\beef : x\perp L \}
\]
over $\Z$ of signature $(n^\beef - n ,0)$.
There are natural inclusions
\[
V_\Z \oplus \Lambda \subset V_\Z^\beef \subset   (V_\Z^\beef)^\vee \subset V_\Z^\vee \oplus \Lambda^\vee
\] 
all of finite index, from which it follows that the orthogonal decomposition
\[
V^\beef = V \oplus \Lambda_\Q
\]
identifies 
\[
\mu + V_\Z^\beef = \bigsqcup_{ \mu_1+\mu_2 \in \mu  } ( \mu_1 + V_\Z) \times (\mu_2 +\Lambda).
\]
Here the disjoint union over $\mu_1+\mu_2\in \mu$ is understood to mean   the union   over all pairs 
\[
( \mu _1, \mu_2) \in (V_\Z^\vee / V_\Z) \oplus (\Lambda^\vee / \Lambda) 
\]
satisfying $\mu_1  + \mu_2 \in ( \mu+ V_\Z^\beef) / (  V_\Z \oplus \Lambda  )$.

The following lemma  gives a corresponding decomposition  of special quasi-endomorphisms.
For the proof see   \cite[Proposition 2.6.4]{AGHMP-1}.

\begin{proposition}\label{prop:special decomp cosets}
For any scheme $S$ and any morphism $S \to \mathcal{S}_K(G,\DD)$ there is a canonical isometry
\[
V(\mathcal{A}_S^\beef ) \iso   V(\mathcal{A}_S) \oplus \Lambda_\Q   ,
\]
which restricts to a bijection
\begin{equation}\label{special decomp cosets}
V_\mu(\mathcal{A}_S^\beef) \iso \bigsqcup_{   \mu_1 + \mu_2 \in \mu  } V_{\mu_1}(\mathcal{A}_S) \times (\mu_2 + \Lambda).
\end{equation}
\end{proposition}

The relation between special divisors on the source and target of (\ref{integral embiggen morphism})
 is most easily expressed in terms of the line bundles associated to the divisors, rather than the divisors themselves.
By abuse of notation, we now use $\mathcal{Z}(m,\mu)$ to denote also the line bundle on $\mathcal{S}_K(G,\DD)$ determined by the Cartier divisor of the same name, extend the definition to  $m\le 0$ by
\[
\mathcal{Z}(m,\mu) = \begin{cases}
\bm{\omega}^{-1} & \mbox{if } (m,\mu) = (0,0) \\
\co_{\mathcal{S}_K(G,\DD)} & \mbox{otherwise,}
\end{cases}
\]
and use similar conventions for $\mathcal{S}_{K^\beef}(G^\beef,\DD^\beef)$.

\begin{proposition}\label{prop:pullback}
  For any rational number $m\ge 0$ and any $\mu \in (V^\beef_\Z )^\vee / V^\beef_\Z$, there is a  canonical isomorphism of line bundles  
\[
 \mathcal{Z}^\beef (m,\mu)|_{\mathcal{S}_K(G,\DD)} \iso  \bigotimes_{ \substack{ m_1 + m_2 = m  \\  \mu_1 + \mu_2\in   \mu   } }  \mathcal{Z}(m_1,\mu_1)^{ \otimes r_\Lambda(m_2,\mu_2) }
\]
on $\mathcal{S}_K(G,\DD)$.  Here we have set
\[
R_\Lambda(m,\mu) =  \{  \lambda \in \mu +\Lambda : Q (\lambda) = m \} 
\]
and $r_\Lambda(m,\mu) = \# R_\Lambda(m,\mu)$.
\end{proposition}

\begin{proof}
If $m<0$, or if $m=0$ and $\mu\neq 0$,   the tensor product on the right is empty,  and both sides of the desired isomorphism are canonically trivial.
If $(m,\mu) = (0,0)$ the claim is just a restatement of Proposition \ref{prop:integral bundle pullback}.
  Thus we may assume that $m>0$.

The decomposition (\ref{special decomp cosets}) induces an isomorphism 
\begin{align}\label{special stack decomp}
 \mathcal{Z}^\beef (m,\mu)_{/\mathcal{S}_K(G,\DD)}  
 & \iso   \bigsqcup\limits_{ \substack{ m_1 + m_2 = m  \\  \mu_1 + \mu_2 \in  \mu \\m_1 >0 \\ \lambda \in R_\Lambda(m_2,\mu_2) } }  \mathcal{Z} (m_1,\mu_1) 
 \sqcup  \bigsqcup\limits_{  \substack{  \mu_2 \in  \mu \\ \lambda \in R_\Lambda(m,\mu_2) } } 
  \mathcal{S}_K (G,\DD)
\end{align}
of  $\mathcal{S}_K(G,\DD)$-stacks,
where the condition $\mu_2\in \mu$ means that 
\[
0+\mu_2   \in  ( V_\Z^\vee / V_\Z)  \oplus ( \Lambda^\vee/ \Lambda) 
\]
 lies in the subset  $( \mu+V_\Z^\beef) /  ( V_\Z \oplus \Lambda)$.
Explicitly, given any connected scheme $S$ and a morphism
\[
S \to \mathcal{S}_K(G,\DD),
\]
a lift of the morphism to the  first disjoint union on the right hand side of (\ref{special stack decomp}) determines a pair 
\[
(x,\lambda)  \in V_{\mu_1}(\mathcal{A}_S) \times( \mu_2+\Lambda )
\]  
satisfying $m_1=Q(x)$ and $m_2=Q(\lambda)$.
Using (\ref{special decomp cosets}) we obtain a special quasi-endomorphism
\[
x^\beef = x+\lambda \in V_{\mu}(\mathcal{A}^\beef_S).
\]
 Similarly, a lift to the second disjoint union determines a vector $\lambda\in \mu_2+\Lambda$ satisfying $m=Q(\lambda)$, which determines a special quasi-endomorphism
\begin{equation}\label{degenerate special}
x^\beef =0+ \lambda \in V_{\mu}(\mathcal{A}^\beef_S).
\end{equation}
 In either case  $Q(x^\beef)=m$, and so our lift determines an $S$-point of the left hand side of (\ref{special stack decomp}).

If $\Lambda^\vee$ does not represent $m$, then $R_\Lambda(m,\mu_2)=\emptyset$ for all choices of $\mu_2$, and the desired isomorphism of line bundles 
\begin{align*}
 \mathcal{Z}^\beef (m,\mu)|_{\mathcal{S}_K(G,\DD)}
& \iso  \bigotimes_{ \substack{ m_1 + m_2 = m  \\  \mu_1 + \mu_2 \in \mu \\ m_1>0  } } 
 \mathcal{Z}(m_1,\mu_1)^{ \otimes r_\Lambda(m_2,\mu_2) } \\
& \iso  \bigotimes_{ \substack{ m_1 + m_2 = m  \\  \mu_1 + \mu_2 \in  \mu  } } 
\mathcal{Z}(m_1,\mu_1)^{ \otimes r_\Lambda(m_2,\mu_2) },
\end{align*}
on $\mathcal{S}_K(G,\DD)$ follows immediately from (\ref{special stack decomp}). 
In general, the decomposition (\ref{special stack decomp}) shows that the support of  $\mathcal{Z}^\beef(m,\mu)$ contains the image of (\ref{integral embiggen morphism}) as soon as there is some  $\mu_2\in \mu$ for which   $R_\Lambda(m,\mu_2)$ is nonempty.  Thus we must compute an improper intersection.

Fix a geometric point $s\to \mathcal{S}_K(G,\DD)$ and, as in Proposition \ref{prop:sufficiently small},  an \'etale neighborhood 
\[
U^\beef \to \mathcal{S}_{K^\beef}(G^\beef,\DD^\beef)
\]
of $s$ small enough that the morphism
\[
 \mathcal{Z}^\beef(m,\mu)_{/U^\beef} \to U^\beef
\]
restricts to a closed immersion on every connected component of the domain.
By shrinking  $U^\beef$  we may assume that these connected components are in bijection with the set of lifts
\[
\xymatrix{
& \mathcal{Z}^\beef(m,\mu)_{/U^\beef} \ar[d] \\
s \ar@{.>}[ur] \ar[r]  & U^\beef.
}
\]

Having so chosen $U^\beef$, we then choose a connected \'etale neighborhood 
\[
U \to  \mathcal{S}_ K (G,\DD) 
\]
of $s$ small enough that there exists a lift
\[
\xymatrix{
{ U } \ar[d]\ar@{.>}[r]  &  { U^\beef } \ar[d]  \\
{  \mathcal{S}_ K (G,\DD)  }  \ar[r]  &  {   \mathcal{S}_{K^\beef}(G^\beef,\DD^\beef)  ,}  
}
\]
and so that in the cartesian diagram
\[
\xymatrix{
{  \mathcal{Z}^\beef(m,\mu)_{/U}  } \ar[r] \ar[d]  &   { \mathcal{Z}^\beef(m,\mu)_{/U^\beef} } \ar[d] \\
 {U }  \ar[r]  &  { U^\beef }
 }
\]
each  of the vertical arrows restricts to  a closed immersion on every connected component of its source, and  the top horizontal arrow induces a bijection on connected components.

The decomposition (\ref{special stack decomp}) induces a decomposition of $U$-schemes
\begin{align*}
\mathcal{Z}^\beef (m,\mu)_{/U}    \iso   \bigsqcup\limits_{ \substack{ m_1 + m_2 = m  \\  \mu_1 + \mu_2 \in  \mu \\m_1 >0 \\ \lambda \in R_\Lambda(m_2,\mu_2)} }  \mathcal{Z}(m_1,\mu_1)_{/U} 
 \sqcup  \bigsqcup\limits_{    \substack { \mu_2 \in  \mu \\    \lambda\in R_\Lambda(m,\mu_2)   }}  U . \nonumber
\end{align*}
The first disjoint union defines a Cartier divisor on $U$.
In the second disjoint union, the copy of $U$ indexed by $\lambda\in R_\Lambda(m,\mu_2)$ is the image of the open and closed immersion
\[
f_\lambda : U \to \mathcal{Z}^\beef (m,\mu)_{/U}  
\]
obtained by endowing the Kuga-Satake abelian scheme $\mathcal{A}^\beef_U$ with the special quasi-endomorphism
$0 + \lambda \in   V_\mu(\mathcal{A}_U^\beef)$ of (\ref{degenerate special}).

There is a corresponding  canonical decomposition of $U^\beef$-schemes
\begin{equation}\label{special stack components}
\mathcal{Z}^\beef(m,\mu)_{/ U^\beef} =  
\mathcal{Z}^\beef_{\mathrm{prop}} \sqcup  \bigsqcup_{  \substack{ \mu_2 \in \mu  \\  \lambda \in R_\Lambda(m,\mu_2)  }  }\mathcal{Z}^\beef_\lambda,
\end{equation}
in which  
\begin{equation}\label{lambda component}
\mathcal{Z}^\beef_\lambda \subset \mathcal{Z}^\beef(m,\mu)_{/ U^\beef}
\end{equation}
 is the connected component containing  the image of 
\[
U \map{f_\lambda} \mathcal{Z}^\beef(m,\mu)_{/U} \to \mathcal{Z}^\beef(m,\mu)_{/U^\beef}
\]
and
 \[
 \mathcal{Z}^\beef_{\mathrm{prop}}\subset \mathcal{Z}^\beef(m,\mu)_{/ U^\beef}
 \] 
 is the union of all  connected components not of this form.

 It is clear from the definitions that  the image of $U \to U^\beef$  intersects the Cartier divisor   $\mathcal{Z}^\beef_{\mathrm{prop}} \to U^\beef$  properly, and in fact
 \begin{equation}\label{proper intersection}
 \mathcal{Z}^\beef_{\mathrm{prop} / U } \iso
  \bigsqcup\limits_{ \substack{ m_1 + m_2 = m  \\  \mu_1 + \mu_2 \in  \mu \\m_1 >0 \\ \lambda \in R_\Lambda(m_2,\mu_2)} }  \mathcal{Z}(m_1,\mu_1)_{/U}.
\end{equation}
On the other hand,  the image of $U \to U^\beef$  is completely contained within the support of every  $\mathcal{Z}^\beef_\lambda \to U^\beef$.

By mild abuse of notation, we denote again by $\mathcal{Z}_\mathrm{prop}^\beef$ and $\mathcal{Z}_\lambda^\beef$ the line bundles on $U^\beef$ determined by the Cartier divisors of the same name.

\begin{lemma}\label{lem:full adjunction}
There is a canonical isomorphisms of line bundles 
\[
 \mathcal{Z}^\beef_{\mathrm{prop}}| _U \iso
\bigotimes_{ \substack{ m_1 + m_2 = m  \\   \mu_1 + \mu_2 \in  \mu\\ m_1 >0   } }  
\mathcal{Z}(m_1,\mu_1)|_U^{ \otimes r_\Lambda(m_2,\mu_2) },
\]   
and a canonical isomorphism
\begin{equation}\label{bundle adjunction}
\mathcal{Z}^\beef_\lambda |_U\iso \bm{\omega}|_U^{-1}.
\end{equation}
\end{lemma}

\begin{proof}
The first isomorphism is clear from the isomorphism of $U$-schemes (\ref{proper intersection}).
The second isomorphism is more subtle, and is based on similar calculations  in the context of unitary Shimura varieties; see especially  \cite[Theorem 7.10]{BHY} and \cite{Ho3}.

Our \'etale neighborhood $U^\beef \to \mathcal{S}_{K^\beef}(G^\beef,\DD^\beef)$ was chosen in such a way that 
$\mathcal{Z}_\lambda^\beef \to U^\beef$ is a closed immersion defined by a locally principal sheaf of ideals 
$J_\lambda \subset \co_{U^\beef}$. 
The closed subscheme 
\[
\widetilde{\mathcal{Z}}_\lambda^\beef  \subset  U^\beef
\]
defined by $J_\lambda^2$ is called the \emph{first order tube} around $Z_\lambda^\beef$.
We now have morphisms
\[
U  \map{f_\lambda}  \mathcal{Z}_\lambda^\beef \hookrightarrow \widetilde{\mathcal{Z}}_\lambda^\beef  \hookrightarrow U^\beef \to \mathcal{S}_{K^\beef}(G^\beef,\DD^\beef).
\]

Tautologically, $J_\lambda^{-1}$ is the line bundle on $U^\beef$ determined by the Cartier divisor $\mathcal{Z}^\beef_\lambda$.   Denote by $\sigma_\lambda$ the constant function $1$, viewed as a section of 
$ \co_{U^\beef}  \subset J_\lambda^{-1}$, so that $\mathrm{div}(\sigma_\lambda) = \mathcal{Z}^\beef_\lambda$.

On the other hand, after restriction to the connected component (\ref{lambda component}) the Kuga-Satake abelian scheme $\mathcal{A}^\beef$ acquires a tautological 
\[
x^\beef \in V_\mu(\mathcal{A}^\beef_{\mathcal{Z}_\lambda^\beef}).
\]
The discussion of  \S \ref{ss:deformation} then provides us with a canonical section
\[
\mathrm{obst}_{x^\beef} \in H^0 \big( \widetilde{\mathcal{Z}}_\lambda^\beef , \bm{\omega}|^{-1}_{   \widetilde{\mathcal{Z}}_\lambda^\beef }  \big) 
\]
whose zero locus is the closed subscheme $\mathcal{Z}_\lambda^\beef$.

The idea is roughly that the equality of divisors 
\[
\mathrm{div}(\sigma_\lambda) = \mathrm{div}( \mathrm{obst}_{x^\beef} ) 
\]
 should imply that there is  a  unique  isomorphism of line bundles (\ref{bundle adjunction}) over the first order tube  sending $\sigma_\lambda \mapsto \mathrm{obst}_{x^\beef}$,   which we would then pull back via $f_\lambda$.
This is a bit too strong.  Instead, we argue  that such an  isomorphism exists Zariski locally on the first order tube, and that any two such local isomorphisms restrict to the same  isomorphism over $U$.

Indeed, working Zariski locally, we can assume that 
\[
U = \Spec(R), \quad U^\beef = \Spec ( R^\beef)
\]
for integral domains $R$ and $R^\beef$, and 
\[
\mathcal{Z}_\lambda^\beef = \Spec ( R^\beef/J)  , \quad  \widetilde{\mathcal{Z}}_\lambda^\beef  = \Spec ( R^\beef/J^2).
\]
The morphisms $U \to \mathcal{Z}_\lambda^\beef \to  U^\beef$ then correspond to  homomorphisms 
\[
R^\beef\to R^\beef/J \to R.
\] 
Let  $\mathfrak{p} \subset R^\beef$ be the kernel of this composition, so that $J\subset \mathfrak{p}$.
Note that $p \not\in \mathfrak{p}$, as the flatness of $\mathcal{S}_K(G,\DD)$ over $\Z_{(p)}$ implies that $R$ has no $p$-torsion.

Assume that we have chosen trivializations of the line bundles $\bm{\omega}|_{U^\beef}$ and $Z_\lambda^\beef$ on $U^\beef$, so that  our sections  $\mathrm{obst}_\lambda$ and $\sigma_\lambda$ are identified with elements  
\[
a \in R^\beef/J^2   \qquad \mbox{and} \qquad b\in R^\beef,
\]  
respectively.  Each of these elements  generates the  ideal $J / J^2 \subset R^\beef / J^2$.

\begin{lemma}
There exists  $u\in R^\beef/J^2$ such that $ua=b$.   The image of any such $u$ in $R^\beef/\mathfrak{p}\subset R$ is a unit.
If also $u'a=b$, then  $u=u'$  in $R^\beef/\mathfrak{p}\subset R$.
\end{lemma}

\begin{proof}
Suppose we are given any $x\in R^\beef / J^2$ with  $bx=0$.  We claim that 
\[
x\in \mathfrak{p}/J^2 .
\]
If not, then any lift $x\in R^\beef$ becomes a unit in the localization $R^\beef_\mathfrak{p}$.  
As $bx \in J^2$, we obtain 
\begin{equation}\label{little b}
b \in \mathfrak{p}^2 R^\beef_\mathfrak{p}.
\end{equation}

We have noted above that $p\not\in \mathfrak{p}$, and so $R_\mathfrak{p}^\beef$ is a $\Q$-algebra.
The source and target of 
\[
\mathcal{Z}^\beef( m,\mu) \to \mathcal{S}_{K^\beef}(G^\beef,\DD^\beef)
\]
have smooth generic fibers,   and so   
$
R^\beef_\mathfrak{p} \to R^\beef_\mathfrak{p} / b R^\beef_\mathfrak{p}
$
is a morphism of regular local rings.   By (\ref{little b}),   this morphism induces an isomorphism on tangent spaces, and so is itself an isomorphism.  Thus $b=0$ in  $R^\beef_\mathfrak{p}$, and hence also in $R^\beef$.
This contradicts the fact that  $b$ generates the ideal $J$.

As $a$ and $b$ generate both generate $J / J^2$,  there exist $u,v\in R^\beef/J^2$ such that $ua=b$ and $vb=a$. 
Obviously $b\cdot (1-uv)=0$, and taking $x= 1-uv$  the paragraph above  implies $1-uv \in \mathfrak{p} / J^2$.  
Thus the image of $u$ in $R^\beef / \mathfrak{p}$ is a unit with inverse $v$.
If also $u' a =b$, the same argument shows that the image of $u'$ in $R^\beef /\mathfrak{p}$ is a unit with inverse $v$, and hence $u=u'$ in $R^\beef/\mathfrak{p}$.
\end{proof}

The discussion above provides us with a canonical isomorphism  
\[
\mathcal{Z}^\beef_\lambda |_U\iso \bm{\omega}|_U^{-1}
\] 
Zariski locally on $U$, and  glueing over an open cover completes the proof of Lemma \ref{lem:full adjunction}.
\end{proof}

We now complete the proof of Proposition \ref{prop:pullback}.
If we interpret the isomorphism of $U^\beef$-schemes (\ref{special stack components}) as an isomorphism
\[
\mathcal{Z}^\beef(m,\mu)|_{U^\beef} \iso 
\mathcal{Z}^\beef_{\mathrm{prop}} \otimes  \bigotimes_{  \substack{ \mu_2 \in \mu  \\  \lambda \in R_\Lambda(m,\mu_2)  }  } \mathcal{Z}^\beef_\lambda,
\]
of line bundles on $U^\beef$, pull back via $U\to U^\beef$, and use Lemma \ref{lem:full adjunction}, we obtain  canonical isomorphisms
\begin{align*}
\mathcal{Z}^\beef (m,\mu)|_U 
 &  \iso  \Bigg(  \bigotimes_{ \substack{ m_1 + m_2 = m  \\  \mu_1 + \mu_2\in  \mu \\ m_1>0 } } 
  \mathcal{Z}(m_1,\mu_1)|_U^{ \otimes r_\Lambda(m_2,\mu_2) } \Bigg)
  \otimes \Bigg(
   \bigotimes_{ \substack{  \mu_2 \in  \mu  } }  \bm{\omega}|_U^{-r_\Lambda(m,\mu_2)} 
  \Bigg)
   \\
&  \iso  \bigotimes_{ \substack{ m_1 + m_2 = m  \\  \mu_1 + \mu_2= \mu  } }  \mathcal{Z}(m_1,\mu_1)|_U^{ \otimes r_\Lambda(m_2,\mu_2) }
\end{align*}
of line bundles over the \'etale neighborhood  $U$ of $s\to \mathcal{S}_K(G,\DD)$.  
Now let   $U$ vary over an \'etale cover and apply  descent.
\end{proof}

%%%%%%%%%%%%%%%%%%%%%%%%%%%%%%%%%%%

\section{Normality and flatness}
\label{s:integrality II}

%%%%%%%%%%%%%%%%%%%%%%%%%%%%%%%%%%%%%

Keep $V_\Z \subset V$ and $K \subset G(\A_f)$ as in \S \ref{s:integrality I}, and once again fix a prime $p$ at which $V_\Z$ is maximal. 
 After some technical preliminaries in \S \ref{ss:local properties}, we prove in \S \ref{ss:flatness}
 that the special fiber of the integral model 
 \[
 \mathcal{S}_K(G,\DD) \to \Spec(\Z_{(p)})
 \]
 is geometrically normal if $n \ge 6$, and that the special divisors are flat if $n \ge 4$.  
 When $p\neq 2$ these results already appear\footnote{with the sharper bounds  $n \ge 5$ and $n \ge 3$, respectively} in \cite{AGHMP-2}.  Here we use similar ideas, but employ the methods of Ogus~\cite{Ogus1979} to control the dimension of the supersingular locus, as these apply even when $p=2$.

%%%%%%%%%%%%%%%%%%%%%%%%%%%

\subsection{Local properties of special cycles}
\label{ss:local properties}

%%%%%%%%%%%%%%%%%%%%%%%%%%%

Suppose in this subsection that $V_\Z$ is self-dual at $p$.
As in the discussion of \S \ref{ss:gspin integral}, the smooth integral model 
$\mathcal{S}_{K}(G,\DD)$ comes with  filtered vector bundles
$
0 \subset F^0\bm{H}_{{dR}}  \subset \bm{H}_{{dR}}
$
and
\[
0 \subset F^1 \bm{V}_{dR}\subset F^0\bm{V}_{dR} \subset \bm{V}_{dR},
\]
along with an injection 
$
\bm{V}_{dR} \to \underline{\End}(\bm{H}_{{dR}})
$
onto a local direct summand.
Composition in $\underline{\End}(\bm{H}_{{dR}})$ endows $\bm{V}_{{dR}}$ with a  quadratic form
\[
\bm{Q}:\bm{V}_{{dR}}\to \co_{\mathcal{S}_{K}(G,\DD) },
\]
under which $F^1\bm{V}_{{dR}}$ is an isotropic line with orthogonal subsheaf $F^0\bm{V}_{{dR}}$.

Recall from (\ref{self-dual mu}) the $\Z$-module (\ref{self-dual mu}) of special quasi-endomoprhisms
\[
V_0 ( \mathcal{A}_S)  \subset \End(\mathcal{A}_S) \otimes \Z_{(p)}
\]
determined by the trivial  coset $0 \in V_\Z^\vee / V_\Z$.
Any $x \in V_0(\mathcal{A}_S)$
has a \emph{de Rham realization} $\bm{x}_{dR}$, which is a global section of the subsheaf
\[
\bm{V}_{{dR},S} \subset \underline{\End}(\bm{H}_{{dR},S}).
\]
In particular, de Rham realization defines a morphism of $\co_S$-modules
\[
V_0(\mathcal{A}_S) \otimes \co_S \to \bm{V}_{{dR},S}.
\]
compatible with the quadratic forms on source and target.
In fact, as is clear from the proof of Proposition \ref{prop:obstruction}, the image is contained in $F^0 \bm{V}_{{dR},S}$.

Fix a  positive definite quadratic space $\Lambda$ over $\Z$, and consider the stack
\begin{equation}\label{general cycle}
\mathcal{Z}(\Lambda) \to \mathcal{S}_{K}(G,\DD)
\end{equation}
with functor of points 
\[
\mathcal{Z}(\Lambda)(S) = \{\mbox{isometric embeddings }\iota:\Lambda\to V_0(\mathcal{A}_S) \}
\]
for any morphism $S\to \mathcal{S}_{K}(G,\DD)$.  
As observed in~\cite[\S 4.4]{AGHMP-2} (see also Lemma \ref{lem:zlamb smooth} below), this is a Deligne-Mumford stack over $\Z_{(p)}$ whose generic fiber is smooth  of dimension  $n-\mathrm{rank}(\Lambda)$.
Moreover, the morphism (\ref{general cycle})   is  finite and unramified.

We now briefly recall the deformation theory of these stacks. 
As in the proof of Proposition \ref{prop:obstruction}, we have a canonical flat connection
\[
\nabla:\bm{V}_{dR}\to \bm{V}_{dR}\otimes  \Omega^1_{\mathcal{S}_K(G,\DD)/\Z_{(p)}}.
\]
This connection satsfies Griffiths's transversality with respect to the Hodge filtration, and the Kodaira-Spencer map associated with it induces an isomorphism
\[
F^1\bm{V}_{dR}\otimes\bigl(\Omega^1_{\mathcal{S}_K(G,\DD)/\Z_{(p)}}\bigr)^\vee
\iso F^0 \bm{V}_{dR} / F^1 \bm{V}_{dR}.
\]
Dualizing, and using the bilinear pairing on $\bm{V}_{dR}$, we obtain an isomorphism
\[
F^0\bm{V}_{dR} / F^1 \bm{V}_{dR} \iso 
( \bm{V}_{dR} / F^0\bm{V}_{dR}  ) \otimes \Omega^1_{\mathcal{S}_K(G,\DD)/\Z_{(p)}}.
\]
% This is an isomorphism when $V_{\Z_p}$ is self-dual. When it is merely almost self-dual (so that $p=2$), its mod-$2$ reduction has a one-dimensional kernel, which is precisely the radical
% \[
% \bm{rad} = \mathrm{rad}\left(\bm{V}_{dR}\vert_{\mathcal{S}_K(G,\DD)_{\F_2}}\right)\subset \bm{V}_{dR}\vert_{\mathcal{S}_K(G,\DD)_{\F_2}}.
% \]
This is \cite[Proposition 4.16]{mp:spin}, whose proof applies also when $p=2$; one only has to replace appeals to results from~\cite{KisinJAMS} with appeals  to the analogous results from \cite{KMP}.

Now, suppose that we have a point $s$ of $\mathcal{S}_K(G,\DD)$ valued in a field $k$. 
If $\tilde{s}$ is any lift of $s$ to the ring of dual numbers $k[\epsilon]$,  the connection $\nabla$ induces a canonical isomorphism
\[
\xi_{\tilde{s}}:\bm{V}_{dR,s}\otimes_k k[\epsilon]\iso \bm{V}_{dR,\tilde{s}},
\]
and thus gives rise to an isotropic line
\[
F^1_{\tilde{s}}(\bm{V}_{dR,s}\otimes_kk[\epsilon]) 
\define  \xi_{\tilde{s}}^{-1}(F^1\bm{V}_{dR,\tilde{s}})   \subset \bm{V}_{dR,s}\otimes_kk[\epsilon].
\]
By construction, this line lifts $F^1\bm{V}_{dR,s}$.

The properties of the Kodaira-Spencer map mentioned above can now be reinterpreted as saying that the association
\[
\tilde{s}\mapsto F^1_{\tilde{s}}(\bm{V}_{dR,s}\otimes_kk[\epsilon])
\]
is a bijection from the tangent space of $\mathcal{S}_K(G,\DD)$ at $s$ to the space of isotropic lines in $\bm{V}_{dR,s}\otimes_kk[\epsilon]$ lifting $F^1\bm{V}_{dR,s}$. 
This latter space can be canonically identified with the $k$-vector space
\[
\Hom_k(F^1\bm{V}_{dR,s}, F^0\bm{V}_{dR,s}    /   F^1\bm{V}_{dR,s} )
\]
as follows: Any lift $F^1_{\tilde{s}}(\bm{V}_{dR,t}\otimes_kk[\epsilon])$ will be contained in $F^0\bm{V}_{dR,s}\otimes_kk[\epsilon]$, and so we can consider the associated map
\[
F^1_{\tilde{s}}(\bm{V}_{dR,s}\otimes_kk[\epsilon])\to 
 (  F^0\bm{V}_{dR,s}    /   F^1\bm{V}_{dR,s} )  \otimes_k  k[\epsilon], 
\]
which will factor as
\begin{displaymath}
\xymatrix{
  F^1_{\tilde{s}}(\bm{V}_{dR,s}\otimes_kk[\epsilon]) \ar[r]\ar[d]_{\epsilon\mapsto 0}	& (  F^0\bm{V}_{dR,s}    /   F^1\bm{V}_{dR,s} )  \otimes_k  k[\epsilon]\\
  F^1\bm{V}_{dR,s}\ar[r]_{\varphi_{\tilde{s}}}&F^0\bm{V}_{dR,s}    /   F^1\bm{V}_{dR,s}\ar[u]_{1\otimes\epsilon}
}.
\end{displaymath}
The desired identification is now given by the assignment $F^1_{\tilde{s}}\mapsto \varphi_{\tilde{s}}$.

We can say more.  Suppose that $s$ lifts to a $k$-point of $\mathcal{Z}(\Lambda)$ corresponding to an embedding $\Lambda\hookrightarrow V(\mathcal{A}_s) $. 
We will continue to use $s$ to denote this lift as well. 
The de Rham realization of the embedding gives a map
\[
\Lambda\to  F^0 \bm{V}_{dR,s},
\]
and we let 
\[
\bm{\Lambda}_{dR,s}\subset  F^0 \bm{V}_{dR,s}
\]
  be the $k$-subspace generated by its image.
Now, the bijection from the previous paragraph identifies the tangent space of $\mathcal{Z}(\Lambda)$ at $s$ with the space of isotropic lines in $\bm{V}_{dR,s}\otimes_kk[\epsilon]$ that lift $F^1\bm{V}_{dR,s}$ and are also orthogonal to $\bm{\Lambda}_{dR,s}$. 
This space in turn can be identified with the $k$-vector space
\begin{equation}\label{better tangent}
\Hom_k(F^1\bm{V}_{dR,s},\overline{\bm{\Lambda}}^{\perp}_{dR,s}),
\end{equation}
where 
\[
\overline{\bm{\Lambda}}_{dR,s}\subset F^0\bm{V}_{dR,s}    /   F^1\bm{V}_{dR,s}
\]
 is the  the image of  $\bm{\Lambda}_{dR,s}$
and $\overline{\bm{\Lambda}}^{\perp}_{dR,s}$ is its orthogonal complement.

For proofs of the above statements, which use the explicit description of the complete local rings of $\mathcal{S}_K(G,\DD)$, see~\cite[Prop. 5.16]{mp:spin}. As observed there, they also apply more generally to arbitrary nilpotent divided power thickenings. We record some immediate consequences.

\begin{lemma}
\label{lem:zlamb smooth}
Let the notation be as above, and set $r = \mathrm{rank}(\Lambda)$. 
\begin{enumerate}
	\item The completed \'etale local ring $\widehat{\co}_{\mathcal{Z}(\Lambda),s}$  is a quotient of $\widehat{\co}_{\mathcal{S}_K(G,\DD),s}$ by an ideal generated by $ \mathrm{rank}(\Lambda)$ elements.
	\item $\mathcal{Z}(\Lambda)$ is smooth at $s$ if and only if  $\overline{\bm{\Lambda}}_{dR,s}$ has $k$-dimension  $ \mathrm{rank}(\Lambda)$. In particular, the generic fiber of $\mathcal{Z}(\Lambda)$ is smooth.
	\item Suppose that $k$ has characteristic $p$, and that the Krull dimension of  $\widehat{\co}_{\mathcal{Z}(\Lambda),s}/ (p) $ is  $n- \mathrm{rank}(\Lambda)$. Then $\mathcal{Z}(\Lambda)$ and $\mathcal{Z}(\Lambda)_{\F_p}$ are local complete intersections at $s$. Moreover, $\mathcal{Z}(\Lambda)$ is flat over $\Z_{(p)}$ at $s$.
	\end{enumerate}
\end{lemma}

\begin{proof}
The first claim is a consequence of the deformation theory explained above (more precisely, of its generalization to arbitrary square-zero thickenings) and Nakayama's lemma. See~\cite[Corollary 5.17]{mp:spin}.

For the second claim, note that $\mathcal{Z}(\Lambda)$ will be smooth at $s$ if and only if its tangent space at $s$ has dimension  $n-\mathrm{rank}(\Lambda)$.
As we have identified the tangent space with (\ref{better tangent}), this is equivalent to  $\overline{\bm{\Lambda}}_{dR,s}$ having dimension $\mathrm{rank}(\Lambda)$. For the assertion about the generic fiber, it suffices to check the criterion for smoothness at every $\C$-valued point. Now, note that the de Rham realization
\[
V_0(\mathcal{A}_s)\otimes_\Z\C \to \bm{V}_{dR,s},
\]
is injective, and also that the image of this realization is precisely the weight $(0,0)$ part of the Hodge structure on $\bm{V}_{dR,s}$, and hence is complementary to $F^1\bm{V}_{dR,s}$. This implies that $\overline{\bm{\Lambda}}_{dR,s}$ has dimension $r$ over $\C$, and hence that $\mathcal{Z}(\Lambda)$ is smooth at $s$.

Now we come to the third claim. Note that $\widehat{\co}_{\mathcal{S}_K(G,\DD),s}$ is formally smooth over $W(k)$ of Krull dimension $n+1$. Hence, $\widehat{\co}_{\mathcal{S}_K(G,\DD),s}/(p)$ is also formally smooth over $k$ of Krull dimension $n$, and $\widehat{\co}_{\mathcal{Z}(\Lambda),s}/(p)$, which is its quotient by an ideal generated by $\mathrm{rank}(\Lambda)$ element, is a complete intersection as soon as
\[
\dim\big(\widehat{\co}_{\mathcal{Z}(\Lambda),s}/(p) \big) = n - \mathrm{rank}(\Lambda).
\]
This is precisely our hypothesis.

Now, note that we have
\[
n - \mathrm{rank}(\Lambda) + 1\leq \mathrm{dim}(\widehat{\co}_{\mathcal{Z}(\Lambda),s})
\le \mathrm{dim}\big(\widehat{\co}_{\mathcal{Z}(\Lambda),s}/(p)\big) + 1 = n - \mathrm{rank}(\Lambda) + 1.
\]
Here, the first two inequalities follow from Krull's Hauptidealsatz. This shows 
\[
\dim ( \widehat{\co}_{\mathcal{Z}(\Lambda),s} )= n - \mathrm{rank}(\Lambda) + 1,
\]
and implies that $\widehat{\co}_{\mathcal{Z}(\Lambda),s}$ is a complete intersection ring.

Finally, to see that $\mathcal{Z}(\Lambda)$ is flat over $\Z_{(p)}$ at $s$, note that $p$ cannot be a zero divisor in $\widehat{\co}_{\mathcal{Z}(\Lambda),s}$: Indeed, the equality
\[
\dim~(\widehat{\co}_{\mathcal{Z}(\Lambda),s}/(p)) = n - \mathrm{rank}(\Lambda) = \dim~\widehat{\co}_{\mathcal{Z}(\Lambda),s} - 1
\]
implies that $p$ is not contained in any minimal prime of $\widehat{\co}_{\mathcal{Z}(\Lambda),s}$. Since $\widehat{\co}_{\mathcal{Z}(\Lambda),s}$ is a complete intersection ring and hence Cohen-Macaulay, this implies that $p$ is not a zero divisor.
\end{proof}

For any morphism $S\to \mathcal{Z}(\Lambda)$,  de Rham realization defines a morphism
\[
\Lambda\otimes\co_S \to \bm{V}_{dR,S}.
\]
Let $\bm{\Lambda}_{dR,S}\subset \bm{V}_{dR,S}$ be the image of this morphism.

We will consider the canonical open substack
\begin{equation}\label{nice part}
\mathcal{Z}^{\mathrm{pr}}(\Lambda)\hookrightarrow \mathcal{Z}(\Lambda)
\end{equation}
 characterized by the property that a morphism $S\to \mathcal{Z}(\Lambda)$ factors through $\mathcal{Z}^{\mathrm{pr}}(\Lambda)$ if and only if $\bm{\Lambda}_{dR,S} \subset \bm{V}_{dR,S}$ is a local direct summand  of rank equal to $\mathrm{rank}(\Lambda)$.
% \item If $p=2$ and $n$ is odd, then, over $T_0 = T\otimes\F_2$, the subsheaf
% \[
% \bm{\Lambda}_{dR,T_0}+\bm{rad}_{T_0}\subset \bm{V}_{dR,T_0}
% \] 
% is a local direct summand of rank $\mathrm{rank}(\Lambda) + 1$.
% \end{itemize}
% When $p>2$, this is essentially the stack defined in~\cite[Prop. 6.16]{mp:spin}.

\begin{proposition}
\label{prop:z pr gen smooth}
Consider the following assertions:  
\begin{enumerate}
	\item 
	For any generic geometric point $\eta$ of $\mathcal{Z}^{\mathrm{pr}}(\Lambda)_{\F_p}$, the Kuga-Satake abelian scheme $\mathcal{A}_\eta$ is ordinary, and the tautological map
$\Lambda\to V_0(\mathcal{A}_\eta)$ is an isomorphism.
	\item 
	The special fiber $\mathcal{Z}^{\mathrm{pr}}(\Lambda)_{\F_p}$ is a generically smooth local complete intersection of dimension $n-\mathrm{rank}(\Lambda)$.
	\item
	The special fiber $\mathcal{Z}^{\mathrm{pr}}(\Lambda)_{\F_p}$ is smooth outside of a codimension $2$ subspace. 
\end{enumerate}
Then (1) and (2) hold whenever $\mathrm{rank}(\Lambda)\le n/2$, and (3) holds whenever $\mathrm{rank}(\Lambda) \le  ( n-1 )/2$.
\end{proposition}

\begin{proof}
We will prove the proposition by induction on the rank of $\Lambda$.
For any integer $r\geq 0$ and $i\in\{1,2,3\}$, let $P_i(r)$ be the statement that assertion (i) is valid whenever $\mathrm{rank}(\Lambda)=r$. 
We claim 
\begin{enumerate}[(i)]
	\item if  $0\leq r\le ( n-1) / 2 $ then  $P_2(r)$ implies $P_1(r)$,
	\item if  $r\leq  ( n-2) /2 $ then $P_1(r)$ and $P_2(r)$ together imply $P_2(r+1)$,
	\item if  $r\leq  ( n-3) /2$ then $P_1(r)$ and $P_2(r)$ together imply $P_3(r+1)$.
\end{enumerate}
Once the claims are proved, the lemma will follow by induction.
Indeed, the base case $P_2(0)$ is implied by the smoothness of $\mathcal{S}_K(G,\DD)$. 

The claims themselves follow from an argument derived from~\cite{Ogus1979}, which was used in~\cite[Proposition 6.17]{mp:spin}, and exploits the following simple lemma. 

\begin{lemma}
\label{lem:parallel dimension}
Let $Z$ be an $\F_p$-scheme admitting an unramified map $Z\to \mathcal{S}_K(G,\DD)$. Suppose that we have a local direct summand $\bm{N}\subset \bm{V}_{dR}\vert_Z$ that is horizontal for the integrable connection
\[
\bm{V}_{dR,Z}\to \bm{V}_{dR,Z}\otimes_{\co_Z}\Omega^1_{Z/\F_p}
\]
induced from the one on $\bm{V}_{dR}$. Suppose also that
\[
F^1\bm{V}_{dR,Z}\subset \bm{N}.
\]
Then $\dim(Z) \leq \mathrm{rank}(\bm{N})-1$. If, in addition $\bm{N}\cap F^0\bm{V}_{dR,Z}$ is a local direct summand of $\bm{V}_{dR,Z}$, then we in fact have
\[
\dim(Z) \leq \mathrm{rank}(\bm{N}\cap F^0\bm{V}_{dR,Z})-1.
\]
\end{lemma}
\begin{proof}
For the first assertion, it is enough to show that, at any point $z\in Z(k)$ valued in a field $k$, the tangent space of $Z$ at $z$ has dimension at most $\mathrm{rank}(\bm{N})-1$. But our hypotheses imply that, if $\tilde{z}\in Z(k[\epsilon])$ is any lift of $Z$, then we must have
\[
F^1_{\tilde{z}}(\bm{V}_{dR,Z}\otimes_kk[\epsilon])\subset (\bm{N}_z\otimes_kk[\epsilon])\cap (F^0\bm{V}_{dR,z}\otimes_kk[\epsilon]).
\]
This, combined with the fact that $Z$ is unramified over $\mathcal{S}_K(G,\DD)$,  implies that the tangent space of $Z$ at $z$ can be identified with a subspace of
\[
\Hom_k(F^1\bm{V}_{dR,z},\overline{\bm{N}}_z)\subset 
\Hom_k(F^1\bm{V}_{dR,z}, F^0\bm{V}_{dR,z}  /  F^1\bm{V}_{dR,z}  ),
\]
where $\overline{\bm{N}}_z$ is the image of $\bm{N}_z\cap F^0\bm{V}_{dR,z}$ in $F^0\bm{V}_{dR,z}  /  F^1\bm{V}_{dR,z}$. 
We are now done, since $\overline{\bm{N}}_z$ has dimension at most $\mathrm{rank}(\bm{N})-1$.

The second assertion is immediate from the proof of the first.
\end{proof}

We begin with claim (i). Assume $P_2(r)$, and suppose  $\mathrm{rank}(\Lambda)=r$. 
Fix a geometric generic point $\eta$ of $\mathcal{Z}^{\mathrm{pr}}(\Lambda)_{\F_p}$. 
Then $P_2(r)$ implies that there is a smooth $\F_p$-scheme $U$, equidimensional of dimension $n-r$, and an \'etale map $U\to \mathcal{Z}^{\mathrm{pr}}(\Lambda)_{\F_p}$, whose image contains $\eta$. 
 
% Now, let $U^{ss}\subset U$ be the reduced closed subscheme supported on the supersingular locus. By our hypothesis on $r$ and Proposition~\ref{prop:supersing dim bound}, we have
% \[
% \dim U > \frac{n}{2}\geq \dim U^{ss}.
% \]
% This shows that $\eta$ is non-supersingular. In particular, arguing as in~\cite[\S 6.27]{mp:spin}, one finds that the de Rham realization induces an injective map
% \[
% V(\mathcal{A}_\eta)\otimes k(\eta)\hookrightarrow \bm{V}_{dR,\eta}.
% \]

As explained in the proof of~\cite[Proposition 6.17]{mp:spin}, there is a canonical isotropic line 
\[
\bm{C}\subset \bm{V}_{dR,U},
\]
called the \emph{conjugate filtration}, which is horizontal for the connection on $\bm{V}_{dR,U}$, is contained in $\bm{\Lambda}_{dR,U}^\perp$, and is such that a point $t\in U(k)$ is non-ordinary if and only if $\bm{C}_{t}\subset F^0\bm{V}_{dR,t}$,  or, equivalently, if and only if 
\[
F^1\bm{V}_{dR,t}\subset \bm{C}_t^\perp\cap \bm{\Lambda}_{dR,t}^\perp.
\]

Now, we have
\[
\bm{C}_{t}\subset \bm{\Lambda}_{dR,t}
\]
only if $F^1\bm{V}_{dR,t}\subset \bm{\Lambda}_{dR,t}$. See for instance~\cite[Lemma 4.20]{mp:spin}. Therefore, since we are assuming that $U$ is smooth, the subsheaf
\[
\bm{C}_U + \bm{\Lambda}_{dR,U}\subset \bm{V}_{dR,U}
\]
is a horizontal local direct summand of rank $r+1$.

% and so its orthogonal complement
% \[
% \bm{C}_U^\perp \cap \bm{\Lambda}_{dR,U}^\perp\subset \bm{V}_{dR,U}
% \]
% is a horizontal direct summand of rank $n-r+1$.

By Lemma~\ref{lem:parallel dimension}, if $Z\subset U$ is a closed subscheme with 
\[
F^1\bm{V}_{dR,Z}\subset \bm{C}_{Z}+\bm{\Lambda}_{dR,Z},
\]
then $\dim Z\leq r$. Using $r\leq (n-1)/2$, we see that $r = \dim Z < \dim U = n-r$. 
Therefore, after shrinking $U$ if necessary, we can assume that
\[
F^1\bm{V}_{dR,U}+\bm{C}_{U} +\bm{\Lambda}_{dR,U}\subset \bm{V}_{dR,U}
\]
is a direct summand of rank $r+2$, or, equivalently, that
\[
F^0\bm{V}_{dR,U}\cap \bm{C}_U^\perp \cap \bm{\Lambda}_{dR,U}^\perp\subset \bm{V}_{dR,U}
\]
is a direct summand of rank $n-r$. Therefore, once again by Lemma~\ref{lem:parallel dimension}, the locus in $U$ where $F^1\bm{V}_{dR,U}$ is contained in this direct summand has dimension at most $n-r-1$.
But this is precisely the non-ordinary locus in $U$. As $\dim(U) = n-r$, this shows the first part of $P_1(r)$.

Suppose now that the map $\Lambda\to V_0(\mathcal{A}_\eta)$ is not a bijection, so that there exists $x\in V_0(\mathcal{A}_\eta)$ such that
\[
\widetilde{\Lambda} = \Lambda +\langle x\rangle \subset V_0(\mathcal{A}_\eta)
\]
is a direct summand of rank $r+1$, and its de Rham realization
\[
\widetilde{\bm{\Lambda}}_{dR,\eta} = \bm{\Lambda}_{dR,\eta} + \langle \bm{x}_{dR,\eta}\rangle \subset \bm{V}_{dR,\eta}
\]
is a $k(\eta)$-vector subspace of dimension $r+1$.

After shrinking $U$ if necessary, we can assume that $x\in V_0(\mathcal{A}_U)$, and that de Rham realization gives us a local direct summand
\[
\widetilde{\bm{\Lambda}}_{dR,U} = \bm{\Lambda}_{dR,U} + \langle \bm{x}_{dR,U}\rangle \subset \bm{V}_{dR,U}
\]
of rank $r+1$ that is horizontal for the connection. However, the discussion of the deformation theory above Lemma~\ref{lem:zlamb smooth} implies that, over $U$, the Kodaira-Spencer map factors through an isomorphism
\[
( F^0\bm{V}_{dR,U} /  F^1\bm{V}_{dR,U} ) /\overline{\bm{\Lambda}}_{dR,U}
\iso  (  \bm{V}_{dR,U} / F^1\bm{V}_{dR,U}  ) \otimes_{\co_U}\Omega^1_{U/\F_p}.
\]
However,  the horizontality of $\widetilde{\bm{\Lambda}}_{dR,U}$ guarantees that its (non-trivial) image on the left-hand side is in the kernel of the Kodaira-Spencer map. This contradiction finishes the proof of claim (i).

We will prove claims (ii) and (iii). Suppose that $P_1(r)$ and $P_2(r)$ hold and that $\mathrm{rank}(\Lambda) = r+1$. Write $\Lambda = \Lambda_1\oplus \Lambda_0$, where $\mathrm{rank}(\Lambda_0) = 1$. Then we have an obvious factorization
\[
\mathcal{Z}^{\mathrm{pr}}(\Lambda)\to \mathcal{Z}^{\mathrm{pr}}(\Lambda_1)\to \mathcal{S}_K(G,\DD).
\]
The first arrow exhibits $\mathcal{Z}^{\mathrm{pr}}(\Lambda)_{\F_p}$  as a divisor on $\mathcal{Z}^{\mathrm{pr}}(\Lambda_1)_{\F_p}$  (\'etale locally on the source, in the sense of Proposition \ref{prop:sufficiently small}).
  Indeed, the complete local rings of the former are cut out by one equation in those of the latter, and $P_1(r)$ shows that $\mathcal{Z}^{\mathrm{pr}}(\Lambda)_{\F_p}$ does not contain any generic points of $\mathcal{Z}^{\mathrm{pr}}(\Lambda_1)_{\F_p}$. Therefore, by Lemma~\ref{lem:zlamb smooth} and $P_2(r)$, we find that $\mathcal{Z}^{\mathrm{pr}}(\Lambda)_{\F_p}$ is a local complete intersection of dimension $n-(r+1)$.

Let $W\subset \mathcal{Z}^{\mathrm{pr}}(\Lambda)_{\F_p}$ be the nonsmooth locus, with its reduced substack structure.  We  find from Lemma~\ref{lem:zlamb smooth} that 
\[
F^1\bm{V}_{dR}\vert_W\subset \bm{\Lambda}_{dR}\vert_W.
\] 
By Lemma~\ref{lem:parallel dimension}, this implies that $\mathrm{dim}(W)\leq r$. This is bounded by $n-r-2$ under the hypothesis $r\leq  ( n-2 ) /2$, and by $n-r-3$ if $r\leq ( n-3 )/2$.
This  proves (ii) and (iii), and completes the proof of Proposition  \ref{prop:z pr gen smooth}.
\end{proof}

It will be useful to recall some bounds on the dimension of the supersingular locus in the mod-$p$ fiber of $\mathcal{S}_K(G,\DD)$ under the assumption that $V_{\Z_p}$ is almost self-dual. 

\begin{proposition}
\label{prop:supersing dim bound}
Suppose that $V_{\Z_p}$ is almost self-dual of rank $n+2$, and suppose that $Z\to \mathcal{S}_K(G,\DD)$ is an unramified morphism from an $\F_p$-scheme $Z$ such that, for all points $z\in Z(k)$ valued in a field $k$, the abelian variety $\mathcal{A}_z$ is supersingular. Then $\dim (Z)  \leq  n/2$. 
If  $V_{\Q_p}$ is an orthogonal sum of hyperbolic planes, we have the sharper bound
\[
\dim(Z) \leq \frac{n}{2}-1.
\]
\end{proposition}
\begin{proof}
If $V_{\Q_p}$ is not an orthogonal sum of hyperbolic planes, then we can find an embedding
\[
V_{\Z}\hookrightarrow V^\beef_{\Z},
\]
where $V^\beef_{\Q_p}$ is of this form, and where the codimension of $V \subset V^\beef$ is $1$ if $n$ is odd and $2$ if $n$ is even. Using such an embedding, the proposition can be reduced to proving the final assertion, and so we may assume that $V_{\Q_p}$ (and hence $V_{\Z_p}$) is an orthogonal sum of hyperbolic planes. 

When $p>2$, the proposition follows from the much finer results of~\cite{Howard-Pappas}, which give a complete description of the supersingular locus of $\mathcal{S}_K(G,\DD)_{\F_p}$. However, if one is only interested in upper bounds, one can appeal to the methods of \cite{Ogus2001-wy}, which apply even when $p=2$ and $V_{\Z_p}$ is self-dual. See in particular Proposition 14 of [\emph{loc.~cit.}]

For the convenience of the reader, we sketch the basic idea here. First, we can replace $Z$ with its underlying reduced scheme.  Second, we can throw away its singular part, and assume that $Z$ is smooth. 

If $z\in Z(k)$ is a geometric point, then the \emph{Artin invariant} of $z$ is the $k$-codimension of the image of $V_0(\mathcal{A}_z)\otimes_\Z k \to \bm{V}_{dR,z}$.  This is an integer between $1$ and $n/2$. 
Ogus's argument shows that there is a canonical filtration of $F^0\bm{V}_{dR,Z}$ by coherent, isotropic, horizontal coherent subsheaves
\[
\bm{E}_1 \subset \cdots \bm{E}_i\ \subset\cdots \subset  \bm{E}_{n/2}\subset F^0\bm{V}_{dR,Z}
\]
with the following properties:
\begin{itemize}
	\item A geometric point $z\in Z(k)$ has Artin invariant  $ \le j$ if and only if
	\[
      F^1\bm{V}_{dR,z}\subset \bm{E}_{j,z} . 
	\]
	\item If $Z_{\geq j}\subset Z$ is the open subscheme where the Artin invariant is  $ \ge j$, 
	then $\bm{E}_{j,Z_{\geq j}}$ is a rank $j$ local direct summand of $\bm{V}_{dR,Z_{\geq j}}$.
\end{itemize}
Note that the first condition ensures that locus where the Artin invariant is bounded below by $j$ is indeed an open subscheme of $Z$. 

Given these two properties, it is immediate from Lemma~\ref{lem:parallel dimension} that the dimension of $Z$ is bounded above by $r-1$, where $r$ is the maximal Artin invariant attained by a geometric point of $Z$. This proves the proposition.

The construction of $\bm{E}_{j}$ is as follows.
For $j=1$, $\bm{E}_1$ is just the conjugate filtration $\bm{C}\subset \bm{V}_{dR,Z}$ already encountered in the proof of Proposition~\ref{prop:z pr gen smooth}. 
The crystalline Frobenius on the crystalline realization of $\mathcal{A}_Z$ induces an isometry
\[
\gamma: \mathrm{Fr}_Z^* (    F^0 \bm{V}_{dR,Z}  / F^1 \bm{V}_{dR,Z}) 
\iso
\bm{C}^\perp / \bm{C},
\]
where $\mathrm{Fr}_Z$ is the absolute Frobenius on $Z$.
Now  inductively define $\bm{E}_j\subset \bm{C}^\perp$ as the pre-image of the image of $\bm{E}_{j-1}$ under the composition
\[
\mathrm{Fr}_Z^*\bm{E}_{j-1}\hookrightarrow \mathrm{Fr}_Z^*F^0\bm{V}_{dR,Z}
 \map{\gamma}
\bm{C}^\perp / \bm{C} .
\]

It  follows from the argument in \cite[Lemma 5]{Ogus2001-wy} that $\bm{E}_j$ is a subsheaf of $F^0\bm{V}_{dR,Z}$ for all $j$, so that the inductive procedure is well-defined. That it is isotropic, coherent and horizontal follows from the construction. That the filtration thus obtained has the desired properties follows from the arguments in Proposition 6 and Lemma 9 of [\emph{loc.~cit.}].
\end{proof}

\begin{lemma}
\label{lem:zpr complement}
Suppose that $\Lambda$ is maximal at $p$. 
The complement of $\mathcal{Z}^{\mathrm{pr}}(\Lambda)$ in $\mathcal{Z}(\Lambda)$ lies above the supersingular locus of $\mathcal{S}_K(G,\DD)_{\F_p}$. 
If we let
\[
m = \begin{cases}
\frac{n}{2} & \mbox{if $V_{\Z_p}$ is an orthogonal sum of hyperbolic planes}\\
\lfloor\frac{n}{2}  \rfloor   & \mbox{if $n$ is odd}\\
\frac{n}{2}-1 & \mbox{otherwise,}
\end{cases}
\]
then the following properties hold.
\begin{enumerate}
\item If $\mathrm{rank}(\Lambda) \leq m$ then $\mathcal{Z}^{\mathrm{pr}}(\Lambda)_{\F_p}$ is dense in $\mathcal{Z}(\Lambda)_{\F_p}$.
\item If $\mathrm{rank}(\Lambda) \leq m-1$  then the complement of $\mathcal{Z}^{\mathrm{pr}}(\Lambda)_{\F_p}$ in $\mathcal{Z}(\Lambda)_{\F_p}$ has codimension at least $2$.
\end{enumerate}
\end{lemma}
\begin{proof}
Once we know that the complement is supported above the supersingular locus of the mod-$p$ fiber, the rest will follow from the bounds in Proposition~\ref{prop:supersing dim bound}.

To prove the assertion on the complement, we first note that the open immersion~\eqref{nice part} induces an isomorphism of the generic fibers; see~\cite[Prop. 6.16]{mp:spin}. Therefore, we only have to show that the mod-$p$ fiber of the complement is supported on the supersingular locus. 
Equivalently, we must show that, for any non-supersingular point $s\in \mathcal{Z}(\Lambda)(k)$ valued in a field $k$ of characteristic $p$, the subspace $\bm{\Lambda}_{dR,s}\subset \bm{V}_{dR,s}$ has $k$-dimension $\mathrm{rank}(\Lambda)$.

Arguing as in~\cite[\S 6.27]{mp:spin}, we find that, for such a point $s$, the de Rham realization map
\[
V_0(\mathcal{A}_s)\otimes k \to \bm{V}_{dR,s}
\]
is injective. 
Moreover, by the maximality of $\Lambda$ at $p$, the image of 
\[
\Lambda \otimes\Z_{(p)} \to V_0(\mathcal{A}_s)\otimes\Z_{(p)}
\]
 is a $\Z_{(p)}$-module direct summand of rank $\mathrm{rank}(\Lambda)$.
Combining these two observations shows that the subspace  $\bm{\Lambda}_{dR,s}\subset \bm{V}_{dR,s}$ has $k$-dimension $\mathrm{rank}(\Lambda)$, and completes the proof of the lemma. 
\end{proof}

\begin{proposition}
\label{prop:zlamb local props}
Suppose that $\Lambda$ is maximal at $p$, and let $m$  be defined as in Lemma~\ref{lem:zpr complement}. 
\begin{enumerate}
\item If $\mathrm{rank}(\Lambda)\leq m$ then  $\mathcal{Z}(\Lambda)_{\F_p}$ is a generically smooth local complete intersection
of dimension $n-\mathrm{rank}(\Lambda)$. Moreover, $\mathcal{Z}(\Lambda)$ is normal and flat over $\Z_{(p)}$.
\item If $\mathrm{rank}(\Lambda)\leq m-1$ then $\mathcal{Z}(\Lambda)_{\F_p}$ is geometrically normal.
\end{enumerate}
\end{proposition}

\begin{proof}
Note that we always have
\[
m\leq  \frac{n}{2} \quad\mbox{ and }\quad  m-1 \leq   \frac{n-1}{2}.
\]

First suppose $\mathrm{rank}(\Lambda) \le m$. Combining Proposition \ref{prop:z pr gen smooth} and Lemma \ref{lem:zpr complement} shows that
$\mathcal{Z}^{\mathrm{pr}}(\Lambda)_{\F_p}$ is a generically smooth local complete intersection of dimension $n-\mathrm{rank}(\Lambda)$, and is dense in $\mathcal{Z}(\Lambda)_{\F_p}$.
Hence $\mathcal{Z}(\Lambda)_{\F_p}$ is itself generically smooth of dimension $n-\mathrm{rank}(\Lambda)$.

It now follows from claim (3) of Lemma~\ref{lem:zlamb smooth} that $\mathcal{Z}(\Lambda)$ is a local complete intersection, flat over $\Z_{(p)}$.
 In particular, it is Cohen-Macaualy and so satisfies Serre's property $(S_k)$ for all $k\ge 1$. 
 Recall from claim (2) of Lemma~\ref{lem:zlamb smooth} that the generic fiber of $\mathcal{Z}(\Lambda)$ is smooth over $\Q$.
 As we have already proved that the special fiber is generically smooth,  $\mathcal{Z}(\Lambda)$ is  regular in codimension one, and  hence satisfies Serre's property $(R_1)$.  Claim (2) now follows from Serre's criterion for normality. 

Now suppose $\mathrm{rank}(\Lambda) \le m-1$.  We have already shown that the geometric fiber of $\mathcal{Z}(\Lambda)_{\F_p}$ is a local complete intersection. So, just as above, to show that it is normal  it is enough to show that it is regular in codimension one. This follows by combining Proposition \ref{prop:z pr gen smooth} and Lemma \ref{lem:zpr complement}, which shows that
$\mathcal{Z}^{\mathrm{pr}}(\Lambda)_{\F_p}$ is smooth outside of a codimension two subspace, and that its complement in  $\mathcal{Z}(\Lambda)_{\F_p}$ has codimension at least $2$.

% Therefore $\mathcal{Z}(\Lambda)_{\F_p}$ is itself smooth outside of codimension $2$, and so its geometric fiber satisfies Serre's property $(R_1)$.   
% Hence it is Cohen-Macaulay, and so satisfies Serre's property   $(S_k)$ for all $k\ge 1$.
% By  Serre's criterion  $\mathcal{Z}(\Lambda)_{\F_p}$ is geometrically normal.
\end{proof}

%%%%%%%%%%%%%%%%%%%%%%%%%%%

\subsection{Normality of the fibers, and flatness of  divisors}
\label{ss:flatness}

%%%%%%%%%%%%%%%%%%%%%%%%%%%

We return to the general setting in  which $V_\Z \subset V$ is any maximal lattice, and deduce two important consequences from the results of \S \ref{ss:local properties}.

\begin{proposition}\label{prop:normal fiber}
If $n\geq 6$, the special fiber of  $\mathcal{S}_K(G,\DD)$ is geometrically normal.
\end{proposition}

\begin{proof}
When $p>2$, this is part of \cite[Theorem 4.4.5]{AGHMP-2}.  
 The same idea of proof works in general, bolstered now by Proposition~\ref{prop:zlamb local props}

Using Lemma~\ref{lem:good beef}, we may choose an embedding $V_\Z\hookrightarrow V^\beef_\Z$ as in \S \ref{ss:embiggen} in such a way that  $V^\beef_{\Z}$ is self-dual at $p$, and 
\[
\Lambda = \{ x\in V_\Z^\beef : x\perp V_\Z \}
\]
has rank at most $r$, where $r=2$ if $n$ is even and $r=3$ otherwise.\footnote{If $p\neq 2$  we can choose $V^\beef_\Z$ to be self-dual at $p$ with $r=2$. In this case, we can improve the bound to $n\geq 5$ as in \cite[Theorem 4.4.5]{AGHMP-2}.}

There is a commutative diagram
\[
\xymatrix{
 &   {  \mathcal{Z}^\beef(\Lambda) }   \ar[d] \\
 {  \mathcal{S}_K(G,\DD)  } \ar[ur]\ar[r]  &   {   \mathcal{S}_{K^\beef}(G^\beef,\DD^\beef) } 
}
\]
in which the vertical morphism is defined as in (\ref{general cycle}), the horizontal morphism is 
(\ref{integral embiggen morphism}), and the diagonal arrow is  induced by the isometric embedding
\[
\Lambda \to V_0( \mathcal{A}^\beef_{ \mathcal{S}_K(G,\DD)} ).
\]
determined by  (\ref{special decomp cosets}).

The self-duality of $V^\beef_{\Z_p}$ gives us an isomorphism
\[
V^\vee_{\Z_p} / V_{\Z_p} \iso \Lambda^\vee_{\Z_p}/\Lambda_{\Z_p}
\]
of quadratic spaces over $\Q_p/\Z_p$, as in (\ref{coset swap}).  The maximality of $V_\Z$ at $p$ implies that the left hand side contains no 
nonzero isotropic vectors, and so neither does the right hand side.  This implies the maximality of $\Lambda$ at $p$.
With this in hand, we may apply  Proposition~\ref{prop:zlamb local props} and the inequality
\[
r\leq \frac{n+r}{2}-2,
\]
which holds as $n\geq 6$, to see that  $\mathcal{Z}^\beef(\Lambda)$ has geometrically normal fibers.

Thus it suffices to  show that the diagonal arrow is an open and closed immersion.
This holds in the generic fiber by  \cite[Lemma 7.1]{mp:spin}, and hence also on the level of integral models as the source and target are both normal.
\end{proof}

\begin{proposition}\label{prop:nice divisors}
Assume that $n\ge 4$.  For every positive $m\in \Q$ and $\mu \in V_\Z^\vee /V_\Z$,  the special divisor $\mathcal{Z}(m,\mu)$ is flat over $\Z_{(p)}$.
\end{proposition}

\begin{proof}
When $p>2$ this is \cite[Proposition 4.5.8]{AGHMP-2}.  We explain how to extend the proof to the general case.

As in the proof of Proposition~\ref{prop:normal fiber} fix an embedding $V_\Z\hookrightarrow V^\beef_\Z$ with $V^\beef_{\Z}$ self-dual at $p$, and so that 
\[ 
\Lambda = \{ x\in V_\Z^\beef : x\perp V_\Z \}
\]
  is maximal of rank at most $r$ with $r=2$ when $n$ is even and $r=3$ otherwise.\footnote{Once again, if $p>2$, then we can always take $r=2$ and the result can be strengthened to only require $n\ge 3$.}

Consider again the finite unramified morphism
\[
\mathcal{Z}^\beef(\Lambda)\to \mathcal{S}_{K^\beef}(G^\beef,\DD^\beef).
\]
By Proposition~\ref{prop:zlamb local props}, this is normal and flat over $\Z_{(p)}$, as long as we have
\[
2 \le \frac{n+2}{2}-1,
\]
for $n$ even and
\[
3 \le \frac{n+3}{2}-1,
\]
for $n$ odd. These inequalities hold for $n \ge 4$.

Using the decomposition (\ref{special stack decomp}), we may choose a positive 
$m^\beef\in \Q$ and a $\mu^\beef\in (V^\beef_\Z)^\vee /V^\beef_\Z$ in such a way that 
\[
\mathcal{Z}(m,\mu) \subset 
\mathcal{Z}^\beef(m^\beef,\mu^\beef) \times_{ \mathcal{S}_{K^\beef}(G^\beef,\DD^\beef) }   \mathcal{S}_K(G,\DD)
\]
as an open and closed substack.
Now use the open and closed immersion
\[
\mathcal{S}_K(G,\DD) \to \mathcal{Z}^\beef(\Lambda)
\]
from the proof of Proposition \ref{prop:normal fiber} to identify
\begin{equation}\label{nonstandard divisor 1}
\mathcal{Z}(m,\mu) \subset  
\mathcal{Z}^\beef(m^\beef,\mu^\beef)\times_{\mathcal{S}_{K^\beef}(G^\beef,\DD^\beef)}\mathcal{Z}^\beef(\Lambda)
\end{equation} 
as a union of connected components.   In particular, by Proposition \ref{prop:sufficiently small}, the projection
\begin{equation}\label{nonstandard divisor 2}
\mathcal{Z}(m,\mu)  \to \mathcal{Z}^\beef(\Lambda)
\end{equation}
is,  \'etale locally on the target, a disjoint union of closed immersions each defined by a single equation.

\begin{lemma}
The image of (\ref{nonstandard divisor 2}) contains no irreducible component of  $\mathcal{Z}^\beef(\Lambda)_{\F_p}$.
\end{lemma}
\begin{proof}
An  $S$-point of  $\mathcal{Z}(m,\mu)$ determines a special quasi-endomorphism
$x\in V(\mathcal{A}_S)$ with $Q(x)=m$.
The image of such an $S$-point under the inclusion (\ref{nonstandard divisor 1}) determines an
$x^\beef \in V(\mathcal{A}^\beef_S)$, as well as an  isometric embedding $\iota : \Lambda \to V_0(\mathcal{A}_S)$.    Unpacking the construction of the inclusion (\ref{nonstandard divisor 1}), we find that 
 the orthogonal decomposition
\[
V(\mathcal{A}_S^\beef ) =    V(\mathcal{A}_S) \oplus \Lambda_\Q   ,
\]
of Proposition \ref{prop:special decomp cosets} identifies 
$
x^\beef = x + \iota(\lambda)
$
for some $\lambda \in \Lambda_\Q$.   In particular, $x$ determines a nonzero element of  $V(\mathcal{A}_S^\beef )$ orthogonal to $\iota ( \Lambda_\Q )$, and 
\[
\iota : \Lambda_\Q   \to  V(\mathcal{A}_S^\beef)
\]
is not surjective.

In contrast,   for every generic point $\eta$ of $\mathcal{Z}^\beef(\Lambda)_{\F_p}$ we have
\[
\iota_\eta:\Lambda\iso V_0(\mathcal{A}_\eta^\beef).
\]
Indeed, this follows from the density $\mathcal{Z}^{\mathrm{pr}}(\Lambda)_{\F_p} \subset \mathcal{Z}(\Lambda)_{\F_p}$  proved in Lemma \ref{lem:zpr complement}, and assertion (1) of Proposition \ref{prop:z pr gen smooth}. It can be checked that the numerical hypotheses hold under our hypothesis $n\geq 4$.

Thus the image of (\ref{nonstandard divisor 2}) cannot contain the generic point of any irreducible component of $\mathcal{Z}^\beef(\Lambda)_{\F_p}$, completing the proof of the lemma.
\end{proof}
%In particular $\iota_\eta(\Lambda)^\perp = 0$, and hence $\eta$ is not in the image of $\mathcal{W}$.
% This is~\cite[Proposition 4.5.8]{AGHMP-2}. The hypothesis there requires that $V_{\Z_p}$ be self-dual when $p=2$; however, the proof cites the argument from~\cite[Proposition 5.21]{mp:spin}, which applies just as well in the situation where $V_{\Z_2}$ is merely almost self-dual. The main ingredients are the strong divisibility of the crystalline realization associated with $V_{\Z_2}$, as well as a description of the Kodaira-Spencer map associated with its de Rham realization. When $p>2$, these are deduced in [\emph{loc. cit.}] from the results of \cite{KisinJAMS}, which give an explicit description of the complete local rings of $\mathcal{S}_K(G,\DD)$. When $p=2$, one simply has to use the corresponding description given in \cite{KMP}.

To complete the proof of  Proposition \ref{prop:nice divisors}, we  apply the following lemma to the complete local ring of the local complete intersection (and hence Cohen-Macaulay) stack $\mathcal{Z}^\beef(\Lambda)$ at a point in the image of~\eqref{nonstandard divisor 2}, and taking $a$ to be the equation defining the complete local ring of $\mathcal{Z}(m,\mu)$ at a point in the pre-image.
\begin{lemma}
Let $R$ be a complete local flat $\Z_{(p)}$-algebra that is Cohen-Macaulay. Suppose that $a\in R$ is such that $\Spec(R/aR)\subset \Spec~R$ does not contain any irreducible component of $\Spec(R\otimes_{\Z_{(p)}}\F_p)$. Then $R/aR$ is also flat over $\Z_{(p)}$.
\end{lemma}
\begin{proof}
Since $R$ is $\Z_{(p)}$-flat, $R/pR$ is once again Cohen-Macaulay. Our hypotheses imply that the image $\overline{a}\in R/pR$ of $a$ is not contained in any minimal prime of $R/pR$, which means that $\overline{a}$ is a non-zero divisor in $R/pR$. Since $R$ is local, this is equivalent to saying that $p$ is a non-zero divisor in $R/aR$, which shows that $R/aR$ is $\Z_{(p)}$-flat.
\end{proof}
This completes the proof of Proposition \ref{prop:nice divisors}
\end{proof}

\section{Integral theory of $q$-expansions}
\label{s:integral q}

%%%%%%%%%%%%%%%%%%%%%%%%%%%%%%%%%%%%%

Keep the hypotheses and notation of \S \ref{s:integrality I} and \S \ref{s:integrality II}.
 In particular, we fix a prime $p$ at which $V_\Z \subset V$ is  maximal.
We now consider  toroidal compactifications of the integral model
\[
\mathcal{S}_K (G,\DD)\to \Spec(\Z_{(p)})
\]
If $V$ is anisotropic then  \cite[Corollary 4.1.7]{mp:compactification} shows that the integral model  is already proper. 
Therefore, in this subsection, we  assume that $V$ admits an isotropic vector.

%%%%%%%%%%%%%%%%%%%%%%%%%%%%%%%%%%%%%

\subsection{Toroidal compactification}
\label{ss:integral toroidal}

%%%%%%%%%%%%%%%%%%%%%%%%%%%%%%%%%%%%%

Fix auxiliary data $V_\Z^\beef \subset V^\beef$ and $K^\beef$  as in \S \ref{ss:embiggen}, 
and choose this in such a way that $V_\Z^\beef$ is almost self-dual at $p$.  
In particular, from (\ref{integral definition}) we have the  finite morphism 
\[
\mathcal{S}_K(G,\DD) \to \mathcal{S}_{K^\beef}(G^\beef,\DD^\beef)
\]
of integral models,  under which   $\bm{\omega}^\beef$ pulls back to $\bm{\omega}$.

 We may choose the auxiliary $V^\beef$ to have signature $(n^\beef ,2)$ with $n^\beef \ge 5$.
 By Lemma \ref{lem:integral polarization}, this allows us to choose a symplectic form $\psi^\beef$ on 
 \[
 H^\beef=C(V^\beef)
 \]
   in such a way that the $\Z$-lattice 
 $
H^\beef_\Z = C(V^\beef_\Z ) 
$
is self-dual at $p$.  As in \S \ref{ss:hodge embedding} we obtain an embedding 
 \[
(G^\beef, \DD^\beef) \to (G^\Sg , \DD^\Sg)
\]
into the Siegel Shimura datum determined by $(H^\beef,\psi^\beef)$. 
Recalling the Shimura datum $(\mathbb{G}_m , \mathcal{H}_0)$ of \S \ref{ss:simple}, 
this also fixes a morphism $(G^\beef,\DD^\beef) \to (\mathbb{G}_m , \mathcal{H}_0)$.

Define reductive groups over $\Z_{(p)}$by 
\[
\mathcal{G}^\beef = \GSpin(V^\beef_{\Z_{(p)}}),\quad \mathcal{G}^\Sg = \GSp(H^\beef_{\Z_{(p)}} ),
\]
so that $G^\beef\to G^\Sg$ extends to a closed immersion $\mathcal{G}^\beef \to \mathcal{G}^\Sg$. 
Fix a compact open subgroup 
\[
K^\Sg  =K^\Sg_p  K^{\Sg,p}  \subset G^\Sg(\A_f)
\] 
containing $K^\beef$ and satisfying $K^\Sg_p=\mathcal{G}^\Sg(\Z_p)$.
After shrinking the prime-to-$p$ parts of 
\[
K \subset K^\beef \subset K^\Sg,
\]
we assume that all three are neat.

We can construct a toroidal compactification of $\mathcal{S}_K(G,\DD)$ as follows.  
Fix a finite, complete $K^\Sg$-admissible  cone decomposition $\Sigma^\Sg$ for $(G^\Sg,\DD^\Sg)$.  
As explained in \S\ref{ss:cone functoriality}, it pulls back  to a finite, complete,  $K^\beef$-admissible polyhedral cone decomposition   $\Sigma^\beef$ for $(G^\beef,\DD^\beef)$, and   a finite, complete,  $K$-admissible polyhedral cone decomposition   $\Sigma$ for $(G,\DD)$. 
If $\Sigma^\Sg$ has the no self-intersection property, then so do the decompositions induced from it.

Assume that $K^\Sg$ and $\Sigma^\Sg$ are chosen so that $\Sigma^\Sg$ is smooth and satisfies the no self-intersection property.
We obtain a commutative diagram
\begin{equation}\label{eqn:comp maps}
\xymatrix{
{  \mathcal{S}_K(G,\DD,\Sigma)    } \ar[d]  & {  \Sh _K(G,\DD,\Sigma)  }   \ar[d] \ar[l] \\
 { \mathcal{S}_{K^\beef} (G^\beef , \DD^\beef,\Sigma^\beef) }  \ar[d]  & {  \Sh_{K^\beef}(G^\beef , \DD^\beef,\Sigma^\beef)  }  \ar[l]  \ar[d] \\
  { \mathcal{S}_{K^\Sg}(G^\Sg , \DD^\Sg,\Sigma^\Sg)  }    & {  \Sh_{K^\Sg}(G^\Sg , \DD^\Sg,\Sigma^\Sg), }  \ar[l]
}
\end{equation}
where $ \mathcal{S}_{K^\Sg}(G^\Sg , \DD^\Sg,\Sigma^\Sg)$ is the toroidal compactification of $\mathcal{S}_{K^\Sg}(G^\Sg , \DD^\Sg)$ constructed by Faltings-Chai.  Note that the neatness of $K^\Sg$ implies that it is an algebraic space,  rather than a stack, but does not guarantee that it is a scheme.
The two algebraic spaces above it are defined by normalization, exactly as in (\ref{integral definition}).

According to \cite[Theorem 4.1.5]{mp:compactification}, the algebraic space  $ \mathcal{S}_K(G,\DD,\Sigma) $ is proper over $\Z_{(p)}$  and admits a stratification
\begin{equation}\label{integral strata}
 \mathcal{S}_K(G,\DD,\Sigma)   =
\bigsqcup_{ (\Phi,\sigma) \in \mathrm{Strat}_{K}(G ,\DD,\Sigma) } \mathcal{Z}_{K}^{  (\Phi,\sigma) }(G,\DD,\Sigma)
\end{equation}
 by locally closed  subspaces, extending  (\ref{generic stratification}), in which every stratum is flat over $\Z_{(p)}$.  
 The unique open stratum  is  $\mathcal{S}_K(G,\DD)$, and its complement is a Cartier divisor.

Fix a toroidal stratum representative $(\Phi,\sigma) \in \mathrm{Strat}_K(G,\DD,\Sigma)$ in such a way  that the parabolic subgroup underlying $\Phi$ is the stabilizer of an isotropic line.   As in \S \ref{ss:torsor}, the cusp label representative $\Phi$ determines  a  $T_\Phi$-torsor
\[
\Sh_{K_\Phi}(Q_\Phi,\DD_\Phi) \to \Sh_{\nu_\Phi(K_\Phi)}(\mathbb{G}_m,\mathcal{H}_0),
\]
and the rational polyhedral cone $\sigma$ determines a partial compactification
 \[
\Sh_{K_\Phi}(Q_\Phi,\DD_\Phi)\hookrightarrow \Sh_{K_\Phi}(Q_\Phi,\DD_\Phi,\sigma).
 \]

The base $\Sh_{\nu_\Phi(K_\Phi)}(\mathbb{G}_m,\mathcal{H}_0)$ of the $T_\Phi$-torsor, being a zero dimensional \'etale  scheme over $\Q$, has a canonical finite normal integral model defined as the normalization of $\Spec(\Z_{(p)})$.  The picture is 
\[
\xymatrix{ 
{  \mathcal{S}_{\nu_\Phi(K_\Phi)}(\mathbb{G}_m,\mathcal{H}_0)  } \ar[d]  & { \Sh_{\nu_\Phi(K_\Phi)}(\mathbb{G}_m,\mathcal{H}_0) } \ar[d] \ar[l]  \\
{  \Spec(\Z_{(p)})  }  &  { \Spec(\Q)  .}  \ar[l]
}
\]

 \begin{proposition} \label{prop:strata descp}  
 Define an integral model 
  \[
 \mathcal{T}_\Phi =  \Spec\Big(  \Z_{(p)} [q_\alpha]_{ \alpha \in \Gamma^\vee_\Phi(1) } \Big)
 \]
of  the torus $T_\Phi$ of \S \ref{ss:torsor}.
 \begin{enumerate}
 	\item The $\Q$-scheme $\Sh_{K_\Phi}(Q_\Phi,\DD_\Phi)$ admits a canonical integral model 
	\[
	\mathcal{S}_{K_\Phi}(Q_\Phi,\DD_\Phi) \to \Spec( \Z_{(p)}),
	\] 
endowed with the structure of a  relative $\mathcal{T}_\Phi$-torsor 
\[
	\mathcal{S}_{K_\Phi}(Q_\Phi,\DD_\Phi)   \to \mathcal{S}_{\nu_\Phi(K_\Phi)}(\mathbb{G}_m,\mathcal{H}_0)
\] 
compatible with the torsor structure (\ref{torus torsor}) in the generic fiber.
	 	\item 
		There is a canonical isomorphism
 \[
 \widehat{\mathcal{S}}_{K_\Phi}(Q_\Phi,\DD_\Phi,\sigma)\iso\widehat{\mathcal{S}}_K(G,\DD,\Sigma)
 \]
 of formal algebraic spaces extending~\eqref{boundary iso}.
 \end{enumerate}
 Here $\mathcal{S}_{K_\Phi}(Q_\Phi,\DD_\Phi) \hookrightarrow \mathcal{S}_{K_\Phi}(Q_\Phi,\DD_\Phi,\sigma)$ is the partial compactification determined by the rational polyhedral cone 
 \[
 \sigma \subset U_\Phi(\R)(-1) = \Hom( \mathbb{G}_m,\mathcal{T}_\Phi )_\R
 \]
  and the formal scheme on the left hand side is its completion along its unique closed stratum.  On the right, 
\[
\widehat{\mathcal{S}}_K(G,\DD,\Sigma) =
 \mathcal{S}_K(G,\DD,\Sigma)^{\wedge}_{\mathcal{Z}^{  (\Phi,\sigma) }_K(G,\DD,\Sigma)}
\]
is the  formal  completion along the stratum indexed by $(\Phi,\sigma)$.
 \end{proposition}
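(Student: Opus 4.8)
The plan is to build both statements by the same mechanism used to construct $\mathcal{S}_K(G,\DD)$ itself: reduce to the Siegel case via the chosen Hodge embeddings, invoke the results of Faltings--Chai, and then pass to normalizations, using the toroidal boundary charts of \cite{mp:compactification} as a black box. For part (1), I would first observe that the cusp label representative $\Phi$ for $(G,\DD)$ (with underlying parabolic the stabilizer of an isotropic line) propagates, via the functoriality of cone decompositions in \S\ref{ss:cone functoriality}, to cusp label representatives $\Phi^\beef$ for $(G^\beef,\DD^\beef)$ and $\Phi^\Sg$ for $(G^\Sg,\DD^\Sg)$, with compatible inclusions $Q_\Phi \subset Q_{\Phi^\beef}\subset Q_{\Phi^\Sg}$, $U_\Phi \subset U_{\Phi^\beef}\subset U_{\Phi^\Sg}$, and hence a map of tori $T_\Phi \to T_{\Phi^\beef}\to T_{\Phi^\Sg}$. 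The mixed Shimura variety $\Sh_{K_{\Phi^\Sg}}(Q_{\Phi^\Sg},\DD_{\Phi^\Sg})$ is a moduli space of $1$-motives and, by \cite{FaltingsChai} (or the version in \cite[\S2]{mp:compactification}), carries a canonical integral model over $\Z_{(p)}$ which is a relative $\mathcal{T}_{\Phi^\Sg}$-torsor over the zero-dimensional model $\mathcal{S}_{\nu_{\Phi^\Sg}(K_{\Phi^\Sg})}(\mathbb{G}_m,\mathcal{H}_0)$. One then defines $\mathcal{S}_{K_\Phi}(Q_\Phi,\DD_\Phi)$ as the normalization of (an appropriate connected component, or étale piece, of) this Siegel mixed model in the generic-fiber $\Sh_{K_\Phi}(Q_\Phi,\DD_\Phi)$; flatness and normality over $\Z_{(p)}$ are automatic from the normalization construction, exactly as in \eqref{integral definition}. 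The $\mathcal{T}_\Phi$-torsor structure is inherited: the generic-fiber torsor action of $T_\Phi$ extends because, on the Siegel side, the torus action has the moduli interpretation recalled in the proof of Proposition~\ref{prop:canonical bundle machine} (cf.~\cite[(2.2.8)]{mp:compactification}), from which the descent to $\Z_{(p)}$ of the action is manifest, and because $\mathcal{T}_\Phi\to\mathcal{T}_{\Phi^\Sg}$ is compatible with all of this; one checks the torsor axioms fiberwise after the faithfully flat base change $\mathcal{S}_{\nu_\Phi(K_\Phi)}(\mathbb{G}_m,\mathcal{H}_0)\to\Spec(\Z_{(p)})$.

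For part (2), the strategy is to identify the formal completion $\widehat{\mathcal{S}}_K(G,\DD,\Sigma)$ along the stratum $\mathcal{Z}^\Upsilon_K(G,\DD,\Sigma)$ with the corresponding formal completion $\widehat{\mathcal{S}}_{K^\Sg}(G^\Sg,\DD^\Sg,\Sigma^\Sg)$ — where the analogous statement is part of the Faltings--Chai theory, phrased in \cite[Theorem 4.1.5]{mp:compactification} — and then descend through the two normalization steps in \eqref{eqn:comp maps}. Concretely: the arithmetic toroidal boundary chart of \cite[\S4]{mp:compactification} already gives, for the Siegel compactification, an isomorphism $\widehat{\mathcal{S}}_{K^\Sg_{\Phi^\Sg}}(Q_{\Phi^\Sg},\DD_{\Phi^\Sg},\sigma^\Sg)\iso\widehat{\mathcal{S}}_{K^\Sg}(G^\Sg,\DD^\Sg,\Sigma^\Sg)$ refining \eqref{boundary iso}. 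Pulling this back along the finite morphism $\mathcal{S}_{K^\beef}(G^\beef,\DD^\beef,\Sigma^\beef)\to\mathcal{S}_{K^\Sg}(G^\Sg,\DD^\Sg,\Sigma^\Sg)$ and using that normalization commutes with the (excellent, Noetherian) formal completions involved — so that completing the normalization gives the normalization of the completion — propagates the isomorphism one step up, and then the same argument propagates it to $\mathcal{S}_K(G,\DD,\Sigma)$. The fact that $\mathcal{S}_{K_\Phi}(Q_\Phi,\DD_\Phi,\sigma)$, the partial compactification attached to $\sigma\subset U_\Phi(\R)(-1)=\Hom(\mathbb{G}_m,\mathcal{T}_\Phi)_\R$ via the torus-embedding recipe \eqref{q torus}–\eqref{mixed compactification}, is exactly the normalization of its Siegel counterpart in the generic fiber follows because both sides are normal, flat, and agree generically and on the special fiber of the torus-embedding combinatorics.

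I expect the main obstacle to be the bookkeeping in the descent of the boundary isomorphism through normalization: one must know that normalization is compatible with formal completion along the relevant strata, which requires the schemes in \eqref{eqn:comp maps} to be excellent (true, being of finite type over $\Z_{(p)}$) and requires care because the stratum $\mathcal{Z}^\Upsilon_K(G,\DD,\Sigma)$ is only locally closed, not closed — so one completes along a closed stratum inside an open neighborhood, exactly as in the generic-fiber discussion following \eqref{boundary divisor iso}. A secondary subtlety, already flagged in the Remark after \eqref{boundary iso}, is that Pink's construction a priori produces the boundary chart only up to a finite group quotient; here Hypothesis~\ref{hyp:motivic} together with neatness of $K$, $K^\beef$, $K^\Sg$ makes that group trivial, so this does not cause trouble. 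Finally, one should check that the choice of auxiliary data $(V^\beef,Q^\beef)$, $V^\beef_{\widehat{\Z}}$, $K^\beef$, $\Sigma^\Sg$ does not affect $\mathcal{S}_{K_\Phi}(Q_\Phi,\DD_\Phi)$ or the isomorphism in (2); this is a boundary analogue of Proposition~\ref{prop:integral model ind}, and follows by the same argument — any two choices are dominated by a common third, and the normalizations agree because they agree in the generic fiber and are both normal and flat.
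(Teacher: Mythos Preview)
The paper's own proof is a single sentence: it cites \cite[Theorem~4.1.5]{mp:compactification} and nothing more. Your proposal is, in effect, a sketch of the argument behind that cited theorem, and the overall architecture---reduce to the Siegel boundary charts via the chain \eqref{eqn:comp maps}, invoke Faltings--Chai, and propagate through normalization---is indeed what \cite{mp:compactification} does. So at the level of strategy you have correctly reverse-engineered the content of the citation.

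There is one concrete inaccuracy worth flagging. You assert that the Siegel mixed Shimura variety $\Sh_{K_{\Phi^\Sg}}(Q_{\Phi^\Sg},\DD_{\Phi^\Sg})$ is a $\mathcal{T}_{\Phi^\Sg}$-torsor over the \emph{zero-dimensional} scheme $\mathcal{S}_{\nu_{\Phi^\Sg}(K_{\Phi^\Sg})}(\mathbb{G}_m,\mathcal{H}_0)$. This is not right: when $\Phi$ comes from an isotropic line in $V$, the induced Siegel cusp label $\Phi^\Sg$ corresponds to a maximal isotropic subspace of $H^\beef$ of rank $\tfrac12\dim H^\beef$, and the quotient $(\bar Q_{\Phi^\Sg},\bar\DD_{\Phi^\Sg})$ is not $(\mathbb{G}_m,\mathcal{H}_0)$. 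The Siegel boundary chart is the usual three-step tower (torus torsor over an abelian scheme over a smaller pure Siegel variety), and only after restricting along the GSpin map does the base collapse to dimension zero. This does not break your argument---normalization in the generic fiber still produces the correct object, and the $\mathcal{T}_\Phi$-action still descends from the moduli description on the Siegel side---but it means the bookkeeping in part~(1) is more involved than your sketch suggests, and the phrase ``an appropriate connected component, or \'etale piece'' is hiding a genuine dimension drop that needs to be accounted for. The cited theorem in \cite{mp:compactification} handles exactly this.
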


\begin{proof}
This is a consequence of \cite[Theorem 4.1.5]{mp:compactification}.
\end{proof}

By \cite[Theorem 2]{mp:compactification} and \cite{FaltingsChai},
both  $\mathcal{S}_{K^\beef}(G^\beef,\DD^\beef,\Sigma^\beef)  $ and the Faltings-Chai compactification are  proper.
They admit  stratifications 
  \[
\mathcal{S}_{K^\beef}(G^\beef,\DD^\beef,\Sigma^\beef)  =
\bigsqcup_{ (\Phi^\beef,\sigma^\beef)  \in \mathrm{Strat}_{K^\beef}(G^\beef,\DD^\beef ,\Sigma^\beef) }
 \mathcal{Z}_{K^\beef}^{(\Phi^\beef,\sigma^\beef)}(G^\beef,\DD^\beef,\Sigma^\beef)  ,
\]
and
\[
 \mathcal{S}_{K^\Sg}(G^\Sg , \DD^\Sg,\Sigma^\Sg)  =
\bigsqcup_{ (\Phi^\Sg,\sigma^\Sg)  \in \mathrm{Strat}_{K^\Sg}(G^\Sg,\DD^\Sg) }
 \mathcal{Z}_{K^\Sg}^{  (\Phi^\Sg,\sigma^\Sg)  }(G^\Sg , \DD^\Sg,\Sigma^\Sg).
\]
analogous to (\ref{integral strata}).   
By \cite[(4.1.13)]{mp:compactification}, these stratifications satisfy a natural compatibility:  
if 
\[
( \Phi,\sigma) \in \mathrm{Strat}_K(G,\DD,\Sigma)
\]
 has images  $(\Phi^\beef,\sigma^\beef)$ and 
$(\Phi^\Sg,\sigma^\Sg)$, in the sense of \S \ref{ss:cone functoriality}, then the maps in~\eqref{eqn:comp maps} induce maps on strata
\[
\mathcal{Z}^{( \Phi,\sigma)}_K(G,\DD,\Sigma)
 \to \mathcal{Z}^{(\Phi^\beef,\sigma^\beef)}_{K^\beef}(G^\beef,\DD^\beef,\Sigma^\beef)
 \to \mathcal{Z}^{(\Phi^\Sg,\sigma^\Sg)}_{K^\Sg}(G^\Sg,\DD^\Sg,\Sigma^\Sg).
\]

% Write $\widehat{\mathcal{S}}_K(G,\DD,\Sigma)$ (resp. $\widehat{\mathcal{S}}_{K^\beef}(G^\beef , \DD^\beef,\Sigma^\beef)$ $\mathcal{S}_{K^\Sg}(G^\Sg , \DD^\Sg,\Sigma^\Sg)$) for the formal completions along the $\Upsilon$-stratum (resp. $\Upsilon^\beef$-stratum, $\Upsilon^\Sg$-stratum).

% There are corresponding maps
% \[
% \mathcal{S}_{K_\Phi}(Q_\Phi,\DD_\Phi,\sigma)\to \mathcal{S}_{K^\beef_{\Phi^\beef}}(Q_{\Phi^\beef},\DD_{\Phi^\beef},\sigma^\beef)\to \mathcal{S}_{K^\Sg_{\Phi^\Sg}}(Q_{\Phi^\Sg},\DD_{\Phi^\Sg},\sigma^\Sg)
% \]
% of toric embeddings of integral models of mixed Shimura varieties, which respect stratifications. 

Applying the functor of Proposition \ref{prop:total integral vector bundles} below to the $\mathcal{G}^\beef$-representation $V^\beef_{\Z_{(p)}}$ yields a line bundle $\bm{\omega}^\beef = F^1 \bm{V}^\beef_{dR}$ on $\mathcal{S}_{K^\beef} (G^\beef,\DD^\beef,\Sigma^\beef)$, which we pull back to a line bundle $\bm{\omega}$ on  $\mathcal{S}_K(G,\DD,\Sigma)$.   This gives an extension of   (\ref{integral omega})   to the toroidal compactification.

\begin{proposition}\label{prop:total integral vector bundles}
There is a  functor  
\[
N  \mapsto ( \bm{N}_{dR} , F^\bullet  \bm{N}_{dR} )
\]
 from representations  $\mathcal{G}^\beef \to \GL(N)$ on free   $\Z_{(p)}$-modules of finite rank to filtered vectors bundles on $\mathcal{S}_{K^\beef}(G^\beef,\DD^\beef,\Sigma^\beef)  $, extending the functor (\ref{int bundle machine}) on the open stratum, and the functor of Theorem \ref{thm:total vector bundles} in the generic fiber.
\end{proposition}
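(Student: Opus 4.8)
The strategy is to extend the \emph{standard principal bundle} underlying the functor (\ref{int bundle machine}) over the boundary of the toroidal compactification, and then to define $\bm{N}_{dR}$ as an associated bundle. Recall from \cite{KMP} (see also \S\ref{ss:gspin integral}) that over the open stratum $\mathcal{S}_{K^\beef}(G^\beef,\DD^\beef)$ the functor (\ref{int bundle machine}) is produced as follows: the Kuga--Satake abelian scheme $\mathcal{A}^\beef$ of \S\ref{ss:KS} has relative de Rham cohomology $\bm{H}^\beef_{dR}$ with its Hodge filtration and Gauss--Manin connection, and a finite collection of tensors $(s_\alpha)$, flat for the Gauss--Manin connection and of Hodge type $(0,0)$, cuts out the reductive group scheme $\mathcal{G}^\beef=\GSpin(V^\beef_{\Z_{(p)}})\subset \GL(\bm{H}^\beef_{dR})$. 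Equivalently, one has a $\mathcal{G}^\beef$-torsor
\[
\mathcal{J}_{K^\beef}=\underline{\mathrm{Isom}}_{(s_\alpha)}\big( H^\beef_{\Z_{(p)}},\ \bm{H}^\beef_{dR} \big)\longrightarrow \mathcal{S}_{K^\beef}(G^\beef,\DD^\beef)
\]
together with a $\mathcal{G}^\beef$-equivariant map $b$ to the integral model of the compact dual $\check{M}(G^\beef,\DD^\beef)$ (the $\Z_{(p)}$-quadric of isotropic lines in $V^\beef_{\Z_{(p)}}$, which is \emph{smooth} precisely because $V^\beef_{\Z_p}$ is almost self-dual) recording the Hodge filtration; for a representation $N$ the pair $(\bm{N}_{dR},F^\bullet\bm{N}_{dR})$ is then $\mathcal{J}_{K^\beef}\times^{\mathcal{G}^\beef}b^*(\check{N},F^\bullet\check{N})$ in the notation of Lemma \ref{lem:dual filtrations}.

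The main point is that this picture extends across the boundary. By the construction of $\mathcal{S}_{K^\beef}(G^\beef,\DD^\beef,\Sigma^\beef)$ as a normalization of the Faltings--Chai compactification and by \cite[Theorem 4.1.5]{mp:compactification}, the abelian scheme $\mathcal{A}^\beef$ extends to a semi-abelian scheme over $\mathcal{S}_{K^\beef}(G^\beef,\DD^\beef,\Sigma^\beef)$, and $\bm{H}^\beef_{dR}$ extends to the Faltings--Chai canonical extension $\overline{\bm{H}^\beef_{dR}}$, a vector bundle carrying the Hodge filtration and a logarithmic Gauss--Manin connection whose residues along the (relative normal crossings) boundary divisor are nilpotent, since $\Sigma^\Sg$ was chosen smooth and the monodromy is unipotent. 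Because each $s_\alpha$ is flat over the open stratum, nilpotence of the residues forces the rank one local system it spans to have trivial monodromy along the boundary; hence $s_\alpha$ extends uniquely to a flat section of $\overline{\bm{H}^{\beef,\otimes}_{dR}}$, and this extension still lies in filtration degree $0$ because $\mathcal{S}_{K^\beef}(G^\beef,\DD^\beef,\Sigma^\beef)$ is reduced and $\overline{\bm{H}^\beef_{dR}}/F^0$ is locally free. Consequently $\mathcal{J}_{K^\beef}$ extends to a $\mathcal{G}^\beef$-torsor $\mathcal{J}_{K^\beef}(G^\beef,\DD^\beef,\Sigma^\beef)$ over $\mathcal{S}_{K^\beef}(G^\beef,\DD^\beef,\Sigma^\beef)$ equipped with a $\mathcal{G}^\beef$-equivariant map $b$ to $\check{M}(G^\beef,\DD^\beef)$. (Alternatively, one may construct this extension on the formal completions $\widehat{\mathcal{S}}_{K_\Phi}(Q_\Phi,\DD_\Phi,\sigma)\iso \widehat{\mathcal{S}}_{K^\beef}(G^\beef,\DD^\beef,\Sigma^\beef)$ furnished by the analogue of Proposition \ref{prop:strata descp} for $(G^\beef,\DD^\beef)$, and glue with the open part; this is essentially the argument of \cite{mp:compactification}.) Forming associated bundles, $(\bm{N}_{dR},F^\bullet\bm{N}_{dR})=\mathcal{J}_{K^\beef}(G^\beef,\DD^\beef,\Sigma^\beef)\times^{\mathcal{G}^\beef}b^*(\check{N},F^\bullet\check{N})$, defines the desired functor, and functoriality in $N$ is immediate from the associated-bundle construction.

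It remains to check the two compatibilities. By construction the extended torsor restricts over the open stratum to $\mathcal{J}_{K^\beef}$, so the functor restricts to (\ref{int bundle machine}). In the generic fiber, $\mathcal{J}_{K^\beef}(G^\beef,\DD^\beef,\Sigma^\beef)_\Q$ restricts over $\Sh_{K^\beef}(G^\beef,\DD^\beef)$ to the de Rham realization of the standard torsor, which by the Betti--de Rham comparison agrees there with $J_{K^\beef}(G^\beef,\DD^\beef)$ of \S\ref{s:AVB}; since $\Sh_{K^\beef}(G^\beef,\DD^\beef,\Sigma^\beef)$ is normal and a torsor under a smooth group scheme is determined by its restriction to a dense open, it coincides with the standard torsor $J_{K^\beef}(G^\beef,\DD^\beef,\Sigma^\beef)$ underlying Theorem \ref{thm:total vector bundles}. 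Hence the two functors agree rationally.

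I expect the extension of the tensors $(s_\alpha)$ across the boundary --- equivalently, of the $\mathcal{G}^\beef$-torsor --- to be the only substantive step: it is here that one must invoke the theory of degenerations of abelian varieties and the logarithmic structure on the Faltings--Chai compactification (unipotence of the local monodromy, nilpotence of the residues of the canonical extension), exactly as carried out in \cite{mp:compactification}. Everything else is formal manipulation of associated bundles and a normality/density argument.
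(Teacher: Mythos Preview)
Your approach is essentially the same as the paper's: both extend the $\mathcal{G}^\beef$-torsor $\mathcal{J}_{K^\beef}$ across the boundary by extending $\bm{H}^\beef_{dR}$ with its Hodge filtration and the tensors $(s_\alpha)$ via \cite{mp:compactification}, and then form associated bundles. The only cosmetic difference is that the paper packages the similitude twist into the torsor by recording an additional isomorphism $f_0:\Lie(\mathbb{G}_m)\iso\co_S$ compatible with the extended symplectic form $\bm{\psi}^\beef$, and defines the map $b$ to the integral compact dual by first landing in the Siegel Lagrangian variety $\check{\mathcal{M}}(G^\Sg,\DD^\Sg)$ and then observing the factorization through $\check{\mathcal{M}}(G^\beef,\DD^\beef)$.
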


\begin{proof}
Consider the filtered vector bundle $(\bm{H}^\beef_{dR} , F^\bullet \bm{H}^\beef_{dR})$ over $\Sh_{K^\beef}(G^\beef,\DD^\beef)$ obtained by applying the functor (\ref{mixed bundles})  to the representation
\[
G^\beef \to G^\Sg=\mathrm{GSp}(H^\beef).
\]

Now let $\nu^\beef:G^\beef \to \mathbb{G}_m$ be the spinor similitude, and let $\Q(\nu^\beef)$ denote the corresponding one-dimensional representation of $G^\beef$. 
 It determines a line bundle on $\Sh_{K^\beef}(G^\beef,\DD^\beef)$, which is canonically a pullback via the morphism
\[
\Sh_{K^\beef}(G^\beef,\DD^\beef) \map{\nu^\beef} \Sh_{\nu^\beef( K^\beef) }  ( \mathbb{G}_m,\mathcal{H}_0).
\]
Combining this with Remark \ref{rem:tate bundles}, we see that the line bundle determined by $\Q(\nu^\beef)$  is canonically identified with $\Lie (\mathbb{G}_m)$, and hence the $G^\beef$-equivariant morphism  $\psi^\beef: H\otimes H \to \Q(\nu^\beef)$ induces an  alternating form 
\[
\psi^\beef : \bm{H}^\beef_{dR}  \otimes \bm{H}^\beef_{dR}  \to  \Lie (\mathbb{G}_m).
\]
The nontrivial step $F^0\bm{H}_{dR}^\beef$ in the filtration is a Lagrangian subsheaf with respect to this pairing.

The vector bundle $\bm{H}_{dR}^\beef$  is canonically identified with the pullback via
 \[
 \Sh_{K^\beef}(G^\beef,\DD^\beef) \to  \Sh_{K^{\Sg}} (G^\Sg , \DD^\Sg)
 \]
   of the first relative homology 
\[
\bm{H}^{\Sg}_{dR} = \underline{\Hom}  \big(R^1 \pi_*  \Omega^\bullet_{A^\Sg / \Sh_{K^{\Sg}} (G^\Sg , \DD^\Sg)} , \co_{\mathrm{Sh}_{K^{\Sg}} (G^\Sg , \DD^\Sg)} \big)
\]
of the universal polarized abelian scheme  $\pi : A^\Sg \to \Sh_{K^{\Sg}} (G^\Sg , \DD^\Sg)$.  
  As the universal abelian scheme extends canonically to the integral model,  so does $\bm{H}_{dR}^{\Sg}$.
    Its pullback defines  an extension of  $\bm{H}^\beef_{dR}$, along with its filtration and alternating form, to the integral model $\mathcal{S}_{K^\beef}(G^\beef , \DD^\beef)$.

Now fix a family of tensors 
\[
\{s_\alpha\} \subset  H_{\Z_{(p)}}^{\beef, \otimes}
\]
 that cut out the reductive subgroup 
 $
 \mathcal{G}^\beef \subset \mathcal{G}^\Sg .
 $   
 The functoriality of (\ref{mixed bundles}) implies  that these tensors define global sections $\{ \bm{s}_{\alpha,dR} \}$ of $\bm{H}_{dR}^{\beef,\otimes}$ over the generic fiber.  By  \cite[Corollary 2.3.9]{KisinJAMS}, they extend (necessarily uniquely) to sections over the integral model $\mathcal{S}_{K^\beef}(G^\beef , \DD^\beef)$.

By  \cite[Proposition 4.3.7]{mp:compactification}, the filtered vector bundle $(\bm{H}^\beef_{dR} , F^\bullet \bm{H}^\beef_{dR} )$  admits a canonical extension to  $\mathcal{S}_{K^\beef}(G^\beef , \DD^\beef,\Sigma^\beef)$.   
The alternating form $\psi^\beef$ and the sections $\bm{s}_{\alpha, dR}$  also extend (necessarily uniquely).

This allows us to define a $\mathcal{G}^\beef$-torsor
\[
\mathcal{J}_{K^\beef}(G^\beef , \DD^\beef,\Sigma^\beef) \map{a}\mathcal{S}_{K^\beef}(G^\beef , \DD^\beef,\Sigma^\beef)
\]
whose  functor of points assigns to a scheme $S\to \mathcal{S}_{K^\beef}(G^\beef , \DD^\beef,\Sigma^\beef)$ the set of all pairs $(f,f_0)$ of isomorphisms
\begin{equation}\label{torsor pair}
 f :  \bm{H}^\beef_{dR / S}  \iso  H^\beef_{\Z_{(p)}} \otimes \co_S ,\quad
 f_0: \Lie(\mathbb{G}_m)_{/S} \iso \co_S
\end{equation}
satisfying $f (   \bm{s}_{\alpha,dR}  ) = s_\alpha \otimes 1$ for all $\alpha$, and making the diagram
\[
\xymatrix{
{    \bm{H}^\beef_{dR / S}\otimes \bm{H}^\beef_{dR / S}      } \ar[rrr]^{\psi^\beef}  \ar[d]_{f\otimes f} &&& { \Lie(\mathbb{G}_m) }    \ar[d]^{f_0} \\
{   ( H^\beef_{\Z_{(p)}} \otimes \co_S ) \otimes (  H^\beef_{\Z_{(p)}} \otimes \co_S  ) } \ar[rrr]^{ \psi^\beef } &&& {  \co_S} 
}
\]
commute.

Define  smooth $\Z_{(p)}$-schemes  $\check{\mathcal{M}}^\beef$ and $\check{\mathcal{M}}^\Sg$ with  functors of points
\begin{align*}
\check{\mathcal{M}} (G^\beef,\DD^\beef) (S)  & = \{ \mbox{isotropic lines } z \subset V_{\Z_{(p)}}^\beef \otimes  \co_S \} \\
\check{\mathcal{M}} (G^\Sg,\DD^\Sg) (S)  & = \{ \mbox{Lagrangian subsheaves\ } F^0  \subset H_{\Z_{(p)}}^\beef \otimes  \co_S \}.
\end{align*}
These are integral models  of the  compact duals  $\check{M}(G^\beef,\DD^\beef)$ and  $\check{M}(G^\Sg,\DD^\Sg)$ of \S \ref{ss:hodge embedding},  and are related, using (\ref{special injection}), by a closed immersion
\begin{equation}\label{pre b dual}
\check{\mathcal{M}}  (G^\beef,\DD^\beef)   \to \check{\mathcal{M}} (G^\Sg,\DD^\Sg)
\end{equation}
sending the isotropic line $z\subset V_{\Z_{(p)}}^\beef$  to the Lagrangian 
$
zH_{\Z_{(p)}}^\beef \subset H_{\Z_{(p)}}^\beef.
$

We now have a diagram
\begin{equation}\label{integral local model}
\xymatrix{
 {  \mathcal{J}_{K^\beef}(G^\beef , \DD^\beef,\Sigma^\beef)}   \ar[d]_a \ar[r]^{ b }  &    { \check{\mathcal{M}}   (G^\beef,\DD^\beef)}  \\
{\mathcal{S}_{K^\beef}(G^\beef , \DD^\beef,\Sigma^\beef)}  
}
\end{equation}
in which $a$ is a $\mathcal{G}^\beef$-torsor and $b$ is $\mathcal{G}^\beef$-equivariant, extending the diagram  (\ref{general compact local model}) already constructed in the generic fiber.   To define the morphism $b$ we first define a morphism  
\[
 \mathcal{J}_{K^\beef}(G^\beef , \DD^\beef,\Sigma^\beef) \to  \check{\mathcal{M}} (G^\Sg,\DD^\Sg)
 \]
by sending an $S$-point $(f,f_0)$ to the Lagrangian subsheaf
\[
f( F^0 \bm{H}_{dR /S }  ) \subset  H_{\Z_{(p)}} \otimes \co_S.
\]
This morphism factors through (\ref{pre b dual}).   Indeed, as   (\ref{pre b dual}) is a closed immersion,  this is a formal consequence of the fact that we have such a factorization in the generic fiber, as can be checked using the analogous complex analytic construction.

With the diagram (\ref{integral local model}) in hand, the construction of the desired functor proceeds by simply imitating the construction (\ref{mixed bundles}) used in the generic fiber.
\end{proof}

%%%%%%%%%%%%%%%%%%%%%%%%%%%%%%%%%%%%%%%%

\subsection{Integral $q$-expansions}

%%%%%%%%%%%%%%%%%%%%%%%%%%%%%%%%%%%%%%%%

Continue with the assumptions of \S \ref{ss:integral toroidal}, and now fix a toroidal stratum representative 
\[
 ( \Phi , \sigma)  \in \mathrm{Strat}_K(G,\DD,\Sigma)
\] 
as in \S \ref{ss:qs}.  Thus $\Phi = ( P, \DD^\circ, h)$ with $P$ the stabilizer of an isotropic line $I\subset V$,  and $\sigma \in \Sigma_\Phi$ is a  top dimensional rational polyhedral cone.   Let 
\[
( \Phi^\beef , \sigma^\beef) \in \mathrm{Strat}_{K^\beef}(G^\beef,\DD^\beef,\Sigma^\beef)
\] 
be the image of $ ( \Phi , \sigma)$, in the sense of \S \ref{ss:cone functoriality}.

 The formal completions along the corresponding  strata 
 \begin{align}\label{integral cusp strata}
\mathcal{Z}^{ ( \Phi , \sigma) }_K(G,\DD,\Sigma) & \subset \mathcal{S}_K(G,\DD,\Sigma) \\ 
\mathcal{Z}^{ ( \Phi^\beef , \sigma^\beef) }_{K^\beef}(G^\beef , \DD^\beef , \Sigma^\beef)  & \subset \mathcal{S}_{K^\beef}(G^\beef , \DD^\beef , \Sigma^\beef), \nonumber
\end{align}
 are denoted 
 \begin{align*}
 \widehat{ \mathcal{S} }_K(G,\DD,\Sigma)  
 &= \mathcal{S}_K(G,\DD,\Sigma)^\wedge _{  \mathcal{Z}^{(\Phi,\sigma)}_K (G,\DD,\Sigma)   } \\
 \widehat{ \mathcal{S} }_{K^\beef}(G^\beef , \DD^\beef , \Sigma^\beef)  
 & =  \mathcal{S}_{K^\beef}(G^\beef , \DD^\beef , \Sigma^\beef)^\wedge   _{  \mathcal{Z}^{ (\Phi^\beef,\sigma^\beef)}_{K^\beef} (G^\beef,\DD^\beef,\Sigma^\beef)   }   .
\end{align*}
These are formal algebraic spaces over $\Z_{(p)}$ related by a finite morphism 
\begin{equation}\label{integral boundary map}
\widehat{ \mathcal{S} }_K(G,\DD,\Sigma)       \to \widehat{ \mathcal{S} }_{K^\beef}(G^\beef , \DD^\beef , \Sigma^\beef)  .
\end{equation}

Fix a $\Z_{(p)}$-module generator $\ell \in I \cap V_{\Z_{(p)}}$.  Recall from the discussion leading to (\ref{basic q}) that such an $\ell$ determines an isomorphism 
\[
 [\bm{\ell}^{\otimes k} , -   ]    :   \bm{\omega}^{\otimes k} \to \co_{  \widehat{\Sh}_K   ( G,  \DD ,\Sigma )  } 
\]
of line bundles on $  \widehat{\Sh}_K   ( G,  \DD ,\Sigma ) $.

\begin{proposition}\label{prop:integral canonical sections}
The above isomorphism   extends uniquely to an isomorphism 
\[
 [\bm{\ell}^{\otimes k} , -  ]    :   \bm{\omega}^{\otimes k} \to \co_{ \widehat{ \mathcal{S} }_K(G,\DD,\Sigma)    } 
\]
of line bundles on the integral model $ \widehat{ \mathcal{S} }_K(G,\DD,\Sigma) $.
\end{proposition}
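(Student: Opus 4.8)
The plan is to reduce the assertion, via the boundary uniformization of Proposition~\ref{prop:strata descp}, to a statement about the line bundle of weight‑one modular forms on the integral model of the mixed Shimura variety sitting at the boundary, where it becomes the pullback of a bundle from a $0$‑dimensional base and can be trivialized explicitly. Uniqueness is automatic: the formal scheme $\widehat{\mathcal{S}}_K(G,\DD,\Sigma)$ is the completion of the normal, $\Z_{(p)}$‑flat scheme $\mathcal{S}_K(G,\DD,\Sigma)$ along the $\Z_{(p)}$‑flat stratum $\mathcal{Z}^\Upsilon_K(G,\DD,\Sigma)$, hence is itself normal and $\Z_{(p)}$‑flat, so its generic fibre $\widehat{\Sh}_K(G,\DD,\Sigma)$ is schematically dense; thus a morphism of line bundles is determined by its restriction to the generic fibre, and a morphism of line bundles that is an isomorphism over the generic fibre is an isomorphism if and only if it is surjective on each geometric fibre. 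So it suffices to produce an extension of $[\bm{\ell}^{\otimes k},\,\cdot\,]$ and to check its surjectivity in positive characteristic.

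Next, apply Proposition~\ref{prop:strata descp}(2) both to $(G,\DD)$ and to the auxiliary datum $(G^\beef,\DD^\beef)$: writing $\Phi^\beef$ for the cusp label representative of $(G^\beef,\DD^\beef)$ induced by $\Phi$ as in \S\ref{ss:cone functoriality} and $\sigma^\beef$ for the associated cone, one obtains identifications
\[
\widehat{\mathcal{S}}_K(G,\DD,\Sigma)\iso\widehat{\mathcal{S}}_{K_\Phi}(Q_\Phi,\DD_\Phi,\sigma),\qquad
\widehat{\mathcal{S}}_{K^\beef}(G^\beef,\DD^\beef,\Sigma^\beef)\iso\widehat{\mathcal{S}}_{K^\beef_{\Phi^\beef}}(Q_{\Phi^\beef},\DD_{\Phi^\beef},\sigma^\beef),
\]
compatible with \eqref{boundary iso} in the generic fibre, with the finite morphism \eqref{integral boundary map}, and with $\bm{\omega}=\bm{\omega}^\beef|_{\mathcal{S}_K(G,\DD,\Sigma)}$. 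Since the smallest admissible parabolic of $G^\beef$ containing $P$ is the stabilizer of $I\subset V^\beef$, the cusp label $\Phi^\beef$ is again of isotropic‑line type, with underlying line $I\subset V\subset V^\beef$, and Proposition~\ref{prop:strata descp}(1) realizes $\mathcal{S}_{K^\beef_{\Phi^\beef}}(Q_{\Phi^\beef},\DD_{\Phi^\beef},\sigma^\beef)$ as a $\mathcal{T}_{\Phi^\beef}(\sigma^\beef)$‑bundle over the $0$‑dimensional integral model $\mathcal{S}_{\nu_{\Phi^\beef}(K^\beef_{\Phi^\beef})}(\mathbb{G}_m,\mathcal{H}_0)$, with structure map $\pi$. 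The crux is then that, over the formal completion of $\mathcal{S}_{K^\beef_{\Phi^\beef}}(Q_{\Phi^\beef},\DD_{\Phi^\beef},\sigma^\beef)$ along its closed stratum, the diagram \eqref{integral local model} restricts to the integral analogue of the mixed standard torsor of Proposition~\ref{prop:canonical bundle machine}, namely a $\mathcal{G}^\beef$‑torsor mapping $Q_{\Phi^\beef}$‑equivariantly to an integral model $\check{\mathcal{M}}(Q_{\Phi^\beef},\DD_{\Phi^\beef})\subset\check{\mathcal{M}}(G^\beef,\DD^\beef)$ of the mixed compact dual, whose formation commutes with passage to the generic fibre. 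Establishing this requires the construction of the integral toroidal compactification of \cite{mp:compactification} (and the semi‑abelian degeneration of the Kuga–Satake abelian scheme along the boundary); I expect it to be the main obstacle, since only the Hodge filtration $F^\bullet$, and not the boundary weight structure, is made explicit in Proposition~\ref{prop:total integral vector bundles}.

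Granting this, the construction mirrors the proof of Proposition~\ref{prop:canonical sections}. Applying the functor attached to \eqref{integral local model} to the $Q_{\Phi^\beef}$‑representation $I\subset V^\beef$ (on which $Q_{\Phi^\beef}$ acts through $\nu_{\Phi^\beef}$, by \eqref{line diagram}) produces, over the formal completion, a sub‑line‑bundle $\bm{I}^\beef_{dR}\subset\bm{V}^\beef_{dR}$ which, exactly as in \S\ref{ss:simple} and Remark~\ref{rem:tate bundles}, is the $\pi$‑pullback of $I\otimes\Lie(\mathbb{G}_m)$; the bilinear form $\bm{Q}^\beef$ and the inclusion $\bm{\omega}^\beef=F^1\bm{V}^\beef_{dR}$ then give a pairing $\bm{I}^\beef_{dR}\otimes\bm{\omega}^\beef\to\co$ recovering \eqref{omega triv} over the generic fibre. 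On $\check{\mathcal{M}}(Q_{\Phi^\beef},\DD_{\Phi^\beef})$ the tautological isotropic line $F^1\check{V}^\beef$ is, by the defining condition of its functor of points, everywhere transverse to $I^\perp$, so the bilinear form induces a $Q_{\Phi^\beef}$‑equivariant isomorphism $\check{I}^\beef\otimes\check{\omega}^\beef\iso\co$; pulling back and descending along the torsor as in Proposition~\ref{prop:canonical sections} shows the pairing is an isomorphism in every characteristic.

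Finally, fix the $\Z_{(p)}$‑generator $\ell\in I\cap V_{\Z_{(p)}}$. On the $0$‑dimensional base the line bundle $\Lie(\mathbb{G}_m)$ is canonically trivialized by the invariant derivation $q\,d/dq$, so $\ell\otimes(q\,d/dq)$ trivializes $I\otimes\Lie(\mathbb{G}_m)$; its $\pi$‑pullback, paired against $\bm{\omega}^\beef$, yields a trivialization $\bm{\omega}^\beef\iso\co$ over $\widehat{\mathcal{S}}_{K^\beef_{\Phi^\beef}}(Q_{\Phi^\beef},\DD_{\Phi^\beef},\sigma^\beef)$ whose restriction to the generic fibre is $[\bm{\ell},\,\cdot\,]$. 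Pulling this back along \eqref{integral boundary map} and taking the $k$‑th tensor power gives the required extension of $[\bm{\ell}^{\otimes k},\,\cdot\,]$ to $\widehat{\mathcal{S}}_K(G,\DD,\Sigma)$, which is an isomorphism by the surjectivity established above; uniqueness was already noted. The one genuinely non‑formal input, and hence the point to nail down carefully, is the identification in the third paragraph of \eqref{integral local model} with the integral mixed standard torsor along the boundary stratum.
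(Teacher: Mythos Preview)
Your proposal is correct and follows essentially the same route as the paper: the paper also reduces to constructing the trivialization on $\widehat{\mathcal{S}}_{K^\beef}(G^\beef,\DD^\beef,\Sigma^\beef)$, and the step you flag as the ``main obstacle'' --- producing the integral mixed standard torsor along the boundary --- is exactly the content of the paper's key Lemma, which establishes a canonical reduction of the $\mathcal{G}^\beef$-torsor in \eqref{integral local model} to a $\mathcal{Q}^\beef_\Phi$-torsor $\mathcal{J}^\beef_\Phi$ mapping to $\check{\mathcal{M}}^\beef_\Phi$, using the weight filtration on $\bm{H}^\beef_{dR}$ coming from the canonical degenerating abelian scheme over the formal completion. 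The only point you leave implicit that the paper makes explicit is that maximality of $V_{\Z_p}$ forces $I_{\Z_{(p)}}=I\cap V_{\Z_{(p)}}=I\cap V^\beef_{\Z_{(p)}}$ to be a $\Z_{(p)}$-direct summand of $V^\beef_{\Z_{(p)}}$, so that $\ell$ is also a generator there and (together with almost self-duality of $V^\beef_{\Z_p}$) the pairing $\check{I}\otimes\check{\omega}^\beef\to\co$ is an isomorphism integrally.
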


\begin{proof}
The maximality of  $V_{\Z_p}$ implies that $V_{\Z_p} \subset V_{\Z_p}^\beef$  is a $\Z_p$-module direct summand.  In particular, 
\[
I_{\Z_{(p)} }= I \cap V_{\Z_{(p)} } = I \cap V^\beef_{\Z_{(p)} } \subset V^\beef_{\Z_{(p)}}
\]
is a $\Z_{(p)}$-module direct summand generated by $\ell$.  Because  $\bm{\omega}$ is defined as the pullback of $\bm{\omega}^\beef$, and because the uniqueness part of the claim is obvious, it suffices to construct an isomorphism
\begin{equation}\label{integral magic section}
[\bm{\ell} ,  -   ] : \bm{\omega}^\beef \to \co_{ \widehat{ \mathcal{S} }_{K^\beef}(G^\beef,\DD^\beef,\Sigma^\beef)    } 
\end{equation}
extending the one in the generic fiber,  and then pull back along (\ref{integral boundary map}).

We return to the notation of the proof of Proposition \ref{prop:total integral vector bundles}.
 Let $\mathcal{P}^\beef \subset \mathcal{G}^\beef$ be the stabilizer of the isotropic line  $I_{\Z_{(p)}} \subset V^\beef_{\Z_{(p)}}$,  define a
  $\mathcal{P}^\beef$-stable weight filtration
\[
\mathrm{wt}_{-3}H^\beef_{\Z_{(p)}}=0,\quad \mathrm{wt}_{-2}H^\beef_{\Z_{(p)}} = \mathrm{wt}_{-1}H^\beef_{\Z_{(p)}}= I_{\Z_{(p)}} H^\beef_{\Z_{(p)}},
\quad \mathrm{wt}_{0}H^\beef_{\Z_{(p)}} = H^\beef_{\Z_{(p)}},
\]
 and set 
\[
\mathcal{Q}^\beef_\Phi = \mathrm{ker} \big(   \mathcal{P}^\beef \to \GL (   \mathrm{gr}_0(H^\beef_{\Z_{(p)} }) )    \big).
\]
Compare with the discussion of \S \ref{ss:orthogonal clr lines}.

The   $\Z_{(p)}$-schemes  of (\ref{pre b dual})  sit in a commutative diagram
\[
\xymatrix{
{  \check{\mathcal{M}}^\beef_\Phi  }  \ar[r]  \ar[d]  &  {   \check{\mathcal{M}}^\Sg_\Phi  }  \ar[d]  \\
{  \check{\mathcal{M}}(G^\beef,\DD^\beef)  } \ar[r]   &   {  \check{\mathcal{M}}(G^\Sg , \DD^\Sg)  } 
}
\]
in which the horizontal arrows are closed immersions, and the vertical arrows are open immersions. 
 The $\Z_{(p)}$-schemes  in the top row are defined by their  functors of points, which are
\[
\check{\mathcal{M}}_\Phi^\beef (S) = 
\left\{ \begin{array}{c} \mbox{isotropic lines }  z \subset V^\beef_{\Z_{(p)}} \otimes  \co_S \mbox{ such that}  \\
V^\beef_{\Z_{(p)}} \to V^\beef_{\Z_{(p)}}/I_{\Z_{(p)}}^\perp \\ \mbox{ identifies }  z \iso (V^\beef_{\Z_{(p)}}/I_{\Z_{(p)}}^\perp) \otimes \co_S \end{array}  \right\}
\]
and
\[
\check{\mathcal{M}}_\Phi^\Sg (S) = 
\left\{ \begin{array}{c} \mbox{Lagrangian subsheaves }  F^0 \subset H^\beef_{\Z_{(p)}} \otimes  \co_S \mbox{ such that}  \\
H^\beef_{\Z_{(p)}} \to  \mathrm{gr}_0 ( H^\beef_{\Z_{(p)} }  )   \\ \mbox{ identifies }  F^0 \iso \mathrm{gr}_0 ( H^\beef_{\Z_{(p)} }  )  \otimes \co_S \end{array}  \right\}.
\]

Passing to formal completions, the diagram (\ref{integral  local model})  determines a diagram 
\begin{equation}\label{integral boundary torsor}
\xymatrix{
 {  \widehat{ \mathcal{J} } _{K^\beef} (G^\beef,\DD^\beef, \Sigma^\beef) }   \ar[d]_a \ar[rr]^{ b }  &  &    { \check{\mathcal{M}} (G^\beef,\DD^\beef) }  \\
{ \widehat{\mathcal{S}}_{K^\beef} (G^\beef , \DD^\beef , \Sigma^\beef) }  
}
\end{equation}
of formal algebraic spaces over $\Z_{(p)}$, in which $a$ is a $\mathcal{G}^\beef$-torsor and $b$ is $\mathcal{G}^\beef$-equivariant, and $\widehat{ \mathcal{J} } _{K^\beef} (G^\beef,\DD^\beef, \Sigma^\beef) $ is the formal completion of $\mathcal{J}_{K^\beef}(G^\beef,\DD^\beef, \Sigma^\beef)$ along the fiber over the stratum  (\ref{integral cusp strata}).

\begin{lemma}
The $\mathcal{G}^\beef$-torsor in  (\ref{integral boundary torsor}) admits a canonical reduction of structure to a $\mathcal{Q}^\beef_\Phi$-torsor $\mathcal{J}_\Phi ^\beef$, sitting in a diagram
\[
\xymatrix{
 {  \mathcal{J}_\Phi ^\beef}   \ar[d]_a \ar[r]^{ b }  &    { \check{\mathcal{M}}^\beef_\Phi    }  \\
{ \widehat{\mathcal{S}}_{K^\beef} (G^\beef , \DD^\beef , \Sigma^\beef)   }.
}
\]
\end{lemma}

\begin{proof}
The essential point is that the filtered vector bundle $(\bm{H}_{dR}^\beef, F^\bullet\bm{H}_{dR}^\beef)$ on  $\mathcal{S}_{K^\beef} (G^\beef , \DD^\beef , \Sigma^\beef)$ used in the construction of the $\mathcal{G}^\beef$-torsor 
\[
\mathcal{J}_{K^\beef} (G^\beef , \DD^\beef , \Sigma^\beef)  \to \mathcal{S}_{K^\beef} (G^\beef , \DD^\beef , \Sigma^\beef) 
\] 
acquires extra structure after restriction to   $\widehat{\mathcal{S}}_{K^\beef} (G^\beef , \DD^\beef , \Sigma^\beef) $.  Namely, it acquires a weight filtration 
\[
\mathrm{wt}_{-3}\bm{H}_{dR}^\beef=0,\quad \mathrm{wt}_{-2}\bm{H}_{dR}^\beef = \mathrm{wt}_{-1}\bm{H}_{dR}^\beef, \quad \mathrm{wt}_{0}\bm{H}_{dR}^\beef = \bm{H}_{dR}^\beef,
\]
along with distinguished isomorphisms
\begin{align*}
\mathrm{gr}_{-2}  ( \bm{H}_{dR}^\beef )   & \iso \mathrm{gr}_{-2} ( H^\beef_{\Z_{(p)}}  ) \otimes  \Lie (\mathbb{G}_m)  \\
\mathrm{gr}_{0}  ( \bm{H}_{dR}^\beef  )  & \iso \mathrm{gr}_0  (  H^\beef_{\Z_{(p)}} )\otimes \co_{  \widehat{\mathcal{S}}_{K^\beef}(G^\beef,\DD^\beef,\Sigma^\beef)}.
\end{align*}
This follows from the discussion of \cite[(4.3.1)]{mp:compactification}. The essential point is that over the formal completion $\widehat{\mathcal{S}}_{K^\beef} (G^\beef , \DD^\beef , \Sigma^\beef) $ there is a canonical degenerating abelian scheme, and the desired extension of $\bm{H}_{dR}^\beef$ is its de Rham realization. 
The extension of the weight and Hodge filtrations is also a consequence of this observation; see~\cite[\S 1]{mp:compactification}, and in particular \cite[Proposition 1.3.5]{mp:compactification}.

The desired reduction of structure 
$
\mathcal{J}_\Phi ^\beef   \subset  \widehat{ \mathcal{J} } _{K^\beef} (G^\beef,\DD^\beef, \Sigma^\beef)
$
is now defined as the closed formal algebraic subspace parametrizing pairs of isomorphisms $(f,f_0)$ as in (\ref{torsor pair}) that respect this additional structure.

Moreover, after restricting $\bm{H}_{dR}^\beef$ to  $ \widehat{\mathcal{S}}_{K^\beef} (G^\beef , \DD^\beef , \Sigma^\beef) $, the surjection $\bm{H}_{dR}^\beef \to \mathrm{gr}_0 \bm{H}_{dR}^\beef$  identifies $F^0 \bm{H}_{dR}^\beef \iso \mathrm{gr}_0 \bm{H}_{dR}^\beef$.  Indeed, in the language of~\cite[\S 1]{mp:compactification}, this just amounts to the observation that the de Rham realization of a $1$-motive with trivial abelian part has trivial weight and Hodge filtrations.

As the composition 
\[
\mathcal{J}_\Phi ^\beef    \subset \widehat{ \mathcal{J} } _{K^\beef} (G^\beef,\DD^\beef, \Sigma^\beef)  
 \map{b} \check{\mathcal{M}} (G^\beef , \DD^\beef) \subset \check{\mathcal{M}} (G^\Sg ,\DD^\Sg)
\]
sends $(f,f_0) \mapsto f( F^0 \bm{H}_{dR}^\beef)$, it takes values in the open subscheme 
\[
\check{\mathcal{M}}_\Phi^\Sg \subset \check{\mathcal{M}} ( G^\Sg , \DD^\Sg ).
\]  
It therefore take values in the closed subscheme $\check{\mathcal{M}}_\Phi^\beef \subset \check{\mathcal{M}}_\Phi^\Sg$, as this can be checked in the generic fiber, where it follows from the analogous complex analytic constructions.
\end{proof}

Returning to the main proof,  let $\check{I} \subset \check{V}^\beef$ be the constant $\mathcal{Q}_\Phi^\beef$-equivariant line bundles on  $\check{\mathcal{M}}^\beef_\Phi$ determined by the representations $I_{\Z_{(p)}} \subset V^\beef_{\Z_{(p)}}$, 
and let $\check{\omega}^\beef\subset \check{V}^\beef$ be the tautological line bundle.
The self-duality of $V^\beef_{\Z_{(p)}}$ guarantees that the bilinear pairing on $\check{V}^\beef$ restricts to an isomorphism
\[
[  -  ,  -   ] : \check{I}\otimes \check{\omega}^\beef\to \co_{\check{\mathcal{M}}^\beef_\Phi}.
\]
Pulling back these line bundles to $\mathcal{J}_\Phi^\beef$ and taking the quotient by $\mathcal{Q}_\Phi^\beef$, we obtain an isomorphism
\[
[   -   , - ] : \bm{I}_{dR} \otimes \bm{\omega}^\beef \to \co_{  \widehat{ \mathcal{S} }_{K^\beef} (G^\beef, \DD^\beef , \Sigma^\beef ) }
\]
of line bundles on $  \widehat{ \mathcal{S} }_{K^\beef} (G^\beef, \DD^\beef , \Sigma^\beef ) $.

On the other hand, the action of $\mathcal{Q}^\beef_\Phi$ on $I_{\Z_{(p)}}$ is through the character $\nu^\beef_\Phi$, which agrees with the restriction of $\nu^\beef:\mathcal{G}^\beef \to \mathbb{G}_m$ to $\mathcal{Q}_\Phi^\beef$.  The canonical morphism
\[
\mathcal{J}^\beef_\Phi   \to \widehat{\mathcal{J}}_{K^\beef}(G^\beef , \DD^\beef ,\Sigma^\beef)   \map{(f,f_0) \mapsto f_0 }
  \underline{\mathrm{Iso}} \big( \Lie(\mathbb{G}_m) , \co_{ \widehat{ \mathcal{S} }_{K^\beef} (G^\beef, \DD^\beef , \Sigma^\beef )  }  \big)
\]
of formal algebraic spaces over $ \widehat{ \mathcal{S} }_{K^\beef} (G^\beef, \DD^\beef , \Sigma^\beef )$ identifies
$\mathrm{ker}( \nu^\beef_\Phi )   \backslash  \mathcal{J}^\beef_\Phi$ with the trivial $\mathbb{G}_m$-torsor 
\[
 \underline{\mathrm{Iso}} \big( \Lie(\mathbb{G}_m) , \co_{ \widehat{ \mathcal{S} }_{K^\beef} (G^\beef, \DD^\beef , \Sigma^\beef )  }  \big)
 \iso 
  \underline{\mathrm{Aut}} \big(  \co_{ \widehat{ \mathcal{S} }_{K^\beef} (G^\beef, \DD^\beef , \Sigma^\beef )  }  \big) 
\]
over $ \widehat{ \mathcal{S} }_{K^\beef} (G^\beef, \DD^\beef , \Sigma^\beef )$.  As the action of $\mathcal{G}^\diamond$ on $I_{\Z_{(p)}}$ is via  $\nu_\Phi^\diamond$, this trivialization fixes an isomorphism
\begin{align*}
\bm{I}_{dR}    &  = \mathcal{Q}_\Phi^\beef \backslash \big( I_{\Z_{(p)}} \otimes \co_{  \mathcal{J}_\Phi^\beef  }  \big)   \\
&  = \mathbb{G}_m \backslash \big( I_{\Z_{(p)}} \otimes \co_{  \mathrm{ker}(\nu_\Phi^\beef) \backslash  \mathcal{J}_\Phi^\beef  }  \big)   \\
&  \iso   I_{\Z_{(p)}} \otimes \co_{   \widehat{ \mathcal{S} }_{K^\beef} (G^\beef, \DD^\beef , \Sigma^\beef )  }.
\end{align*}

The generator $\ell \in I_{\Z_{(p)}}$ now determines a trivializing section $\bm{\ell}=\ell\otimes 1$ of $\bm{I}_{dR}$, defining the desired  isomorphism (\ref{integral magic section}).  This completes the proof of Proposition \ref{prop:integral canonical sections}.
\end{proof}

Let   $I_*\subset V$ and 
 \[
\xymatrix{
(Q_\Phi , \DD_\Phi)    \ar[r]_{ \nu_\Phi } & ( \mathbb{G}_m , \mathcal{H}_0)  \ar@/_1pc/[l]_{\spl}.
}
\] 
 be as in the discussion preceding Proposition \ref{prop:torsor splitting}.
 Choose a compact open subgroup $K_0 \subset \A_f^\times$ small enough that $s(K_0) \subset K_\Phi$, and assume that $K_0$ factors as 
 \[
 K_0 = \Z_p^\times \cdot K_0^p.
 \]
   Let $F/\Q$ be the abelian extension of $\Q$ determined by 
\[
 \mathrm{rec} : \Q^\times_{>0} \backslash \A_f^\times/K_0  \iso \Gal(F  / \Q).
\]
Fix a prime $\mathfrak{p}\subset \co_F$ above $p$, and let $R\subset F$ be the localization of $\co_F$ at $\mathfrak{p}$.  Note that the above assumption on $K$ implies that $p$ is unramified in $F$.

\begin{proposition}
\label{prop:formal cusp nbhd}
If we set
\[
\widehat{\mathcal{T}}_\Phi(\sigma) =  \Spf \Big(  \Z_{(p)}  [[ q_\alpha]]_{  \substack{  \alpha \in \Gamma_\Phi^\vee(1)  \\ \langle \alpha, \sigma \rangle \ge 0   }  }   \Big),
\]
there is a unique  morphism 
\[
 \bigsqcup_{ a\in  \Q^\times_{>0} \backslash \A_f^\times / K_0  }   \widehat{\mathcal{T}}_ { \Phi} (\sigma)_{/R}
\to   \widehat{ \mathcal{S} }_K(G,\DD,\Sigma)_{/R}
\]
of formal algebraic spaces over  $R$ whose base change to $\C$ agrees with the morphism of Proposition \ref{prop:torsor splitting}.  Moreover, if $t$ is any point of the source and $s$ is its image in $\widehat{ \mathcal{S} }_K(G,\DD,\Sigma)_{/R}$, the induced map on \'etale local rings $\co^{et}_s\to \co^{et}_t$ is faithfully flat.
\end{proposition}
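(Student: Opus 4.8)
The plan is to deduce this integral statement from the corresponding complex-analytic statement of Proposition \ref{prop:torsor splitting} together with the integral structures produced in \S \ref{ss:integral toroidal}. First I would explain where the morphism comes from. By Proposition \ref{prop:strata descp}(2) there is a canonical isomorphism $\widehat{\mathcal{S}}_{K_\Phi}(Q_\Phi,\DD_\Phi,\sigma) \iso \widehat{\mathcal{S}}_K(G,\DD,\Sigma)$ of formal $\Z_{(p)}$-schemes, and by part (1) of that proposition $\mathcal{S}_{K_\Phi}(Q_\Phi,\DD_\Phi)$ is a $\mathcal{T}_\Phi$-torsor over $\mathcal{S}_{\nu_\Phi(K_\Phi)}(\mathbb{G}_m,\mathcal{H}_0)$. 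The base of this torsor is the normalization of $\Spec(\Z_{(p)})$ in $\Sh_{\nu_\Phi(K_\Phi)}(\mathbb{G}_m,\mathcal{H}_0) = \Spec$ of an abelian extension of $\Q$, so choosing the prime $\mathfrak{p} \subset \co_F$ above $p$ and setting $R = (\co_F)_\mathfrak{p}$, the section $s : \mathbb{G}_m \to Q_\Phi$ and the chosen $K_0$ split this torsor after base change to $R$, exactly as in the proof of Proposition \ref{prop:torsor splitting}, into a disjoint union over $a \in \Q^\times_{>0}\backslash\A_f^\times/K_0$ of copies of $\mathcal{T}_{\Phi/R}$; the key input is that $\mathcal{S}_{\nu_\Phi(K_\Phi)}(\mathbb{G}_m,\mathcal{H}_0)_{/R}$ acquires, after this base change, the distinguished $R$-points coming from the components, which is the integral refinement of the reciprocity computation (\ref{simple reciprocity}). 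Passing to the partial compactification determined by $\sigma$ and completing along the closed stratum turns $\mathcal{T}_{\Phi/R}$ into $\widehat{\mathcal{T}}_\Phi(\sigma)_{/R}$, yielding the desired morphism; its base change to $\C$ agrees with the morphism of Proposition \ref{prop:torsor splitting} by construction, and uniqueness follows because the target is flat over $\Z_{(p)}$ (hence over $R$), so a morphism of formal $R$-schemes into it is determined by its generic fiber.

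The second assertion — faithful flatness of $\co_s \to \co_t$ for $t$ a point of the source with image $s$ — I would prove by combining two observations. The map $\widehat{\mathcal{T}}_\Phi(\sigma)_{/R} \to \widehat{\mathcal{S}}_{K_\Phi}(Q_\Phi,\DD_\Phi,\sigma)_{/R}$ is, on each component, an open piece of the $\mathcal{T}_\Phi$-torsor structure extended to partial compactifications: étale-locally on the base it looks like the completion of $\mathcal{T}_\Phi(\sigma) \times (\text{base})$ along the closed stratum crossed with a point, so each local ring map is flat (in fact formally smooth modulo the toric boundary, but flatness is all we need). Composing with the isomorphism of Proposition \ref{prop:strata descp}(2) gives flatness of $\co_s \to \co_t$. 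Faithfulness then follows because a flat local homomorphism of Noetherian local rings is automatically faithfully flat once one checks the closed point goes to the closed point, which it does here since $t$ lies over the torus-fixed point $q_\alpha = 0$ of its fiber and this maps into the closed stratum $\mathcal{Z}^\Upsilon_K(G,\DD,\Sigma)$ along which we have completed.

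The main obstacle I anticipate is not the flatness — that is formal once the torsor description of Proposition \ref{prop:strata descp} is in hand — but rather pinning down the correct integral model of the base $\Sh_{\nu_\Phi(K_\Phi)}(\mathbb{G}_m,\mathcal{H}_0)$ and checking that the splitting induced by $s$ and $K_0$ is defined over $R$ (not merely over $\C$), i.e.\ that the distinguished sections of the torsor land in the $R$-points. This amounts to verifying that the reciprocity law used in Proposition \ref{prop:torsor splitting} to identify the components with $\Q^\times_{>0}\backslash\A_f^\times/K_0$ is compatible with the canonical integral model of the zero-dimensional Shimura variety $\Sh_{\nu_\Phi(K_\Phi)}(\mathbb{G}_m,\mathcal{H}_0) = \Spec(F')$ for an appropriate abelian $F'$, and that $\mathfrak{p}$ was chosen so that $R$ contains the relevant roots of unity and torsor trivialization data. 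Once that bookkeeping is done, everything else is a matter of base change and formal completion, using only Proposition \ref{prop:strata descp} and the flatness statements of \cite{mp:compactification} already cited in \S \ref{ss:integral toroidal}.
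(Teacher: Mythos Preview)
Your approach matches the paper's: reduce to Proposition \ref{prop:torsor splitting} using Proposition \ref{prop:strata descp} as the integral input, then pass to the partial compactification and complete. The obstacle you flag in your final paragraph is precisely what the paper resolves with one additional observation you are missing: $p$ is unramified in $F$, so the integral model of the zero-dimensional base splits as
\[
\mathcal{S}_{\nu_\Phi(K_\Phi)}(\mathbb{G}_m,\mathcal{H}_0)_{/R} \iso \bigsqcup_{a\in \Q^\times_{>0}\backslash \A_f^\times / K_0} \Spec(R),
\]
after which the trivialization of the $\mathcal{T}_\Phi$-torsor over $R$ is immediate and no further ``bookkeeping'' about roots of unity or reciprocity is needed. (The unramifiedness holds because $K_0$ may be chosen with $p$-component $\Z_p^\times$, as the section $s$ lands in $K_{\Phi,p}$ at $p$.) Your treatment of faithful flatness is actually more detailed than the paper's, which does not address that claim separately.
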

 
\begin{proof}
The uniqueness of such a morphism is clear. We have to show existence. The proof of this proceeds just as that of Proposition~\ref{prop:torsor splitting}, except that it uses Proposition~\ref{prop:strata descp} as input. The only additional observation required is that we have an isomorphism
\begin{equation}\label{eqn:gm integral points}
\bigsqcup_{a\in\Q^\times_{>0}\backslash \A_f^\times/K_0} \Spec(R)
\iso
\mathcal{S}_{K_0}(\mathbb{G}_m,\mathcal{H}_0)_{/R}
\end{equation}
of $R$-schemes, which realizes \eqref{eqn:gm shimura points} on $\C$-points. 
Here $\mathcal{S}_{K_0}(\mathbb{G}_m,\mathcal{H}_0)$ is defined as the normalization of $\Spec(\Z_{(p)})$ in $\Sh_{K_0}(\mathbb{G}_m,\mathcal{H}_0)$.

To see this, note that the defining property of canonical models provides an isomorphism
\[
\Spec(F) \iso  \Sh_{K_0}(\mathbb{G}_m,\mathcal{H}_0)
\] 
of $\Q$-schemes, and hence an isomorphism  $F$-schemes
\[
\bigsqcup_{a\in \Q^\times_{>0}\backslash \A_f^\times/K_0}\Spec(F) 
\iso \Sh_{K_0}(\mathbb{G}_m,\mathcal{H}_0)_{/F}.
\]
Using the fact that   $p$ is unramified in $F$, 
one can see that this isomorphism extends to  \eqref{eqn:gm integral points}.
\end{proof}

Suppose $\psi$ is a section of the line bundle $\bm{\omega}^{\otimes k}$ on $\Sh_K(G,\DD)_{/F}$.  It follows from  Proposition \ref{prop:q principle} that the $q$-expansion  (\ref{basic q}) of $\psi$ has coefficients in $F$ for every $a\in \A_f^\times$.     If we view $\psi$ as a rational section  on $\mathcal{S}_K(G,\DD,\Sigma)_{/R}$,  the following result gives a criterion for testing flatness of its divisor.

\begin{corollary}\label{cor:flatness by q}
Assume that the special fiber of $\mathcal{S}_K(G,\DD)_{/R}$ is geometrically normal, and for every  $a\in \A_f^\times$  the $q$-expansion (\ref{basic q})  satisfies
\[
\mathrm{FJ}^{(a)} ( \psi )   \in    R [[q_\alpha]]_{  \substack{    \alpha \in \Gamma_\Phi^\vee(1)     \\      \langle \alpha, \sigma \rangle \ge 0     }  }.
\]
 If this $q$-expansion is nonzero modulo $\mathfrak{p}$ for all $a$,  then  $\mathrm{div}(\psi)$  is $R$-flat.  
\end{corollary}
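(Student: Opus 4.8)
The plan is to reduce the flatness of $\mathrm{div}(\Psi)$ to a statement about formal $q$-expansions along the $0$-dimensional boundary stratum, and then to an elementary observation about power series over the discrete valuation ring $R$. First I would note that $\mathrm{div}(\Psi)$ is $R$-flat precisely when it has no irreducible component supported in the special fiber of $\mathcal{S}_K(G,\DD,\Sigma)_{/R}$; since the boundary is a relative Cartier divisor, the generic point of every irreducible component $Z$ of the special fiber lies in the normal open locus $\mathcal{S}_K(G,\DD)_{/R}$, so the order $\mathrm{ord}_Z(\Psi)\in\Z$ is well defined, and it suffices to show $\mathrm{ord}_Z(\Psi)=0$ for every such $Z$. (Horizontal components are automatically $R$-flat.)

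Next I would pass to the formal completion $\widehat{\mathcal{S}}_K(G,\DD,\Sigma)_{/R}$ along $\mathcal{Z}^\Upsilon_K(G,\DD,\Sigma)$. By Proposition \ref{prop:strata descp} and Proposition \ref{prop:formal cusp nbhd} there is a faithfully flat morphism $\bigsqcup_a \widehat{\mathcal{T}}_\Phi(\sigma)_{/R}\to\widehat{\mathcal{S}}_K(G,\DD,\Sigma)_{/R}$, and by Proposition \ref{prop:integral canonical sections} the generator $\ell\in I\cap V_{\Z_{(p)}}$ trivializes $\bm{\omega}^{\otimes k}$ over the target via $[\bm{\ell}^{\otimes k},\cdot\,]$, so that the pullback of $\Psi$ to the $a$-th component becomes exactly the $q$-expansion $\mathrm{FJ}^{(a)}(\Psi)$ of (\ref{basic q}), and the pullback of $\mathrm{div}(\Psi)$ is $\bigsqcup_a\mathrm{div}(\mathrm{FJ}^{(a)}(\Psi))$. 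Because $\sigma$ is top-dimensional and $\Sigma$ is smooth with respect to $\Gamma_\Phi(-1)$, the ring $R[[q_\alpha]]_{\langle\alpha,\sigma\rangle\ge0}$ is a formal power series ring over $R$ whose special fiber is a domain, so its only vertical prime divisor is $\{\mathfrak{p}=0\}$; a power series in it that is nonzero modulo $\mathfrak{p}$ is not divisible by a uniformizer of $R$, hence has divisor with no vertical component. Thus the hypotheses give that $\mathrm{div}(\Psi)$, pulled back along the faithfully flat map above, has no vertical component, and since $R$-flatness of a Cartier divisor descends along faithfully flat morphisms, $\mathrm{div}(\Psi)$ has no vertical component over $\widehat{\mathcal{S}}_K(G,\DD,\Sigma)_{/R}$.

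To conclude $\mathrm{ord}_Z(\Psi)=0$ for every irreducible component $Z$ of the special fiber, I would use that $Z$ meets $\mathcal{Z}^\Upsilon_K(G,\DD,\Sigma)$ — concretely, that the $R$-points $\mathrm{cusp}^{(a)}_{K_{\Phi 0}}$ arising from the points $q_\alpha=0$ of $\widehat{\mathcal{T}}_\Phi(\sigma)$ have special fibers meeting every irreducible component of the special fiber as $a$ ranges over $\Q^\times_{>0}\backslash\A_f^\times/K_0$. Over the generic fiber this is the content of the last paragraph of the proof of Proposition \ref{prop:q principle} (strong approximation for $\mathrm{Spin}(V)$ together with the reciprocity law (\ref{simple reciprocity})); to transport it to the special fiber I would invoke properness and flatness of $\mathcal{S}_K(G,\DD,\Sigma)$ over $R$ together with the hypothesis that the special fiber is geometrically normal, so that the geometrically irreducible components of the two fibers are in specialization-preserving bijection. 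Granting this, the generic point of $Z$ is recovered by a minimal prime of a completed local ring of $\mathcal{S}_K(G,\DD,\Sigma)_{/R}$ at a point of $Z\cap\mathcal{Z}^\Upsilon_K(G,\DD,\Sigma)$, and the order of $\Psi$ there is nonzero if and only if $\mathrm{ord}_Z(\Psi)\ne0$; combined with the previous paragraph this forces $\mathrm{ord}_Z(\Psi)=0$, and since $Z$ was arbitrary, $\mathrm{div}(\Psi)$ is $R$-flat.

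The main obstacle is precisely this last geometric input — that every irreducible component of the special fiber is seen at the chosen cusp. This is where geometric normality of the special fiber is essential, since it forces irreducible components to agree with geometrically connected components, and where one leans on the structure theory of integral toroidal compactifications of \cite{mp:compactification}; the remaining ingredients, namely faithfully flat descent and the behaviour of divisors of power series over a discrete valuation ring, are routine.
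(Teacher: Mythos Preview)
Your proposal is correct and follows essentially the same approach as the paper. The paper makes your final ``every component meets the cusp'' step precise by invoking \cite[Theorem~1 and Corollary~4.1.11]{mp:compactification} to show that each connected component of $\mathcal{S}_K(G,\DD,\Sigma)_{/R}$ has geometrically connected (hence, under the normality hypothesis, irreducible) special fiber, so the strong-approximation argument from Proposition~\ref{prop:q principle} already suffices; note also that smoothness of $\Sigma$ is neither assumed nor needed, since the completed monoid algebra $R[[q_\alpha]]_{\langle\alpha,\sigma\rangle\ge 0}$ has integral special fiber regardless.
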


\begin{proof}
As  $\mathcal{S}_K(G,\mathcal{D},\Sigma)_{/R}$ is flat over $R$, to show that $\mathrm{div}(\psi)$ is $R$-flat  it is enough to show that its support  does not contain any irreducible components of the special fiber of $\mathcal{S}_K(G,\mathcal{D},\Sigma)_{/R}$.

Every connected component 
\[
C\subset \mathcal{S}_K(G,\DD,\Sigma)_{/R}.
\] 
 has irreducible special fiber.  Indeed, we have assumed that the special fiber of $\mathcal{S}_K(G,\DD)_{/R}$ is geometrically normal. 
  It therefore follows  from  \cite[Theorem 1]{mp:compactification} that the special fiber of $\mathcal{S}_K(G,\DD,\Sigma)_{/R}$ is also geometrically normal.  
 On the other hand,  \cite[Corollary 4.1.11]{mp:compactification} shows that $C$   has geometrically connected special fiber.   Therefore the special fiber of $C$ is both connected and normal, and hence is irreducible.

As in the proof of Proposition \ref{prop:q principle}, the closed stratum 
\[
 \mathcal{Z}_K^{(\Phi,\sigma)} ( G,\DD,\Sigma)_{/R} \subset \mathcal{S}_K(G,\DD,\Sigma)_{/R}
\]
meets every connected component.  Pick a closed point $s$ of this stratum lying on the connected component $C$.
By the definition of $\mathrm{FJ}^{(a)}(\psi)$, and from Proposition~\ref{prop:formal cusp nbhd}, our hypothesis on the $q$-expansion implies that the restriction of $\psi$ to the completed local ring $\mathcal{O}_s$ of $s$ defines a rational section of $\bm{\omega}^{\otimes k}$ 
whose divisor is an $R$-flat Cartier divisor on $\Spf( \mathcal{O}_s )$.

  It follows that $\mathrm{div}(\psi)$ does not contain the special fiber of $C$, and varying $C$ shows that $\mathrm{div}(\psi)$ contains no irreducible components of the special fiber of $\mathcal{S}_K(G,\mathcal{D},\Sigma)_{/R}$.
\end{proof}

\begin{remark}
If $V_{\Z_p}$ is almost self-dual, then $\mathcal{S}_K(G,\DD)$ is smooth over $\Z_{(p)}$, and hence has geometrically normal special fiber.
Without the assumption of almost self-duality,  Proposition~\ref{prop:normal fiber} tells us that the special fiber is geometrically normal whenever $n\geq 6$.
\end{remark}

 %%%%%%%%%%%%%%%%%%%%%%%%%%%%%%%%%%%%%

\section{Borcherds products on integral models}
\label{s:integral borcherds}

%%%%%%%%%%%%%%%%%%%%%%%%%%%%%%%%%%%%%

Keep $V_\Z \subset V$ of signature $(n,2)$ with $n\ge 1$, and  let $(G,\DD)$ be the associated GSpin Shimura datum.  As in the introduction, let $\Omega$ be a finite set of prime numbers containing all primes at which $V_\Z$ is not maximal, and choose (\ref{K choice}) to be factorizable $K=\prod_p K_p$ with
\[
K_p= G(\Q_p) \cap C( V_{\Z_p} )^\times
\]
for all $p\not\in \Omega$.  Set $\Z_\Omega= \Z[ 1/p : p\in \Omega]$.

%%%%%%%%%%%%%%%%%%%%%%%%%%%%%%%%%

\subsection{Statement of the main result}

%%%%%%%%%%%%%%%%%%%%%%%%%%%%%%%%%

In  \S  \ref{ss:gspin integral} and \S \ref{ss:special divisors} we constructed, for every prime $p\not\in \Omega$, 
 an integral model over $\Z_{(p)}$  of the  Shimura variety $\Sh_K(G,\DD)$,  along with   a family of special divisors and a line bundle of weight one modular forms.
As explained in \cite[\S 2.4]{AGHMP-1} and \cite[\S 4.5]{AGHMP-2},  as $p$ varies these models arise as the localizations of a  flat and normal integral model
\[
\mathcal{S}_K(G,\DD) \to \Spec(\Z_\Omega), 
\]
 endowed with a family of special divisors $\mathcal{Z}(m,\mu)$ indexed by positive $m\in \Q$ and $\mu \in L^\vee / L$,  and a line bundle of weight one modular forms $\bm{\omega}$.

\begin{theorem}\label{thm:main borcherds}
 Suppose  
\[
f(\tau)  = \sum_{  \substack{ m\in \Q \\ m \gg -\infty} } c(m) \cdot q^m   \in M^!_{1- \frac{n}{2} }( \overline{\rho}_{V_\Z } )
\]
is a weakly holomorphic form as in   (\ref{input form}), and assume $f$ is integral in the sense of Definition \ref{def:integral form}.  
  After multiplying  $f$ by any sufficiently divisible positive integer, there is a rational section $\psi(f)$  of $\bm{\omega}^{ \otimes c(0,0) }$  over $\mathcal{S}_K(G,\DD)$ whose norm under the  metric  (\ref{better metric}) is related to the regularized theta lift   of \S \ref{ss:borcherds definition} by 
  \begin{equation}\label{norm match}
    -2\log \| \psi(f) \|       =  \regtheta (f) ,
\end{equation}
and whose divisor is 
   \begin{equation}\label{divisor match}
  \mathrm{div}( \psi (f) ) =  
  \sum_{  \substack{  m > 0 \\ \mu\in V_\Z^\vee / V_\Z }  }  c(-m,\mu) \cdot \mathcal{Z}(m,\mu). 
  \end{equation}
  \end{theorem}

The remainder of this subsection is devoted to proving Theorem \ref{thm:main borcherds} under some  restrictive hypotheses on the pair $V_\Z \subset V$.  
These  will allow us to deduce algebraicity of the Borcherds product  from Proposition \ref{prop:borcherds algebraic},  prove its descent to $\Q$ using the $q$-expansion principle of Proposition \ref{prop:q principle}, and  deduce the equality of divisors (\ref{divisor match}) from the flatness of both sides over $\Z_\Omega$.

\begin{proposition}\label{prop:almost there}
If  $n\ge 6$, and if  there exists an $h\in G(\A_f)$ and isotropic vectors 
 $
 \ell , \ell_*  \in h V_\Z
 $
  such that $[\ell ,\ell_*] =1$, then Theorem \ref{thm:main borcherds} holds.
\end{proposition}

\begin{proof}
It suffices to treat the case where 
\[
K = G(\A_f) \cap C(V_{\widehat{\Z}})^\times, 
\]
 for then we can pull back $\psi(f)$  to any smaller level structure.

The vectors  $\ell,\ell_* \in V$ satisfy the relation  (\ref{second isotropic}) with $k=\ell_*$.
Let $I$ and $I_*$ be the  isotropic lines  in $V$ spanned by $\ell$ and $\ell_*$, respectively.
 Let $P$ be the stabilizer of $I$, and let $\DD^\circ \subset \DD$ be a connected component.  This determines a cusp label representative 
 \[
 \Phi = (P,\DD^\circ, h).
 \]
 
Although we will not use this fact explicitly, the following lemma implies that the $0$-dimensional stratum of the Baily-Borel compactification $\Sh_K(G,\DD)^\mathrm{BB}$ indexed by $\Phi$ is geometrically connected.
In other words, Baily-Borel compactification has a cusp defined over $\Q$.

\begin{lemma}\label{lem:optimal section}
The complex orbifold $\Sh_K(G,\DD)(\C)$ is connected, and the  section (\ref{component section}) determined by $I_*$ satisfies  
\begin{equation}\label{optimal section}
\spl(\widehat{\Z}^\times) \subset K_\Phi.
\end{equation}
\end{lemma}

\begin{proof}
We first prove (\ref{optimal section}).
Consider the hyperbolic place 
\[
W= \Q\ell + \Q\ell_* \subset V.
\]
Its corresponding spinor similitude group $\GSpin(W)$ is just the unit group of the even Clifford algebra $C^+(W)$.
The natural inclusion $\GSpin(W) \to G$  takes values in the subgroup $Q_\Phi$, and the cocharacter (\ref{component section}) factors as
\[
\mathbb{G}_m \map{s} \GSpin(W) \to Q_\Phi 
\]
where the first arrow sends $a\in \Q^\times $ to  
\[
s(a)=a^{-1} \ell_*\ell+  \ell\ell_* \in C^+(W)^\times.
\]
From this explicit formula and the inclusion
\[
H_{ \widehat{\Z} }  =\widehat{\Z} \ell \oplus \widehat{\Z} \ell_* \subset h V_{\widehat{\Z}}, 
\]
 it is clear that   (\ref{component section}) satisfies
\[
s( \widehat{\Z}^\times) \subset
 C^+( W_{ \widehat{\Z} } )^\times \subset Q_\Phi (\A_f) \cap C(h V_{\widehat{\Z}})^\times =  K_\Phi.
\]

Now we prove the connectedness claim.
From (\ref{optimal section}) it follows that
\[
   \widehat{\Z}^\times =    \nu_\Phi(s ( \widehat{\Z}^\times ))    \subset     \nu_\Phi(K_\Phi) \subset \nu ( K) ,
\]
 and  hence the $0$-dimensional Shimura variety 
\[
\Sh_{\nu(K)}(\mathbb{G}_m , \mathcal{H}_0)(\C) = \Q^\times \backslash \mathcal{H}_0 \times \A_f^\times / \nu ( K)
\]
consists of  a single point.   The proof of Proposition \ref{prop:q principle} shows that the fibers of 
\[
\Sh_K(G,\DD)(\C) \to \Sh_{\nu(K)}(\mathbb{G}_m , \mathcal{H}_0)(\C)
\]
are connected, completing the proof.
\end{proof}

Applying Theorem \ref{thm:old borcherds} and  Proposition  \ref{prop:borcherds algebraic}  to the form $2f$ gives us a rational section 
\begin{equation}\label{almost there borcherds}
\psi(f) = (2\pi i)^{ c(0,0)} \Psi(2 f)
\end{equation}
of $\bm{\omega}^{ \otimes c(0,0)}$ over $\Sh_K(G,\DD)_{/\C}$.
We first prove that   $\psi(f)$ can be rescaled by a constant of absolute value $1$  to make it  defined over $\Q$.

Fix a neat compact open subgroup $\tilde{K} \subset K$ small enough  that there  is a $\tilde{K}$-admissible complete cone decomposition $\Sigma$ for $(G,\DD)$  satisfying the conclusion of Lemma \ref{lem:good decomposition}.
In particular, we have a top-dimensional rational polyhedral cone $\sigma \in \Sigma_\Phi$ whose interior is contained in a fixed Weyl chamber 
\[
\mathscr{W} \subset \mathrm{LightCone}^\circ(V_{0\R}) \iso C_\Phi.
\]

Let  $\tilde{\psi}(f)$ denote the pullback of $\psi(f)$ to  $\Sh_{\tilde{K}} (G,\DD,\Sigma)_{/\C}$.
Recalling the construction of $q$-expansions of (\ref{basic q}),
the  toroidal stratum representative 
\[
(\Phi , \sigma)  \in \mathrm{Strat}_{\tilde{K}}(G,\DD,\Sigma)
\]
determines  a collection of formal $q$-expansions
\begin{equation}\label{lifted q}
\mathrm{FJ}^{(a)} (\tilde{\psi}(f) ) \in  \C[[ q_\alpha]]_{  \substack{  \alpha \in \Gamma_\Phi^\vee(1)  \\ \langle \alpha, \sigma \rangle \ge 0   } }
\end{equation}
indexed by  $a \in \Q^\times_{>0} \backslash \A_f^\times / \tilde{K}_0$, where $\tilde{K}_0 \subset \mathbb{G}_m(\A_f)$ is chosen small enough that its image under  (\ref{component section}) is contained in $\tilde{K}_\Phi$.

We can read off these $q$-expansions from Proposition \ref{prop:product expansion}, which implies 
 \begin{equation}\label{lifted q form}
 \mathrm{FJ}^{(a)} (\tilde{\psi}(f) )
 =  \big( \kappa^{(a)} \cdot q_{ \alpha(\varrho) } \cdot \mathrm{BP}(f) \big)^2,
 \end{equation}
for an explicit 
 \begin{equation}\label{integral BP}
 \mathrm{BP}   (f) \in \Z [[ q_\alpha ]]_{  \substack{    \alpha \in \Gamma_\Phi^\vee(1) \\  \langle \alpha, \sigma\rangle \ge 0      }   },
\end{equation}
and some constants $\kappa^{(a)} \in \C$ of absolute value $1$.
Indeed,  the hypotheses on  $\ell , \ell_* \in V_\Z$  imply that the constants   $N$ and $A$ appearing in  (\ref{constant A}) are equal to  $1$, and our choice of $k=\ell_* \in h V_\Z$ implies that   $\zeta_\mu =1$
 for all $\mu \in hV_\Z^\vee / h V_\Z$.

 Moreover, it is clear from the presentation of   $\mathrm{BP}(f)$ as a product that its constant term  is equal to $1$.

The $q$-expansion (\ref{lifted q})  is actually independent of $a$. 
 Indeed, using the notation of (\ref{nbhd diagram}), with $K$ replaced by $\tilde{K}$ throughout,  these $q$-expansions can be computed in terms of the pullback of $\psi(f)$ to the upper left corner in 
\[
\xymatrix{
{  \bigsqcup\limits_{ a \in \Q^\times_{>0} \backslash \A_f^\times / \tilde{K}_0 } \tilde{\Gamma}_\Phi^{(a)} \backslash \DD^\circ }  \ar[d]  \ar[rrr]^{z\mapsto  ( z, s(a) h) } 
& &  & {   \Sh_{\tilde{K}} (G,\DD)(\C)  } \ar[d]   \\
 {  (  K_\Phi \cap U_\Phi(\Q)  )  \backslash \DD^\circ   } \ar[rrr]^{z\mapsto  ( z, h) }  & & & {  \Sh_{K} (G,\DD)(\C) }.
}
\]
Here we have chosen our coset representatives $a\in \widehat{\Z}^\times$.  This implies, by Lemma \ref{lem:optimal section}, that $s(a) \in K_\Phi \subset h K h^{-1}$, and so 
  \[
  \tilde{\Gamma}_\Phi^{(a)} = s(a) \tilde{K}_\Phi s(a)^{-1} \cap U_\Phi(\Q) \subset  K_\Phi \cap U_\Phi(\Q) 
  \]
  and $s(a)hK= hK$.  It follows that the pullback of $\psi(f)$ to the upper left corner is the same on every copy of $\DD^\circ$.

 Having proved that all of the $\kappa^{(a)}$ are equal, we may rescale $\psi(f)$ by a constant of absolute value $1$ to make all of them equal to $1$.
 The $q$-expansion principle of   Proposition \ref{prop:q principle} now implies that $\tilde{\psi}(f)$ is defined over $\Q$, and the same is therefore true of $\psi(f)$.
The equality (\ref{norm match}) follows from the equality (\ref{naive borcherds}).

It only remains to prove the equality of divisors (\ref{divisor match}). 
In the generic fiber, this follows from (\ref{norm match}) and  the analysis of the singularities of  $\regtheta(f)$ found in   \cite{Bor98} or \cite{Bruinier}.  
To prove equality on the integral model, it therefore suffices to prove that both sides of the desired equality are flat over $\Z_\Omega$.  
Flatness of the special divisors $\mathcal{Z}( m,\mu)$  is Proposition \ref{prop:nice divisors}.

To prove the flatness  of $\mathrm{div}(\psi(f))$ it suffices to show, for every prime $p\not\in \Omega$, that $\mathrm{div}(\psi(f))$ has no irreducible components supported in characteristic $p$.
This follows from Corollary \ref{cor:flatness by q} and the observation made above that (\ref{integral BP}) has nonzero reduction at $p$.

 The only technical point is that to apply  Corollary \ref{cor:flatness by q} to the integral model of $\Sh_{\tilde{K}}(G,\DD, \Sigma)$ over $\Z_{(p)}$,  we must choose $\tilde{K}$ to have $p$-component 
\[
\tilde{K}_p = G(\Q_p) \cap C(V_{\Z_p})^\times,
\] 
and similarly choose $\tilde{K}_0$ to have $p$-component $\Z_p^\times$.
As $p$ varies, this forces us to vary $\tilde{K}$.  As we need $\tilde{K}$ to satisfy the conclusion of Lemma \ref{lem:good decomposition}, this may require us to also vary both $\Sigma$ and the rational polyhedral cone $\sigma \in \Sigma_\Phi$.  Thus, having rescaled the Borcherds product to eliminate the constants $\kappa^{(a)}$ at one boundary stratum, we may be forced to apply Corollary \ref{cor:flatness by q} at a different boundary stratum of a different toroidal compactification at different level structure, at which we must deal with new  constants $\kappa^{(a)}$.

This is not really a problem. For a given $p$,  one can check using Remark \ref{rem:K shrink} that it is possible to choose $\tilde{K}$ (and hence $\Sigma$ and $\sigma \in \Sigma_\Phi$) as in Lemma \ref{lem:good decomposition}  by shrinking only the prime-to-$p$ part of $K$.  
Using Lemma \ref{lem:optimal section},  we may then choose $\tilde{K}_0$ to have $p$-component $\Z_p^\times$.   
Now pull back $\psi(f)$ via the resulting \'etale cover
\[
\mathcal{S}_{\tilde{K}}(G,\DD)_{/\Z_{(p)}} \to \mathcal{S}_K(G,\DD)_{/\Z_{(p)}} 
\]
over integral models over $\Z_{(p)}$ to obtain a section $\tilde{\psi}(f)$ whose $q$-expansion again has the form 
(\ref{lifted q form}) for some  constants $\kappa^{(a)}$ of absolute value $1$.

  The point is simply that our  $\psi(f)$, hence also $\tilde{\psi}(f)$,  has been rescaled so that it is defined over $\Q$.
This allows us to use the $q$-expansion principle of Proposition \ref{prop:q principle} to deduce that each $\kappa^{(a)}$ is rational, hence is $\pm 1$.  
Thus the power series (\ref{lifted q form})  has integer coefficients and nonzero reduction at $p$.
  Corollary \ref{cor:flatness by q} implies that the divisor of $\tilde{\psi}(f)$ has no irreducible components in characteristic $p$, so the same holds for $\psi(f)$.
\end{proof}

%%%%%%%%%%%%%%%%%%%%%%%%%%%%%%%%%%%%%%%%

\subsection{Proof of Theorem \ref{thm:main borcherds}}
\label{ss:rational}

%%%%%%%%%%%%%%%%%%%%%%%%%%%%%%%%%%%%%%%%

In this subsection we complete the proof  of Theorem \ref{thm:main borcherds}  by developing a purely algebraic analogue of the embedding trick of Borcherds.
This allows us to deduce the general case from the special case proved in Proposition \ref{prop:almost there}.

  According to \cite[Lemma 8.1]{Bor98} there exist self-dual $\Z$-quadratic spaces $\Lambda^{[1]}$ and $\Lambda^{[2]}$ of signature $(24,0)$  whose corresponding theta series
\[
\vartheta^{[i]}  (\tau) = \sum_{ x\in \Lambda^{[i]} } q^{Q(x)} \in M_{12}(\SL_2(\Z) , \C)
\]
 are related by 
 \begin{equation}\label{theta trick}
 \vartheta^{[2]}  - \vartheta^{[1]} = 24 \Delta.
 \end{equation}
 Here $\Delta$ is Ramanujan's modular discriminant, and $Q$ is the quadratic form on $\Lambda^{[i]}$.
    Denote by 
 \[
r^{[i]}(m) = \# \{ x\in \Lambda^{[i]} : Q(x)=m \}
\] 
the $m^\mathrm{th}$ Fourier coefficient of $\vartheta^{[i]}$. 
Set 
\begin{equation}\label{bumped lattices}
V_{\Z}^{[i]} = V_{\Z} \oplus \Lambda^{[i]}   \qquad\mbox{and}\qquad  V^{[i]} = V \oplus \Lambda_\Q^{[i]}.
\end{equation}

 In the notation of \S \ref{ss:weakly holomorphic forms}, the inclusion $V_\Z \hookrightarrow V_\Z^{[i]}$  identifies
 \[
 V_\Z^\vee / V_\Z  \iso  (V_\Z^{[i]} )^\vee/ V_\Z^{[i]}, 
 \]
and the induced isomorphism 
\[
 \sS_{V_\Z  }  \iso   \sS_{V_\Z^{ [i]} } 
\] 
is compatible with the Weil representations on source and target. The fixed weakly holomorphic form $f$ of (\ref{input form})  therefore determines a  form
\[
f^{[i]} (\tau)  =    \sum_{  \substack{ m\in \Q \\ m \gg -\infty} } c^{[i]}(m) \cdot q^m   \in M^!_{-11- \frac{n}{2} }( \overline{\rho}_{V^{[i]}_\Z } )
\]
 by setting  $f^{[i]}    =    f / (  24  \Delta)$.
The relation  
\[
f=  \vartheta^{[2]} f^{[2]} - \vartheta^{[1]} f^{[1]}
\]
  implies the equality of Fourier coefficients
\begin{align}\label{fourier splitting}
c(m,\mu)  & = \sum_{k \ge 0}   r^{[2]} (k)  \cdot c^{[2]} (m - k ,\mu )  \\
&\quad  -  \sum_{k \ge 0}  r^{[1]} (k)  \cdot c^{[1]} (m - k ,\mu ) .\nonumber
\end{align}

Each $V^{[i]}$ determines a GSpin Shimura datum $(G^{[i]} , \DD^{[i]})$.  By choosing 
\[
K^{[i]} = G^{[i]}(\A_f) \cap C(V^{[i]}_{\widehat{\Z}})^\times
\]
 for our compact open subgroups, we put ourselves in the situation of  \S \ref{ss:integral pullbacks}.
 Note that in \S \ref{ss:integral pullbacks} the integral models were over $\Z_{(p)}$, but everything extends verbatim to $\Z_\Omega$.
    In particular,  we have  finite morphisms of integral models
\[
\xymatrix{
&  {   \mathcal{S}_K(G,\DD) \ar[dl]_{j^{[1]}} \ar[dr]^{j^{[2]}  }    }  \\ 
  {  \mathcal{S}^{[1]}   } &   & { \mathcal{S}^{[2]}, } 
}
\]
over $\Z_\Omega$, where we abbreviate 
\[
 \mathcal{S}^{[i]}=\mathcal{S}_{K^{[i]}}(G^{[i]} , \DD^{[i]} ).
\]
Each $\mathcal{S}^{[i]}$ has its own line bundle of weight one modular forms $\bm{\omega}^{[i]}$ and its own family $\mathcal{Z}^{[i]}(m,\mu)$ of special divisors.

The following lemma shows that each $V_\Z^{[i]}  \subset V^{[i]}$ satisfies the hypotheses of Proposition \ref{prop:almost there}.
Thus, after replacing $f$  (and hence both $f^{[1]}$ and $f^{[2]}$)  by a positive integer multiple, we obtain  a Borcherds product $\psi(f^{[i]})$ on $\mathcal{S}^{[i]}$  with divisor
 \begin{equation}\label{embedded divisor}
 \mathrm{div}( \psi( f^{ [i] } ) =  \sum_{ \substack{  m >  0 \\ \mu \in V_\Z^\vee / V_\Z } }   c^{ [i] }(-m,\mu) \cdot \mathcal{Z}^{[i]} (m,\mu) . 
 \end{equation}

\begin{lemma}
There exist isotropic vectors 
$
\ell , \ell_* \in V_\Z^{[i]}
$
 with $[\ell , \ell_*]=1$.
\end{lemma}

\begin{proof}
Let $\mathbb{H}=\Z \ell \oplus \Z \ell_*$ be the integral hyperbolic plane, so that $\ell$ and $\ell_*$ are isotropic with $[\ell,\ell_*]=1$.  To prove the existence of an isometric embedding $\mathbb{H} \to V_\Z^{ [i] }$, we first prove the existence everywhere locally.   

At the archimedean place this is clear from the signature, so fix a prime $p$.
The $\Q_p$-quadratic space  $\Lambda^{ [i] } \otimes_\Z \Q_p$ has dimension $\ge 5$, so  admits an isometric embedding
\[
\mathbb{H}\otimes \Q_p \to  \Lambda^{ [i] } \otimes  \Q_p .
\]
Enlarging the image of $\mathbb{H} \otimes \Z_p$ to a maximal lattice, and invoking Eichler's theorem   that all maximal lattices in a $\Q_p$-quadratic space are isometric \cite[Theorem 8.8]{Ger}, we find that $\mathbb{H}\otimes \Z_p$  embeds into the  (self-dual, hence maximal)  lattice
 $\Lambda^{ [i] } \otimes \Z_p$.  A fortiori, it embeds into  $V_\Z^{[i]} \otimes_\Z \Z_p$.

The existence of  the desired embeddings everywhere locally implies   that there exist  isometric embeddings
\begin{equation}\label{global hyperbolic}
a : \mathbb{H}  \otimes \Q \to V_\Z^{[i]} \otimes \Q,
\end{equation}
and 
\[
\alpha :  \mathbb{H} \otimes \widehat{\Z}  \to V_\Z^{[i]} \otimes \widehat{\Z}.
\]
We may choose these in such a way that $a$ and $\alpha$ induce the same embedding of $\Q_p$-quadratic spaces
at all but finitely many  primes $p$.
All embeddings 
\[
\mathbb{H}\otimes \Q_p  \to V_\Z^{[i]} \otimes \Q_p
\]
 lie in a single $\mathrm{SO}( V^{[i]} )  ( \Q_p)$-orbit, and so there exists a
 \[
g  \in \mathrm{SO}( V^{[i]} )( \A_f)
 \]
  such that 
\begin{equation}\label{spinor adjust}
g a ( \mathbb{H} \otimes \widehat{\Z} ) =\alpha ( \mathbb{H} \otimes \widehat{\Z} )  .
 \end{equation}

Fix a subspace $W \subset V_\Z^{[i]} \otimes \Q$ of signature $(2,1)$ perpendicular to the image of (\ref{global hyperbolic}).  There exists an isomorphism $\SO(W) \iso \mathrm{PGL}_2$ identifying
 the spinor norm
\[
\mathrm{SO}(W)(\A_f) \to \A_f^\times / (\A_f^\times)^2
\]
with the determinant, and hence the spinor norm is surjective.
This allows us to modify $g$ by an element of $\SO(W)(\A_f)$, which does not change the relation (\ref{spinor adjust}), in order to arrange that $g$ has trivial spinor norm.
Now choose any lift 
\[
g \in \mathrm{Spin}( V^{[i]} )( \A_f),
\]
and note that (\ref{spinor adjust}) implies
 \[ 
 g a ( \mathbb{H} \otimes \widehat{\Z} ) \subset V_\Z^{[i]}\otimes \widehat{\Z}.
 \]

As the spin group is simply connected, it satisfies strong approximation.  
By choosing $\gamma \in \mathrm{Spin}(  V^{[i]} ) ( \Q)$ sufficiently close to $g$, we find an
 isometric embedding $\gamma a : \mathbb{H} \to V_\Z^{[i]}$.
\end{proof}

At least formally, we wish to define 
\[
\psi(f) = \frac{ (j^{[2]})^* \psi(f^{ [2]  } ) }{ (j^{[1]})^* \psi(f^{ [1] } )  } .
\]
As noted in \S \ref{ss:intro proof},  the image of $j^{[i]}$ will typically be contained in the support of the divisor of  $\psi(f^{ [i]  } )$, and so the quotient on the right will typically be either  $0/0$ or $\infty/\infty$.

The key to making sense of this quotient is to combine the following lemma, which is really just a restatement of (\ref{embedded divisor}),  with the pullback formula of Proposition \ref{prop:pullback}.  As in the pullback formula, we use $\mathcal{Z}^{[i]}(m,\mu)$ to denote both the special divisor and its corresponding line bundle, and extend the definition to $m\le 0$ by 
\[
\mathcal{Z}^{[i]} (m,\mu) = \begin{cases}
(\bm{\omega}^{ [i] } )^{-1} & \mbox{if } (m,\mu)=(0,0) \\
\co_{ \mathcal{S}^{[i]} } & \mbox{otherwise.}
\end{cases}
\]

\begin{lemma}\label{lem:borcherds iso}
The Borcherds product $\psi( f^{[i]} )$ determines an isomorphism of line bundles
\[
\co_{ \mathcal{S}^{[i]} }\iso  \bigotimes_{ \substack{  m\ge 0 \\ \mu \in V_\Z^\vee / V_\Z   } } \mathcal{Z}^{[i]} (m,\mu)^{  \otimes c^{[i]} (-m ,\mu ) }.
\]
\end{lemma}

\begin{proof}
 If $m>0$ there is a canonical section
 \[
 s^{[i]}(m,\mu) \in H^0 \big(   \mathcal{S}^{[i]} , \mathcal{Z}^{[i]} (m,\mu) \big)
 \]
with divisor  the Cartier divisor $\mathcal{Z}^{[i]} (m,\mu)$ of the same name.
This is just the constant function $1$, viewed as a section of 
\[
\co_{\mathcal{S}^{[i]}} \subset \mathcal{Z}^{[i]} (m,\mu).
\]
The equality of divisors (\ref{embedded divisor}) implies that there is a unique isomorphism
\[
(\bm{\omega}^{ [i] } ) ^{ \otimes c^{[i]}(0,0) } \iso  \bigotimes_{ \substack{  m> 0 \\ \mu \in V_\Z^\vee / V_\Z   } } 
\mathcal{Z}^{[i]} (m,\mu)^{  \otimes c^{[i]} (-m ,\mu ) }
\]
sending
\[
\psi(f^{[i]}) \mapsto   \bigotimes_{ \substack{  m> 0 \\ \mu \in V_\Z^\vee / V_\Z   } } s^{[i]} (m,\mu)^{  \otimes c^{[i]} (-m ,\mu ) },
\]
and so the claim is  immediate from the definition of $\mathcal{Z}^{[i]}(0,\mu)$.
\end{proof}

\begin{proof}[Proof of Theorem \ref{thm:main borcherds}]
If we pull back the isomorphism of Lemma \ref{lem:borcherds iso} via $j^{[i]}$ and use the pullback formula of Proposition \ref{prop:pullback}, we  obtain  isomorphisms of line bundles
\[
\co_{ \mathcal{S}_K(G,\DD) } \iso    \bigotimes_{ \substack{  m_1,m_2 \ge 0 \\ \mu \in V_\Z^\vee / V_\Z   } }
\mathcal{Z}(m_1,\mu)^{ \otimes r^{[i]} (m_2)  \cdot c^{[i]} (-m_1 - m_2 ,\mu )  }
\]
for $i\in\{ 1, 2\}$.  These two isomorphisms,  along with (\ref{fourier splitting}), determine an isomorphism
\[
\co_{ \mathcal{S}_K(G,\DD) } \iso    \bigotimes_{ \substack{  m \ge 0 \\ \mu \in V_\Z^\vee / V_\Z   } }
\mathcal{Z}(m ,\mu)^{ \otimes  c (-m ,\mu )  }.
\]

Now simply reverse the reasoning in the proof of Lemma \ref{lem:borcherds iso}.  Rewrite  the isomorphism above as
\[
\bm{\omega}^{ c(0,0) } \iso  \bigotimes_{ \substack{  m > 0 \\ \mu \in V_\Z^\vee / V_\Z   } }  
\mathcal{Z}(m ,\mu)^{ \otimes  c (-m ,\mu )  }.
\] 
Each line bundle on the right admits a canonical section $s(m,\mu)$ whose divisor is the Cartier divisor $\mathcal{Z}(m,\mu)$ of the same name, and so the rational section of $\bm{\omega}^{ c(0,0)}$ defined by  
\begin{equation}\label{subtle borcherds}
\psi(f) =   \bigotimes_{ \substack{  m > 0 \\ \mu \in V_\Z^\vee / V_\Z   } }  s(m ,\mu)^{ \otimes  c (-m ,\mu )  }
\end{equation}
has divisor 
\[
 \mathrm{div}( \psi(f) ) =  \sum_{ \substack{  m >  0 \\ \mu \in V_\Z^\vee / V_\Z } }   c(-m,\mu) \cdot \mathcal{Z}(m,\mu).
 \]
 To complete the proof of Theorem \ref{thm:main borcherds}, we need only prove that the section defined by (\ref{subtle borcherds}) satisfies the norm relation  (\ref{norm match}).

Fix a $g\in G(\A_f)$, and consider the complex  uniformizations
\[
\xymatrix{
& {   \DD^{[1] }  }  \ar[rr] &  &  {   \mathcal{S}^{[1]}(\C) }    \\
{  \DD  } \ar[rr] \ar[ur]^{ j^{[1]} } \ar[dr]_{ j^{[2]} }& &  { \mathcal{S}_K (G,\DD)  (\C) }  \ar[ur] ^{ j^{[1]} } \ar[dr] _{ j^{[2]} } \\
&  {    \DD^{[2]}  } \ar[rr]&  &  {   \mathcal{S}^{[2]}(\C)  } 
}
\]
in which all horizontal arrows send $z\mapsto (z,g)$.

Denote by $\psi_g(f)$ the pullback of $\psi(f)$ to $\DD$.
The similarly defined meromorphic sections  $\psi_g(f^{[i]})$ on $\DD^{[i]}$ are already assumed to satisfy the norm relation
\[
-2 \log \| \psi_g(f^{[i]} ) \| =\regtheta_g(f^{[i]} ) 
\]
on $\DD^{[i]}$,   where 
\[
\regtheta_g(f^{[i]} )=\regtheta(f^{[i]} ,g)
\]
 is the regularized theta lift of \S \ref{ss:borcherds definition}.

Recall from \S \ref{ss:deformation} that every $x \in  V^{[i]}$ with $Q(x)>0$ determines a global section 
\[
\mathrm{obst}^{an}_x \in H^0 \big( \DD^{[i]}  ,    \bm{\omega}_{\DD^{ [i] }}^{-1}  \big),
\]
with zero locus the analytic divisor
\[
\DD^{[i]}(x) = \{ z\in \DD^{[i]}  : [z, x] =0  \}.
\]
The pullback of $\mathcal{Z}^{ [i] }(m,\mu)(\C)$ to $\DD^{[i]}$  is  given  by the locally finite sum of analytic divisors
\[
\sum_{  \substack{   x\in g \mu+  g V^{[i]}_\Z \\ Q(x)=m     }   }\DD^{[i]}(x) .
\]
Define the \emph{renormalized} Borcherds product
\[
\tilde{\psi}_g(f^{[i]}) =
\psi_g(f^{[i]}) \otimes  \bigotimes_{ m>0 }  \bigotimes_{  \substack{  \lambda \in \Lambda^{[i]} \\ Q(\lambda)=m }   }   ( \mathrm{obst}^{an}_\lambda)^{ \otimes -c^{[i]}(-m,0)} .
\]
This is a meromorphic section of 
$
 \bigotimes_{ m \ge 0 }  \big(\bm{\omega}_{ \DD^{ [i] } } \big)^{ \otimes  r^{[i]} (m) c^{[i]}(-m,0)}  
$
with divisor
\[
\mathrm{div}\big( \tilde{\psi}_g(f^{[i]}) \big)  = \sum_{\substack {m>0 \\ \mu\in V_\Z^\vee / V_\Z }}
c^{[i]}(-m,\mu) \sum_{  \substack{   x\in g \mu+  g V^{[i]}_\Z \\ Q(x)=m   \\ x\not\in \Lambda^{[i]}  }   }   \DD^{[i]} (x)  .
\]

Note that each divisor $\DD^{[i]}(x)$ appearing on the right hand side intersects the subspace $\mathcal{D}\subset \DD^{[i]}$ properly.  Indeed,
If we decompose $x=y +\lambda$ with $y \in g\mu + g V_\Z$ and $\lambda\in \Lambda$, then   
\[
\DD^{[i]} (x)   \cap \DD = \begin{cases}
\DD(y)  & \mbox{if }Q(y)>0 \\
 \emptyset & \mbox{otherwise,}
 \end{cases}
\]
where $\DD(y)=\{z\in \DD : [ z,y] =0 \}$.

This is the point: by renormalizing the Borcherds product we have removed precisely the part of its divisor that intersects $\DD$ improperly, and so the renormalized Borcherds product has a well-defined pullback to $\DD$.  Indeed, using the relation (\ref{fourier splitting}), we see that 
\begin{equation}\label{analytic renorm}
\psi_g( f) = \frac{  ( j^{ [2] })^* \tilde{\psi}_g(f^{[2]})}{  ( j^{ [1] })^*   \tilde{\psi }_g(f^{[1]})}
\end{equation}
is a section of the line bundle $\bm{\omega}_\DD^{ \otimes c(0,0)}$. 
By directly comparing the algebraic and analytic constructions, which ultimately boils down to the comparison of algebraic and analytic obstructions found in Proposition \ref{prop:two obstructions}, one can check that it agrees with the $\psi_g( f)$ defined at the beginning of the proof.

Define the \emph{renormalized} regularized theta lift
\begin{align*}
\widetilde{\Theta}^\mathrm{reg}_g(f^{[i]}) & = \regtheta_g(f^{[i]}) 
 +  2 \sum_{m>0} c^{[i]}(-m,0) \sum_{ \substack{  \lambda\in \Lambda^{[i]} \\ Q(\lambda) =m }  }
  \log\|\mathrm{obst}^{an}_\lambda\|
\end{align*}
so that 
\begin{equation}\label{over-reg borcherds}
- 2 \log \| \tilde{\psi}_g( f^{[i]} )\| =  \widetilde{\Theta}^\mathrm{reg}_g(f^{[i]})  .
\end{equation}
Combining this with (\ref{analytic renorm}) yields
\[
-2\log\|\psi_g( f ) \|  =  ( j^{ [2] })^* \widetilde{\Theta}^\mathrm{reg}_g( f^{ [2] } )   -   ( j^{ [1] })^*  \widetilde{\Theta}^\mathrm{reg}_g(  f^{ [1]} )   .
\]

As was noted in Remark \ref{rem:miracle values}, 
the regularized theta lift $\regtheta_g( f^{ [i] } )$ is \emph{over-regularized}, in the sense that its definition makes sense at every point of $\DD^{[i] }$, even at points of the divisor along which $\regtheta_g( f^{ [i] } )$ has its logarithmic singularities.  As in  \cite[Proposition 5.5.1]{AGHMP-1}, its values along the discontinuity agree with the values of  $\widetilde{\Theta}^\mathrm{reg}_g( f^{ [i] } )$, and in fact we have 
\[
( j^{ [i] })^* \regtheta_g( f^{ [i] } )  = ( j^{ [i] })^* \widetilde{\Theta}^\mathrm{reg}_g( f^{ [i] } )
\]
as functions on $\DD$.  

On the other hand, for each $i\in \{1,2\}$, the regularized theta lift has the form
\[
\regtheta_g( f^{ [i] } ) (z) = \int_{   \SL_2(\Z) \backslash \mathcal{H}   }      f^{[i]} (\tau)   \vartheta^{[i]} (\tau,z,g)        \  \frac{ du\, dv}{v^2}
\]
 as in (\ref{theta integral}).  As in  \cite[(4.16)]{BY}, when we restrict the variable $z$ to $\DD \subset \DD^{[i]}$ the theta kernel factors as
\[
 \vartheta^{[i]} (\tau,z,g)   =  \vartheta  (\tau,z,g)   \cdot \vartheta^{[i]}(\tau),
\]
where $\vartheta  (\tau,z,g)$ is the theta kernel defining $\regtheta_g(f)$.  Thus 
\[
( j^{ [i] })^* \regtheta_g( f^{ [i] } ) 
=  \int_{   \SL_2(\Z) \backslash \mathcal{H}   }      f  (\tau)   \vartheta (\tau,z,g)     \cdot \frac{\vartheta^{[i]}(\tau)}{24 \Delta}    \  \frac{ du\, dv}{v^2}
\]

Combining this last equality with (\ref{theta trick}) proves the first equality in 
\begin{align*}
\regtheta_g(f)  & = 
  ( j^{ [2] })^* \regtheta_g( f^{ [2] } )  -  ( j^{ [1] })^*  \regtheta_g(  f^{ [1]} )    \\
& =
 ( j^{ [2] })^* \widetilde{\Theta}^\mathrm{reg}_g( f^{ [2] } )   -  ( j^{ [1] })^*  \widetilde{\Theta}^\mathrm{reg}_g(  f^{ [1]} ),
\end{align*}
which is just a more explicit statement of  \cite[Lemma 8.1]{Bor98}.  
Combining this with (\ref{analytic renorm}) and  (\ref{over-reg borcherds}) shows that $\psi(f)$ satisfies the norm relation (\ref{norm match}), and completes the proof of  Theorem \ref{thm:main borcherds}.
\end{proof}

%%%%%%%%%%%%%%%%%%%%%%%%%%%%%%%%%

\subsection{A remark on sufficient divisibility}
\label{ss:divisible remark}

%%%%%%%%%%%%%%%%%%%%%%%%%%%%%%%%%

In order to obtain a  Borcherds product $\psi(f)$ on the integral model $\mathcal{S}_K(G,\DD) \to \Spec(\Z_\Omega)$,   Theorem \ref{thm:main borcherds}  requires that we first multiply the integral form
\[
f(\tau)  = \sum_{  \substack{ m\in \Q \\ m \gg -\infty} } c(m) \cdot q^m   \in M^!_{1- \frac{n}{2} }( \overline{\rho}_{V_\Z } )
\]
by some unspecified positive integer $N$.     In fact, examination of the proof shows that $N=N(V_\Z)$ depends only on the quadratic lattice $V_\Z$, and not on  the choice of $f$, the finite set of primes $\Omega$, or the level subgroup $K$.

Indeed, one  first checks this in the situation of Proposition \ref{prop:almost there}.  
Thus we  assume that $n \ge 6$, and that there exists an $h\in G(\A_f)$ and isotropic vectors $\ell ,\ell_* \in h V_\Z$ with $[\ell,\ell_*]=1$.   As in the proof of that proposition, one can reduce to the case 
\[
K = G(\A_f) \cap C(  V_{ \widehat{\Z}} )^\times. 
\]
The only point in the proof of Proposition \ref{prop:almost there} where one must replace $f$ by $Nf$  is when Theorem \ref{thm:old borcherds} and Proposition  \ref{prop:borcherds algebraic}  are invoked to obtain the Borcherds product (\ref{almost there borcherds}) over the complex fiber $\Sh_K(G,\DD)_{/\C}$.
Thus we only need to require that $N$ be chosen divisible enough that the multipliers 
\[
\xi_g(f) : G(\Q)^\circ  \cap g K g^{-1} \to \C^\times
\]
of (\ref{multiplier}) satisfy $\xi_g(f)^N =1$, as 
$f$ varies over all integral forms as above  and $g\in G(\A_f)$ runs over the finite set of indices in
\[
\bigsqcup_g  ( G(\Q)^\circ  \cap g K g^{-1} ) \backslash \DD^\circ \iso \Sh_K(G,\DD)_{/\C}.
\]
This is possible,  as the natural map 
\[
G(\Q)^\circ  \cap g K g^{-1} \to \SO(g V_\Z)
\]
has kernel $\{ \pm 1\}$, and its image has  finite abelianization; see \cite{Bor:GKZcorrection}.

The general case follows by examining the constructions of \S \ref{ss:rational}.  
Applying the  special case above to the lattices in (\ref{bumped lattices}) yields positive integers $N(V_\Z^{[1]})$ and  $N(V_\Z^{[2]})$.  Any multiple of  
\[
N(V_\Z) = N(V_\Z^{[1]}) \cdot N(V_\Z^{[2]})
\]
 is then  ``sufficiently divisible" for the purposes of Theorem \ref{thm:main borcherds}.

%%%%%%%%%%%%%%%%%%%%%%%%%%%%%%%%%

\subsection{Modularity of the generating series}

%%%%%%%%%%%%%%%%%%%%%%%%%%%%%%%%%

For any positive $m\in \Q$ and any $\mu \in V_\Z^\vee/ V_\Z$,  we denote  again by  
\[
\mathcal{Z}(m,\mu) \in \mathrm{Pic} ( \mathcal{S}_K(G,\DD) )
\]
 the line bundle  defined by the Cartier divisor of the same name.   Extend the definition to $m=0$ by
\[
\mathcal{Z}(0,\mu) = \begin{cases}
\bm{\omega}^{-1} & \mbox{if }\mu=0 \\
\co_{ \mathcal{S}_K(G,\DD)  }  & \mbox{otherwise.}
\end{cases}
\]

Recall from \S \ref{ss:weakly holomorphic forms} the Weil representation
\[
\rho_{V_\Z }  : \widetilde{\SL}_2(\Z) \to \Aut_\C( \sS_{V_\Z} )
 \] 
 on $\sS_{V_\Z  } = \C [ V^\vee_\Z / V_\Z ]$.

\begin{theorem}\label{thm:naked modularity}
Let $\phi_\mu \in \sS_{V_\Z}$ be the characteristic function of the coset $\mu\in V_\Z^\vee / V_\Z$.
For any $\Z$-linear map 
$
\alpha: \mathrm{Pic}( \mathcal{S}_K(G,\DD) ) \to \C
$
we have  
\[
 \sum_{ \substack{ m \ge 0 \\   \mu \in V_\Z^\vee / V_\Z   }  }  \alpha ( \mathcal{Z}(m ,\mu ) ) \cdot  \phi_\mu   \cdot q^m \in M_{ 1+ \frac{n}{2} } ( \rho_{V_\Z} ).
\]
\end{theorem}

\begin{proof}
According to the modularity criterion of \cite[Theorem 3.1]{Bor:GKZ}, a formal $q$-expansion
\[
\sum_{\substack{ m \ge 0 \\ \mu \in V_\Z^\vee / V_\Z } } a (m,\mu) \cdot \phi_\mu \cdot q^m
\]
with coefficients in $\sS_{V_{\Z}}$ defines an element of $M_{ 1+ \frac{n}{2} } ( \rho_{V_\Z} )$ if and only if
\begin{equation}\label{no obstruction}
0= \sum_{ \substack{ m\ge 0 \\ \mu \in V_\Z^\vee / V_\Z } } c(-m,\mu) \cdot  a(m,\mu)
\end{equation}
for every 
\[
f(\tau)  = \sum_{  \substack{ m\in \Q\\ \mu\in V_\Z^\vee / V_\Z }}  c(m,\mu) \cdot q^m   \in M^!_{1- \frac{n}{2} }( \overline{\rho}_{V_\Z } ).
\]
By the main result of \cite{mcgraw}, it suffices to verify (\ref{no obstruction})  only for $f(\tau)$ that are  integral, in the sense of Definition 
\ref{def:integral form}.

For any  integral $f(\tau)$, Theorem \ref{thm:main borcherds} implies that 
\[
\bm{\omega}^{ c(0,0 )}  = \sum_{ \substack{ m> 0 \\ \mu \in V_\Z^\vee / V_\Z } } c(-m,\mu) \cdot  \mathcal{Z}(m,\mu) 
\]
up to a torsion element in $\mathrm{Pic}( \mathcal{S}_K(G,\DD) )$, and hence
\[
\sum_{ \substack{ m\ge 0 \\ \mu \in V_\Z^\vee / V_\Z } } c(-m,\mu) \cdot  \mathcal{Z}(m,\mu)  \in \mathrm{Pic}( \mathcal{S}_K(G,\DD) )
\]
is killed by any $\Z$-linear map $\alpha: \mathrm{Pic}( \mathcal{S}_K(G,\DD) ) \to \C$.  
Thus the claimed modularity follows from the result of Borcherds cited above.
\end{proof}

%%%%%%%%%%%%%%%%%%%%%%%%%%%%%%%%%

\subsection{Modularity of the arithmetic generating series}

%%%%%%%%%%%%%%%%%%%%%%%%%%%%%%%%%

Bruinier \cite{Bruinier} has defined a Green function $\regtheta(F_{m,\mu})$  for the divisor $\mathcal{Z}(m,\mu)$.
This Green function is constructed, as in (\ref{theta integral}),  as the regularized theta lift of a harmonic Hejhal-Poincare series
\[
F_{m,\mu} \in H_{1-\frac{n}{2}} ( \overline{\rho}_{V_\Z} )
\]
whose holomorphic part, in the sense of \cite[\S 3]{BruinierFunke}, has the form
\[
F_{m,\mu}^+(\tau) = \left( \frac{ \phi_{\mu}+ \phi_{-\mu}}{2}  \right) \cdot  q^{-m}  + O(1),
\]
where $\phi_\mu \in S_{V_\Z}$ is the characteristic function of the coset $\mu \in V_\Z^\vee/ V_\Z$.
See \cite[\S 3.2]{AGHMP-1} and the references therein.

This Green function determines a metric on the corresponding line bundle, and so determines a class
\[
\widehat{\mathcal{Z}}(m,\mu) = (  \mathcal{Z}(m,\mu) , \regtheta(F_{m,\mu}) )  \in \widehat{\mathrm{Pic}}( \mathcal{S}_K(G,\DD) )
\]
for every positive $m\in \Q$ and $\mu\in V_\Z^\vee / V_\Z$.  
Recall that that we have defined a metric (\ref{better metric}) on the line bundle $\bm{\omega}$, and so obtain a class
\[
\widehat{\bm{\omega}} \in \widehat{\mathrm{Pic}}( \mathcal{S}_K(G,\DD) ).
\]
 We  define
\[
\widehat{\mathcal{Z}}(0,\mu) = \begin{cases}
\widehat{\bm{\omega}}^{-1} & \mbox{if }\mu=0 \\
0 & \mbox{otherwise.}
\end{cases}
\]

\begin{theorem}\label{thm:metrized modularity}
Suppose  $n\ge 3$. For any $\Z$-linear functional 
\[
\alpha : \widehat{\mathrm{Pic}}( \mathcal{S}_K(G,\DD) ) \to \C
\]
we have
\[
\sum_{ \substack{ m \ge 0 \\   \mu \in V_\Z^\vee / V_\Z   }  } 
 \alpha \big( \widehat{\mathcal{Z}}(m ,\mu ) \big) \cdot \phi_\mu  \cdot q^m  \in M_{ 1+ \frac{n}{2} } ( \rho_{V_\Z} ).
\]
\end{theorem}

\begin{proof}
The assumption that $n\ge 3$ implies that any form 
\[
f(\tau)  = \sum_{  \substack{ m\in \Q\\ \mu\in V_\Z^\vee / V_\Z }}  c(m,\mu) \cdot q^m   \in M^!_{1- \frac{n}{2} }( \overline{\rho}_{V_\Z } ).
\]
has negative weight.  As in \cite[Remark 3.10]{BruinierFunke}, this implies that any such $f$
is a linear combination of the Hejhal-Poincare series $F_{m,\mu}$, and in fact
\[
f = \sum_{ \substack{m>0 \\ \mu \in V_\Z^\vee / V_\Z } } c(-m,\mu)  \cdot F_{m,\mu}.
\]
This last equality follows, as in the proof of \cite[Lemma 3.10]{BHY}, from the fact that the difference between the two sides is a harmonic weak Maass form whose holomorphic part is $O(1)$. 
In particular,  we have the equality of regularized theta lifts
\[
 \regtheta(f)   =    \sum_{ \substack{m>0 \\ \mu \in V_\Z^\vee / V_\Z } } c(-m,\mu)  \cdot    \regtheta(F_{m,\mu}) .
\]

Now assume that $f$ is integral.  After replacing $f$ by a positive integer multiple, Theorem \ref{thm:main borcherds} provides us with  a Borcherds product $\psi(f)$ with arithmetic divisor
\[
\widehat{\mathrm{div}}(\psi(f)) = ( \mathrm{div}(\psi(f) ) , - \log \| \psi (f) \|^2 )
= \sum_{ \substack{m>0 \\ \mu \in V_\Z^\vee / V_\Z } } c(-m,\mu)  \cdot   \widehat{\mathcal{Z}}(m,\mu).
\]
On the other hand, in the group of metrized line bundles we have 
\[
\widehat{\mathrm{div}}(\psi(f)) =
 \widehat{\bm{\omega}}^{\otimes c(0,0) }  = - c(0,0) \cdot  \widehat{\mathcal{Z}}(0,0).
\]

The above relations show that
\[
 \sum_{ \substack{m\ge 0 \\ \mu \in V_\Z^\vee / V_\Z } } c(-m,\mu)  \cdot   \widehat{\mathcal{Z}}(m,\mu) \in \widehat{\mathrm{Pic}}( \mathcal{S}_K(G,\DD) )
\]
is a torsion element  for any integral $f$.    Exactly as in the proof of Theorem \ref{thm:naked modularity}, the claim follows from the modularity criterion of Borcherds.
\end{proof}

%\backmatter

\bibliographystyle{smfalpha}

\providecommand{\bysame}{\leavevmode\hbox to3em{\hrulefill}\thinspace}
\providecommand{\MR}{\relax\ifhmode\unskip\space\fi MR }
% \MRhref is called by the amsart/book/proc definition of \MR.
\providecommand{\MRhref}[2]{%
  \href{http://www.ams.org/mathscinet-getitem?mr=#1}{#2}
}
\providecommand{\href}[2]{#2}

%\bibliography{../howard_project/Borcherds.bib}

\end{document}